\documentclass{amsart}
\usepackage{amsmath,amssymb,amsthm}
\usepackage{hyperref}
\usepackage{graphicx}
\usepackage{xcolor}
\usepackage{microtype}
\usepackage{mathtools}
\usepackage[english]{babel}

\usepackage[all]{xy}
\usepackage{tikz-cd}

\usepackage{marginnote}
\usepackage[normalem]{ulem}
\usepackage{soul}

\newcommand{\norm}[1]{\left\Vert#1\right\Vert}

\newcommand{\abs}[1]{\left\vert#1\right\vert}
\newcommand{\set}[1]{\left\{#1\right\}}

\newcommand{\duality}[1]{\left\langle#1\right\rangle}

\newcommand{\orb}{\operatorname{Orb}}

\newcommand{\eqnorm}[1]{\left\lvert\!\left\lvert\!\left\lvert #1 
  \right\rvert\!\right\rvert\!\right\rvert}

\newcommand{\pten}{\ensuremath{\widehat{\otimes}_\pi}}

\DeclareMathOperator{\lspan}{span}

\DeclareMathOperator{\rad}{rad}

\DeclareMathOperator{\Lip}{Lip}

\newcommand{\HL}{\mathcal HL}

\newcommand{\N}{\mathbb{N}}

\newcommand{\R}{\mathbb{R}}            
\newcommand{\C}{\mathbb{C}}

\DeclareMathOperator{\co}{co}
\newcommand{\cconv}{\overline{\co}}

\newcommand{\disk}{\mathbb{D}}
\newcommand{\torus}{\mathbb{T}}

\newtheorem{theorem}{Theorem}[section]
\newtheorem{lemma}[theorem]{Lemma}

\newtheorem{proposition}[theorem]{Proposition}
\newtheorem{corollary}[theorem]{Corollary}
\theoremstyle{definition}
\newtheorem{definition}[theorem]{Definition}
\newtheorem{example}[theorem]{Example}

\theoremstyle{remark}
\newtheorem{remark}[theorem]{Remark}
\numberwithin{equation}{section}

\title{Composition operators for holomorphic Lipschitz functions}

\author[V. Dimant]{Verónica Dimant}
\address[V. Dimant]{Departamento de Matem\'{a}tica y Ciencias, Universidad de San
		Andr\'{e}s, Vito Dumas 284, (B1644BID) Victoria, Buenos Aires,
		Argentina and CONICET} \email{vero@udesa.edu.ar}

\author[L. C. García-Lirola]{Luis C. Garc\'ia-Lirola}
\address[L. C. García-Lirola]{Departamento de Matemáticas, Universidad de Zaragoza, 50009, Zaragoza, Spain} 
\email{\texttt{luiscarlos@unizar.es}}
\urladdr{\url{https://personal.unizar.es/luiscarlos/}}

\author[J. Guerrero-Viu]{Juan Guerrero-Viu}
\address[J. Guerrero-Viu]{Departamento de Matemáticas, Universidad de Zaragoza, 50009, Zaragoza, Spain} 
\email{j.guerrero@unizar.es}

\author[A. Procházka]{Antonín Procházka}
\address[A. Procházka]{Universite Marie et Louis Pasteur, CNRS, LmB (UMR 6623), F-25000 Besançon, France.}
\email{antonin.prochazka@univ-fcomte.fr}

\subjclass[2020]{47B33; 46J15; 47L10}

\keywords{Composition operator; Holomorphic function; Lipschitz function}

\begin{document}

\begin{abstract}
We study composition operators on spaces of holomorphic Lipschitz functions defined on the open unit ball of a complex Banach space. Our approach is based on the linearization of the symbol through the holomorphic Lipschitz-free spaces, which allow composition operators to be realized as adjoints of linear operators. For spaces with the bounded approximation property, we characterize composition operators between spaces of holomorphic Lipschitz functions vanishing at the origin and describe when  composition operators are onto isomorphisms.

We further investigate compactness and weak compactness properties of composition operators. In the finite-dimensional setting, compactness and weak compactness are shown to coincide, and a complete characterization is obtained in terms of the symbol. Finally, we analyze the asymptotic behavior of the iterates of composition operators, proving convergence to zero whenever the supremum norm of the symbol is less than one, and we extend several results to the case not vanishing at 0.
\end{abstract}

\maketitle

\section{Introduction}

The study of the interplay between the properties of a composition operator $C_\phi$ and its symbol $\phi$ began with Nordgren's work \cite{Nor} in 1968. Since then, the topic has undergone tremendous development within the contexts of analytic mappings and Lipschitz functions. Typical areas of focus in this type of research include the boundedness, injectivity, surjectivity, compactness and weak compactness of composition operators; the analysis of their spectra and the behavior of their iterations.

The space of Lipschitz holomorphic mappings defined on the open unit ball of a Banach space, along with some of its linearization properties, is addressed in \cite{ADGLM}. It provides a natural  setting for dealing with composition operators, which is the aim of this article. We study several usual  properties of these operators taking into account the results in the cases of spaces of bounded holomorphic mappings and Lipschitz functions (with the base point fixed at 0) as permanent points of comparison and inspiration.

In each of the three settings—Lipschitz functions vanishing at a base point, bounded holomorphic functions, and Lipschitz holomorphic mappings—there exists a canonical predual which allows for a natural linearization of the nonlinear function spaces. More precisely, the Lipschitz-free space over a pointed metric space (also called Arens-Eells space), see \cite{GK, Weaver2ed}, provides a predual for $\Lip_0(M)$; Mujica’s predual \cite{Mujica} plays the analogous role for spaces of bounded holomorphic functions $\mathcal H^\infty(U)$; and, in the holomorphic Lipschitz framework, the holomorphic Lipschitz-free space $\mathcal G_0(U)$ serves as a canonical predual of the space of holomorphic Lipschitz functions vanishing at $0$. In all three cases, every function in the corresponding function space admits a unique linear representation as a functional on the associated predual space. We refer the reader to \cite{GHT} for a recent survey comparing these constructions. 

We will mainly focus on the case in which the symbol $\phi$ is defined on the open unit ball $B_Y$ and satisfies $\phi(B_Y)\subseteq B_X$, for complex Banach spaces $X$ and $Y$. Thus $\phi$ acts naturally on the space $\mathcal HL(B_X)$ of holomorphic Lipschitz functions on $B_X$ (sometimes called the \emph{analytic Lipschitz algebra} and denoted $\Lip_A(B_X,1)$, see e.g. \cite{BM05, BM25}). We will endow this space with the norm $\norm{f}_L=L(f)+|f(0)|$, where $L(f)$ denotes the Lipschitz constant of $f$, which is also an algebra satisfying $\norm{fg}_L\leq 2\norm{f}_L\norm{g}_L$. However, throughout most of the paper  we work with the subalgebra $\mathcal HL_0(B_X)$ of functions satisfying  $f(0)=0$, where $\norm{f}_L=L(f)$, coming back to the general case in the final section. For a holomorphic Lipschitz symbol $\phi\colon B_Y\to B_X\subseteq X$ with $\phi(0)=0$, the composition operator $C_\phi\colon \mathcal HL_0(B_X)\to \mathcal HL_0(B_Y)$ is well-defined and bounded, and indeed it is the adjoint of the linearization of $\phi$ between the corresponding holomorphic-Lipschitz free spaces $\mathcal G_0(B_Y)$ and $\mathcal G_0(B_X)$. Exploiting this pre-adjoint approach, we obtain in Section~\ref{sect:composition operators HL0} that, for spaces with the bounded approximation property, the composition operators from $\mathcal HL_0(B_X)$ to $\mathcal HL_0(B_Y)$ are precisely the multiplicative operators that arise as adjoint of operators from $\mathcal G_0(B_Y)$ to $\mathcal G_0(B_X)$ (Theorem~\ref{thm:compositionoperatorequivalence}). This compares to similar results in the bounded holomorphic, resp. the  Lipschitz, setting -- \cite[Theorem~4.1]{Car}, resp. 
\cite[Theorem~7.23]{Weaver2ed}. The proof is based on the identification of multiplicative functionals on $\mathcal G_0(B_X)$ as evaluations on $\overline{B}_X$ (Proposition \ref{prop:multiplicativeisdelta}). As a consequence, we obtain a characterization of the symbols for which $C_\phi$ is an onto isomorphism between $\mathcal HL_0$-spaces (Corollary~\ref{c:BAP-characterizationOfCompositionIsomorphism}). 

In Section~\ref{Section:Compactness}, we turn to the study of compactness properties of composition operators on $\mathcal HL_0(B_X)$. Our main result here is Theorem \ref{theo:compactnesscomp}, which reveals a strong connection with the corresponding theory for the composition operator on $\mathcal H^\infty(B_X)$ developed in \cite{AGL}.  In the finite-dimensional case, we prove that compactness and $\ell_\infty$-strict singularity (also known as ``not fixing any copies of $\ell_\infty$", see the definition below) coincide for these operators, and we obtain a complete characterization: a composition operator $C_\phi\colon \mathcal HL_0(B_X)\to\mathcal HL_0(B_Y)$  is compact if and only if the symbol $\phi$ 
 satisfies $\norm{\phi}_\infty <1$ (Corollary \ref{cor:compactfindimcase}). This also echoes known characterizations in the Lipschitz case \cite{KamSch, JVVV, ACPcomp}. 

In Section~\ref{sect:iteration}, we analyze the behavior of the iterates $C^{(n)}_\phi=C_{\phi^{(n)}}$ of the composition operator $C_\phi\colon \mathcal HL_0(B_X)\to\mathcal HL_0(B_X)$. Note that these iterates always have norm 1 when acting on $\mathcal H^\infty(B_X)$. However, the norm of $C_\phi^{(n)}$ on $\mathcal HL_0(B_X)$ is $L(\phi^{(n)})$. We will show in Theorem \ref{thm:ergodic-phi-contracting} that if $\norm{\phi}_\infty<1$ then $L(\phi^{(n)})\underset{n\to\infty}{\to} 0$ and so $C_\phi^{(n)}$ converges to $0$ as an operator on the $\mathcal HL_0$ spaces. For finite-dimensional $X$, we also get a characterization in terms of the existence of invariant subsets of the unit sphere $S_X$ (Proposition \ref{prop:equivalent-convergent-0-findim}). 

In the last part of the paper, Section~\ref{sect:HL}, we come back to the study of  $\mathcal HL(B_X)$. This space also admits a canonical predual, denoted $\mathcal G(B_X)$. However, some properties that hold isometrically in the $\mathcal HL_0$-case now only hold isomorphically (e.g. every $f\in \mathcal HL(B_X, Y)$ admits a unique linearization $T_f\in \mathcal L(\mathcal G(B_X), Y)$ with $\frac{1}{2}\norm{f}_L\leq \norm{T_f}\leq \norm{f}_L$, see Proposition~\ref{prop:Gdef}). Still, transfer principles as Lemma~\ref{lemma: A tilde} allow us to prove a version of the previous results for $\phi$ not necessarily fixing $0$. We also show that there is a natural correspondence between the multiplicative functionals on $\mathcal HL_0(B_X)$, $\mathcal HL(B_X)$, and the algebra of uniformly continuous holomorphic functions on $B_X$, yielding a bijection between the spectra of these algebras (Proposition~\ref{prop:relacion-espectros}).

\subsubsection*{Notation}

$X, Y$ will stand for complex Banach spaces. We denote by $B_X$ (respectively, $S_X$) its open unit ball (respectively, unit sphere). For $A\subseteq X$ we denote $\operatorname{rad}(A)=\sup_{x\in A}\norm{x}$. The space of continuous linear maps from $X$ to $Y$ is denoted by $\mathcal L(X, Y)$, and $X^*=\mathcal L(X, \mathbb C)$. We write $\mathcal P(^m X)$ for the space of continuous $m$-homogeneous polynomials, that is, those $P\colon X\to \mathbb C$ so that  there exists a continuous  $m$-linear symmetric form $\check{P}\colon X\times \cdots \times X\to \mathbb C$ with $P(x)=\check{P}(x,\ldots, x)$. 
We say that $P\in \mathcal P(^m X)$ is of \emph{finite type} if  $P(x)=\sum_{j=1}^n [x_j^*(x)]^m $ for certain $x_j^*\in X^*$ and let $\mathcal P_f(^m X)$  be the space of finite type $m$-homogeneous polynomials. 
Moreover,  $\mathcal P(X)$ stands for the space of finite sums of continuous  homogeneous polynomials on $X$.

For a metric space $(M,d)$ and a Banach space $Y,$ let $\Lip(M,Y)$ be the vector space of all  $f\colon M \to Y$ such that $\norm{f(x_1) - f(x_2)} \leq C d(x_1,x_2)$ for some $C > 0$ and for all $x_1 \neq x_2 \in M$.
The smallest $C$ in the above definition is the {\em Lipschitz constant} of $f$,  and we write it as $L(f)$. Let $0 \in M$ denote an arbitrary fixed point. In order to get a normed space, we will be particularly interested in the subspace $\Lip_0(M,Y)$ consisting of those $f \in \Lip(M,Y)$ such that $f(0) = 0.$ In this way, $L(f) = 0$ if and only if $f = 0$, and so $L(\cdot)$ defines a norm on $\Lip_0(M,Y)$. 
For complex Banach spaces $X$ and $Y$ and an open set $U\subseteq X$,  denote by $\mathcal H^\infty(U,Y)$ the vector space of all $f\colon U \to Y$ such that $f$  is  holomorphic (i.e. complex
Fr\'echet differentiable)   and bounded  on  $U$, endowed with the supremum  norm. 
In both the Lipschitz and $\mathcal H^\infty$ situations, if the range $Y = \mathbb C,$ then the notation is shortened to $\Lip_0(M)$ and $\mathcal H^\infty(U)$. 

Recall that $X$ is said to have the Bounded Approximation Property (BAP) if there is $\lambda>0$ such that the identity $I\colon X\to X$ can be approximated by finite-rank operators in $\lambda B_{\mathcal L(X,X)}$ uniformly on compact sets (equivalently, pointwise). 
We refer the reader to \cite{Casazza} for examples and applications.

\medskip

\section{Preliminaries}\label{sect:preliminares}

In this section, we recall the definition of the holomorphic Lipschitz-free space (HL-free space, for short) introduced in \cite{ADGLM} and some of its properties. Similarly as in the case of Lipschitz-free spaces, it is possible to present the HL-free spaces in a more direct way, avoiding in particular Dixmier-Ng and Montel's theorems.
Although we will not include it here, the same construction is possible for Dineen-Mujica's predual of $\mathcal H^\infty(U)$.

Let $U\subseteq X$ be open, bounded, such that $0 \in U$.
We denote by $\mathcal HL_0(U,Y)$ the Banach space of all holomorphic Lipschitz functions $f\colon U \to Y$ satisfying $f(0)=0$, equipped with the norm $L(f)=\sup_{x\neq y} \frac{\norm{f(x)-f(y)}}{\norm{x-y}}$.
As usual, we shorten the notation $\mathcal HL_0(U):= \mathcal HL_0(U, \mathbb C)$. Note that $f\cdot g$ is holomorphic and
\[ L(f\cdot g) \leq 2 \rad(U) L(f) L(g)\]
for any $f,g\in \mathcal HL_0(U)$, where $\rad(U)=\sup_{x\in U}\norm{x}$. Thus, $\mathcal HL_0(U)$ is a non-unital Gelfand algebra in the terminology of \cite{Weaver2ed}.

For every $x\in U$ we define $\delta(x)\in \mathcal HL_0(U)
^*$ to be the evaluation functional.
Since $X^* \subseteq \mathcal HL_0(U)$ canonically, we clearly have that $\norm{\delta(x)-\delta(y)}=\norm{x-y}$. Thus, the map $\delta\colon U \to \mathcal HL_0(U)^*$ is an isometry. 
Further, $\delta$ is weak*-holomorphic, that is, for each $f\in \mathcal HL_0(U)$ the function $f\circ \delta\colon U\to \mathbb C$ is holomorphic, since $(f\circ \delta) (x) =\langle \delta(x), f\rangle = f(x)$ for each $x\in U$. According to Dunford's result (see e.g. \cite[Corollary~15.47]{DefantGarciaMaestreSevillaPeris} or  \cite[Exercise 8.D]{MujicaLibro}), every weak*-holomorphic map is indeed holomorphic. Hence, $\delta\colon U\to \mathcal HL_0(U)^*$ is holomorphic and isometric. Also, $\delta(0)=0$; consequently, $\delta\in \mathcal HL_0(U, \mathcal HL_0(U)^*)$.

The holomorphic Lipschitz-free space of $U$, for short HL-free space, is defined as 
\[ \mathcal G_0(U):=\overline{\lspan}\, \delta(U)\subseteq \mathcal HL_0(U)^*.\]

The HL-free space has linearization properties similar to those of the Lipschitz-free and holomorphic-free spaces.  
This was shown in \cite{ADGLM} for $\mathcal G_0(B_X)$, but the same argument works for $\mathcal G_0(U)$. 
For the sake of completeness, we provide a proof, whose benefit is that it avoids the use of Montel's theorem. 
We will denote by $\tau_0$ the \emph{topology of uniform convergence on compact subsets of $U$} and by $\tau_p$ the \emph{topology of pointwise convergence}.

\begin{proposition}\label{prop:G0def} Let $X, Y$ be complex Banach spaces and $U\subseteq X$ be an open bounded subset with $0\in U$. 
\begin{itemize}
   \item[(a)] For every $f\in \mathcal HL_0(U,Y)$ there exists a unique $T_f \in \mathcal L(\mathcal G_0(U),Y)$ such that $f=T_f\circ \delta$ and $L(f)=\norm{T_f}$.
    
    \item[(b)] The map $f\mapsto T_f$ is surjective and linear.  In particular, 
    \[\mathcal HL_0(U,Y)=\mathcal L(\mathcal G_0(U),Y) \text{ and } \mathcal HL_0(U)=\mathcal G_0(U)^*.\]

    \item[(c)] The topologies $w^*$, $\tau_0$ and $\tau_p$ coincide on $r\overline{B}_{\mathcal HL_0(U)}$, for any $r>0$.

   \item[(d)] For any $r>0$,
   \begin{align*}
       \mathcal G_0(U) &=\{\varphi\in \mathcal HL_0(U)^*: \varphi|_{r\overline{B}_{\mathcal HL_0(U)}} \text{ is } \tau_0\text{-continuous}\}\\
       & =\{\varphi\in \mathcal HL_0(U)^*: \varphi|_{r\overline{B}_{\mathcal HL_0(U)}} \text{ is } \tau_p\text{-continuous}\}.
   \end{align*}
   \end{itemize}    
\end{proposition}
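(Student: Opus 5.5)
Part (a) is the natural place to start. Given $f\in\mathcal HL_0(U,Y)$, I define $T_f$ on $\lspan\,\delta(U)$ by $T_f\bigl(\sum_i a_i\delta(x_i)\bigr)=\sum_i a_i f(x_i)$. To see that this is well defined and has norm at most $L(f)$, pick $y^*\in B_{Y^*}$. Then $y^*\circ f\in\mathcal HL_0(U)$ with $L(y^*\circ f)\le L(f)$, and so
\[
\Bigl|y^*\Bigl(\sum_i a_if(x_i)\Bigr)\Bigr| = \Bigl|\Bigl\langle \sum_i a_i\delta(x_i),\, y^*\circ f\Bigr\rangle\Bigr| \le L(f)\,\Bigl\|\sum_i a_i\delta(x_i)\Bigr\|.
\]
Taking the supremum over $y^*$ gives both well-definedness and the bound $\|T_f\|\le L(f)$. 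Since $\delta(U)$ spans a dense subspace, $T_f$ extends uniquely to $\mathcal G_0(U)$. For the reverse inequality I use that $\delta$ is an isometry: $\|f(x)-f(y)\|\le\|T_f\|\,\|\delta(x)-\delta(y)\|=\|T_f\|\,\|x-y\|$. Uniqueness also follows from density.

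For part (b), linearity of $f\mapsto T_f$ is clear. For surjectivity, take $T\in\mathcal L(\mathcal G_0(U),Y)$ and set $f=T\circ\delta$. This $f$ is holomorphic, because $\delta\colon U\to\mathcal HL_0(U)^*$ is holomorphic and takes values in the closed subspace $\mathcal G_0(U)$ (so it is holomorphic into $\mathcal G_0(U)$), and $T$ is bounded linear. It is Lipschitz with $L(f)\le\|T\|$, and $f(0)=0$. Then $T_f=T$ on $\delta(U)$, hence everywhere. When $Y=\mathbb C$, this identifies $\mathcal HL_0(U)$ with $\mathcal G_0(U)^*$, and the duality is just the restriction of the canonical pairing.

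For part (c), I first note that $w^*$ coincides with $\tau_p$ on bounded sets. Indeed, $\tau_p$ is generated by $\delta(U)$, which spans a dense subspace of the predual $\mathcal G_0(U)$, and on bounded sets convergence on a dense set of the predual is equivalent to $w^*$-convergence. Next, $\tau_0$ agrees with $\tau_p$ on $r\overline B_{\mathcal HL_0(U)}$ because that ball is equi-Lipschitz. Pointwise convergence of an equi-Lipschitz net is uniform on compacts by a standard finite $\varepsilon$-net argument: cover a compact set $K$ by finitely many balls of radius $\varepsilon/(3r)$, and on each ball the deviation is controlled by the value at its center plus $2r\cdot\varepsilon/(3r)$. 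The ball is $w^*$-compact by Alaoglu, so all three Hausdorff topologies coincide there. This route avoids Montel's theorem entirely.

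For part (d), every $\varphi\in\mathcal G_0(U)$ is $w^*$-continuous on $\mathcal HL_0(U)$, hence $\tau_0$- and $\tau_p$-continuous on the ball by (c). Conversely, if $\varphi\in\mathcal HL_0(U)^*=\mathcal G_0(U)^{**}$ is $\tau_p$-continuous on $r\overline B$, then it is $w^*$-continuous on the ball by (c). By scaling, it is $w^*$-continuous on every ball, so the Banach–Dieudonné/Grothendieck criterion gives that $\varphi$ is $w^*$-continuous, i.e. $\varphi\in\mathcal G_0(U)$. The $\tau_0$ version is the same.

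The main obstacle I expect is the holomorphy of $T\circ\delta$ in (b), which requires $\delta$ to be holomorphic into $\mathcal G_0(U)$. This follows from Dunford's theorem as recalled, together with closedness of $\mathcal G_0(U)$: the derivative of $\delta$ is a limit of difference quotients lying in $\mathcal G_0(U)$. The second delicate point is the Banach–Dieudonné step in (d).
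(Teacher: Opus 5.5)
Your proposal is correct and follows essentially the same route as the paper: the norm estimate via $y^*\circ f$ and density for (a), surjectivity from $\delta\in\mathcal HL_0(U,\mathcal G_0(U))$ for (b), equi-Lipschitz control on the ball for (c), and Banach--Dieudonn\'e for (d). There are two local differences. First, you get well-definedness of $T_f$ on $\lspan\,\delta(U)$ directly from that estimate, so the paper's interpolation argument for the linear independence of $\delta(U\setminus\{0\})$ is not needed. Second, in (c) you show that $\tau_p$-convergence implies $w^*$-convergence on bounded sets by density of $\lspan\,\delta(U)$ in the predual, whereas the paper shows that $w^*$-convergence implies $\tau_0$-convergence and then concludes using Alaoglu's compactness and the Hausdorff property of $\tau_p$; as a result, your own appeal to Alaoglu is superfluous.
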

\begin{proof}
(a) First, note that an interpolation argument shows that the set $\{\delta(x):x\in U\setminus\{0\}\}$ is linearly independent in $\mathcal G_0(U)$. Indeed, assume that $\sum_{j=1}^n \lambda_j \delta(x_j)=0$ for different points $x_j\in U\setminus\{0\}$ and $\lambda_j\in \mathbb C$. Let $x_0=0$ and $\lambda_0=0$. Take $x_{ij}^*\in S_{X^*}$ with $x_{ij}^*(x_i-x_j)=\norm{x_i-x_j}$ and define $f(x)=\sum_{j=0}^n \overline{\lambda_j}\prod_{i\neq j} \frac{x_{ij}^*(x_i-x)}{\norm{x_i-x_j}}$. Then $f\in \mathcal HL_0(U)$ and  $0=\langle f, \sum_{j=1}^n \lambda_j \delta(x_j)\rangle=\sum_{j=1}^n |\lambda_j|^2$.

    Now, given $f \in \mathcal HL_0(U,Y)$ we define $T_f(\delta(x)):=f(x)$ and we extend it linearly to $\lspan \delta(U)$.
    Once we show that $\norm{T_f}\leq L(f)$ in this domain, we use that   $T_f$ admits a norm preserving extension to $\mathcal G_0(U)$.
\[
\begin{split}
\norm{T_f\left(\sum\lambda_i\delta(x_i)\right)}&=\sup_{y^*\in B_{Y^*}} \abs{y^*\left(\sum \lambda_if(x_i)\right)} = \sup_{y^* \in B_{Y^*}} \abs{\sum \lambda_i y^*\circ f(x_i)}\\ 
&\leq L(f) \sup_{g \in B_{\mathcal HL_0(U)}}\left|\duality{g,\sum \lambda_i\delta(x_i)}\right|=L(f)\norm{\sum \lambda_i \delta(x_i)}.
\end{split}
\]
It follows that $\norm{T_f}\leq L(f)$ and $f=T_f\circ \delta$.
Also, $L(f)\leq \norm{T_f} \cdot L(\delta)=\norm{T_f}$.

(b) The map $f\mapsto T_f$ is clearly linear, and its surjectivity follows from the fact that $\delta \in \mathcal HL_0(U,\mathcal G_0(U))$. 

(c) 
 Let $(f_\gamma)_\gamma\subseteq r\overline{B}_{\mathcal HL_0(U)}$ converge $w^*$ to $f \in r\overline{B}_{\mathcal HL_0(U)}$. 
Then for every totally bounded $K \subseteq U$, $f_\gamma\to f$ uniformly on $K$ as $L(f_\gamma-f)\leq 2r$. 
In particular, $f_\gamma \to f$ pointwise. 
In other words, the two identities
$$
Id_1\colon \left(r\overline{B}_{\mathcal HL_0(U)},w^*\right) \to \left(r\overline{B}_{\mathcal HL_0(U)},\tau_0\right) \quad \mbox{and} \quad Id_2\colon \left(r\overline{B}_{\mathcal HL_0(U)},\tau_0\right) \to \left(r\overline{B}_{\mathcal HL_0(U)},\tau_p\right)
$$
are continuous. 
Since $\tau_p$ is Hausdorff, and $(r\overline{B}_{\mathcal HL_0(U)},w^*)$ is compact by Alaoglu's theorem, we have that $Id_1$ and $Id_2$ are homeomorphisms.

(d) 
Fix $\varphi\in \mathcal HL_0(U)^*$. 
Point (c) clearly implies that the restriction
$\varphi|_{r\overline{B}_{\mathcal HL_0(U)}}$ 
is $w^*$-continuous iff 
$\varphi|_{r\overline{B}_{\mathcal HL_0(U)}}$ is 
$\tau_0$-continuous, iff 
$\varphi|_{r\overline{B}_{\mathcal HL_0(U)}}$  is  $\tau_p$-continuous.
Now Banach-Dieudonn\'e implies that $\varphi|_{r\overline{B}_{\mathcal HL_0(U)}}$ 
is $w^*$-continuous iff $\varphi$ 
is $w^*$-continuous. 
This last condition is equivalent to $\varphi \in \mathcal G_0(U)$.
\end{proof}

\begin{remark}
    The fact that $\tau_p$ and $\tau_0$ coincide on $\overline{B}_{\mathcal HL_0(U)}$ is elementary using the total boundedness of compact sets. 
    The fact that $\overline{B}_{\mathcal HL_0(U)}$ is $\tau_0$-compact follows also from Montel's theorem (see e.g. \cite[Proposition 9.16]{MujicaLibro}).
\end{remark}

Recall that every holomorphic Lipschitz map $f\colon U\to Y$ extends uniquely to a map $f\colon \overline{U}\to Y$ with the same Lipschitz constant. In what follows, we show  that the commutativity of the diagrams proved in \cite{ADGLM} holds for these extensions too. 

\begin{lemma}\label{lemma:extension-border-HL0}
    Let $X,Y$ be complex Banach spaces and $U,V$ be open subsets of $X$ and $Y$ respectively, containing $0$. Given $f\in \mathcal HL_0(U,Y)$ we have:
    \begin{itemize}
        \item[(a)] There is a Lipschitz map $\widetilde{f}\colon \overline{U}\rightarrow Y$ such that $\widetilde{f}|_{U}=f$ and $L\big(\widetilde{f}\big)= L(f)$.
        \item[(b)] There is an isometry $\widetilde{\delta} \colon  \overline{U}\rightarrow \mathcal G_0(U)$ such that $\widetilde{\delta}|_{U}=\delta$ and $\|\widetilde{\delta}(x)\|=\norm{x}$ for all $x\in \overline{U}$.
        \item[(c)] There is a unique linear operator $T_f\in \mathcal L(\mathcal G_0(U), Y)$ such that the following diagram commutes.
\begin{equation*}
\xymatrix{
 \overline{U} \ar[r]^{\widetilde{f}}  \ar[d]_{\widetilde{\delta}}    &  Y  \\
   \mathcal{G}_0(U)  \ar[ru]_{T_f}  &
}
\end{equation*}
    \item[(d)] If $f(U)\subseteq  V$, there is a unique linear operator $\widehat{f}\coloneq T_{\delta_Y \circ f} \in \mathcal L(\mathcal G_0(U), \mathcal{G}_0(V))$  such that the following diagram commutes
    \begin{equation*}
        \xymatrix{
 \overline{U} \ar[r]^{\widetilde{f}}  \ar[d]_{\widetilde{\delta}_X }    &  \overline{V} \ar[d]^{\widetilde{\delta}_Y }  \\
   \mathcal{G}_0(U)  \ar[r]_{ \widehat{f}}   &  \mathcal{G}_0(V)
}
    \end{equation*} and $\|\widehat f\|=L(f)$.
    \end{itemize}
\end{lemma}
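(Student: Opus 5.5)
The whole lemma rests on completeness: a Lipschitz map into a complete space extends uniquely, with the same constant, to the closure of its domain. I will apply this first to $f$, then to $\delta$, and then get (c) and (d) from uniqueness of these extensions together with Proposition~\ref{prop:G0def}.

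For (a), take $x\in\overline U$ and a sequence $x_n\in U$ with $x_n\to x$. The estimate $\norm{f(x_n)-f(x_m)}\le L(f)\norm{x_n-x_m}$ shows that $(f(x_n))$ is Cauchy, hence converges in the Banach space $Y$. Interleaving two approximating sequences shows the limit does not depend on the sequence, so we set $\widetilde f(x):=\lim f(x_n)$. Passing to the limit in the Lipschitz inequality gives $L(\widetilde f)\le L(f)$. The reverse inequality holds because $\widetilde f$ extends $f$.

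For (b), apply the same construction to $\delta\colon U\to\mathcal G_0(U)$. This is legitimate because $\delta$ takes values in the closed subspace $\mathcal G_0(U)$ of the dual space $\mathcal HL_0(U)^*$, which is complete, so the extension $\widetilde\delta$ still lands in $\mathcal G_0(U)$. The identity $\norm{\delta(x)-\delta(y)}=\norm{x-y}$ passes to limits, so $\widetilde\delta$ is an isometry. Since $\widetilde\delta(0)=\delta(0)=0$, we get $\|\widetilde\delta(x)\|=\norm{x}$.

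For (c), take $T_f$ as in Proposition~\ref{prop:G0def}(a). The maps $T_f\circ\widetilde\delta$ and $\widetilde f$ are both continuous on $\overline U$ and agree on the dense subset $U$, so they coincide. Uniqueness of $T_f$ holds because any such operator is determined on $\delta(U)$, and $\delta(U)$ spans a dense subspace of $\mathcal G_0(U)$.

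For (d), the map $g:=\delta_Y\circ f$ lies in $\mathcal HL_0(U,\mathcal G_0(V))$: it is a composition of holomorphic maps, it has $L(g)\le L(f)$ because $\delta_Y$ is an isometry, and $g(0)=0$. Conversely, $L(f)\le L(g)$, again since $\delta_Y$ is an isometry. Proposition~\ref{prop:G0def}(a) then yields $\widehat f=T_g$ with $\|\widehat f\|=L(g)=L(f)$.

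By (c) applied to $g$, we have $\widehat f\circ\widetilde\delta_X=\widetilde g$. It remains to check that $\widetilde g=\widetilde\delta_Y\circ\widetilde f$ on $\overline U$. Both sides are continuous and agree on $U$, so this follows from density. Two small points make this work. First, $\widetilde f$ maps $\overline U$ into $\overline V$, because $f(U)\subseteq V$ and $\widetilde f$ is continuous, so $\widetilde\delta_Y\circ\widetilde f$ is defined. Second, $\widetilde\delta_Y\circ\widetilde f$ is continuous as a composition of Lipschitz maps. Uniqueness of $\widehat f$ follows as in (c).

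There is no real obstacle here. The only care needed is in (b), to ensure the extension of $\delta$ takes values in the closed subspace $\mathcal G_0(U)$, and in (d), to confirm the range condition $\widetilde f(\overline U)\subseteq\overline V$.
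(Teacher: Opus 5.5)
Your proof is correct and follows the paper's argument essentially step by step. You extend $f$ and $\delta$ to the closure by completeness, obtain $T_f\circ\widetilde\delta=\widetilde f$ by continuity and density (the paper passes to limits along a sequence, which is the same thing), and deduce (d) from (c) applied to $\delta_Y\circ f$, using $\widetilde{\delta_Y\circ f}=\widetilde\delta_Y\circ\widetilde f$, $\widetilde f(\overline U)\subseteq\overline V$ and $L(\delta_Y\circ f)=L(f)$.
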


\begin{proof}
    (a) Just take $\widetilde{f}$ to be the (unique) Lipschitz extension of $f$ to $\overline{U}$.

    (b) Apply (a) to the $1$-Lipschitz function $\delta\colon U\to \mathcal G_0(U)$ to get $\widetilde{\delta}\colon \overline{U}\to \mathcal G_0(U)$. Since $\delta$ is an isometry, it is easy to check that so is $\widetilde{\delta}$. In particular, $\|\widetilde{\delta}(x)\|=\|\widetilde{\delta}(x)-\widetilde{\delta}(0)\|=\norm{x}$.

    (c) Let $T_f$ be the linearization of $f$ given by Proposition \ref{prop:G0def}. If $x\in \overline{U}$, we have 
    \begin{equation*}
        T_f\left(\widetilde{\delta}(x)\right)= T_f\left( \lim_{n\to\infty}\delta(x_n) \right) = \lim_{n\to\infty} T_f(\delta(x_n)) = \lim_{n\to\infty} f(x_n) = \tilde{f}(x),
    \end{equation*}
    where $(x_n)_n\subseteq U$ is any sequence converging to $x$. 

    (d) Since $f(U)\subseteq  V$ and $\widetilde{f}$ is Lipschitz by (a), we infer that $\widetilde f\left(\overline{U}\right)\subseteq \overline{V}$. The conclusion is then obtained by applying part (c) to the map $\delta_Y\circ f\in \mathcal HL_0(U, \mathcal G_0(V))$ and observing that $\widetilde{\delta_Y\circ f}= \widetilde{\delta}_Y \circ \widetilde{f}$. Moreover,  Proposition \ref{prop:G0def} gives $\|\widehat f\|=\|T_{\delta_Y \circ f}\|=L(\delta_Y\circ f)=L(f) $.
\end{proof}

From now on, we will use the same notation $f$ for both the original map and its extension $\widetilde{f}$. The same convention applies to the map $\widetilde{\delta}$.

We also recall the following elementary fact, which will be useful throughout the paper. Given $f\in \mathcal HL_0(U, Y)$, we have
\[\norm{df(x_1)(x_2)}=\lim_{h\to 0}\norm{\frac{f(x_1+hx_2)-f(x)}{h}}\leq L(f)\norm{x_2}, \qquad \forall x_1\in U, \forall x_2\in X\]
and therefore $\sup_{x\in U}\norm{df(x)}\leq L(f)$. If $U$ is also convex, then the mean value theorem yields the reverse inequality. Consequently, the map 
\begin{align*}
D \colon  \mathcal HL_0(U,Y) &\to \mathcal H^\infty(U, \mathcal L(X,Y))\\
 f&\mapsto df
\end{align*}
is an into isometry (although, in general, not onto; see the comments after  Proposition 2.1 in \cite{ADGLM}). That is, if $U\subseteq X$ is an open convex set containing 0 and $f\in \mathcal HL_0(U,Y)$,
\begin{equation}\label{eq:norm Lipschitz-differential on U}
    L(f)= \sup_{x\in U}\norm{df(x)}=\sup\{\|df(x_1)(x_2)\|:\, x_1\in U, x_2\in S_X\}.
\end{equation}
In particular, for $ f\in \mathcal HL_0(B_X,Y)$,
\begin{equation}\label{eq:norm Lipschitz-diferential}
    L(f)= \|df\|_\infty=\sup_{x\in B_X}\norm{df(x)}=\sup\{\|df(x_1)(x_2)\|:\, x_1, x_2\in B_X\}.
\end{equation}

These formulas for the Lipschitz norm imply that the closed unit ball of $\mathcal G_0(U)$ is the absolutely closed convex hull of the set of \emph{elementary molecules} $\frac{\delta(x)-\delta(y)}{\norm{x-y}}$. This was observed in \cite[Proposition 2.6]{ADGLM} for $U=B_X$. We include here a proof which also shows that, when $U$ is convex, it suffices to take the closed convex hull. 
We denote by $\cconv$ the closed convex hull of a given set, and by $\overline\Gamma$ its absolutely closed convex hull.

\begin{proposition}\label{prop:unitballconv} Let $U$ be an open bounded subset of a complex Banach space $X$, containing $0$. Then
\begin{equation*} \overline{B}_{\mathcal G_0(U)}=\overline{\Gamma}\left\{\frac{\delta(x_1)-\delta(x_2)}{\norm{x_1-x_2}} : x_1\not= x_2\in U\right\}.\end{equation*}
If moreover $U$ is convex, then 
\begin{align*}
\overline{B}_{\mathcal G_0(U)}&=\cconv\{d\delta(x_1)(x_2): x_1\in U, x_2\in S_X\}\\
&=\cconv\left\{\frac{\delta(x_1)-\delta(x_2)}{\norm{x_1-x_2}} : x_1\not= x_2\in U\right\}
\end{align*}
\end{proposition}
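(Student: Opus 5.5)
The plan is to use Hahn–Banach separation in the duality $\mathcal G_0(U)^*=\mathcal HL_0(U)$ from Proposition~\ref{prop:G0def}(b).

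For the first equality, let $M$ be the set of elementary molecules. Each $m\in M$ satisfies $\norm{m}\le 1$, because $|\langle f,\delta(x_1)-\delta(x_2)\rangle|=|f(x_1)-f(x_2)|\le L(f)\norm{x_1-x_2}$. Hence $\overline\Gamma(M)\subseteq \overline B_{\mathcal G_0(U)}$.

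For the reverse inclusion, suppose $\mu\in \overline B_{\mathcal G_0(U)}\setminus\overline\Gamma(M)$. Since $\overline\Gamma(M)$ is absolutely convex and closed, the complex Hahn–Banach separation theorem gives $f\in \mathcal HL_0(U)$ with $|\langle f,m\rangle|\le 1$ for all $m\in M$ but $\mathrm{Re}\,\langle f,\mu\rangle>1$. The first condition says precisely $L(f)\le 1$. Then $|\langle f,\mu\rangle|\le \norm{\mu}\le 1$, a contradiction.

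For the convex case, the convex hull (without absolute values) is the issue. Let $D=\{d\delta(x_1)(x_2): x_1\in U,\ x_2\in S_X\}$.
\begin{itemize}
\item First, $d\delta(x_1)(x_2)$ makes sense in $\mathcal G_0(U)$ because $\delta$ is holomorphic with values in $\mathcal G_0(U)$.
\item Second, $d\delta(x_1)(x_2)=\lim_{h\to0}\frac{\delta(x_1+hx_2)-\delta(x_1)}{h}$, taken along real $h>0$ with $x_1+hx_2\in U$. Each quotient is an elementary molecule, so $D\subseteq \overline{M}$.
\item Third, $D$ is balanced: $\lambda\, d\delta(x_1)(x_2)=d\delta(x_1)(\lambda x_2)$ with $\lambda x_2\in S_X$ for $|\lambda|=1$. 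Hence $\cconv(D)=\overline\Gamma(D)$.
\end{itemize}

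Now separate again. If $\mu\in \overline B_{\mathcal G_0(U)}\setminus\cconv(D)$, there is $f\in\mathcal HL_0(U)$ with $\sup_{d\in D}\mathrm{Re}\,\langle f,d\rangle<\mathrm{Re}\,\langle f,\mu\rangle$. Since $D$ is balanced, the left side equals $\sup\{|df(x_1)(x_2)|: x_1\in U,\ x_2\in S_X\}$, using $\langle f,d\delta(x_1)(x_2)\rangle=df(x_1)(x_2)$ (by linearity and continuity of $f$ on $\mathcal G_0(U)$). By \eqref{eq:norm Lipschitz-differential on U}, which needs convexity, this supremum is $L(f)$. We would get $L(f)<\mathrm{Re}\,\langle f,\mu\rangle\le L(f)\norm{\mu}\le L(f)$, a contradiction. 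So $\overline B_{\mathcal G_0(U)}=\cconv(D)$.

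Finally, $D\subseteq\overline M$ and $M\subseteq \overline B_{\mathcal G_0(U)}$ give
\[
\cconv(D)\subseteq\cconv(M)\subseteq \overline B_{\mathcal G_0(U)}=\cconv(D),
\]
so all three sets coincide. The main technical point is identifying $\langle f,d\delta(x_1)(x_2)\rangle$ with $df(x_1)(x_2)$ and checking that $D$ is balanced. Both are routine, and the substantive input is the formula \eqref{eq:norm Lipschitz-differential on U}.
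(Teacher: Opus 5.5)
Your proposal is correct and follows essentially the same route as the paper. The first equality comes from separation against the molecules, since they norm $L(f)$. In the convex case you combine \eqref{eq:norm Lipschitz-differential on U}, the invariance of $D$ under unimodular scalars, and the difference-quotient approximation $D\subseteq\overline{M}$.

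The only cosmetic difference is that you separate directly from $\cconv(D)$, whereas the paper first identifies $\overline B_{\mathcal G_0(U)}$ with $\overline\Gamma(D)$. Also, what you call ``balanced'' is really only invariance under $\mathbb{T}$, which is all you use.
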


\begin{proof}
The first part follows using a standard separation argument and the fact that 
\[L(f)=\sup\left\{\left|\left\langle f, \frac{\delta(x_1)-\delta(x_2)}{\norm{x_1-x_2}}\right\rangle\right|: x_1\not= x_2\in U\right\}.\]

Now assume that $U$ is convex. Write $A=\{d\delta(x_1)(x_2): x_1\in U, x_2\in S_X\}$ and note that $\langle f, d\delta(x_1)(x_2)\rangle = df(x_1)(x_2)$, so  \eqref{eq:norm Lipschitz-differential on U} yields $\overline{B}_{\mathcal G_0(U)}=\overline{\Gamma}(A)$ . Also, $\mathbb TA=A$, since $df(x_1)(\lambda x_2)=\lambda df(x_1)(x_2)$ for each $\lambda\in \mathbb T$. Thus, 
\[ \overline{B}_{\mathcal G_0(U)}=\overline{\Gamma}(A) = \cconv(\mathbb TA)=\cconv(A). \]
Finally, note that, for $x_1\in U$, $x_2\in X$ and small $t>0$, the differentiability of $\delta\colon U\to\mathcal G_0(U)$ implies that
\[
 \norm{ \frac{\delta(x_1+tx_2)-\delta(x_1)}{t}-d\delta(x_1)(x_2)}\to 0, \mbox{ as } t\to 0.
    \]
Therefore, 
\[
A \subseteq \overline{\set{\frac{\delta(x_1)-\delta(x_2)}{\norm{x_1-x_2}}:x_1\not= x_2 \in U}},
\]
from where we get the last equality. 
\end{proof}
\subsection{Some facts on operator ideals}
Even though the notion of an operator ideal is classical, see \cite[p. 45]{Pietsch}, in the sequel we are going to use only one of its constituting properties. 
We say that a class  $\mathcal I$ of bounded linear operators has the \emph{ideal property} if, for all bounded linear operators $A,B$ and each $T \in \mathcal I$, we have $A\circ T\circ B \in \mathcal I$ whenever the composition makes sense, see \cite[p. 108]{DefantFloret}.

Besides the ideals of compact and weakly compact operators, in this paper we will also deal with some related classes of operators which we will now define.

Let $Z$ be a Banach space. 
We say that $T \in \mathcal L(X,Y)$ \emph{fixes a copy of $Z$} if $Z$ is isomorphic to a subspace $E \subseteq X$ and $T|_E$ is an isomorphism onto its image.
The operators which do not fix any copy of $Z$ are also known in the literature under the name \emph{$Z$-strictly singular operators} \cite{KaniaLaustsen}.
Clearly the class of $Z$-strictly singular operators has the ideal property.
While $Z$-strictly singular operators are not necessarily closed under addition for a general $Z$, 
when $Z=\ell_1$, resp. $Z=\ell_\infty$, it is known that $Z$-strictly singular operators form a closed operator ideal (see \cite[Proposition~2.5]{KaniaLaustsen}, resp. \cite[Corollary 1.4]{Rosenthal}).

We say that $T \in \mathcal L(X,Y)$ \emph{fixes a complemented copy of $Z$} if there exists a complemented subspace $E$ of $X$ such that $E$ is isomorphic to $Z$ and $T|_E$ is an isomorphism onto its image. 
This is easily seen to be equivalent to requiring that $Z$ is isomorphic to some $E \subseteq X$ such that $T|_E$ is an isomorphism onto its image and $T(E)$ is complemented in $Y$, see~\cite{Lemay}.

Again, it is clear that the class of operators which do not fix any complemented copy of $Z$ has the ideal property. 

In the diagram depicted in Figure~\ref{fig:Ideals} we relate the above notions to some classical operator ideals.

\begin{figure}[h]
    \centering

\[\begin{tikzcd}
	& {\text{compact}} & \\
	{\text{strictly singular}} & {\text{weakly compact}} & {\text{separable range}} \\
	& {\ell_\infty-\text{strictly singular}} \\
	& {\text{does not fix any complemented copy of } \ell_\infty}
	\arrow[from=1-2, to=2-1, Rightarrow]
	\arrow[from=1-2, to=2-2, Rightarrow]
	\arrow[from=1-2, to=2-3, Rightarrow]
	\arrow[from=2-1, to=3-2, Rightarrow]
	\arrow[from=2-2, to=3-2, Rightarrow]
	\arrow[from=2-3, to=3-2, Rightarrow]
	\arrow[from=3-2, to=4-2, Leftrightarrow]
\end{tikzcd}\]
    \caption{All downward implications are trivial. 
    The only upward implication follows from the injectivity of $\ell_\infty$.}
    \label{fig:Ideals}
\end{figure}

Clearly, when $T$ fixes a complemented copy of $Z$, then $T^*$ fixes a complemented copy of $Z^*$. 
The converse statement is true for $Z=\ell_1$. 
The following slightly stronger result is probably well known but we struggle to find a proper reference. 
\begin{lemma}\label{l:BessagaPelczynski}
If $T^*\colon X^* \to Y^*$ fixes a copy of $c_0$ then $T:Y\to X$ fixes a complemented copy of $\ell_1$.
\end{lemma}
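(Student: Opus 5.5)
Suppose $T^*\colon X^*\to Y^*$ fixes a copy of $c_0$. Then there is a sequence $(x_n^*)\subseteq X^*$ equivalent to the unit vector basis of $c_0$ such that $(T^*x_n^*)$ is also equivalent to the $c_0$ basis. The strategy is to produce a sequence $(y_k)\subseteq Y$ for which $(Ty_k)$ is equivalent to the $\ell_1$ basis and spans a complemented subspace of $X$, with a projection built from the $x_n^*$. The restriction of $T$ to the span of $(y_k)$ is then an isomorphism whose image is complemented. By the remark recalled earlier (see \cite{Lemay}), this means $T$ fixes a complemented copy of $\ell_1$, provided $(y_k)$ is itself equivalent to the $\ell_1$ basis. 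That last point is automatic: $\|\sum a_k y_k\|\ge \|T\|^{-1}\|\sum a_kTy_k\|\gtrsim\sum|a_k|$, and the reverse inequality holds because the $y_k$ are bounded.

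\textbf{Step 1.} Since $(T^*x_n^*)$ is a $c_0$-basis in $Y^*$, it is seminormalized, so $\|T^*x_n^*\|\ge c>0$. Choose $y_n\in B_Y$ with $|\langle x_n^*, Ty_n\rangle|=|\langle T^*x_n^*,y_n\rangle|\ge c/2$. Set $u_n=Ty_n$, a bounded sequence in $X$, and consider the matrix $a_{mn}=x_m^*(u_n)$.

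For each fixed $n$, the map $\sum b_m e_m\mapsto \sum b_m x_m^*(u_n)$ is bounded on $c_0$, because $(x_m^*)$ is a $c_0$-basis. Hence $\sum_m |x_m^*(u_n)|\le K\|u_n\|$, so each column is in $\ell_1$, uniformly in $n$. Also, $(x_m^*)$ is weak$^*$-null. This is the standard fact that a $c_0$-sequence in a dual space is weak$^*$-null only up to passing to differences. To avoid that subtlety, I use instead that $\sum_m|x_m^*(u)|<\infty$ for every $u$, which gives $x_m^*(u)\to0$ for each fixed $u$.

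\textbf{Step 2 (main obstacle): almost diagonalize by a gliding hump.} Pass to subsequences $m_k=n_k$ with $|x_{n_k}^*(u_{n_j})|$ small for $j\ne k$. The terms with $j<k$ are made small using $x_m^*(u)\to0$ for the finitely many earlier $u_{n_j}$. The terms with $j>k$ are the hard part: we need $\sum_{k<j}|x_{n_k}^*(u_{n_j})|$ small.

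For this I would first replace $(y_n)$ by a weakly Cauchy subsequence, via Rosenthal's $\ell_1$ theorem. In the alternative case, $(y_n)$ already has an $\ell_1$ subsequence, and one can still run the argument below. If $(y_n)$ is weakly Cauchy, consider differences $y_{2n+1}-y_{2n}$. These are weakly null against $T^*$ of the finitely many chosen functionals, so the upper-triangular entries can be made arbitrarily small. The diagonal entries must stay bounded below, which needs care: choose $y_{2n+1}$ good for $x^*_{k}$ and $y_{2n}$ such that $x_k^*(Ty_{2n})$ is small. I expect this bookkeeping to be the delicate point.

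\textbf{Step 3.} After relabeling, we have $|x_k^*(u_k)|\ge c/4$ and $\sum_{j\ne k}|x_k^*(u_j)|\le \varepsilon_k$ with $\sum\varepsilon_k$ small. Then for scalars $a_j$,
\[
\Big\|\sum_j a_ju_j\Big\|\ \ge\ K^{-1}\sum_k\Big|x_k^*\Big(\sum_j a_ju_j\Big)\Big|\ \ge\ \Big(\tfrac{c}{4}-\varepsilon\Big)K^{-1}\sum|a_k|.
\]
This uses the $\ell_1$-boundedness from Step 1, namely $\sum_k|x_k^*(u)|\le K\|u\|$. Hence $(u_k)$ is an $\ell_1$-sequence.

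Now define $P u=\sum_k \frac{x_k^*(u)}{x_k^*(u_k)}u_k$. This is bounded, again because $\sum_k|x_k^*(u)|\le K\|u\|$. Moreover, $P$ is a small perturbation of the identity on $\overline{\lspan}(u_k)$, so a standard Neumann-series correction yields a projection onto $\overline{\lspan}(u_k)$. Together with the $\ell_1$-equivalence of $(y_k)$ noted at the start, this concludes the proof.
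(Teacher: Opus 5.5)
Your approach works and differs from the paper's, but one step needs repair. Rosenthal's $\ell_1$ theorem is unnecessary, and it leaves your ``alternative case'' unjustified. An $\ell_1$-subsequence of $(y_n)$ gives, by itself, no control of the entries $\langle x_m^*,Ty_n\rangle$ above the diagonal. The difference trick only needs $\lim_n\langle T^*x_m^*,y_n\rangle$ to exist for each of the countably many $m$. A Cantor diagonal extraction gives this in every case, keeping the pairs $(x_n^*,y_n)$.

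The bookkeeping you flag is then routine. Pick indices $p_1<q_1<p_2<q_2<\cdots$ from the extracted subsequence, set $v_i=y_{q_i}-y_{p_i}$, and pair $v_i$ with the functional $x_{q_i}^*$.
\begin{itemize}
\item Since $x_m^*(u)\to0$ for fixed $u$, taking $q_i$ large gives $|x_{q_i}^*(Tv_i)|\ge c/2-|x_{q_i}^*(Ty_{p_i})|\ge c/4$.
\item Taking $q_i$ large also makes the entries $x_{q_i}^*(Tv_k)$, $k<i$, tiny.
\item Taking $p_i$ (hence $q_i$) beyond a threshold makes $x_{q_k}^*(Tv_i)$, $k<i$, tiny, by the Cauchy property of $(\langle T^*x_{q_k}^*,y_n\rangle)_n$.
\end{itemize}
Arrange all off-diagonal entries below $\varepsilon2^{-i-k}$, with $\varepsilon$ small relative to $c$, $K$ and $\|T\|$. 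Then your Step~3 goes through: the $\ell_1$ lower estimate, which needs the column sums $\sum_{k\neq j}|x_k^*(u_j)|$ and these are bounded by the total, and the Neumann-series correction of $P$. Incidentally, $(x_m^*)$ is genuinely weak$^*$-null, even weakly null, by the very estimate $\sum_m|x_m^*(u)|\le K\|u\|$ you give; no differences are needed there.

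Compared with the paper: the paper uses that $T^*S$ is an isomorphic embedding, so $S^*T^{**}\colon Y^{**}\to\ell_1$ is onto. It lifts $e_n^*$ to $Y^{**}$ by the open mapping theorem and approximates by Goldstine. This yields $x_n\in CB_Y$ with $(S^*Tx_n)$ weak$^*$-null but not norm-null in $\ell_1$, so exact biorthogonality in $Y^{**}$ controls the upper triangle from the start. The Bessaga--Pe\l czy\'nski selection principle, together with complementation of block bases of $\ell_1$, then does the gliding hump and produces the projection, which is pulled back through $S^*|_X$. Your projection $u\mapsto\sum_k\frac{x_k^*(u)}{x_k^*(u_k)}u_k$ is a hands-on version of the paper's $L_X\circ P\circ S^*|_X$.

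The paper's route is shorter because it outsources the combinatorics to standard basic-sequence results. Yours is elementary and self-contained: no bidual, open mapping or Goldstine. It replaces biorthogonality by subtracting the limit, and uses only that $S$ is bounded and $\inf_m\|T^*x_m^*\|>0$, not the full embedding property of $T^*S$. The price is explicit index bookkeeping.
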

We follow the usual proof of the special case $T^*=Id_{X^*}$ of this theorem, due to Bessaga and Pe\l czy\'nski~\cite{BessagaPelzcynski1958}, see also \cite[Proposition~2.e.8]{LindenstraussTzafriri}. 
\begin{proof}
Let $S\colon c_0 \to X^*$ be the isomorphic embedding such that $T^*\circ S$ is an isomorphic embedding.
Then $S^*\circ T^{**}\colon Y^{**} \to \ell_1$ is a surjective map.
By open mapping theorem, there is $C>0$ such that there are $(y_n)_n \subseteq CB_{Y^{**}}$ with $S^*\circ T^{**}y_n=e_n^*$, where $(e_n^*)$ is the unit vector basis of $\ell_1$.
Using Goldstine theorem, we find $(x_n)_n \subseteq CB_Y$ such that 
    $$\begin{cases} |\langle S^*\circ T^{**}x_n,e_k\rangle|< \frac1n, & k<n\\ |\langle S^*\circ T^{**}x_n,e_n\rangle-1|< \frac1n.\end{cases}$$
This means that $(S^*\circ T^{**}x_n)_n=(S^*\circ Tx_n)_n$ is $w^*$-null in $\ell_1$ but not norm-null.
Bessaga-Pe\l czy\'nski selection principle (see~\cite[Theorem 1.3.10]{AlbiacKalton}) gives a subsequence $(S^*\circ T^{**}x_{n_k})_k$ which will be congruent to a block basic subsequence of $\ell_1$ basis.
It follows, as is well known, that $F:=\operatorname{span}(S^*\circ T^{**}x_{n_k})_k$ is isomorphic to $\ell_1$ and complemented in $\ell_1$, say by a projection $P\colon\ell_1 \to F$.
We define $L_Y\colon F \to Y$ as the unique linear map which satisfies $L_Y(S^*\circ T^{**}x_{n_k}):=x_{n_k}$. 
Clearly this is a lifting of $S^*\circ T^{**}|_{(S^*\circ T^{**})^{-1}(F)}$.
Similarly, $L_X\colon F \to X$ is the unique linear map which satisfies $L_X(S^*\circ T^{**}x_{n_k}):= T^{**}x_{n_k}=Tx_{n_k}$; it is a lifting of the surjective map $S^*|_{(S^*)^{-1}(F)}$.
One can see from the previous considerations that $T|_{L_Y(F)}$ is an isomorphism and $T(L_Y(F))=L_X(F)$. 
Finally, the space $L_X(F)$ is complemented by the projection $L_X\circ P\circ S^*|_X$ (while the space $L_Y(F)$ is complemented by the projection $L_Y\circ P \circ S^* \circ T$). 
\end{proof}

In the sequel we will need the following known lemma about the compactness of tensor operators. For completeness, we include proofs of the statements for which we do not have a direct reference. 

\begin{lemma}\label{lemma:comptensorop} Let $X, Y, Z, W$ be Banach spaces and $T\colon X\to Z$, $S\colon Y\to W$ be non-zero
operators, and consider $T\otimes S\colon  X\pten Y\to Z\pten W$. Then 
\begin{itemize}
    \item[(a)] If $T\otimes S$ belongs to a class $\mathcal I$ with the ideal property, then $T$ and $S$ belong to $\mathcal I$.
    \item[(b)] If $T$ and $S$ are compact, then $T\otimes S$ is compact.
    \item[(c)] If $T$ is compact and $S$ is weakly compact, then $T\otimes S$ is weakly compact.
\end{itemize} 
\end{lemma}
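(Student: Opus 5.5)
For (a), I would realize $T$ and $S$ as factors of $T\otimes S$ through bounded linear maps, so that the ideal property applies. Since $S\neq 0$, I pick $y_0\in Y$ with $S y_0\neq 0$. By Hahn--Banach I then take $w_0^*\in W^*$ with $w_0^*(Sy_0)=1$.

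Next I would define $J\colon X\to X\pten Y$ by $Jx=x\otimes y_0$. I would also define $Q\colon Z\pten W\to Z$ as the linearization of the bounded bilinear map $(z,w)\mapsto w_0^*(w)z$, so that $Q(z\otimes w)=w_0^*(w)z$. Both maps are bounded, with $\norm{J}=\norm{y_0}$ and $\norm{Q}\le \norm{w_0^*}$, by the universal property of the projective tensor product. Then
\[Q\circ (T\otimes S)\circ J(x)=Q(Tx\otimes Sy_0)=w_0^*(Sy_0)Tx=Tx,\]
so $T=Q\circ(T\otimes S)\circ J\in\mathcal I$. The symmetric argument, using $T\neq0$, gives $S\in\mathcal I$.

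For (b) and (c), the key fact is that the closed unit ball of $X\pten Y$ is the closed convex hull of $B_X\otimes B_Y$. Hence $(T\otimes S)(\overline B_{X\pten Y})\subseteq \cconv\{Tx\otimes Sy: x\in B_X, y\in B_Y\}$. For (b), the set $K=\overline{T(B_X)}\otimes\overline{S(B_Y)}$ is the image of the compact set $\overline{T(B_X)}\times\overline{S(B_Y)}$ under the continuous bilinear map $(z,w)\mapsto z\otimes w$, so $K$ is compact. By Mazur's theorem its closed convex hull is compact, which proves (b).

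For (c) I would use the same reduction: it suffices to show that $A=\{z\otimes w: z\in K_1, w\in K_2\}$ is relatively weakly compact, where $K_1=\overline{T(B_X)}$ is norm compact and $K_2=\overline{S(B_Y)}$ is weakly compact. Krein--\v{S}mulian then gives that $\cconv(A)$ is weakly compact. I expect this step to be the main obstacle, and the plan is to use Eberlein--\v{S}mulian. Given a sequence $z_n\otimes w_n$, pass to a subsequence with $z_n\to z$ in norm and $w_n\to w$ weakly. The term $(z_n-z)\otimes w_n$ tends to $0$ in norm, because $\norm{(z_n-z)\otimes w_n}\le\norm{z_n-z}\,\norm{w_n}$ and $K_2$ is bounded. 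For the term $z\otimes w_n$, the map $w\mapsto z\otimes w$ is a bounded linear operator $W\to Z\pten W$, so it is weak-to-weak continuous and $z\otimes w_n\to z\otimes w$ weakly. Hence $z_n\otimes w_n\to z\otimes w$ weakly, so $A$ is relatively weakly compact and $T\otimes S$ is weakly compact.
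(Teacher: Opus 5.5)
Your proposal is correct. Parts (a) and (b) follow the paper's proof essentially verbatim. For (a) it is the same factorization $T=R\circ(T\otimes S)\circ i$, through $x\mapsto x\otimes y_0$ and $z\otimes w\mapsto w^*(w)z$. For (b) it is the same reduction to $\cconv(T(B_X)\otimes S(B_Y))$. The paper checks relative compactness of $T(B_X)\otimes S(B_Y)$ by a subsequence argument, while you use continuity of $(z,w)\mapsto z\otimes w$ on a compact product; that is the same idea.

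For (c) the routes differ. The paper does not prove (c); it simply cites Racher's Corollary~1.1. You give a self-contained argument that reuses the reduction from (b). Krein--\v{S}mulian replaces Mazur's theorem, and Eberlein--\v{S}mulian handles weak compactness where (b) used norm compactness. The splitting
\[
z_n\otimes w_n-z\otimes w=(z_n-z)\otimes w_n+z\otimes(w_n-w)
\]
is exactly what is needed. The first term is norm-null since $K_2$ is bounded, and the second is weakly null because $w\mapsto z\otimes w$ is bounded and linear. A minor point you left implicit: $K_2=\overline{S(B_Y)}$ is weakly compact because $S(B_Y)$ is convex, so its norm closure equals its weak closure.

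Your version makes the lemma independent of the external reference at the cost of a few lines, and it shows (b) and (c) as two instances of one argument. The citation is shorter and places the fact in Racher's more general study of weak compactness of tensor product operators. Both your (b) and (c) rest on $\overline{B}_{X\pten Y}=\cconv(B_X\otimes B_Y)$, which is specific to the projective norm. That is all the paper needs, since it only applies the lemma to $\widehat{\phi}\otimes I$ on projective tensor products.
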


\begin{proof}
(a) Let $y_0\in Y$ such that $Sy_0\neq 0$ and take $w^*\in W^*$ with $w^*(Sy_0)=1$. 
Consider the operators $i\colon X\to X\pten Y$ given by $i(x)=x\otimes y_0$ and $R\colon Z\pten W\to Z$ given by $R(z\otimes w)=w^*(w)z$. 
Then it is clear that $T=R\circ (T\otimes S)\circ i$, so $T\in \mathcal I$. Analogously, $S\in \mathcal I$.  

(b)    We have that 
    \begin{align*} (T\otimes S)(\overline{B}_{X\pten Y}) &= (T\otimes S)(\cconv(B_X\otimes B_Y)) \subseteq \cconv(T\otimes S)(B_X\otimes B_Y)\\
    &=\cconv(T(B_X)\otimes S(B_Y)).
    \end{align*}
Thus, it suffices to show that $T(B_X)\otimes S(B_Y)$ is relatively compact. For that, take sequences $(x_n)_n\subseteq B_X$ and $(y_n)_n\subseteq B_Y$. Since $T$ and $S$ are compact, there is a subsequence $(n_k)_k$ such that $(Tx_{n_k})_k$ and $(Sy_{n_k})_k$ converge to points $z\in Z$ and $w$ in $W$. Then
\[ \norm{Tx_{n_k}\otimes Sy_{n_k}-z\otimes w}\leq \norm{Tx_{n_k}\otimes Sy_{n_k}- z\otimes Sy_{n_k}}+\norm{z\otimes Sy_{n_k}-z\otimes w}\to 0 \]

(c) This is Corollary~1.1 in \cite{racher}.
\end{proof}

\medskip
\section{Composition Operators on \texorpdfstring{$\mathcal HL_0(B_X)$}{HL0(BX)}}\label{sect:composition operators HL0}

We begin by defining our object of study in this setting.

\begin{definition}{\label{def:compositionoperator}}
    Let $X, Y$ be complex Banach spaces and $U\subseteq X$, $V\subseteq Y$ be open subsets containing 0.
    A mapping $A \colon \HL_0(U) \to \HL_0(V)$ is called a \textit{composition operator} if there exists a map $\phi \in \mathcal HL_0(V, X)$ satisfying $\phi(V)\subseteq U$ such that $A (f) = f\circ \phi$, for every $f\in \mathcal HL_0(U)$.
    Since $\HL_0(U)$ separates points of $\delta(U)$, such $\phi$ is unique and we denote the associated composition operator as $C_\phi$.
\end{definition}

For $\phi\in \mathcal HL_0(V,X)$ with $\phi(V)\subseteq U$, Lemma \ref{lemma:extension-border-HL0} (d) gives us a linear map  $\widehat{\phi}: \mathcal G_0(V)\rightarrow \mathcal G_0(U)$ satisfying $\widehat{\phi}\circ \delta_Y=\delta_X\circ \phi$ and $\|\widehat{\phi}\|=L(\phi)$.

\begin{lemma}\label{lemma:compositionoperatorproperties}
      We always have $C_\phi= \widehat{\phi}^*$ and so $C_\phi$ is continuous with $\norm{C_\phi}=L(\phi)$.
     \end{lemma}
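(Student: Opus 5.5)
The plan is to identify $C_\phi$ with $\widehat{\phi}^*$ through the duality $\mathcal HL_0(U)=\mathcal G_0(U)^*$ of Proposition~\ref{prop:G0def}(b). Once that identification is in place, the norm formula follows from $\|\widehat\phi^*\|=\|\widehat\phi\|$ and Lemma~\ref{lemma:extension-border-HL0}(d).

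First, fix $f\in\mathcal HL_0(U)$, viewed as an element of $\mathcal G_0(U)^*$ via $\langle f,\delta_X(x)\rangle=f(x)$. Then $\widehat\phi^*(f)$ is an element of $\mathcal G_0(V)^*=\mathcal HL_0(V)$. By Proposition~\ref{prop:G0def}(a),(b), this element corresponds to the function $y\mapsto \langle \widehat\phi^*(f),\delta_Y(y)\rangle$ on $V$. Using $\widehat\phi\circ\delta_Y=\delta_X\circ\phi$, we compute
\[
\langle \widehat\phi^*(f),\delta_Y(y)\rangle=\langle f,\widehat\phi(\delta_Y(y))\rangle=\langle f,\delta_X(\phi(y))\rangle=f(\phi(y)).
\]
Hence $\widehat\phi^*(f)=f\circ\phi$. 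This shows that $f\circ\phi$ indeed belongs to $\mathcal HL_0(V)$ and that $C_\phi=\widehat\phi^*$. The only point needing care is that the identification $\mathcal HL_0(V)=\mathcal G_0(V)^*$ is exactly the "evaluate on $\delta_Y(y)$" correspondence. This holds because $\operatorname{span}\delta_Y(V)$ is dense in $\mathcal G_0(V)$, so a functional is determined by its values on the $\delta_Y(y)$.

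Second, $C_\phi$ is continuous as the adjoint of a bounded operator, and $\|C_\phi\|=\|\widehat\phi^*\|=\|\widehat\phi\|=L(\phi)$ by Lemma~\ref{lemma:extension-border-HL0}(d).

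Alternatively, one can check the norm directly. The bound $L(f\circ\phi)\le L(f)L(\phi)$ gives $\le$. For $\ge$, test on functionals: given $y_1\neq y_2$ with $\phi(y_1)\neq\phi(y_2)$, take $x^*\in S_{X^*}$ norming $\phi(y_1)-\phi(y_2)$. Then $x^*\in \mathcal HL_0(U)$ has norm $1$, and $L(x^*\circ\phi)\ge \|\phi(y_1)-\phi(y_2)\|/\|y_1-y_2\|$.

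I expect no real obstacle here. The mildest subtlety is the bookkeeping that $\widehat\phi$ is defined on $\mathcal G_0(V)$ with values in $\mathcal G_0(U)$ and intertwines the $\delta$'s, which is exactly what the cited lemma provides.
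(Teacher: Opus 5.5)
Your proposal is correct and follows the paper's proof: you evaluate $\widehat{\phi}^*(f)$ at $\delta_Y(y)$, use $\widehat{\phi}\circ\delta_Y=\delta_X\circ\phi$ to get $f(\phi(y))$, and read off $\norm{C_\phi}=\|\widehat{\phi}\|=L(\phi)$ from Lemma~\ref{lemma:extension-border-HL0}(d). Your additional direct norm check, testing on norming functionals $x^*\in S_{X^*}$, is also valid but not needed.
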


\begin{proof} Note that, for every $f\in \mathcal HL_0(U)$ and $y\in V$, we have
\begin{align*} \widehat{\phi}^*(f)(y)&=\langle \widehat{\phi}^*(f), \delta_Y(y)\rangle= \langle f, \widehat{\phi}(\delta_Y(y))\rangle =  \langle f, \delta_X(\phi(y))\rangle\\
&= f(\phi(y)) = C_\phi(f)(y).\end{align*}
 Thus $C_\phi= \widehat{\phi}^*$ and so $C_\phi$ is a bounded linear operator with $\norm{C_\phi}=\|\widehat{\phi}\|=L(\phi)$.  
\end{proof}

Note that given a holomorphic function $\phi\colon V\to X$ with $\phi(V)\subseteq U$, the composition operator $C_\phi\colon \mathcal H^\infty(U)\to\mathcal H^\infty (V)$ satisfies $\norm{C_\phi}=1$. 
This is always the case for multiplicative operators between Banach algebras with unit. However, in the Lipschitz-holomorphic case it is possible to construct composition operators with arbitrarily large or arbitrarily small norm,   as the following example shows. 

\begin{example}\label{e:CompositionArbitraryNorm} A composition operator with large norm / A composition operator with small norm.
   \begin{enumerate}
       \item Given $n\in \N$, pick $x\in S_X$ and $y^*\in S_{Y^*}$. Define $\phi \colon B_Y \rightarrow X$ by $\phi(y)=y^*(y)^{n} x$ for all $y\in B_Y$. It is clear that $\phi \in \mathcal{H}L_0(B_Y,X)$ since $\phi$ is a homogeneous polynomial. Moreover, 
    \begin{equation*}
        \norm{\phi(y)} =\norm{y^*(y)^{n}x}\leq  \norm{y^*}^{n}\norm{y}^{n} \norm{x}<1, \quad \forall y\in B_Y,
    \end{equation*}
    showing that $\phi(B_Y)\subseteq B_X$. It is clear that $\norm{\phi}_\infty=1$ but we see that $L(\phi)=n$. Indeed, the differential of this mapping satisfies $$d\phi(y_1)(y_2)=n y^*(y_1)^{n-1}y^*(y_2) x, \quad \forall y_1\in B_Y,\ \forall y_2\in S_Y.$$
    Hence, using \eqref{eq:norm Lipschitz-diferential} we conclude that $\|C_\phi\|=L(\phi)=\norm{d\phi}_\infty=n$.

    \item Given $0<\varepsilon<1$, pick a linear map $\phi\in\mathcal L(Y,X)$ with $\|\phi\|=\varepsilon$. Then, the composition operator $C_\phi \colon \mathcal HL_0(B_X)\rightarrow \mathcal{H}L_0(B_Y)$ satisfies $\|C_\phi\|=L(\phi)=\|\phi\|=\varepsilon$.
   \end{enumerate}
\end{example}

\subsection{Multiplicative and composition operators. }

As already mentioned, the space $\mathcal HL_0(B_X)$ is a Gelfand algebra: if $f,g\in\mathcal HL_0(B_X)$ then $f\cdot g\in\mathcal HL_0(B_X)$ with $L(f\cdot g)\le 2L(f)L(g)$. Given that composition operators respect the product (i.e., $C_\phi(f\cdot g)=C_\phi(f)\cdot C_\phi(g)$),  it is interesting to know when a multiplicative continuous linear mapping from $\mathcal HL_0(B_X)$ to $\mathcal HL_0(B_Y)$ is actually a composition operator. Notice that in the case of Lipschitz-free spaces over bounded metric spaces, Theorem~7.23 in~\cite{Weaver2ed} claims that for every normal algebra homomorphism $T\colon \Lip_0(M)\to \Lip_0(N)$ there is some $g\in \Lip_0(N,M)$ such that $T=C_g$. Analogously, Theorem~4.1 in \cite{Car} claims that every weak*-weak*-continuous algebra homomorphism $T\colon \mathcal H^\infty(U)\to \mathcal H^\infty(V)$ is a composition operator, for absolutely convex bounded open sets $U,V$ under the hypothesis of bounded approximation property.

With this goal in mind we start by presenting some  results about linear multiplicative (and therefore continuous) forms. Given a Gelfand algebra $\mathcal A$, we say that a non-zero map $\varphi : \mathcal A\rightarrow \C$ is a \textit{character} if it is both linear and multiplicative.
\begin{lemma}\label{l:MultiplicativeFormsOnGelfandAlgebras}
If $\mathcal A$ is a Gelfand algebra with $\norm{fg}\leq C\norm{f}\norm{g}$ for each $f,g\in \mathcal A$, then every character $\varphi \colon \mathcal A\to \mathbb C$ is continuous and satisfies $\norm{\varphi}\leq C$.
\end{lemma}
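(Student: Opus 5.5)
The plan is to adapt the classical argument that characters on a unital Banach algebra have norm at most $1$, rescaling to take the constant $C$ into account and avoiding the unit. It is convenient to pass to the equivalent norm $\eqnorm{f}:=C\norm{f}$. This norm is submultiplicative, since
\[\eqnorm{fg}=C\norm{fg}\le C^2\norm{f}\norm{g}=\eqnorm{f}\,\eqnorm{g}.\]
The claim $\norm{\varphi}\le C$ is then the same as $|\varphi(f)|\le \eqnorm{f}$ for all $f\in\mathcal A$. So it is enough to prove: if $f\in\mathcal A$ satisfies $\eqnorm{f}<1$, then $|\varphi(f)|<1$ (and, more precisely, $\varphi(f)\neq 1$ suffices).

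We argue by contradiction. Suppose $\eqnorm{f}<1$ but $|\varphi(f)|\ge 1$. Put $\lambda=\varphi(f)$ and $g=f/\lambda$. Then $\eqnorm{g}<1$ and $\varphi(g)=1$. Since $\mathcal A$ need not have a unit, we use quasi-inverses in place of $(1-g)^{-1}$. Set
\[h:=-\sum_{n\ge1} g^n.\]
This series converges absolutely in the complete algebra $\mathcal A$, because $\eqnorm{g^n}\le \eqnorm{g}^n$. A telescoping computation gives
\[g+h-gh = g-\sum_{n\ge 1}g^n+\sum_{n\ge 2}g^n=0.\]
Applying $\varphi$, which is linear and multiplicative, we get $\varphi(g)+\varphi(h)-\varphi(g)\varphi(h)=0$. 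Substituting $\varphi(g)=1$ gives $1+\varphi(h)-\varphi(h)=1=0$, a contradiction. Notice that this step uses only algebraic properties of $\varphi$ and no continuity; continuity is needed only to make sense of $h$ as an element of $\mathcal A$, and that comes from completeness of $\mathcal A$.

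From $|\varphi(f)|<1$ whenever $\eqnorm{f}<1$, homogeneity gives $|\varphi(f)|\le\eqnorm{f}=C\norm{f}$ for every $f$. Hence $\varphi$ is bounded with $\norm{\varphi}\le C$, and in particular it is continuous.

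The one point that needs care is completeness. The series defining $h$ has to converge, so I will read ``Gelfand algebra'' as a Banach algebra in the sense of \cite{Weaver2ed}, possibly non-unital. This is satisfied by $\mathcal HL_0(U)$, since it is a Banach space. Beyond that, the only subtlety is the missing unit, and the quasi-inverse identity $g+h-gh=0$ is the device that handles it.
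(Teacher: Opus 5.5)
Your proof is correct and is essentially the paper's argument. The paper passes to the unitization $\mathcal A_1=\mathcal A\oplus\mathbb C$ with norm $\norm{(f,\lambda)}_1=C\norm{f}+|\lambda|$ and cites the classical fact that characters on unital Banach algebras have norm $1$; your rescaled norm $\eqnorm{\cdot}$ is just the restriction of $\norm{\cdot}_1$ to $\mathcal A$, and your identity $g+h-gh=0$ says exactly that $(-h,1)$ is the inverse of $(-g,1)$ in $\mathcal A_1$, so you have carried out the cited Neumann-series argument inside $\mathcal A$ without building the unitization.
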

\begin{proof}
The fact that a multiplicative linear map is necessarily bounded is standard so we only sketch the argument. 
Indeed, let $\mathcal A_1= \mathcal A\oplus \mathbb C$ endowed with the product $(f,\lambda)\cdot(g,\mu)= (fg+\lambda g + \mu f, \lambda\mu)$ and the norm $\norm{(f,\lambda)}_1=C\norm{f}+|\lambda|$. It is easy to check that $(\mathcal A_1,\norm{\cdot}_1)$ is a unital Banach algebra. Moreover, we can consider $\varphi_1\colon \mathcal A_1\to \mathbb C$ given by $\varphi_1(f,\lambda)=\varphi(f)+\lambda$. Then it follows that $\varphi_1$ is a character on $\mathcal A_1$ with $\varphi_1(1)=1$. A well-known result yields that $\varphi_1$ is continuous with $\norm{\varphi_1}=1$ (see e.g. \cite[Proposition 2.22]{Douglas}). Thus, for $f\in \mathcal A$, \[ |\varphi(f)|=|\varphi_1(f,0)|\leq \norm{(f,0)}_1 = C\norm{f}.\]     
\end{proof}

For a Gelfand algebra $\mathcal A$, we denote by $\mathcal M_0(\mathcal A)$ the set of characters $\varphi\colon \mathcal A \rightarrow\C$ together with the zero map. Hence, for every $f\in \mathcal A$ and $\varphi\in \mathcal M_0(\mathcal A)$ define $\widehat{f}(\varphi)=\varphi(f)$. The map $\Gamma_{\mathcal A} \colon f \rightarrow \widehat{f}$ is called the \textit{Gelfand transform}. It is a non-expansive algebra homomorphism from $\mathcal A$ into $\Lip_0(\mathcal M_0(\mathcal A))$, which is in general not injective. However, for the cases where $\mathcal A$ is $\mathcal HL_0(U)$ or $\mathcal{H}L(U)$ (see definition in Section \ref{sect:HL}), the Gelfand transform is injective since we clearly have $\{ \delta(x) : x\in U\}\subseteq \mathcal M_0(\mathcal A)$. Another example of algebras where the Gelfand transform is always injective are the commutative $C^*$-algebras. In the above settings, we  obtain the following useful result.

\begin{proposition}\label{prop:automaticcontinuity}
    Let $\mathcal A,\mathcal B$ be Gelfand algebras and $T\colon \mathcal A\rightarrow \mathcal B$ be a linear multiplicative map. If the Gelfand transform $\Gamma_{\mathcal B}$ is injective, then $T$ is bounded.
\end{proposition}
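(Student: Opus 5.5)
The plan is to apply the closed graph theorem. Both $\mathcal A$ and $\mathcal B$ are Banach spaces, since a Gelfand algebra is in particular a Banach algebra. So it is enough to show that $T$ has closed graph.

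Take a sequence $f_n\to f$ in $\mathcal A$ with $Tf_n\to g$ in $\mathcal B$. We want to show $Tf=g$. Because $\Gamma_{\mathcal B}$ is injective, it suffices to check that $\psi(Tf)=\psi(g)$ for every $\psi\in\mathcal M_0(\mathcal B)$.

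Fix such a $\psi$. If $\psi=0$ there is nothing to prove. Otherwise $\psi$ is a character of $\mathcal B$, and by Lemma~\ref{l:MultiplicativeFormsOnGelfandAlgebras} it is continuous. Now consider $\varphi:=\psi\circ T\colon\mathcal A\to\mathbb C$. It is linear and multiplicative, since both $T$ and $\psi$ are. So $\varphi$ is either zero or a character of $\mathcal A$. In either case Lemma~\ref{l:MultiplicativeFormsOnGelfandAlgebras}, applied to $\mathcal A$, shows that $\varphi$ is continuous.

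We can now compute the same limit in two ways:
\[
\psi(g)=\lim_n \psi(Tf_n)=\lim_n \varphi(f_n)=\varphi(f)=\psi(Tf).
\]
The first equality uses the continuity of $\psi$ on $\mathcal B$, and the third uses the continuity of $\varphi$ on $\mathcal A$. This holds for all $\psi\in\mathcal M_0(\mathcal B)$, so $\widehat{Tf}=\widehat g$. Injectivity of $\Gamma_{\mathcal B}$ then gives $Tf=g$, so the graph of $T$ is closed and $T$ is bounded.

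The one point that needs care is the automatic continuity of $\psi\circ T$. It must come from its multiplicativity via Lemma~\ref{l:MultiplicativeFormsOnGelfandAlgebras}, and not from any continuity of $T$, which is what we are trying to prove. Since the lemma only asks for a linear multiplicative nonzero map on a Gelfand algebra, this works without further hypotheses. I expect no other obstacle.
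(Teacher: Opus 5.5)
Your proposal is correct and follows essentially the same route as the paper: a closed graph argument in which continuity of $\psi$ and of $\psi\circ T$ comes from Lemma~\ref{l:MultiplicativeFormsOnGelfandAlgebras}, and injectivity of $\Gamma_{\mathcal B}$ identifies the limit. The only cosmetic difference is that the paper takes $f_n\to 0$ and shows $g=0$, while you take $f_n\to f$ and show $Tf=g$; by linearity these are the same.
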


\begin{proof}
    We show that $T$ is bounded using the closed graph theorem.  Assume that $f_n\to 0$ in $\mathcal A$ and $T(f_n)\to g$ in $\mathcal B$. Fix $\varphi\in \mathcal M_0(\mathcal B)$. Hence, it is clear that $\varphi\circ T\in \mathcal M_0(\mathcal A)$ and by Lemma \ref{l:MultiplicativeFormsOnGelfandAlgebras}, both $\varphi$ and $\varphi\circ T$ are continuous. Then,
    \begin{equation*}
        |\varphi(g)|\leq |\varphi(g-T(f_n))|+|\varphi(T(f_n))| \leq \norm{\varphi}\norm{g-T(f_n)}+\norm{\varphi\circ T}\norm{f_n}\to 0.
    \end{equation*}
    Since $\varphi$ was arbitrary and $\Gamma_{\mathcal B}$ is injective we conclude that $g=0$.
     This proves that $T$ is bounded.
\end{proof}

Lemma~\ref{l:MultiplicativeFormsOnGelfandAlgebras} already implies that a multiplicative linear form $\varphi$ on $\HL_0(U)$ is continuous and satisfies $\norm{\varphi}\leq 2\rad(U)$. 
But we can obtain a better estimate.
For that, we define the \emph{spectral radius} of $f\in \mathcal HL_0(U)$ as $r(f)\coloneqq\lim_{n\to\infty} L(f^n)^{1/n}$. The existence of this limit can be shown by adapting the usual arguments on Banach algebras to the Gelfand algebra case, for instance by considering the submultiplicative sequence $b_n =2\rad(U)L(f^n)$. Then $a_n=\log b_n$ defines a subadditive sequence. By the classical Fekete's lemma \cite{Fekete} the limit $c=\lim_{n\to\infty}\frac{a_n}{n}$ exists, and so $r(f)=\lim_{n\to\infty} b_n^{1/n} = e^c$. 

\begin{proposition}\label{prop:multiplicativebounded} Let $U$ be a bounded open subset of a complex Banach space with $0\in U$. 
\begin{itemize}
\item[(a)] For each $f\in \mathcal HL_0(U)$, we have $r(f)\leq \|f\|_\infty\le \rad(U)L(f)$. 
\item[(b)] Let $\varphi$ be a  multiplicative linear form on $\HL_0(U)$. 
Then $\varphi\in \mathcal HL_0(U)^*$ and 
\[ |\varphi(f)|\leq r(f) \qquad \forall f\in \mathcal HL_0(U).\]
Therefore $\norm{\varphi}\leq \rad(U)$. 
\item[(c)] Let $Y$ be a complex Banach space and $V\subseteq Y$ be an open set containing $0$. 
Let $A:\HL_0(U)\to \HL_0(V)$ be multiplicative and linear. 
Then $A$ is bounded.
\end{itemize}    
\end{proposition}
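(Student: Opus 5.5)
For (a), I would first show $\|f\|_\infty\le \rad(U)L(f)$: since $f(0)=0$, for every $x\in U$ we have $|f(x)|=|f(x)-f(0)|\le L(f)\norm{x}\le \rad(U)L(f)$. For $r(f)\le\|f\|_\infty$, I would estimate $L(f^n)$ directly. For $x,y\in U$,
\[
f(x)^n-f(y)^n=(f(x)-f(y))\sum_{k=0}^{n-1}f(x)^kf(y)^{n-1-k},
\]
so $|f(x)^n-f(y)^n|\le n\|f\|_\infty^{n-1}L(f)\norm{x-y}$. This gives $L(f^n)\le n\|f\|_\infty^{n-1}L(f)$. Taking $n$-th roots, $(nL(f))^{1/n}\to1$ (when $L(f)>0$; if $f=0$ everything is trivial). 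Hence $r(f)=\lim L(f^n)^{1/n}\le \|f\|_\infty$, where the limit exists by the Fekete argument recalled before the statement.

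For (b), I would proceed as follows. Lemma~\ref{l:MultiplicativeFormsOnGelfandAlgebras}, with $C=2\rad(U)$, gives continuity, so $\varphi\in\mathcal HL_0(U)^*$ and $|\varphi(g)|\le 2\rad(U)L(g)$ for all $g$. Applying this to $g=f^n$ and using multiplicativity, $|\varphi(f)|^n=|\varphi(f^n)|\le 2\rad(U)L(f^n)$. Taking $n$-th roots and letting $n\to\infty$ yields $|\varphi(f)|\le r(f)$. Combining with (a), $|\varphi(f)|\le \|f\|_\infty\le\rad(U)L(f)$, so $\norm{\varphi}\le\rad(U)$. (If $\varphi$ is the zero map this is trivial.)

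For (c), I would apply Proposition~\ref{prop:automaticcontinuity} with $\mathcal A=\HL_0(U)$ and $\mathcal B=\HL_0(V)$. Both are Gelfand algebras; for $V$ possibly unbounded one should note that the paper's standing assumption is bounded sets, and otherwise one argues as below with evaluations only. The required injectivity of $\Gamma_{\mathcal B}$ holds because the evaluations $\delta(y)$, $y\in V$, are characters (or zero, at $y=0$) and they separate points of $\HL_0(V)$. If boundedness of $V$ is an issue, I would run the closed graph argument of Proposition~\ref{prop:automaticcontinuity} directly: for $f_n\to0$ and $Af_n\to g$, each $\delta(y)\circ A$ is a multiplicative linear form on $\HL_0(U)$, hence bounded by (b), so $g(y)=\lim (Af_n)(y)=0$. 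Here $\delta(y)$ is continuous on $\HL_0(V)$ since $|h(y)|\le L(h)\norm{y}$. The main subtlety is just this bookkeeping; the key estimate is the telescoping bound in (a).
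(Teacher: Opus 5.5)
Your proposal is correct and follows the paper's proof essentially step by step. The paper obtains the same bound $L(f^n)\le n\|f\|_\infty^{n-1}L(f)$ by writing $f^n=g_n\circ f$ with $g_n(z)=z^n$, which is $n\|f\|_\infty^{n-1}$-Lipschitz on $\|f\|_\infty\mathbb D$; your telescoping identity gives this bound equally well. Part (b) is the same $n$-th root argument built on Lemma~\ref{l:MultiplicativeFormsOnGelfandAlgebras}, and part (c) is deduced from Proposition~\ref{prop:automaticcontinuity} using that point evaluations on $V$ separate points. Your observation that $\HL_0(V)$ need not be an algebra when $V$ is unbounded is a worthwhile extra bit of care that the paper glosses over: running the closed-graph argument directly with the forms $\delta(y)\circ A$, which are bounded by (b), covers that case.
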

Notice that Example~\ref{e:CompositionArbitraryNorm} provides multiplicative and linear maps with arbitrarily large norm. Therefore, in (c) above one cannot expect an estimate for $\norm{A}$ analogous to the one in (b), depending only on the geometry of $U$.
\begin{proof}
(a) Given $n\in \mathbb N$, take $g_n(z)=z^n$ for $z\in\mathbb C$. Note that $L(g_n|_{R\mathbb D
}) = n R^{n-1}$ for each $R>0$. Thus, for $f\in \mathcal HL_0(U)$, 
\[ L(f^n)=L(g_n\circ f)\leq L(g_n|_{\norm{f}_\infty \mathbb D}) L(f) = n \norm{f}_\infty^{n-1}L(f)\]
and so, noting that $\|f\|_\infty\le \rad(U) L(f)$ we have 
\[ r(f)\leq \lim_{n\to\infty} n^{\frac{1}{n}} \norm{f}_\infty^{\frac{n-1}{n}} L(f)^{\frac{1}{n}} = \norm{f}_\infty\le\rad(U) L(f).\]
(b) Using Lemma~\ref{l:MultiplicativeFormsOnGelfandAlgebras}, we see that $\varphi$ is bounded. 
Thus it  satisfies
 \[ |\varphi(f)|^n =|\varphi(f^n)|\leq \norm{\varphi}L(f^n)\]
and then $|\varphi(f)|\leq \norm{\varphi}^{1/n} L(f^n)^{1/n}$. Taking limit with $n\to\infty$ we conclude that $|\varphi(f)|\leq r(f)$, for every $f\in \mathcal HL_0(U)$. 

(c) It follows from Proposition \ref{prop:automaticcontinuity} and the fact that $\{\delta(x):x\in U\}\subseteq \mathcal M_0(\mathcal HL_0(U))$. 
\end{proof}

\begin{remark} \label{rmk:acotada-norma-infinito}
    As a consequence of the previous proposition, if $\varphi\in \mathcal HL_0(U)^*$ is  multiplicative, then $|\varphi(f)|\le\|f\|_\infty$, for all $f\in \mathcal HL_0(U)$. 
    On the other hand, a non-multiplicative linear form $\varphi\in \mathcal HL_0(U)^*$ could be unbounded in the set $\{f\in \mathcal HL_0(U): \|f\|_\infty\le 1\}$. Indeed, consider for instance, a sequence $(z_n)_n\subseteq\disk$ such that $z_n\to 1$. Define, for each $n$, the functional $\varphi_n\in \mathcal HL_0(\disk)^*$ given by $\varphi_n(f)=f'(z_n)$. Since $(\varphi_n)_n$ is contained in the $w^*$-compact set $\overline B_{\mathcal HL_0(\disk)^*}$ it has a  subnet $(\varphi_{n_\alpha})_\alpha $ which is $w^*$-convergent to an element $\varphi\in \overline B_{\mathcal HL_0(\disk)^*}$. To see that $\varphi$ is not bounded in the set $\{f\in \mathcal HL_0(\disk): \|f\|_\infty\le 1\}$, take the sequence of functions $(f_k)_k$ in this set given by $f_k(z)=z^k$ and note that $\varphi(f_k)=\lim_\alpha k z_{n_\alpha}^{k-1}=k$.
\end{remark}

\begin{example} \label{ex:delta_x} Note that any $x\in \overline{U}$ yields a multiplicative functional $\varphi=\delta(x)$  with $\norm{\varphi}= \norm{x}$. In particular, this shows that there are multiplicative forms with any norm between $0$ and $\rad(U)$. \end{example}

For the case $U=B_X$, Proposition \ref{prop:multiplicativebounded} yields the following result. 

\begin{corollary}\label{cor:multiplicativebounded}
    Let $X$ be a complex Banach space and let $\varphi\in \mathcal{H}L_0(B_X)^*$. If $\varphi$ is also multiplicative, then $\norm{\varphi}\leq 1$.
\end{corollary}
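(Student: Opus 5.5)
This is a direct specialization of Proposition~\ref{prop:multiplicativebounded}(b) to the case $U=B_X$. The plan is to check that $B_X$ meets the hypotheses of that proposition, compute $\rad(B_X)$, and read off the bound.

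First, $B_X$ is a bounded open subset of the complex Banach space $X$ and contains $0$, so Proposition~\ref{prop:multiplicativebounded} applies. Second, $\rad(B_X)=\sup_{x\in B_X}\norm{x}=1$. Third, a functional $\varphi\in\mathcal HL_0(B_X)^*$ that is multiplicative is in particular a multiplicative linear form on $\mathcal HL_0(B_X)$. By part (b) it satisfies $|\varphi(f)|\le r(f)$ for every $f$, and hence $\norm{\varphi}\le\rad(B_X)=1$.

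For completeness, the last inequality in part (b) comes from combining it with part (a):
\[|\varphi(f)|\le r(f)\le \norm{f}_\infty\le \rad(B_X)\,L(f)=L(f),\]
where $\norm{f}_\infty\le L(f)$ holds because $f(0)=0$ and $\norm{x}<1$ on $B_X$. Taking the supremum over $f$ with $L(f)\le 1$ gives $\norm{\varphi}\le 1$.

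There is no real obstacle here, since the statement is immediate from the earlier proposition once $\rad(B_X)=1$ is noted. The bound is sharp: by Example~\ref{ex:delta_x}, $\delta(x)$ with $x\in S_X$ is multiplicative and has norm $1$.
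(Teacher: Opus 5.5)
Your proof is correct and is the paper's own argument: the corollary is obtained there by applying Proposition~\ref{prop:multiplicativebounded}(b) with $U=B_X$ and $\rad(B_X)=1$. Your explicit chain $|\varphi(f)|\le r(f)\le\norm{f}_\infty\le L(f)$ and the sharpness remark via $\delta(x)$ with $x\in S_X$ are both accurate.
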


In the following results we will restrict our attention to the case where $U=B_X$, although the same statements remain valid for open bounded sets $U$ that are convex and balanced. 
Indeed, in this situation $U=B_{(X,\eqnorm{\cdot})}$,for some equivalent norm $\eqnorm{\cdot}$ on $X$. Consequently, $f\in \mathcal HL_0(U)$ if and only if $f\in \mathcal HL_0(B_{X,\eqnorm{\cdot}})$, and $\mathcal G_0(U)$ is a renorming of $\mathcal G_0(B_{(X,\eqnorm{\cdot})})$.  

In general, there are many multiplicative elements of $\mathcal HL_0(B_X)^*\setminus \delta(\overline B_X)$. This can be deduced from the connection with the spectrum of a Banach algebra established in Corollary \ref{cor:biyeccion-espectro} (see also the examples below the corollary).
We shall prove, however, that whenever$X$ has the bounded approximation property, the only multiplicative elements in $\mathcal G_0(B_X)$ are the evaluations at points of $\overline B_X$. The next proposition establishes this fact; it is an adaptation of \cite[Proposition~3.3]{Car} to our context and should also be compared with \cite[Lemma~7.22]{Weaver2ed}.

\begin{proposition}\label{prop:multiplicativeisdelta}
   Let $X$ be a complex Banach space with the BAP. Let $\varphi\in \mathcal G_0(B_X)$ be multiplicative. Then, there is some $x_0\in \overline{B}_X$ such that $\varphi=\delta(x_0)$.
\end{proposition}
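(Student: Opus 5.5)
The plan is to recover the candidate point from the restriction of $\varphi$ to $X^*\subseteq\mathcal HL_0(B_X)$, then use multiplicativity and the BAP to show that $\varphi$ and $\delta(x_0)$ agree on a dense enough class of functions.

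\emph{Step 1: the candidate point.} Since $\varphi\in\mathcal G_0(B_X)$ and $\mathcal G_0(B_X)^*=\mathcal HL_0(B_X)$, the map $x^*\mapsto\varphi(x^*)$ is a bounded linear functional on $X^*$. Moreover, by Proposition~\ref{prop:G0def}(c),(d) it is $w^*$-continuous on bounded subsets of $X^*$, because on $\overline B_{X^*}\subseteq\overline B_{\mathcal HL_0(B_X)}$ the $w^*$ topology of $\mathcal HL_0(B_X)$ is pointwise convergence on $B_X$. By Banach--Dieudonn\'e there is therefore a unique $x_0\in X$ with $\varphi(x^*)=x^*(x_0)$ for all $x^*\in X^*$. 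Corollary~\ref{cor:multiplicativebounded} gives $\norm{\varphi}\le1$, hence $|x^*(x_0)|\le\norm{x^*}$ and $x_0\in\overline B_X$.

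\emph{Step 2: polynomials.} Multiplicativity gives $\varphi\bigl((x^*)^m\bigr)=x^*(x_0)^m$. By linearity, $\varphi(P)=P(x_0)=\langle\widetilde\delta(x_0),P\rangle$ for every finite type polynomial $P$ vanishing at $0$, where the evaluation at a boundary point is understood through the extension of Lemma~\ref{lemma:extension-border-HL0}. It then suffices to show that $\varphi$ and $\delta(x_0)$, both elements of $\mathcal G_0(B_X)$, coincide on a $w^*$-dense subset of $\mathcal HL_0(B_X)$. More precisely, we want every $f$ in some ball $\overline B_{\mathcal HL_0(B_X)}$ to be a $\tau_p$-limit of a bounded net of finite type polynomials. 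Proposition~\ref{prop:G0def}(c),(d) then gives $\varphi(f)=\lim\varphi(P_\alpha)=\lim P_\alpha(x_0)=f(x_0)$, where the last equality uses that $\delta(x_0)\in\mathcal G_0(B_X)$ is $\tau_p$-continuous on bounded sets.

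\emph{Step 3: the approximation (main obstacle).} Given $f\in\mathcal HL_0(B_X)$ with $L(f)\le1$, let $T_\alpha$ be finite rank operators with $\norm{T_\alpha}\le\lambda$ and $T_\alpha\to I$ pointwise. Fix also $0<r<1$. Consider
\[
g_{\alpha,r}(x)=f\bigl(rT_\alpha x/\lambda'\bigr)\quad\text{or}\quad f_r(T_\alpha x),\qquad f_r(x)=f(rx),
\]
where the scaling is chosen so that $rT_\alpha(B_X)\subseteq B_X$. A cleaner route is to let $T_\alpha$ factor through a finite-dimensional subspace $E_\alpha$, use the functions $f\circ(r\lambda^{-1}\cdots)$, and then write $f_r\circ T_\alpha$ as a $\tau_0$-limit of the Taylor polynomials of $f_r|_{T_\alpha(\ldots)}$. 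These Taylor polynomials are polynomials on a finite-dimensional space composed with $T_\alpha$, hence finite type on $X$. Their Lipschitz norms are uniformly bounded by Cauchy estimates, since $f_r$ is holomorphic on the larger ball $r^{-1}B_X$. The delicate point is that $\norm{T_\alpha}\le\lambda$ may exceed $1$, so $f\circ T_\alpha$ need not be defined on all of $B_X$. I would handle this by working only at the single point $x_0$ and the relevant points, or by using $f(rT_\alpha x)$ restricted appropriately. Alternatively, one can first replace $\varphi$ by its action on dilations $f_r$, for which $f_r(T_\alpha x)$ is defined on $B_X$ when $r\lambda<1$, and then let $r\to1$. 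In more detail: $f_{r}\to f$ in $\tau_p$ as $r\to1$ with $L(f_r)\le1$, so it suffices to treat $f_r$ with $r<1/\lambda$ replaced by iterated dilation.

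Concretely, for $s<1$, set $h=f_s$, which is holomorphic on $s^{-1}B_X$. Approximate $h$ by $h\circ(\mu T_\alpha)$ for suitable $\mu$, obtaining uniformly bounded Lipschitz constants and pointwise convergence at points of $sB_X$. Combine this with the $r\to1$ limit via a diagonal net. The remaining care is that each limit passage stays inside a fixed ball of $\mathcal HL_0(B_X)$, so that part (d) of Proposition~\ref{prop:G0def} applies.
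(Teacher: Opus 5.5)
Your Steps 1 and 2 are sound. Using Banach--Dieudonn\'e in Step 1 is a fine substitute for the paper's identification of $x^*\mapsto\varphi(x^*)$ with $T_{id}(\varphi)$. Reducing the problem to $w^*$-density of finite-type polynomials is exactly the paper's strategy in Lemma~\ref{l:PolynomialsSeparatePoints}.

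The gap is in Step 3, at precisely the point you flag. When the BAP constant $\lambda$ exceeds $1$, your construction only gives bounded approximants of $f_r$ when $r\lambda<1$: you need $rT_\alpha(B_X)\subseteq B_X$ and room for the Cauchy estimates. So what you actually prove is $\varphi(f_r)=\widetilde f(rx_0)$ for $0<r<1/\lambda$. None of your suggested ways of reaching $r\to1$ works:
\begin{itemize}
\item The $w^*$-argument of Proposition~\ref{prop:G0def} needs approximants lying in a fixed ball of $\mathcal HL_0(B_X)$ and converging on all of $B_X$. So you cannot ``work only at $x_0$''; indeed $\varphi$ is the unknown.
\item With $\mu$ fixed, $f_r\circ(\mu T_\alpha)$ tends to $f_{\mu r}$, not to $f_r$, and $\norm{T_\alpha}$ need not tend to $1$.
\item A diagonal net in $(r,\alpha)$ requires approximants of $f_r$ for $r$ near $1$, which is exactly what is missing.
\item Iterating dilations only decreases $r$.
\end{itemize}

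The missing idea is a way past the threshold $1/\lambda$. The paper uses homogeneity. An $m$-homogeneous polynomial $P$ is defined on all of $X$, so $P\circ T_\alpha$ makes sense on $B_X$ whatever $\norm{T_\alpha}$ is, and $L(P\circ T_\alpha|_{B_X})\le\lambda^m L(P|_{B_X})$. Combined with $w^*$-density of polynomials in $\mathcal HL_0(B_X)$, this shows that finite-type polynomials separate points of $\mathcal G_0(B_X)$.

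Your own route can instead be closed by analytic continuation in the dilation parameter. For $f\in\mathcal HL_0(B_X)$ and $\zeta\in\mathbb D$, put $f_\zeta(x)=f(\zeta x)$. Since $L(f_\zeta)\le L(f)$ and $\varphi$ is a norm limit of elements of $\lspan\delta(B_X)$, the map $\zeta\mapsto\varphi(f_\zeta)$ is a uniform limit on $\mathbb D$ of maps $\zeta\mapsto\sum_i c_i f(\zeta x_i)$, hence holomorphic on $\mathbb D$. The map $\zeta\mapsto\widetilde f(\zeta x_0)=f(\zeta x_0)$ is holomorphic there as well. The two agree on $(0,1/\lambda)$, hence on all of $\mathbb D$. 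Now let $\zeta=r\to1^-$: then $f_r\to f$ pointwise with $L(f_r)\le L(f)$, so $\varphi(f_r)\to\varphi(f)$, and this gives $\varphi(f)=\widetilde f(x_0)$. Without one of these extra arguments the proof is incomplete.
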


In the proof of Proposition~\ref{prop:multiplicativeisdelta} we will use the following lemma of independent interest.
\begin{lemma}\label{l:PolynomialsSeparatePoints}
    Let $X$ be a complex Banach space. Then,
    \begin{itemize}
        \item[(a)] The homogeneous polynomials on $X$ separate points of $\mathcal G_0(B_X)$.
        \item[(b)] The homogeneous polynomials of finite type on $X$ separate points of $\mathcal G_0(B_X)$ provided $X$ has the BAP.
        \item[(c)] The dual space $X^*$ separates the multiplicative elements of $\mathcal G_0(B_X)$ provided $X$ has the BAP. 
        In other words, if $X$ has the BAP and if $\varphi\neq \psi \in \mathcal G_0(B_X)$ are multiplicative, then there exists $x^*\in X^*$ such that $\duality{x^*,\varphi}\neq \duality{x^*,\psi}$.
    \end{itemize}
\end{lemma}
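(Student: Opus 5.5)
For (a), I will use the density of polynomials via Taylor expansion. Let $\mu\in\mathcal G_0(B_X)$ with $\langle P,\mu\rangle=0$ for every continuous homogeneous polynomial $P$. Given $f\in\mathcal HL_0(B_X)$, consider the Fejér (Cesàro) means of its Taylor series, $f_n=\sum_{m=1}^{n}(1-\tfrac{m}{n+1})P_m f$. Here $f_n(x)$ is the Fejér mean of the one-variable function $\lambda\mapsto f(\lambda x)$, so $f_n=\int_{\mathbb T} f(\lambda\,\cdot)K_n(\lambda)\,dm(\lambda)$ with $K_n\ge0$ of integral one. Since $x\mapsto f(\lambda x)$ has Lipschitz constant at most $L(f)$, we get $L(f_n)\le L(f)$.

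Convergence is cleanest after dilating. Put $f_r(x)=f(rx)$ with $r<1$. Then $L(f_r)\le L(f)$, $f_r\to f$ pointwise as $r\to1$, and the homogeneous expansion of $f_r$ converges uniformly on $B_X$ by Cauchy estimates. So $(f_r)_n\to f_r$ pointwise, with uniformly bounded Lipschitz norms. By Proposition \ref{prop:G0def}(c), these bounded pointwise convergent nets are $w^*$-convergent. Hence $\langle f,\mu\rangle=\lim_r\lim_n\langle (f_r)_n,\mu\rangle=0$. Since $\mathcal HL_0(B_X)=\mathcal G_0(B_X)^*$, this gives $\mu=0$.

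For (b), it suffices by (a) to show that each $P\in\mathcal P(^mX)$ is a bounded pointwise limit of finite type polynomials; then Proposition \ref{prop:G0def}(c) again gives $w^*$-convergence. Take a net $(T_\alpha)$ of finite-rank operators with $\norm{T_\alpha}\le\lambda$ and $T_\alpha\to I$ pointwise. Then $P\circ T_\alpha\to P$ pointwise on $B_X$, by continuity of $P$. Moreover $L(P\circ T_\alpha|_{B_X})\le\lambda\,L(P|_{\lambda B_X})\le \lambda^m\norm{d P}$ on $\lambda B_X$, which is bounded independently of $\alpha$. Each $P\circ T_\alpha$ is a polynomial on the finite-dimensional range $T_\alpha(X)$ composed with $T_\alpha$. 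On a finite-dimensional space every $m$-homogeneous polynomial is a finite sum of $m$-th powers of linear forms, by polarization. Pulling back along $T_\alpha$ shows that $P\circ T_\alpha\in\mathcal P_f(^mX)$.

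For (c), let $\varphi,\psi$ be multiplicative in $\mathcal G_0(B_X)$ and suppose $\langle x^*,\varphi\rangle=\langle x^*,\psi\rangle$ for all $x^*\in X^*$. By multiplicativity, $\langle (x^*)^m,\varphi\rangle=\langle x^*,\varphi\rangle^m=\langle (x^*)^m,\psi\rangle$. By linearity, $\varphi$ and $\psi$ then agree on all finite type polynomials, so $\varphi=\psi$ by (b).

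The main obstacle is the approximation step in (a). One has to ensure that the Cesàro or dilation approximants keep Lipschitz norm bounded by $L(f)$, and that they converge at least pointwise. The convolution representation $f_n=\int f(\lambda\cdot)K_n$ together with the dilation $f_r$ is what handles this.
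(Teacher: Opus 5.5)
Your proposal is correct and follows the paper's proof. Parts (b) and (c) are the same arguments: the BAP net $P\circ T_\alpha$ of finite type polynomials with $L(P\circ T_\alpha)\le\lambda^m L(P)$, and then multiplicativity applied to $(x^*)^m$. In (a) you prove inline, via Fejér means satisfying $L(f_n)\le L(f)$, the bounded polynomial approximation that the paper takes from \cite[Proposition 3.1 (a)]{ADGLM}, before using that $w^*$ agrees with $\tau_0$ and $\tau_p$ on bounded sets.

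The dilation $f_r$ is not actually needed. For each $x\in B_X$ the map $\zeta\mapsto f(\zeta x)$ is holomorphic on a disc of radius $1/\norm{x}>1$, so $\sum_m P_mf(x)$ already converges to $f(x)$. Hence the Cesàro means $f_n(x)$ converge to $f(x)$ directly.
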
 

\begin{proof}
     (a) Let $\mu\in \mathcal G_0(B_X)$.  
    It follows immediately from \cite[Proposition 3.1 (a)]{ADGLM} and from the fact that $\tau_0$ and $w^*$ coincide on bounded sets of $\HL_0(B_X)$, that there is a polynomial $P=\sum_{m=0}^N P_m$ on $X$ such that $\duality{P,\mu}\neq 0$. Clearly, $\duality{P_m,\mu}\neq 0$ for some $m$.
    
     (b) Given $\mu\in \mathcal G_0(B_X)$, we know, by (a), that there are some $m\in \N$ and $P \in \mathcal P(^mX)$ such that $\duality{P,\mu}\neq 0$.
    Moreover, since $X$ has the $\lambda$-BAP for some $\lambda\ge 1$, we can find a net of finite rank operators $(T_\alpha)_\alpha\subseteq\mathcal F(X,X)$ with $ \sup_{\alpha} \norm{T_\alpha}\leq \lambda$ such that $(T_\alpha)$ converges pointwise to the identity map.
    Consider, for each $\alpha$, $P_\alpha=P\circ T_\alpha$, which is a polynomial of finite type (see, e.g., \cite[Exercise 2.K (b)]{MujicaLibro}). On the one hand, since $P$ is continuous, $(P_\alpha)_\alpha$ converges pointwise to $P$. On the other hand, for any $\alpha$, \begin{align*}
        L(P_\alpha|_{B_X})&=\sup_{x,y\in B_X} \frac{|P(T_\alpha(x))-P(T_\alpha(y))|}{\norm{x-y}} \\&= \norm{T_\alpha}^m \sup_{x,y\in B_X} \frac{\left|P\left(\frac{1}{\norm{T_\alpha}}T_\alpha(x)\right)-P\left(\frac{1}{\norm{T_\alpha}}T_\alpha(y)\right)\right|}{\norm{x-y}}\\&\leq \norm{T_\alpha}^m L(P|_{B_X}) \sup_{x,y} \frac{|T_\alpha(x-y)|}{\norm{T_\alpha}\norm{x-y}} = \norm{T_\alpha}^m L(P|_{B_X})\leq \lambda^m L(P|_{B_X}), 
    \end{align*}
    so $P|_{B_X}$ and the net $(P_\alpha|_{B_X})_\alpha$ are   in $\lambda^m L(P|_{B_X})\overline{B}_{\mathcal HL_0(B_X)}$. 
    Since $w^*$ and $\tau_p$ coincide on bounded sets of $\HL_0(B_X)$ we get that $P_\alpha \overset{w^*}{\to} P$.
    Thus $\duality{P_\alpha,\mu}\neq 0$ for some $\alpha$.

    In order to prove (c) let $\varphi\neq \psi \in \mathcal G_0(B_X)$ be multiplicative. 
    Let $P\in \mathcal P_f(^mX)$ be obtained by application of point (b) so that $\duality{P,\varphi-\psi}\neq 0$. 
    We have that $P(x)=\sum_{i=1}^N x_i^*(x)^m$ for every $x \in X$.
    Therefore $\duality{P,\mu}=\sum_{i=1}^N \duality{(x_i^*)^m,\mu}$ for every $\mu \in \mathcal G_0(B_X)$.
    In particular, it follows that $\duality{(x_i^*)^m,\varphi}\neq \duality{(x_i^*)^m,\psi}$ for some $i \leq N$.
    Now we use the multiplicativity of $\varphi$ and $\psi$ to get the conclusion.
\end{proof}

\begin{proof}[Proof of Proposition~\ref{prop:multiplicativeisdelta}]     First of all, since $X^*\subseteq \mathcal HL_0(B_X)$, we can restrict $\varphi$ to $X^*$ obtaining $\varphi|_{X^*}\in X^{**}$. Actually, $\varphi|_{X^*}$ belongs to $X$. Indeed, consider $R:\mathcal G_0(B_X)\to X^{**}$ the restriction given by $R(\varphi)=\varphi|_{X^*}$ and  the usual  commutative  diagram for the identity map:
     \begin{equation*}
\xymatrix{
 B_X \ar[r]^{id}  \ar[d]_{\delta}    &  X  \\
   \mathcal{G}_0(B_X)  \ar[ru]_{T_{id}}  &
}
\end{equation*}
Since $T_{id}(\delta(x))=x=R(\delta(x))$ for all $x\in B_X$, by the uniqueness of $T_{id}$ we  have $T_{id}=R$. 
Then, there is some $x_0\in X$ such that $\varphi|_{X^*}=x_0$. Furthermore, applying Corollary \ref{cor:multiplicativebounded}, we have \begin{equation*}
        \norm{x_0}=\sup_{x^*\in S_{X^*}} |x^*(x_0)| = \sup_{x^*\in S_{X^*}} |\varphi|_{X^*}(x^*)| \leq \norm{\varphi}\leq 1,
    \end{equation*}
    so $x_0\in \overline{B}_X$.
    Now application of Lemma~\ref{l:PolynomialsSeparatePoints} shows that $\varphi=\delta(x_0)$.
\end{proof}

Now we arrive to the promised result which is a version of \cite[Theorem 4.1]{Car} and \cite[Theorem 7.23]{Weaver2ed}. 

\begin{theorem}\label{thm:compositionoperatorequivalence}
    Let $X, Y$ be complex Banach spaces. 
    Let $A \colon \mathcal HL_0(B_X)\rightarrow \mathcal HL_0(B_Y)$ be a multiplicative linear mapping and consider the following statements.
    \begin{enumerate}
        \item $A$ is a composition operator.
        \item There is a linear continuous map $T\colon \mathcal G_0(B_Y)\rightarrow \mathcal G_0(B_X)$ such that $A=T^*$.
        \item $A$ is $\tau_0$-$\tau_0$-continuous.
        \item $A$ is $\tau_p$-$\tau_p$-continuous.
    \end{enumerate}
    Then 
    \begin{itemize}
        \item $(1)$ implies $(2)$, $(3)$ and $(4)$,
        \item $((3)$ or $(4))$ implies $(2)$.
    \end{itemize}
    When $X$ has the BAP, the statements $(1)$-$(4)$ are equivalent.
\end{theorem}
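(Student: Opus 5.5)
The argument splits into the general implications, which need no approximation hypothesis, and the BAP step, which is where the real content lies.

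For $(1)\Rightarrow(2)$, Lemma~\ref{lemma:compositionoperatorproperties} gives $C_\phi=\widehat{\phi}^*$ with $\widehat{\phi}\colon\mathcal G_0(B_Y)\to\mathcal G_0(B_X)$, so $T=\widehat\phi$ works.

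For $(1)\Rightarrow(3),(4)$, take a net $f_\gamma\to f$ in $\tau_0$ (resp.\ $\tau_p$) on $B_X$. We need $f_\gamma\circ\phi\to f\circ\phi$ in the same topology on $B_Y$.
\begin{itemize}
\item For $\tau_p$ this is immediate, since $\phi(y)\in B_X$.
\item For $\tau_0$, let $K\subseteq B_Y$ be compact. Since $\phi$ is continuous, $\phi(K)$ is a compact subset of $B_X$, and uniform convergence on $\phi(K)$ gives uniform convergence of $f_\gamma\circ\phi$ on $K$.
\end{itemize}

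For $(3)$ or $(4)\Rightarrow(2)$, first note that $A$ is bounded by Proposition~\ref{prop:multiplicativebounded}(c). Hence $A$ maps $\overline{B}_{\mathcal HL_0(B_X)}$ into $\norm{A}\overline{B}_{\mathcal HL_0(B_Y)}$. On these balls the topologies $w^*$, $\tau_0$ and $\tau_p$ coincide by Proposition~\ref{prop:G0def}(c), so in either case $A|_{\overline B}$ is $w^*$-$w^*$ continuous. Fix $\mu\in\mathcal G_0(B_Y)$. The functional $f\mapsto\langle Af,\mu\rangle$ is then $w^*$-continuous on the unit ball, so by Banach--Dieudonn\'e (equivalently Proposition~\ref{prop:G0def}(d)) it lies in $\mathcal G_0(B_X)$. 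Call it $T\mu$. The map $T$ is linear and bounded with $\norm{T}\le\norm{A}$, and $T^*=A$.

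It remains to show $(2)\Rightarrow(1)$ under BAP on $X$; this is the main step. Assume $A=T^*$. For each $y\in B_Y$ put $\varphi_y:=T(\delta_Y(y))\in\mathcal G_0(B_X)$. Then
\[
\langle f,\varphi_y\rangle=(Af)(y),
\]
so $\varphi_y$ is multiplicative on $\mathcal HL_0(B_X)$, because $A$ is multiplicative and evaluation at $y$ is multiplicative. Proposition~\ref{prop:multiplicativeisdelta} then yields a point $\phi(y)\in\overline B_X$ with $\varphi_y=\delta(\phi(y))$ (when $\varphi_y=0$ we get $\phi(y)=0$). Thus $Af=f\circ\phi$, where $f$ is understood through its extension to $\overline B_X$.

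Properties of $\phi$:
\begin{itemize}
\item $\phi=T_{id}\circ T\circ\delta_Y$, using $R=T_{id}$ from the proof of Proposition~\ref{prop:multiplicativeisdelta}. This is a composition of a bounded linear map with a holomorphic Lipschitz map, so $\phi\in\mathcal HL_0(B_Y,X)$.
\item $\phi(0)=0$, since $\delta_Y(0)=0$.
\end{itemize}

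The remaining obstacle is to show $\phi(B_Y)\subseteq B_X$, not merely $\overline B_X$. The approach is to use the maximum modulus principle. Suppose $\norm{\phi(y_0)}=1$ for some $y_0\in B_Y$. Choose $x^*\in S_{X^*}$ with $x^*(\phi(y_0))=1$. The map $g=x^*\circ\phi$ is holomorphic on $B_Y$ with $|g|\le1$ and $|g(y_0)|=1$. Restricting $g$ to the complex line through $0$ and $y_0$ gives a holomorphic function on a disc attaining its maximum modulus at an interior point, so it is constant on that disc. This forces $g(0)=g(y_0)=1$, contradicting $g(0)=x^*(\phi(0))=0$.

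Hence $\phi(B_Y)\subseteq B_X$ and $A=C_\phi$. Combined with $(3)$ or $(4)\Rightarrow(2)$, this shows all four statements are equivalent when $X$ has the BAP.
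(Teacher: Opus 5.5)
Your proposal is correct and follows the paper's proof essentially step by step. It uses $T=\widehat\phi$ for $(1)\Rightarrow(2)$, the restriction $T=A^*|_{\mathcal G_0(B_Y)}$ justified by Proposition~\ref{prop:G0def}(d) for $(3)$ or $(4)\Rightarrow(2)$, and Proposition~\ref{prop:multiplicativeisdelta} followed by the maximum modulus principle for $(2)\Rightarrow(1)$. The only minor difference is how you get $\phi\in\mathcal HL_0(B_Y,X)$: you read it off the factorization $\phi=T_{id}\circ T\circ\delta_Y$, whereas the paper uses weak holomorphy ($x^*\circ\phi=A(x^*)$) plus the norming-functional estimate $\norm{\phi(y_1)-\phi(y_2)}\le\norm{A}\norm{y_1-y_2}$; both arguments work.
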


\begin{proof}     
    $(1)\Rightarrow(2)$ We have already checked in Lemma~\ref{lemma:compositionoperatorproperties} that if $A=C_\phi$ for $\phi\in \mathcal HL_0(B_Y,X)$ with $\phi(B_Y)\subseteq B_X$, then $A=\widehat{\phi}^*$. 
   
    $(2)$ and $X$ has BAP $\Rightarrow(1)$ Given $y\in B_Y$, observe that $T(\delta_Y(y))\in \mathcal G_0(B_X)$ and 
    \begin{align*}
        T(\delta_Y(y))(f\cdot g)&=\delta_Y(y)(T^*(f\cdot g))=\delta_Y(y)(A(f\cdot g))= \delta_Y(y)(A(f)\cdot A(g))\\&= A(f)(y)\cdot A(g)(y) 
        = T(\delta_Y(y))(f)\cdot T(\delta_Y(y))(g),
    \end{align*}
    for every $f,g\in  \mathcal HL_0(B_X)$ since $T^*=A$ is multiplicative. Hence, $T(\delta_Y(y))\in\mathcal G_0(B_X)$ is multiplicative, so there exists $x_y\in \overline{B}_X$ such that $T(\delta_Y(y))=\delta_X(x_y)$, by Proposition~\ref{prop:multiplicativeisdelta}. Therefore, we can define a map $\phi\colon B_Y \rightarrow \overline{B}_X$ by $\phi(y)=x_y$, for every $y\in B_Y$. Given $f\in \mathcal HL_0(B_X)$, we have 
    \begin{align*}
        A(f)(y)&= \delta_Y(y)(A(f)) = \delta_Y(y)(T^*(f)) = T(\delta_Y(y))(f) \\&= \delta_X(x_y)(f) = f(x_y)= f(\phi(y)), \quad \text{for all } y\in B_Y.
    \end{align*}
    Thus, 
    \begin{equation*}
        A(f)=f\circ \phi,  \quad \text{for all }  f\in \mathcal HL_0(B_X).
    \end{equation*} 
    Furthermore, for each $x^*\in X^*\subseteq \mathcal HL_0(B_X)$ since $x^*\circ \phi=A(x^*)$ belongs to $\mathcal HL_0(B_Y)$ we derive that $\phi$ is weakly-holomorphic, which implies $\phi$ is holomorphic \cite[Theorem 8.12 (b)]{MujicaLibro}. Moreover, given $y_1,y_2\in B_Y$, we take $x^*\in S_{X^*}$ with $x^*(\phi(y_1)-\phi(y_2))=\norm{\phi(y_1)-\phi(y_2)}$ and obtain
    \begin{align*}
        \norm{\phi(y_1)-\phi(y_2)}&= |x^*(\phi(y_1))-x^*(\phi(y_2))| = |A(x^*)(y_1)-A(x^*)(y_2)|\\&\leq L(A(x^*))\norm{y_1-y_2}\leq \norm{A}\norm{y_1-y_2}. 
    \end{align*}
   Hence, $\phi$ is Lipschitz and one can easily check that $\phi(0)=0$. 
   Therefore, $\phi\in \mathcal HL_0(B_Y,X)$ and note that $\phi(B_Y)\subseteq \overline{B}_X$. 
 
Recall that, as a consequence of the Maximum Modulus Theorem,  a holomorphic map $\phi$ with $\phi(0)=0$ and $\phi(B_Y)\subseteq \overline{B}_X$ satisfies $\phi(B_Y)\subseteq B_X$ (see e.g. \cite[Exercise 5.G]{MujicaLibro}).
 We thus conclude that $A=C_\phi$ is a composition operator. 

    $(1)\Rightarrow(4)$ By the hypothesis, there is some $\phi \in \HL_0(B_Y,X)$ with $\phi(B_Y)\subseteq B_X$ such that $A=C_\phi$. 
    Fix $f\in \mathcal{H}L_0(B_X)$ and take a net $(f_\alpha)_\alpha\subseteq \mathcal{H}L_0(B_X)$ converging pointwise to $f$. Then, for all $y\in B_Y$
    \begin{equation*}
        \lim_\alpha A(f_\alpha)(y) = \lim_\alpha f_\alpha(\phi(y)) = f(\phi(y)) = A(f)(y), 
    \end{equation*}
    meaning that $A$ is $\tau_p$-$\tau_p$-continuous.

    $(1)\Rightarrow(3)$ By the hypothesis, there is some $\phi \in \HL_0(B_Y,X)$ with $\phi(B_Y)\subseteq B_X$ such that $A=C_\phi$. Fix $f\in \mathcal{H}L_0(B_X)$ and take a net $(f_\alpha)_\alpha\subseteq \mathcal{H}L_0(B_X)$ converging to $f$ uniformly on compacts of $X$.
    Let $K\subseteq B_Y$ be compact and $\varepsilon>0$.
    Then there exists $\gamma_0$ such that $\sup_{y\in K}\abs{f_\gamma \circ \phi(y)-f\circ \phi(y)}=\sup_{x\in \phi(K)} \abs{f_\gamma(x)-f(x)}<\varepsilon$ for all $\gamma\geq \gamma_0$ and we are done.

    $(4)$ or $(3)$ $\Rightarrow(2)$ 
    Using Proposition~\ref{prop:multiplicativebounded}, we see that $A$ is bounded. 

    Let $\tau=\tau_p$ if we assume (4), and $\tau=\tau_0$ if we assume (3).
    To prove that (2) holds, we
    consider the adjoint operator $A^* \colon \mathcal HL_0(B_Y)^*\rightarrow \mathcal HL_0(B_X)^*$, and define $T=A^*|_{\mathcal G_0(B_Y)}$. 
    Once we prove that $A^*(\mathcal G_0(B_Y))\subseteq \mathcal G_0(B_X)$ we derive $T^*=A$ ending the proof of (2). 
    Given $\varphi\in \mathcal G_0(B_Y)$, the point is to show that $T(\varphi)|_{\overline{B}_{\mathcal HL_0(B_X)}}$ is $\tau$-continuous (see Proposition~\ref{prop:G0def}). 
    For that, take $f\in \overline{B}_{\mathcal HL_0(B_X)}$ and a net $(f_\alpha)_\alpha\subseteq \overline{B}_{\mathcal HL_0(B_X)}$ $\tau$-converging to $f$. 
    Hence, 
    \begin{equation*}
        \lim_{\alpha} \duality{T(\varphi),f_\alpha}= \lim_\alpha \duality{A^*(\varphi),f_\alpha} =  \lim_\alpha \duality{\varphi,A(f_\alpha)} = \duality{\varphi,A(f)}=\duality{T(\varphi),f}
    \end{equation*}
    since $A$ is $\tau$-$\tau$-continuous and $\varphi|_{\norm{A}\overline{B}_{\mathcal HL_0(B_Y)}}$ is $\tau$-continuous (due to Proposition~\ref{prop:G0def}). 
    Thus, $T(\varphi)|_{\overline{B}_{\mathcal HL_0(B_X)}}$ is $\tau$-continuous, concluding the proof.
\end{proof}

\subsection{Onto and into composition operators}

Recall that, for a Lipschitz function $f\in \Lip_0(N, M)$ with associated composition operator $C_f\colon \Lip_0(M)\to \Lip_0(N)$, it is well known (see Proposition 2.25 in \cite{Weaver2ed}) that
\begin{itemize}
\item $C_f$ is onto if and only if $f$ is bi-Lipschitz onto its image.
\item $C_f$ is injective if and only if $f(N)$ is dense in $M$.
\item $C_f$ is an onto isomorphism if and only if $f$ is a bi-Lipschitz bijection between $N$ and $M$. 
\end{itemize}

Here we  analyze the corresponding statements for a function $\phi\in \mathcal HL_0(V,X)$ with $\phi(V)\subseteq U\subseteq X$ and its associated composition operator $C_\phi\colon \mathcal HL_0(U)\to\mathcal HL_0(V)$. For $U\subseteq \mathbb C$ and the unital algebra $\mathcal HL(U)$, this has been addressed recently in \cite{BM25}.

We need first to prove a lemma, which is of independent interest. 
Indeed, it seems to be unknown whether such lemma holds without the assumption that $\phi^{-1}$ is Lipschitz, see \cite{Suffridge}.

\begin{lemma}\label{lemma:biLipschitz-biHolomorphic} Let $X, Y$ be complex Banach spaces and $U\subseteq X$, $V\subseteq Y$ be open subsets. Let $\phi \colon V\to U$ be a bijective bi-Lipschitz holomorphic map. Then $\phi^{-1}$ is holomorphic. 
\end{lemma}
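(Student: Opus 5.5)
The approach is to show that $\psi:=\phi^{-1}\colon U\to V$ is complex Fréchet differentiable at every point. Then, being continuous (indeed Lipschitz), it is holomorphic. The natural candidate for the derivative at $x_0=\phi(y_0)$ is $d\phi(y_0)^{-1}$. The plan is therefore to prove that $d\phi(y_0)\in\mathcal L(Y,X)$ is an isomorphism onto $X$, and then to run the standard inverse-function estimate using only the Lipschitz bounds.

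First, fix $y_0\in V$ and write $T=d\phi(y_0)$. I would show $T$ is bounded below. For $h\in Y$ and small $t>0$ we have $\phi(y_0+th)-\phi(y_0)=tTh+o(t)$. The bi-Lipschitz condition gives $\norm{\phi(y_0+th)-\phi(y_0)}\ge c\,t\norm{h}$, where $c=L(\psi)^{-1}$. Dividing by $t$ and letting $t\to0$ yields $\norm{Th}\ge c\norm{h}$. Hence $T$ is injective with closed range.

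Next comes surjectivity of $T$, which I expect to be the main obstacle, since in infinite dimensions a bounded-below operator need not be onto. I would argue by contradiction. Suppose $T(Y)\ne X$, and pick $x\in X$ with $\norm{x}=1$ and $\operatorname{dist}(x,T(Y))\ge 1/2$ (Riesz lemma). For small $t>0$, set $y_t=\psi(x_0+tx)$. Then $\norm{y_t-y_0}\le L(\psi)t$, so $(y_t-y_0)/t$ stays bounded. Fréchet differentiability of $\phi$ at $y_0$ gives
\[ tx=\phi(y_t)-\phi(y_0)=T(y_t-y_0)+o(\norm{y_t-y_0})=T(y_t-y_0)+o(t). \]
Thus $\operatorname{dist}(x,T(Y))\le\norm{x-T((y_t-y_0)/t)}=o(1)$, which is a contradiction. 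So $T$ is an isomorphism onto $X$, with $\norm{T^{-1}}\le L(\psi)$. The key point is that the Lipschitz property of $\psi$ supplies the uniformity needed here; this is exactly where it is used, and why the general case without it is open.

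Finally comes differentiability of $\psi$ at $x_0$. For $k\in X$ small, put $h=\psi(x_0+k)-y_0$, so that $\norm{h}\le L(\psi)\norm{k}$. Then $k=Th+r(h)$ with $\norm{r(h)}=o(\norm{h})=o(\norm{k})$. Hence
\[ \psi(x_0+k)-\psi(x_0)-T^{-1}k=h-T^{-1}(Th+r(h))=-T^{-1}r(h)=o(\norm{k}). \]
So $\psi$ is Fréchet differentiable at $x_0$ with complex-linear derivative $T^{-1}$. Since $x_0\in U$ was arbitrary, $\psi=\phi^{-1}$ is holomorphic on $U$.
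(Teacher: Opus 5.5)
Your proof is correct, but it takes a different route from the paper's.

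The paper never shows that $d\phi(y_0)$ is onto. It uses only that $d\phi(y_0)$ is bounded below. It then takes a Hahn--Banach extension $x^*\in X^*$ of $y^*\circ d\phi(y_0)^{-1}$, which is a priori defined only on the range of $d\phi(y_0)$. With the same remainder estimate as in your last step, it checks that the scalar function $y^*\circ\phi^{-1}$ is Fr\'echet differentiable at $x_0$ with derivative $x^*$. Finally it invokes the theorem that weakly holomorphic maps are holomorphic.

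You instead prove that $d\phi(y_0)$ is surjective, by pulling the points $x_0+tx$ back through the Lipschitz map $\phi^{-1}$, and then differentiate $\phi^{-1}$ itself. Your version is more elementary and self-contained: it needs neither Hahn--Banach nor the weak-holomorphy theorem. It also states explicitly that $d\phi(y_0)$ is an isomorphism onto $X$, with $d(\phi^{-1})(\phi(y_0))=d\phi(y_0)^{-1}$ and $\norm{d(\phi^{-1})(\phi(y_0))}\le L(\phi^{-1})$.

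The paper's version is shorter once the weak-holomorphy theorem is available. Its estimate works for every bounded linear extension of $y^*\circ d\phi(y_0)^{-1}$, so uniqueness of the Fr\'echet derivative shows a posteriori that the closed range of $d\phi(y_0)$ is all of $X$. Both proofs therefore contain the same information; only yours states it outright.

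Two minor remarks. Riesz's lemma is not needed, since any $x$ outside the closed subspace $d\phi(y_0)(Y)$ already has positive distance to it. Also, the Lipschitz bound on $\phi^{-1}$ is not used only in the surjectivity step, as your aside suggests. It also gives the lower bound in your first step and the $o(\norm{k})$ control in your last one.
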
 

\begin{proof}
It suffices to check that $y^*\circ \phi^{-1}$ is holomorphic for each $y^*\in S_{Y^*}$. Let  $x_0\in U$ and denote $y_0=\phi^{-1}(x_0)$. Note that, for each $y\in Y$, we have
\[ \norm{d\phi(y_0)(y)}=\lim_{h\to 0} \frac{\norm{\phi(y_0+hy)-\phi(y_0)}}{\norm{h}} \geq \frac{1}{L(\phi^{-1})}\norm{y}.\]
Thus, $d\phi(y_0)$ is an isomorphism onto its image. Let $T\colon d\phi(y_0)(Y)\to Y$ be its inverse. 
We have $T(d\phi(y_0)(y))=y$ and $\norm{T}\leq L(\phi^{-1})$. Consider, for $y^*\in S_{Y^*}$, the functional $y^* \circ T\colon d\phi(y_0)(Y)\to  \mathbb C$ and let $x^*\in X^*$ be a Hahn-Banach extension of $y^*\circ T$. We will show that $y^*\circ \phi^{-1}$ is holomorphic at $x_0$ with $d(y^*\circ\phi^{-1})(x_0)=x^*$. To this end, we mimic a usual argument in the proof of the Inverse Function Theorem. Since $\phi$ is holomorphic, we have 
\[\phi(y)-\phi(y_0)=d\phi(y_0)(y-y_0)+\norm{y-y_0}\Phi(y), \quad \forall y\in V\]
where $\Phi(y)\to 0$ as $y\to y_0$. Thus, for $x\in U$ we have 
\[ x- x_0 = d\phi(y_0)(\phi^{-1}(x)-\phi^{-1}(x_0)) + \norm{\phi^{-1}(x)-\phi^{-1}(x_0)} \Phi(\phi^{-1}(x))\]
and so
\begin{align*} x^*(x-x_0)&= (x^*\circ d\phi(y_0))(\phi^{-1}(x)-\phi^{-1}(x_0))+ \norm{\phi^{-1}(x)-\phi^{-1}(x_0)} (x^*\circ \Phi)(\phi^{-1}(x)) \\
&= y^*(\phi^{-1}(x)-\phi^{-1}(x_0))+ \norm{\phi^{-1}(x)-\phi^{-1}(x_0)} (x^*\circ \Phi)(\phi^{-1}(x)).
\end{align*}
Therefore
\begin{align*} \frac{|(y^*\circ \phi^{-1})(x)-(y^*\circ \phi^{-1})(x_0)-x^*(x-x_0)|}{\norm{x-x_0}} &= \frac{\norm{\phi^{-1}(x)-\phi^{-1}(x_0)}}{\norm{x-x_0}}|(x^*\circ \Phi)(\phi^{-1}(x))| \\
&\leq  L(\phi^{-1}) |(x^*\circ \Phi)(\phi^{-1}(x))|
\end{align*}
which goes to $0$ as $x\to x_0$ by the definition of $\Phi$. We conclude that $\phi^{-1}$ is holomorphic at every $x_0\in U$.  
\end{proof}

Now we present some necessary or sufficient conditions for the injectivity or surjectivity of a composition operator. 
Recall that it is a standard fact that a bounded linear operator $T$ is an  isomorphic embedding (that is, injective with closed range) if and only if $T^*$ is onto, see e.g. Exercise 2.49 in \cite{checos}.

\begin{proposition}\label{p:CompositionOnto}
    Let $X,Y$ be complex Banach spaces and $U\subseteq X$, $V\subseteq Y$ be open bounded subsets containing $0$. Let $\phi\in \mathcal HL_0(V,X)$ with $\phi(V)\subseteq U$. 
    \begin{itemize}
     \item[(a)] If  $C_\phi$ is onto (equiv. $\widehat{\phi}$ is an isomorphism onto its image),  then $\phi$ is bi-Lipschitz onto its image and $\norm{d\phi(y_1)(y_2)}\geq L(\phi^{-1})^{-1} $ for every $y_1\in V$ and $y_2\in S_Y$. 
     \item[(b)] If $\phi(V)$ is dense in $U$, or $\phi(V)$ has non-empty interior and $U$ is connected, then $C_\phi$ is injective.
     \item[(c)] If $C_\phi$ is injective, then $\phi(V)$ has dense span.
     \item[(d)] If $\phi$ is a bi-Lipschitz bijection between $V$ and $U$, then $\phi^{-1}$ is holomorphic and $C_\phi$ is an onto isomorphism. 
     \item[(e)] If $U$ and $V$ are connected, $\phi(V)=U$ and $C_\phi$ is an onto isometry, then $\phi$ is the restriction of an onto linear isometry $T\colon Y\to X$.
     \end{itemize}
\end{proposition}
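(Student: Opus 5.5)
We treat the five items separately, working throughout with the duality $C_\phi=\widehat{\phi}^*$ of Lemma~\ref{lemma:compositionoperatorproperties}. By the standard fact just recalled, $C_\phi$ is onto exactly when $\widehat{\phi}$ is an isomorphic embedding.

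\emph{(a)} If $C_\phi$ is onto, there is $c>0$ with $\|\widehat{\phi}(\mu)\|\geq c\|\mu\|$ for all $\mu\in\mathcal G_0(V)$. Take $\mu=\delta_Y(y_1)-\delta_Y(y_2)$. Then $\widehat\phi(\mu)=\delta_X(\phi(y_1))-\delta_X(\phi(y_2))$, and since $\delta$ is an isometry this gives $\|\phi(y_1)-\phi(y_2)\|\geq c\|y_1-y_2\|$. Hence $\phi$ is bi-Lipschitz onto its image. The lower bound on the differential comes from the difference-quotient limit, exactly as in the proof of Lemma~\ref{lemma:biLipschitz-biHolomorphic}: for $y_2\in S_Y$,
\[
\|d\phi(y_1)(y_2)\|=\lim_{h\to0}\frac{\|\phi(y_1+hy_2)-\phi(y_1)\|}{|h|}\ge L(\phi^{-1})^{-1}.
\]

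\emph{(b)} Suppose $f\circ\phi=0$. Then $f$ vanishes on $\phi(V)$. If $\phi(V)$ is dense in $U$, continuity gives $f=0$. If $\phi(V)$ has nonempty interior and $U$ is connected, the identity principle for holomorphic functions gives $f\equiv0$.

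\emph{(c)} Argue by contraposition. If $\overline{\lspan}\,\phi(V)\neq X$, Hahn--Banach gives a nonzero $x^*\in X^*$ vanishing on $\phi(V)$. Its restriction $x^*|_U$ lies in $\mathcal HL_0(U)$ and is nonzero, because $U$ is open and contains $0$. Since $C_\phi(x^*)=x^*\circ\phi=0$, the operator $C_\phi$ is not injective.

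\emph{(d)} Lemma~\ref{lemma:biLipschitz-biHolomorphic} shows that $\psi=\phi^{-1}$ is holomorphic. It is Lipschitz by hypothesis and satisfies $\psi(0)=0$, since $\phi(0)=0$. So $\psi\in\mathcal HL_0(U,Y)$ with $\psi(U)=V$, and $C_\psi$ is bounded. Because $C_\psi C_\phi=C_{\phi\circ\psi}=\mathrm{Id}$ and $C_\phi C_\psi=\mathrm{Id}$, the operator $C_\phi$ is an onto isomorphism.

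\emph{(e)} This is the main obstacle. Since $\phi$ is surjective onto $U$, $C_\phi$ is injective, and by (a) $\phi$ is bi-Lipschitz. By (d), $\phi$ is then biholomorphic with inverse $\psi$, and $C_\psi=C_\phi^{-1}$ is also an isometry. Hence $L(\phi)=L(\psi)=1$, so $\phi$ is an isometric bijection $V\to U$ fixing $0$. To linearize it, first consider the differential: $d\phi(0)$ and $d\psi(0)$ have norm at most $1$ and are mutually inverse, so $T:=d\phi(0)$ is an onto linear isometry $Y\to X$. It remains to show $\phi=T$ on $V$. For this, consider $g=T^{-1}\circ\phi\colon V\to Y$, which is holomorphic, $1$-Lipschitz, satisfies $g(0)=0$ and $dg(0)=\mathrm{Id}$. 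A Cartan-type uniqueness theorem gives $g=\mathrm{Id}$ near $0$, using that the iterates $g^{(n)}$ remain $1$-Lipschitz, hence uniformly bounded near $0$. Concretely, expand $g=\mathrm{Id}+P_k+\cdots$ on a ball around $0$ inside $V$; then $g^{(n)}=\mathrm{Id}+nP_k+\cdots$, and Cauchy estimates on the ball force $P_k=0$. Connectedness of $V$ together with the identity principle then gives $g=\mathrm{Id}$ on all of $V$, so $\phi=T|_V$.
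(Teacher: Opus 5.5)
Your proof is correct. Items (a)--(d) follow the paper's argument. In (a) you use the lower bound $\|\widehat{\phi}(\mu)\|\ge c\|\mu\|$ for the preadjoint, while the paper lifts each norming $y^*$ through $C_\phi$ with a uniform bound from the open mapping theorem. These are dual formulations of the same fact.

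Item (e) is where you take a genuinely different route.

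\emph{The paper's route.} It notes that $\widehat{\phi}$ is an onto isometry, which gives $\|\phi(x)-\phi(y)\|=\|\delta(x)-\delta(y)\|=\|x-y\|$ directly. It then invokes Mankiewicz's theorem: a surjective isometry between connected open subsets of normed spaces is the restriction of an affine isometry. This yields an $\mathbb R$-linear $T$ with $\phi=T|_V$, and complex linearity follows from $T=d\phi(0)$.

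\emph{Your route.} You stay within holomorphy.
\begin{enumerate}
\item From (a), (d) and $\|C_\phi\|=\|C_\phi^{-1}\|=1$ you get that $\phi^{-1}$ is holomorphic and $L(\phi)=L(\phi^{-1})=1$.
\item The chain rule makes $T=d\phi(0)$ and $d\phi^{-1}(0)$ mutually inverse contractions, so $T$ is an onto linear isometry.
\item You kill $g=T^{-1}\circ\phi-\mathrm{Id}$ near $0$ by Cartan's uniqueness argument on a ball $rB_Y\subseteq V$. This ball is invariant because $T^{-1}\circ\phi$ is $1$-Lipschitz and fixes $0$. The expansion $g^{(n)}=\mathrm{Id}+nP_k+\cdots$ and the Cauchy estimates then force $P_k=0$.
\item The identity principle on the connected set $V$ finishes the proof.
\end{enumerate}

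\emph{Comparison.} The paper's argument is shorter and purely metric. It needs neither the holomorphy of $\phi^{-1}$ (Lemma~\ref{lemma:biLipschitz-biHolomorphic}) nor power series, but it relies on a nontrivial external theorem. Yours is self-contained up to Cauchy estimates. It also shows that the metric hypothesis enters only through $\|d\phi(0)\|,\|d\phi^{-1}(0)\|\le 1$ and the boundedness of the iterates near $0$; the rigidity itself is holomorphic.
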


\begin{proof}
    (a) Given $y_1,y_2\in V$, consider $y^*\in S_{Y^*}$ with $y^*(y_1-y_2)=\norm{y_1-y_2}$. Then, since $y^*\in \mathcal HL_0(V)$ and $C_\phi$ is onto, we can find some $f\in \mathcal HL_0(U)$ with $C_\phi(f)=y^*$. So,
    \begin{align*}
        \norm{y_1-y_2}&=y^*(y_1-y_2)=y^*(y_1)-y^*(y_2) = C_\phi(f)(y_1)-C_\phi(f)(y_2)\\&= f(\phi(y_1))-f(\phi(y_2)) \leq L(f)\norm{\phi(y_1)-\phi(y_2)}.
    \end{align*}
    Moreover, by the Open Mapping Theorem, we may choose each $f$ so that $L(f)\leq C$ for some constant $C>0$,  since every $y^*$ under consideration has norm one. Thus $L(\phi^{-1})\leq C$. 
    Finally, note that
    \[\norm{d\phi(y_1)(y_2)}= \lim_{h\to0} \norm{\frac{\phi(y_1+hy_2)-\phi(y_1)}{h}}\geq \frac{1}{L(\phi^{-1})} \quad \forall y_1\in V, y_2\in S_Y.\]
   
    (b) Let $f\in \mathcal HL_0(U)$ with $f\neq 0$. If $\phi(V)$ is dense in $U$, there is $y\in V$ such that $C_\phi(f)(y)=f(\phi(y))\neq 0$. Thus $\ker C_\phi=0$.   

    On the other hand, assume that there is an open set $U'\subseteq \phi(V)$. Then any $f\in \HL_0(U)$ such that $C_\phi(f)=0$ satisfies $f|_{U'}=0$ and so $f=0$ (\cite[Proposition~5.7]{MujicaLibro}). 

    (c) If $Z=\overline{\operatorname{span}} \phi(V)\neq X$, then by Hahn-Banach, there is a functional $0\neq x^*\in X^*$ such that $x^*\equiv 0$ on $Z$. Therefore,  $C_\phi(x^*)= x^*\circ \phi=0$ and hence $C_\phi$ is not injective.

    (d) We know from Lemma \ref{lemma:biLipschitz-biHolomorphic}  that $\phi^{-1}\colon U\to V$ is a holomorphic map. Observing that $$C_{\phi^{-1}} \circ C_{\phi} = \widehat{\phi^{-1}}^*\circ \widehat{\phi}^*=(\widehat{\phi}\circ \widehat{\phi^{-1}})^*= \widehat{(\phi\circ\phi^{-1})}^*=\widehat{Id_U}^*=Id_{\mathcal HL_0(U)},$$ and $$C_{\phi} \circ C_{\phi^{-1}} = \widehat{\phi}^*\circ \widehat{\phi^{-1}}^*=(\widehat{\phi^{-1}}\circ \widehat{\phi})^*= \widehat{(\phi^{-1}\circ\phi)}^*=\widehat{Id_V}^*=Id_{\mathcal HL_0(V)},$$
    we obtain the desired conclusion.

    (e) If $C_\phi$ is an onto isometry, so is $\widehat{\phi}$. Moreover,
    \[ \norm{\phi(x)-\phi(y)}=\norm{\widehat{\phi}(\delta(x))-\widehat{\phi}(\delta(y))}=\norm{\delta(x)-\delta(y)}=\norm{x-y}, \quad \forall x,y\in V,\]
    that is, $\phi\colon V\to \phi(V)=U$ is an isometry with $\phi(0)=0$. By Mankiewicz theorem \cite{Mankiewicz}, there is a $\mathbb R$-linear map $T\colon Y\to X$ such that $\phi=T|_V$. Finally, it is easy to check that $d\phi(0)= T$, so actually $T$ is $\mathbb C$-linear.
\end{proof}

\begin{example} Let us see that the converses of  Proposition \ref{p:CompositionOnto} (a) and (c) do not hold. 
\begin{enumerate}
    \item[(1)]  There is a bi-Lipschitz onto its image mapping $\phi$ with $C_\phi$ not onto.

    Let $U=\mathbb D$ and $V=\frac{1}{2}\mathbb D$, and consider $\phi\colon V\to U$ the identity map, which is clearly bi-Lipschitz onto its image. Then $C_\phi(f) = f|_V$ for each $f\in \mathcal HL_0(U)$. Let $g\in \mathcal HL_0(V)$ be given by $g(z)=\frac{z}{1-z}$. Then the only holomorphic function $f\colon \mathbb D\to\mathbb C$ such that $f|_V=g$ is $f(z)=\frac{z}{1-z}$, which is not Lipschitz in $U$. Thus, $C_\phi$ is not onto.

    \item[(2)] There is a mapping $\phi$ with dense image span and not injective $C_\phi$.

    Let $U=B_{\C^2}$ (with the sup norm) and $V=\mathbb D$, and consider $\phi\colon \mathbb D \to \C^2$ given by $\phi(z)=(z,z^2/2)$. Then, $\phi\in\mathcal HL_0(\disk,\C^2)$, $\phi(V)\subseteq U$ and $\{\phi(1),\phi(i)\}$ is a basis of $\C^2$ but $C_\phi f=0$ for $f(z_1,z_2)=z_1^2-2 z_2$.
\end{enumerate}
  
\end{example}

Next, we come back to mappings defined on unit balls and we characterize  when $C_\phi$ is an onto isomorphism under the hypothesis of BAP, as a consequence of Theorem~\ref{thm:compositionoperatorequivalence}. 

\begin{corollary}\label{c:BAP-characterizationOfCompositionIsomorphism}
    Let $X,Y$ be complex Banach spaces such that $Y$ has the BAP. Let $\phi\in \mathcal HL_0(B_Y,X)$ with $\phi(B_Y)\subseteq B_X$. Then, the following assertions are equivalent.
    \begin{enumerate}
        \item $C_\phi\colon  \mathcal HL_0(B_X)\rightarrow \mathcal HL_0(B_Y)$ is an onto isometry.
        \item $C_\phi \colon \mathcal HL_0(B_X)\rightarrow \mathcal HL_0(B_Y)$ is an onto isomorphism.
        \item $\phi \colon B_Y \rightarrow B_X$ is a bijection and $\phi^{-1}\in \mathcal HL_0(B_X,Y)$.
        \item $\phi$ is the restriction of an onto linear isometry $T\colon Y\to X$. 
    \end{enumerate}
\end{corollary}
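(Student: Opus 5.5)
The cycle of implications will be $(4)\Rightarrow(1)\Rightarrow(2)\Rightarrow(3)\Rightarrow(4)$, with the only nontrivial step being $(2)\Rightarrow(3)$. This is where Theorem~\ref{thm:compositionoperatorequivalence} is used, applied with the roles of $X$ and $Y$ exchanged; this explains why the BAP is assumed on $Y$.

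For $(4)\Rightarrow(1)$: if $\phi=T|_{B_Y}$ with $T$ an onto linear isometry, then $\phi$ is a bijection $B_Y\to B_X$ with $L(\phi)=L(\phi^{-1})=1$. Proposition~\ref{p:CompositionOnto}(d) shows that $C_\phi$ is an onto isomorphism with inverse $C_{\phi^{-1}}$. By Lemma~\ref{lemma:compositionoperatorproperties}, $\norm{C_\phi}=L(\phi)=1$ and $\norm{C_{\phi^{-1}}}=1$, so $C_\phi$ is an isometry. The implication $(1)\Rightarrow(2)$ is trivial.

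For $(2)\Rightarrow(3)$, set $A=C_\phi^{-1}\colon \mathcal HL_0(B_Y)\to\mathcal HL_0(B_X)$.
\begin{itemize}
\item $A$ is linear and multiplicative, being the inverse of a multiplicative bijection.
\item Since $C_\phi=\widehat\phi^*$ is an onto isomorphism, $\widehat\phi$ is an onto isomorphism $\mathcal G_0(B_Y)\to\mathcal G_0(B_X)$. Hence $A=(\widehat\phi^{-1})^*$ is an adjoint of an operator $\mathcal G_0(B_X)\to\mathcal G_0(B_Y)$.
\item Since $Y$ has the BAP, Theorem~\ref{thm:compositionoperatorequivalence} (with $X$ and $Y$ swapped) gives $\psi\in\mathcal HL_0(B_X,Y)$ with $\psi(B_X)\subseteq B_Y$ and $A=C_\psi$.
\end{itemize}
Then $C_{\phi\circ\psi}=C_\psi C_\phi=\mathrm{Id}$ on $\mathcal HL_0(B_X)$, and similarly $C_{\psi\circ\phi}=\mathrm{Id}$ on $\mathcal HL_0(B_Y)$. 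Testing against functionals $x^*\in X^*\subseteq\mathcal HL_0(B_X)$, which separate points, gives $\phi\circ\psi=\mathrm{id}_{B_X}$; likewise $\psi\circ\phi=\mathrm{id}_{B_Y}$. So $\phi\colon B_Y\to B_X$ is a bijection with $\phi^{-1}=\psi\in\mathcal HL_0(B_X,Y)$.

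For $(3)\Rightarrow(4)$, I would use a Cartan-type argument rather than Mankiewicz, since $C_\phi$ is not yet known to be isometric. Both $\phi\colon B_Y\to B_X$ and $\phi^{-1}\colon B_X\to B_Y$ are holomorphic, fix $0$, and map unit balls into unit balls. The Schwarz lemma for holomorphic maps between unit balls of Banach spaces gives $\norm{\phi(y)}\le\norm{y}$ and $\norm{\phi^{-1}(x)}\le\norm{x}$. Therefore $\norm{\phi(y)}=\norm{y}$ for every $y\in B_Y$. Now fix $y\in S_Y$ and $x^*\in S_{X^*}$, and consider the scalar function $g(\lambda)=x^*(\phi(\lambda y))/\lambda$ on $\mathbb D$. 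Choosing $x^*$ norming $\phi(\lambda_0y)$ for some $\lambda_0$, we get $|g|\le1$ with $|g(\lambda_0)|=1$, so $g$ is constant by the maximum modulus principle. This suggests that $\phi$ is linear on complex lines, i.e.\ $\phi(\lambda y)=\lambda\,d\phi(0)(y)$.

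That pointwise argument only controls $x^*\circ\phi$ for the one norming functional, so it is not quite enough by itself. The cleaner route, and my main option, is the standard Cartan uniqueness argument:
\begin{itemize}
\item Take $T=d\phi(0)$, which is an onto isomorphism with inverse $d\phi^{-1}(0)$.
\item The Cauchy estimates give $\norm{T}\le1$ and $\norm{T^{-1}}\le1$, so $T$ is an onto linear isometry.
\item The map $h=T^{-1}\circ\phi$ is a holomorphic self-map of $B_Y$ with $h(0)=0$ and $dh(0)=\mathrm{Id}$.
\item Cartan's uniqueness theorem, which is valid for bounded domains in Banach spaces, then yields $h=\mathrm{id}$, i.e.\ $\phi=T|_{B_Y}$.
\end{itemize}
The main obstacle is justifying Cartan's theorem in the Banach-space setting. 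I would prove it directly by expanding $h(y)=y+P_m(y)+\cdots$, where $P_m$ is the first nonzero homogeneous term. Iterating gives $h^{(k)}(y)=y+kP_m(y)+\cdots$, and the Cauchy estimates applied to the iterates $h^{(k)}$, which are uniformly bounded by $1$, force $k\norm{P_m}\le1$ for all $k$. Hence $P_m=0$, so $h$ has no nonlinear terms and is the identity.
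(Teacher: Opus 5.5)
Your proof is correct. For $(4)\Rightarrow(1)\Rightarrow(2)\Rightarrow(3)$ it follows the paper's argument. The inverse $A$ of $C_\phi$ is multiplicative and equals $(\widehat{\phi}^{-1})^*$. Theorem~\ref{thm:compositionoperatorequivalence}, applied with $X$ and $Y$ interchanged (hence the BAP on $Y$), then gives $A=C_\psi$, and testing against $X^*$ and $Y^*$ yields $\psi=\phi^{-1}$.

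The genuine difference is $(3)\Rightarrow(4)$. The paper just invokes Cartan's linearity theorem through \cite[Lemma~2.1]{Arazy}, whereas you prove it by hand, exploiting that the domains are unit balls.
\begin{itemize}
\item The Cauchy estimates applied to $\phi$ and $\phi^{-1}$ give $\norm{d\phi(0)}\le 1$ and $\norm{d\phi(0)^{-1}}=\norm{d\phi^{-1}(0)}\le 1$. So $T=d\phi(0)$ is already an onto isometry.
\item After that, only Cartan's \emph{uniqueness} theorem is needed, for $h=T^{-1}\circ\phi$.
\end{itemize}
Your iteration argument for the uniqueness theorem is sound. The cross term $P_m(y+O(\norm{y}^m))-P_m(y)=O(\norm{y}^{2m-1})$ is absorbed because $m\ge 2$. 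The Cauchy estimates on the self-maps $h^{(k)}$ give $k\sup_{B_Y}\norm{P_m}\le 1$ for every $k$. Finally, the Taylor series at $0$ represents $h$ on the balanced set $B_Y$, so $h=\mathrm{id}$.

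Compared with the standard proof of the linearity theorem for general bounded circular domains, your version avoids the rotation step. That step applies uniqueness to $y\mapsto\phi^{-1}(e^{-i\theta}\phi(e^{i\theta}y))$ and then compares homogeneous terms. Your route is self-contained and short, at the cost of depending on the norm/ball structure; the paper's citation is briefer.

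The exploratory paragraph with $g(\lambda)=x^*(\phi(\lambda y))/\lambda$ is unused and can simply be deleted.
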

    
\begin{proof}
    (1)$\Rightarrow$(2) is clear, and it is easy to check that (4)$\Rightarrow$(1).

    $(2)\Rightarrow(3)$ Since $C_\phi$ is an onto isomorphism there is a linear and continuous map $A\colon\mathcal HL_0(B_Y)\rightarrow \mathcal HL_0(B_X)$ such that $A\circ C_\phi=Id_{\mathcal HL_0(B_X)}$ and $C_\phi \circ A=Id_{\mathcal HL_0(B_Y)}$. It is easy to see that $A$ is multiplicative. Indeed, given $f,g\in \mathcal HL_0(B_Y)$, take $h,i \in \mathcal HL_0(B_X)$ such that $C_\phi(h)=f$ and $C_\phi(i)=g$. Hence,
    \begin{equation*}
        A(f\cdot g)=A(C_\phi(h)C_\phi(i))= A(C_\phi(h\cdot i)) = h\cdot i = A(C_\phi(h))A(C_\phi(i))=A(f)A(g).
    \end{equation*}
    Since $C_\phi$ is an onto isomorphism, we have that $\widehat{\phi}$ is an onto isomorphism too (see Exercise 2.41 in~\cite{checos}).
    Thus $(\widehat{\phi}^{-1})^*=A$.
  
    Therefore, $A=C_\varphi$ for some $\varphi\in \mathcal HL_0(B_X,Y)$ with $\varphi(B_X)\subseteq B_Y$, thanks to Theorem~\ref{thm:compositionoperatorequivalence}. Hence, $$C_{Id_{B_X}}= Id_{\mathcal HL_0(B_X)} = C_\varphi \circ C_\phi=C_{\phi \circ \varphi }$$ and $$C_{Id_{B_Y}}= Id_{\mathcal HL_0(B_Y)} = C_\phi \circ C_\varphi=C_{\varphi \circ \phi }.$$ So, $\phi\circ \varphi = Id_{B_X}$, and $\varphi \circ \phi = Id_{B_Y}$, which proves (3).
    
    (3)$\Rightarrow$(4) is a known consequence of Cartan's linearity theorem, see e.g. \cite[Lemma~2.1]{Arazy}, since $\phi(0)=0$.
\end{proof}

\medskip

\section{Compactness of composition operators on \texorpdfstring{$\mathcal HL_0$}{HL0}}\label{Section:Compactness}

The compactness of composition operators has been extensively studied in the settings of Lipschitz spaces and spaces of bounded holomorphic functions.
In the Lipschitz framework, the functions $f\colon M\to N$ for which the associated composition operator $C_f\colon \Lip_0(M)\to \Lip_0(N)$ is compact were characterized in \cite{KamSch, JVVV} for certain metric spaces $M$ and $N$, and later in full generality in \cite{ACPcomp}. 
Furthermore, the compactness of $C_f$  turns out to be equivalent to several other properties, including weak compactness, strict singularity, cosingularity, and $\ell_\infty$-strict singularity
\cite{ACPcomp,Lemay}.

Along the same lines, for $\phi\in\mathcal H^\infty(B_Y,X)$ satisfying $\phi(B_Y)\subseteq B_X$, it is shown in \cite[Proposition~3]{AGL} that the composition operator $C_\phi\colon \mathcal H^\infty(B_X)\rightarrow \mathcal H^\infty(B_Y)$ is compact if and only if $\|\phi\|_\infty <1$ and $\phi(B_Y)$ is a relatively compact set in $X$. 
Moreover, whenever $\phi(B_Y)$ is relatively compact and $C_\phi$ is weakly compact, then $C_\phi$ is actually compact. 
Motivated by these results, we address the corresponding problem in our setting.

We begin with some general observations concerning compactness properties of composition operators and their preadjoints. 
Since $\overline{B}_{\mathcal G_0(V)}$ is the absolutely closed convex hull of the elementary molecules $\frac{\delta(x)-\delta(y)}{\norm{x-y}}$, we obtain the following characterization, analogous to those in \cite[Proposition 3.4]{Mujica} and \cite[Theorem 2.3]{CPJV} (see also \cite[Proposition~2.1]{ACPcomp}).  

\begin{proposition}\label{prop:compunitball}
 Let $U, V$ be bounded open subsets of  complex Banach spaces $X, Y$ containing $0$, and $\phi\in \mathcal HL_0(V, X)$ with $\phi(V)\subseteq U$. The following statements are equivalent:
\begin{itemize}
\item[(1)] $C_\phi\colon \mathcal HL_0(U)\to \mathcal HL_0(V)$ is a compact (resp. weakly compact) operator.
\item[(2)] $\widehat{\phi}\colon \mathcal G_0(V)\to \mathcal G_0(U)$ is a compact (resp. weakly compact) operator.
\item[(3)] The set 
\[\left\{\frac{\delta(\phi(y_1))-\delta(\phi(y_2))}{\norm{y_1-y_2}}: y_1, y_2\in V, y_1\neq y_2\right\}\]
is relatively compact (resp. relatively weakly compact) in $\mathcal G_0(U)$. 
\end{itemize}
If moreover $V$ is convex, they are also equivalent to:
\begin{itemize}
\item[(4)] The set
\[\left\{\widehat\phi(d\delta(y_1)(y_2)): y_1\in V, y_2\in S_Y\right\}\]
is relatively compact (resp. relatively weakly compact) in $\mathcal G_0(U)$. 
\end{itemize}
\end{proposition}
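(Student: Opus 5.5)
(1)$\Leftrightarrow$(2) follows from Lemma~\ref{lemma:compositionoperatorproperties}, which gives $C_\phi=\widehat{\phi}^*$. Schauder's theorem says an operator is compact iff its adjoint is. Gantmacher's theorem gives the same for weak compactness. No further work is needed here.

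For (2)$\Leftrightarrow$(3), denote by $M$ the set in (3). Since $\widehat\phi$ is linear and $\widehat\phi\circ\delta_Y=\delta_X\circ\phi$, we have
\[ M=\widehat\phi\left(\left\{\frac{\delta(y_1)-\delta(y_2)}{\norm{y_1-y_2}}: y_1\neq y_2\in V\right\}\right). \]
The set on the right is contained in $\overline{B}_{\mathcal G_0(V)}$. So if $\widehat\phi$ is compact (resp. weakly compact), then $M$ is relatively compact (resp. relatively weakly compact), which is (2)$\Rightarrow$(3).

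For (3)$\Rightarrow$(2), use Proposition~\ref{prop:unitballconv}: $\overline{B}_{\mathcal G_0(V)}$ is the closed absolutely convex hull of the elementary molecules. By continuity and linearity of $\widehat\phi$,
\[ \widehat\phi(\overline{B}_{\mathcal G_0(V)})\subseteq \overline{\Gamma}(M). \]
In the compact case, the closed absolutely convex hull of a relatively compact set in a Banach space is compact. This holds because $\Gamma(M)$ is the continuous image of $\overline{B}_{\ell_1^n}\times \overline{M}^n$ for finite stages, and one uses total boundedness (Mazur's theorem). In the weakly compact case, the analogous fact is the Krein--Šmulian theorem: the closed (absolutely) convex hull of a relatively weakly compact set is weakly compact. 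Note that $\mathbb T M$ is still relatively weakly compact, as the image of the weakly compact set $\mathbb T\times\overline{M}^w$ under the jointly continuous scalar multiplication. Hence $\widehat\phi$ maps the unit ball into a compact (resp. weakly compact) set.

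If $V$ is convex, (2)$\Leftrightarrow$(4) works the same way, using the second description in Proposition~\ref{prop:unitballconv}:
\[ \overline{B}_{\mathcal G_0(V)}=\cconv\{d\delta(y_1)(y_2): y_1\in V, y_2\in S_Y\}. \]
Applying $\widehat\phi$ gives $\widehat\phi(\overline{B}_{\mathcal G_0(V)})\subseteq\cconv(N)$, where $N$ is the set in (4), and $N\subseteq\widehat\phi(\overline{B}_{\mathcal G_0(V)})$. Mazur and Krein--Šmulian then finish the argument as before.

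The only step needing care is the weak compactness version of (3)$\Rightarrow$(2). There one must invoke Krein--Šmulian and make sure the balanced hull does not spoil relative weak compactness. Everything else is a formal consequence of Schauder/Gantmacher and Proposition~\ref{prop:unitballconv}.
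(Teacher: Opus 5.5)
Your proposal is correct and follows the same route as the paper, which adapts \cite[Proposition~2.1]{ACPcomp}. Schauder and Gantmacher applied to $C_\phi=\widehat{\phi}^*$ give (1)$\Leftrightarrow$(2), and the descriptions of $\overline{B}_{\mathcal G_0(V)}$ in Proposition~\ref{prop:unitballconv}, combined with Mazur and Krein--\v{S}mulian, give the remaining equivalences; your check that passing to $\mathbb{T}M$ keeps relative (weak) compactness is exactly the small adaptation the complex (absolutely convex) hull requires.
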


\begin{proof}
Just follow the proof of Proposition 2.1 in \cite{ACPcomp}, using the expressions for $\overline{B}_{\mathcal G_0(V)}$ obtained in Proposition \ref{prop:unitballconv}. 
\end{proof}

Under the assumption that $V$ is convex and $\overline{\phi(V)}\subseteq U$, we can get the following characterization. 

\begin{proposition}\label{prop:compRelComp}
     Let $U, V$ be bounded open subsets of  complex Banach spaces $X, Y$ containing $0$, and $\phi\in \mathcal HL_0(V,X)$. Assume $V$ is convex and  
   $\overline{\phi(V)}\subseteq U$. Then the following assertions are equivalent:
    \begin{itemize}
\item[(1)] $C_\phi\colon \mathcal HL_0(U)\to \mathcal HL_0(V)$ is compact. 
\item[(2)] $\phi(V)$ is relatively compact and $d\phi(V)(S_Y)$ is relatively compact.
    \end{itemize}
\end{proposition}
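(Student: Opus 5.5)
We use Proposition~\ref{prop:compunitball}: since $V$ is convex, compactness of $C_\phi$ is equivalent to relative compactness in $\mathcal G_0(U)$ of
\[\mathcal S:=\left\{\widehat\phi(d\delta(y_1)(y_2)): y_1\in V,\ y_2\in S_Y\right\}.\]
The key identity is $\widehat\phi\circ d\delta_Y(y_1)=d\delta_X(\phi(y_1))\circ d\phi(y_1)$. This follows by differentiating $\widehat\phi\circ\delta_Y=\delta_X\circ\phi$ and using that $\widehat\phi$ is linear and bounded. Hence
\[\mathcal S=\{d\delta_X(\phi(y_1))(d\phi(y_1)(y_2)) : y_1\in V,\ y_2\in S_Y\}.\]
So we must study the map $\Psi\colon U\times X\to\mathcal G_0(U)$, $\Psi(x,v)=d\delta_X(x)(v)$. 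It is linear in $v$ with $\|\Psi(x,v)\|\le\|v\|$, since $\|d\delta(x)\|\le L(\delta)=1$.

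For (2)$\Rightarrow$(1), put $K=\overline{\phi(V)}$, which is compact and contained in $U$ by hypothesis. Put $M=\overline{d\phi(V)(S_Y)}$, which is compact and bounded by $L(\phi)$. Since $\delta_X$ is holomorphic on $U$, the map $x\mapsto d\delta_X(x)\in\mathcal L(X,\mathcal G_0(U))$ is continuous in operator norm on $U$. Therefore $\Psi$ is jointly continuous on $U\times X$, using
\[\|\Psi(x,v)-\Psi(x',v')\|\le\|d\delta(x)-d\delta(x')\|\,\|v\|+\|v-v'\|.\]
Thus $\Psi(K\times M)$ is compact and contains $\mathcal S$, so $C_\phi$ is compact. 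This is exactly where $\overline{\phi(V)}\subseteq U$ is needed: $d\delta$ need not extend continuously to $\partial U$.

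For (1)$\Rightarrow$(2), relative compactness of $\phi(V)$ follows from compactness of $\widehat\phi$ (Proposition~\ref{prop:compunitball}). Indeed $\phi(V)=T_{id}(\widehat\phi(\delta_Y(V)))$ with $T_{id}\colon\mathcal G_0(U)\to X$ bounded, and $\delta_Y(V)$ is bounded since $V$ is. For $d\phi(V)(S_Y)$, apply $T_{id}$ to $\mathcal S$. We have $T_{id}\circ\delta_X=id$, so differentiating gives $T_{id}\circ d\delta_X(x)=id_X$. Hence $T_{id}(\mathcal S)=d\phi(V)(S_Y)$, which is therefore relatively compact. This direction does not even need $\overline{\phi(V)}\subseteq U$.

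The main obstacle is justifying rigorously the operator-norm continuity of $x\mapsto d\delta_X(x)$ on $U$ and the chain rule for $\widehat\phi\circ\delta_Y$. Both hold because $\delta_X\colon U\to\mathcal G_0(U)$ is a genuinely (norm-)holomorphic map, as noted in the Preliminaries via Dunford's theorem, and holomorphic maps are $C^1$ in the norm sense. If one wanted uniformity instead, the Cauchy estimates on a ball of radius $\operatorname{dist}(K,X\setminus U)/2$ around points of $K$ give a uniform Lipschitz bound for $d\delta$ near $K$, which yields the joint continuity on $K\times M$ directly.
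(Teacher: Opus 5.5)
Your proposal is correct and follows essentially the same route as the paper. Both arguments use the chain-rule identity $\widehat\phi(d\delta(y_1)(y_2))=d\delta(\phi(y_1))(d\phi(y_1)(y_2))$, the joint continuity of $(x,v)\mapsto d\delta(x)(v)$ on the compact set $\overline{\phi(V)}\times\overline{d\phi(V)(S_Y)}\subseteq U\times X$, and, for the converse, the barycenter map $\beta_X=T_{id}$ together with $\beta_X\circ d\delta_X(x)=id_X$, all combined with Proposition~\ref{prop:compunitball}(4).
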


\begin{proof}
(2) $\Rightarrow$ (1). For $y_1\in V$ and $y_2\in S_Y$ we have
\[ \widehat{\phi}(d\delta(y_1)(y_2))=d(\widehat{\phi}\circ \delta)(y_1)(y_2)=d(\delta\circ \phi)(y_1)(y_2) = d\delta(\phi(y_1))(d\phi(y_1)(y_2)).\]    
Therefore,
\begin{align*}
\left\{ \widehat{\phi}(d\delta(y_1)(y_2)):y_1\in V, y_2\in S_Y\right\} \subseteq d\delta(\overline{\phi(V)})(\overline{d\phi(V)(S_Y)}),
\end{align*}
which is compact since it is the image of the compact set $\overline{\phi(V)}\times \overline{d\phi(V)(S_Y)}\subseteq U\times X$ under the continuous map $(y_1,y_2)\mapsto d\delta(y_1)(y_2)$.
By Proposition \ref{prop:compunitball}.(4), we conclude that $C_\phi$ is compact.

(1) $\Rightarrow$ (2). We have the commutative diagram 
\[\begin{tikzcd}[ampersand replacement=\&]
	V \&\& X \\
	{\mathcal G_0(V)} \&\& {\mathcal G_0(U)}
	\arrow["\phi", from=1-1, to=1-3]
	\arrow["{\delta_Y}"', from=1-1, to=2-1]
	\arrow["{\widehat{\phi}}"', from=2-1, to=2-3]
	\arrow["{\beta_X}"', from=2-3, to=1-3]
\end{tikzcd}\]
where $\beta_X:=T_{Id_{U}}$ is the canonical quotient onto $X$ such that $\beta_X\circ \delta_X=Id_{U}$. We have $\phi(V)=\beta_X\circ \widehat{\phi}\circ \delta_Y(V)$. 
Since $\delta_Y(V)$ is bounded, and $\widehat{\phi}$ is compact, we get that $\phi(V)$ is relatively compact. 

Also, taking differentials on $\beta_X\circ \delta_X=I_{U}$ we have that $\beta_X(d\delta_X(x_1)(x_2))=x_2$ for each $x_1\in U$ and $x_2\in X$. Thus
\begin{align*} d\phi(V)(S_Y)
&= \beta_X(\{d\delta_X(\phi(y_1))(d\phi(y_1)(y_2)): y_1\in V, y_2\in S_Y\})
\end{align*}
Thanks to Proposition \ref{prop:compunitball}, we conclude that $d\phi(V)(S_Y)$ is relatively compact. 
\end{proof}

Let us point out that the assumption $\overline{\phi(V)}\subseteq U$ is needed in the proof of (2)~$\Rightarrow$~(1). Although there  are situations in which (1) already implies that $\overline{\phi(V)}\subseteq U$ (see Corollary~\ref{cor:compactfindimcase} and Lemma~\ref{lemma:Tfixeslinfty}), it is not clear to us whether this must hold in general.

Under the same hypothesis as in the previous proposition, we also have  some information about the weak compactness of $C_\phi$.

\begin{proposition}\label{prop:compRelWComp}
     Let $U, V$ be bounded open subsets of  complex Banach spaces $X, Y$ containing $0$, and $\phi\in \mathcal HL_0(V,X)$. Assume $V$ is convex and  
   $\overline{\phi(V)}\subseteq U$. 
    \begin{itemize}
\item[(a)] If $C_\phi\colon \mathcal HL_0(U)\to \mathcal HL_0(V)$ is weakly compact, then $\phi(V)$ and $d\phi(V)(S_Y)$ are relatively weakly compact sets. 
\item[(b)] If $\phi(V)$ is relatively compact and $d\phi(V)(S_Y)$ is relatively weakly compact, then \\ $C_\phi\colon \mathcal HL_0(U)\to \mathcal HL_0(V)$ is weakly compact.
    \end{itemize}   
\end{proposition}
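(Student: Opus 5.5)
The plan is to mirror the proof of Proposition~\ref{prop:compRelComp}, replacing norm compactness with weak compactness and using Proposition~\ref{prop:compunitball} in its ``weakly compact'' version.

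For (a), assume $C_\phi$ is weakly compact. By Proposition~\ref{prop:compunitball}, $\widehat\phi$ is weakly compact. We use the same commutative diagram as before, with the canonical quotient $\beta_X=T_{Id_U}\colon \mathcal G_0(U)\to X$. Then
\[
\phi(V)=\beta_X\widehat\phi(\delta_Y(V)).
\]
Since $\delta_Y(V)$ is bounded and $\widehat\phi$ is weakly compact, the set $\widehat\phi(\delta_Y(V))$ is relatively weakly compact. A bounded linear map sends relatively weakly compact sets to relatively weakly compact sets, so $\phi(V)$ is relatively weakly compact. Differentiating $\beta_X\circ\delta_X=Id_U$ gives $\beta_X(d\delta_X(x_1)(x_2))=x_2$. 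The chain rule computation from the previous proof then yields
\[
d\phi(V)(S_Y)=\beta_X\bigl(\{\widehat\phi(d\delta_Y(y_1)(y_2)):y_1\in V,\ y_2\in S_Y\}\bigr).
\]
The inner set is contained in $\widehat\phi(\overline B_{\mathcal G_0(V)})$ by Proposition~\ref{prop:unitballconv}, so it is relatively weakly compact. Hence $d\phi(V)(S_Y)$ is relatively weakly compact. This part does not even use $\overline{\phi(V)}\subseteq U$.

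For (b), by Proposition~\ref{prop:compunitball}(4) it suffices to show that
\[
\{\widehat\phi(d\delta(y_1)(y_2))\}=\{d\delta(\phi(y_1))(d\phi(y_1)(y_2)):y_1\in V,\ y_2\in S_Y\}
\]
is relatively weakly compact in $\mathcal G_0(U)$. Set $K=\overline{\phi(V)}$, which is compact and contained in $U$, and let $W$ be the weakly closed convex hull of $d\phi(V)(S_Y)$, which is weakly compact by Krein--\v{S}mulian. The set above is contained in the image of $K\times W$ under the map $\Psi(x_1,x_2)=d\delta(x_1)(x_2)$. This map is linear in $x_2$ and depends continuously on $x_1$: the map $x_1\mapsto d\delta(x_1)\in\mathcal L(X,\mathcal G_0(U))$ is holomorphic, hence norm continuous on $U$.

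The key step is to show that $\Psi(K\times W)$ is relatively weakly compact, and it is the main obstacle. My first approach is Eberlein--\v{S}mulian. Take a sequence $(x_1^n,x_2^n)$ in $K\times W$. Pass to a subsequence with $x_1^n\to x_1$ in norm and $x_2^n\to x_2$ weakly. Then
\[
d\delta(x_1^n)(x_2^n)-d\delta(x_1)(x_2)=\bigl(d\delta(x_1^n)-d\delta(x_1)\bigr)(x_2^n)+d\delta(x_1)(x_2^n-x_2).
\]
The first term tends to $0$ in norm, because $\norm{d\delta(x_1^n)-d\delta(x_1)}\to0$ and $(x_2^n)$ is bounded. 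The second term tends to $0$ weakly, since $d\delta(x_1)$ is a bounded linear operator and therefore weak-to-weak continuous. So every sequence in $\Psi(K\times W)$ has a weakly convergent subsequence, and Eberlein--\v{S}mulian gives relative weak compactness.

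The compactness of $\phi(V)$, rather than mere weak compactness, is essential exactly in the first term. The hypothesis $\overline{\phi(V)}\subseteq U$ is needed so that $d\delta$ is defined and continuous on $K$. An alternative route would invoke Lemma~\ref{lemma:comptensorop}(c), but the direct sequential argument above seems cleaner.
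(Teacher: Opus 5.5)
Your proposal is correct and follows the paper's proof. Part (a) uses the same factorization $\phi=\beta_X\circ\widehat\phi\circ\delta_Y$ and its differentiated form, and part (b) uses the same containment of $\{\widehat\phi(d\delta(y_1)(y_2))\}$ in $d\delta(\overline{\phi(V)})(\overline{d\phi(V)(S_Y)}^w)$ together with the same two-term splitting. The differences are only cosmetic: you argue with sequences via Eberlein--\v{S}mulian and use operator-norm continuity of $x_1\mapsto d\delta(x_1)$, whereas the paper tests against each $f\in\mathcal HL_0(U)$ to get joint (norm)$\times$(weak)-to-weak continuity of $(x_1,x_2)\mapsto d\delta(x_1)(x_2)$ and then invokes compactness of the product set.
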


In spite of the previous statements, note that there exists $\phi$ such that both $\phi(V)$ and $d\phi(V)(S_Y)$ are relatively weakly compact while $C_\phi^L$ fails to be weakly compact, see Example \ref{ex:ell2x/2}. 

Observe also that if $X$ is reflexive, then the condition that $d\phi(V)(S_Y)$ is relatively weakly compact in Proposition \ref{prop:compRelWComp} (b) is automatically satisfied. 

\begin{proof}
   (a) Assume $C_\phi\colon \mathcal HL_0(U)\to \mathcal HL_0(V)$ is weakly compact. As in the proof of Proposition \ref{prop:compRelComp}, we have $\phi=\beta_X\circ \widehat{\phi}\circ \delta_Y$ and so  it follows that $\phi(V)$ is relatively weakly compact. Analogously, $d\phi(V)(S_Y)=\beta_X(\{\widehat{\phi}(d\delta(y_1)(y_2)): y_1\in V, y_2\in S_Y\})$ is relatively weakly compact as being the image of a relatively weakly compact set by a linear map.

   (b) We will show that the map $d\delta\colon (U, \norm{\cdot})\times (X, w)\to (\mathcal G_0(U),w)$ given by $(x_1,x_2)\mapsto d\delta(x_1)(x_2)$ is continuous. Indeed, assume that $u_\alpha\to u$ in norm and $x_\alpha\to x$ weakly.  For $f\in \mathcal HL_0(U)$ we have that $df\colon (U, \norm{\cdot})\to X^*$ is continuous. Thus $\norm{df(u_\alpha)-df(u)}\to 0$. It follows that
   \begin{align*}
    |\langle f, d\delta(u_\alpha)(x_\alpha)-d\delta(u)(x)\rangle| &= |df(u_\alpha)(x_\alpha)-df(u)(x)|\\
    &\leq \norm{df(u_\alpha)-df(u)}\norm{x_\alpha} + |df(u)(x_\alpha-x)|\to 0.
   \end{align*}
   This proves the claim. Now, the same argument as in the proof of the previous proposition shows that 
\[   
\left\{ \widehat{\phi}(d\delta(y_1)(y_2)):y_1\in V, y_2\in S_Y\right\} \subseteq d\delta\left(\overline{\phi(V)}\right)\left(\overline{d\phi(V)(S_Y)}^w\right)
\]
and we may apply Proposition \ref{prop:compunitball} to finish the proof. 
\end{proof}

In this setting, given $\phi\in \mathcal HL_0(V,X)$ with $\phi(V)\subseteq U$  we may consider the composition operator acting on $\mathcal HL_0$ spaces: $C_\phi \colon \mathcal HL_0(U)\rightarrow \mathcal HL_0(V)$ as well as  on $\mathcal H^\infty$ spaces: $C_\phi\colon \mathcal H^\infty(U)\rightarrow \mathcal H^\infty(V)$. To avoid ambiguity, we will use the notations $C_\phi^L$ and $C_\phi^\infty$ whenever necessary.
As we shall see, these two settings are closely related. To study this connection, we also consider the composition operator $C_\phi^{X^*}\colon \mathcal H^\infty(U, X^*)\to \mathcal H^\infty(V, X^*)$. 

\begin{lemma}\label{lemma:factor} Let $U, V$ be bounded open subsets of  complex Banach spaces $X, Y$ containing $0$, and $\phi\in \mathcal HL_0(V, X)$ with $\phi(V)\subseteq U$.  
Then, the following diagram commutes:

\[\begin{tikzcd}
	{\mathcal HL_0(U)} &&&& {\mathcal HL_0(V)} \\
	{\mathcal H^\infty(U,X^*)} && {\mathcal H^\infty(V,X^*)} && {\mathcal H^\infty(V, Y^*)}
	\arrow["{C_\phi^L}", from=1-1, to=1-5]
	\arrow["d"', from=1-1, to=2-1]
	\arrow["d", from=1-5, to=2-5]
	\arrow["{C_\phi^{X^*}}"', from=2-1, to=2-3]
	\arrow["M"', from=2-3, to=2-5]
\end{tikzcd}\]
where $d$ maps each function to its differential, and $M\colon \mathcal H^\infty(V, X^*)\to \mathcal H^\infty(V, Y^*)$ is given by $M(g)(y)= g(y)\circ d\phi(y)$. 
In particular if V is convex then
\[
C_\phi^L=d^{-1}\circ M \circ C_\phi^{X^*}\circ d
\]
where $d^{-1}$ is the isometry defined on $d(\HL_0(V))$ equipped with the norm induced by $\mathcal H^\infty(V,Y^*)$.

\end{lemma}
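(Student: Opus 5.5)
The plan is to check the commutativity pointwise via the chain rule, then verify that every map in the diagram is well defined and bounded, and finally deduce the factorization from the isometric property of $d$ on convex domains.

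\emph{Commutativity.} Fix $f\in\mathcal HL_0(U)$. Since $f\circ\phi$ is a composition of holomorphic maps, the chain rule for Fréchet derivatives gives, for every $y\in V$,
\[
d(C_\phi^L f)(y)=d(f\circ\phi)(y)=df(\phi(y))\circ d\phi(y).
\]
On the other side, $C_\phi^{X^*}(df)(y)=df(\phi(y))$. Applying $M$ to this yields $df(\phi(y))\circ d\phi(y)$, so the two paths around the diagram coincide.

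\emph{Well-definedness of the maps.} Each map must land in the stated space.
\begin{itemize}
\item[(i)] $d\colon\mathcal HL_0(U)\to\mathcal H^\infty(U,X^*)$ is well defined. The map $df$ is holomorphic, since the derivative of a holomorphic map is holomorphic. It is bounded by $L(f)$, using the elementary estimate $\norm{df(x_1)(x_2)}\le L(f)\norm{x_2}$ recalled before \eqref{eq:norm Lipschitz-differential on U}, which requires no convexity.
\item[(ii)] $C_\phi^{X^*}$ is bounded with norm at most $1$, since $g\circ\phi$ is holomorphic and $\sup_V\norm{g\circ\phi}\le\sup_U\norm{g}$.
\item[(iii)] For $M$: the map $d\phi\colon V\to\mathcal L(Y,X)$ is holomorphic and bounded by $L(\phi)$. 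The composition map $\mathcal L(X,\C)\times\mathcal L(Y,X)\to\mathcal L(Y,\C)$, $(A,B)\mapsto A\circ B$, is continuous and bilinear. Hence $y\mapsto g(y)\circ d\phi(y)$ is holomorphic, and $\norm{M(g)}_\infty\le L(\phi)\norm{g}_\infty$. Linearity of $M$ is clear.
\end{itemize}

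\emph{The factorization for convex $V$.} When $V$ is convex, \eqref{eq:norm Lipschitz-differential on U} shows that $d\colon\mathcal HL_0(V)\to\mathcal H^\infty(V,Y^*)$ is an isometry onto its image. It is also injective, because $h(0)=0$ and $V$ is convex, so $h$ is recovered from $dh$ by integrating along segments from $0$. Therefore $d^{-1}$ is defined on $d(\mathcal HL_0(V))$ and is an isometry. The commutativity just proved shows that $M\circ C_\phi^{X^*}\circ d$ takes values in $d(\mathcal HL_0(V))$, and applying $d^{-1}$ gives $C_\phi^L=d^{-1}\circ M\circ C_\phi^{X^*}\circ d$.

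The only mildly delicate point is the holomorphy of $M(g)$. This follows from the composition of a holomorphic map into a product with a continuous bilinear map, so no real obstacle is expected.
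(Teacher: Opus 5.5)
Your proof is correct and follows the paper's argument. Commutativity is the chain rule $d(f\circ\phi)(y)=df(\phi(y))\circ d\phi(y)$, and boundedness of $M$ comes from $\norm{M(g)(y)}\le\norm{g(y)}\,\norm{d\phi(y)}\le L(\phi)\norm{g}_\infty$. Your additional checks are details the paper leaves implicit: holomorphy of $df$ and of $M(g)$, using only $\sup_{y\in V}\norm{d\phi(y)}\le L(\phi)$ so that no convexity is needed, and deriving the factorization from the isometry \eqref{eq:norm Lipschitz-differential on U} for convex $V$.
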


\begin{proof}
Note that for $g\in \mathcal H^\infty(V, X^*)$ and $y\in V$ we have \[\norm{M(g)(y)} \leq \norm{g(y)}_{X^*} \norm{d\phi(y)}\leq \norm{d\phi}_\infty\cdot \norm{g}_\infty=L(\phi)\norm{g}_\infty,\] so $M$ is bounded with $\|M\|\le L(\phi)$. 

Now, for $f\in \mathcal HL_0(U)$ and $y\in V$ we have
\begin{align*} (d\circ C_\phi^L)(f)(y)&= d(f\circ \phi)(y)= df(\phi(y))\circ d\phi(y)\\& = M(df\circ \phi)(y)
= \left(M \circ C_\phi^{X^*} \circ d\right)(f)(y),
\end{align*}
that is, the diagram is commutative.
\end{proof}

\begin{corollary}\label{cor:ideals}
Assume that $V$ is convex and  let $\mathcal I$ be 
a class of operators satisfying the ideal property.
    If $C_\phi^{X^*} \in \mathcal I$ then $C_\phi^L \in  {\mathcal I}$.
\end{corollary}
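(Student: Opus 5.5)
The plan is to read Corollary~\ref{cor:ideals} off the factorization in Lemma~\ref{lemma:factor}. Since $V$ is convex, that lemma gives
\[
C_\phi^L = d^{-1}\circ M\circ C_\phi^{X^*}\circ d .
\]
If each outer factor is a bounded linear operator between Banach spaces, the ideal property of $\mathcal I$ applied to $C_\phi^{X^*}\in\mathcal I$ gives $C_\phi^L\in\mathcal I$.

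The left factor $d\colon \mathcal HL_0(U)\to\mathcal H^\infty(U,X^*)$ is bounded and linear, since $\norm{df}_\infty\le L(f)$; convexity of $U$ is not needed here. The operator $M\colon\mathcal H^\infty(V,X^*)\to\mathcal H^\infty(V,Y^*)$ is bounded with $\norm{M}\le L(\phi)$, as shown in the proof of Lemma~\ref{lemma:factor}. The remaining factor is $d^{-1}$, which is only defined on $d(\mathcal HL_0(V))$. Because $V$ is convex, \eqref{eq:norm Lipschitz-differential on U} says $d\colon\mathcal HL_0(V)\to\mathcal H^\infty(V,Y^*)$ is an isometry. Hence its range $W:=d(\mathcal HL_0(V))$ is closed, and $d^{-1}\colon W\to\mathcal HL_0(V)$ is an isometric isomorphism, in particular bounded.

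The only real obstacle is that $d^{-1}$ is not defined on all of $\mathcal H^\infty(V,Y^*)$, so the ideal property does not apply verbatim to the displayed composition. I would handle this by corestricting. By the commutative diagram of Lemma~\ref{lemma:factor}, $M\circ C_\phi^{X^*}\circ d$ takes values in $W$. So I would write
\[
C_\phi^L = d^{-1}\circ J^{-1}\circ\bigl(M\circ C_\phi^{X^*}\circ d\bigr),
\]
where $J\colon W\hookrightarrow\mathcal H^\infty(V,Y^*)$ is the inclusion. The ideal property gives $S:=M\circ C_\phi^{X^*}\circ d\in\mathcal I$, viewed as an operator into $\mathcal H^\infty(V,Y^*)$.

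It then remains to check that the corestriction of $S$ to the closed subspace $W$ still lies in $\mathcal I$. This holds for all the classes used in the paper: compact, weakly compact (since $W$ is closed, weak closures computed in $W$ and in the big space agree), and $Z$-strictly singular or not fixing complemented copies of $\ell_\infty$ (by the injectivity of $\ell_\infty$, as in Figure~\ref{fig:Ideals}). I would state this as the standing convention on $\mathcal I$ and, if necessary, verify it directly for these classes.

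Once the corestriction is in $\mathcal I$, composing on the left with the bounded map $d^{-1}$ finishes the proof.
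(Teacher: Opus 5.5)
Your proposal is the paper's own argument: the corollary is read off from the factorization $C_\phi^L=d^{-1}\circ M\circ C_\phi^{X^*}\circ d$ at the end of Lemma~\ref{lemma:factor}, and the paper gives no further proof.

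Your corestriction step is a real refinement and not pedantry. The paper applies the ideal property even though $d^{-1}$ is defined only on $W=d(\mathcal HL_0(V))$. Closure under corestriction does not follow from the ideal property alone. For example, the class of operators factoring through $\ell_\infty$ has the ideal property and contains the inclusion $c_0\hookrightarrow\ell_\infty$. It does not contain the corestriction $Id_{c_0}$, since $c_0$ is not complemented in $\ell_\infty$. So the argument, yours or the paper's, genuinely needs your extra convention on $\mathcal I$. Alternatively it needs a bounded extension of $d^{-1}$ to all of $\mathcal H^\infty(V,Y^*)$, i.e.\ complementation of $W$. That is automatic when $\dim Y=1$, because then $d$ is onto.

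Your convention holds for compact and weakly compact operators, which are the only classes to which the paper actually applies the corollary. It also holds for $Z$-strictly singular operators. There you do not need the injectivity of $\ell_\infty$: whether $T|_E$ is an isomorphism onto its image does not depend on the codomain, so corestriction is trivially harmless.
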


\begin{remark}\label{rem:adjoints} Recall that the linearization process $f\mapsto T_f$ defines an isometry from $\mathcal H^\infty (U, X^*)$ onto $\mathcal L(\mathcal G^\infty(U), X^*)= (\mathcal G^\infty(U)\pten X)^*$. The operators that appear in the diagram in Lemma \ref{lemma:factor} are adjoint operators. Indeed, one can easily check that:
\begin{itemize}
    \item $d\colon \mathcal HL_0(U)\to\mathcal H^\infty(U,X^*)$ is the adjoint of the quotient operator  
    \begin{align*}
d_*\colon \mathcal G^\infty(U)\pten X&\to \mathcal G_0(U)\\
\delta(x)\otimes u&\mapsto d\delta(x)(u)\end{align*} 
(see \cite[Proposition 2.7]{ADGLM}). 
    \item $C_\phi^{X^*}\colon \mathcal H^\infty(U, X^*)\to \mathcal H^\infty(V, X^*)$ is the adjoint of the operator 
    \begin{align*}
        \widehat{\phi}\otimes I\colon \mathcal G^\infty(V)\pten X\to \mathcal G^\infty(U)\pten X.
    \end{align*}
Indeed, denoting $T=\widehat{\phi}\otimes I$, we have:
\begin{align*} 
    \duality{T^*f,\delta(y)\otimes x}
    &= \langle f, T(\delta(y)\otimes x)\rangle = \duality{ f, \widehat{\phi}(\delta(y))\otimes x} = \langle f, \delta(\phi(y))\otimes x)\rangle \\
    &= 
   \duality{f(\phi(y)),x}
    = \duality{(C_\phi^{X^*}f)(y),x}.
    \end{align*}
\item $M\colon \mathcal H^\infty(V, X^*)\to \mathcal H^\infty (V,Y^*)$ is the adjoint of the operator 
\begin{align*}
M_*\colon\mathcal G^\infty(V)\pten Y&\to  \mathcal G^\infty (V)\pten X\\
\delta(y)\otimes v &\mapsto \delta(y)\otimes d\phi(y)(v)=\delta(y)\otimes \left(\widehat{d\phi}(\delta(y))(v)\right)
\end{align*}
\end{itemize}
Predualizing, we get the commutative diagram:
\[\begin{tikzcd}[ampersand replacement=\&]
	{\mathcal G_0(V)} \&\&\&\& {\mathcal G_0(U)} \\
	{\mathcal G^\infty(V)\widehat{\otimes}_\pi Y} \&\& {\mathcal G^\infty(V)\widehat{\otimes}_\pi X} \&\& {\mathcal G^\infty(U)\widehat{\otimes}_\pi X}
	\arrow["{\widehat{\phi}}", from=1-1, to=1-5]
	\arrow["{d_*}", from=2-1, to=1-1]
	\arrow["{M_*}", from=2-1, to=2-3]
	\arrow["{\widehat{\phi}\otimes I}", from=2-3, to=2-5]
	\arrow["{d_*}"', from=2-5, to=1-5]
\end{tikzcd}\]
\end{remark}

We isolate some typical situations where we can say more about (weak) compactness of  the operator $C_\phi^{X^*}\colon \mathcal H^\infty(U, X^*)\to \mathcal H^\infty(V, X^*)$ in terms of the same property of the operator $C_\phi^\infty$.

\begin{lemma}\label{lemma:CphiLvsInfty} Let $U, V$ be bounded open subsets of  complex Banach spaces $X, Y$, and $\phi\in \mathcal HL(V, X)$ with $\phi(V)\subseteq U$, $\phi\neq 0$.  Then,
\begin{itemize}
\item[(a)] $C_\phi^{X^*}$ is compact if and only if $C_\phi^\infty$ is compact and $\dim(X)<+\infty$.
\item[(b)] If $C_\phi^{X^*}$ is weakly compact, then $C_\phi^{\infty}$ is weakly compact and $X$ is reflexive.
\item[(c)] If $C_\phi^\infty$ is compact and $X$ is reflexive, then $C_\phi^{X^*}$ is weakly compact.
\item[(d)] If  $C_\phi^\infty$ is weakly compact and $X$ is finite dimensional, then $C_\phi^{X^*}$ is weakly compact.
\end{itemize}
\end{lemma}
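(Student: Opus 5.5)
The plan is to work entirely on the predual side, using Remark~\ref{rem:adjoints}: $C_\phi^{X^*}$ is the adjoint of $\widehat{\phi}\otimes I\colon \mathcal G^\infty(V)\pten X\to \mathcal G^\infty(U)\pten X$, and $C_\phi^\infty$ is the adjoint of $\widehat{\phi}\colon\mathcal G^\infty(V)\to\mathcal G^\infty(U)$. By Schauder's and Gantmacher's theorems, (weak) compactness of an operator is equivalent to (weak) compactness of its preadjoint. So every item becomes a statement about the tensor operator $\widehat{\phi}\otimes I_X$, which we handle with Lemma~\ref{lemma:comptensorop}. We need $\widehat{\phi}\neq 0$; this holds because $\phi\neq 0$ gives some $y$ with $\delta(\phi(y))\neq 0$.

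For (a), suppose $C_\phi^{X^*}$ is compact. Then $\widehat{\phi}\otimes I_X$ is compact, and Lemma~\ref{lemma:comptensorop}(a), applied to the class of compact operators, shows that both $\widehat{\phi}$ and $I_X$ are compact. Hence $C_\phi^\infty=\widehat{\phi}^*$ is compact, and compactness of $I_X$ forces $\dim X<\infty$. Conversely, if $C_\phi^\infty$ is compact and $\dim X<\infty$, then $\widehat{\phi}$ and $I_X$ are both compact. Lemma~\ref{lemma:comptensorop}(b) then makes $\widehat{\phi}\otimes I$ compact, and dualizing gives compactness of $C_\phi^{X^*}$.

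For (b), suppose $C_\phi^{X^*}$ is weakly compact. Then $\widehat{\phi}\otimes I$ is weakly compact, and Lemma~\ref{lemma:comptensorop}(a), now with the class of weakly compact operators, shows that $\widehat{\phi}$ and $I_X$ are weakly compact. This means $C_\phi^\infty$ is weakly compact and $X$ is reflexive.

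For (c), compactness of $C_\phi^\infty$ gives compactness of $\widehat{\phi}$, and reflexivity of $X$ makes $I_X$ weakly compact. Lemma~\ref{lemma:comptensorop}(c) then gives weak compactness of $\widehat{\phi}\otimes I_X$, and Gantmacher's theorem transfers this to $C_\phi^{X^*}$.

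For (d), which is the only step needing a different argument, we observe that when $\dim X=n<\infty$ there is an isomorphism $\mathcal H^\infty(U,X^*)\cong \mathcal H^\infty(U)^n$, obtained by taking coordinates in a basis of $X^*$. Under this isomorphism $C_\phi^{X^*}$ becomes the diagonal operator $C_\phi^\infty\oplus\cdots\oplus C_\phi^\infty$, and a finite direct sum of weakly compact operators is weakly compact. The same direct-sum argument also gives an alternative proof of the reverse direction in (a). Alternatively, one can use the fact that $\mathcal G^\infty(V)\pten X$ is isomorphic to a finite sum of copies of $\mathcal G^\infty(V)$. We expect no serious obstacle; the main care needed is in justifying the predual identifications of Remark~\ref{rem:adjoints} and in checking that the isomorphisms commute with the composition operators.
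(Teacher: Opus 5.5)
Your proof is correct. For (a)--(c) it is exactly the paper's argument: the paper's proof is a one-liner saying that $C_\phi^{X^*}=(\widehat{\phi}\otimes I)^*$ and $C_\phi^\infty=\widehat{\phi}^*$, so everything follows from Lemma~\ref{lemma:comptensorop} together with the Schauder and Gantmacher theorems.

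The only divergence is in (d).
\begin{itemize}
\item The paper again obtains (d) from Lemma~\ref{lemma:comptensorop}. It implicitly uses part (c) with the two factors interchanged, via the canonical isometry $Z\pten W\cong W\pten Z$: here $I_X$ is compact since $\dim X<\infty$, and $\widehat{\phi}$ is weakly compact.
\item You bypass tensor products and identify $C_\phi^{X^*}$, in coordinates with respect to a basis of $X^*$, with the direct sum of $n$ copies of $C_\phi^\infty$.
\end{itemize}
Your version is more elementary and does not depend on Racher's result behind Lemma~\ref{lemma:comptensorop}(c). The paper's version has the advantage of treating all four items uniformly through the same predual factorization.

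One small correction: $\widehat{\phi}\neq 0$ does not actually use the hypothesis $\phi\neq 0$. In $\mathcal G^\infty(U)$ every evaluation $\delta(x)$ has norm one, because constants belong to $\mathcal H^\infty(U)$. Hence $\widehat{\phi}(\delta(y))=\delta(\phi(y))\neq 0$ for every $y\in V$.
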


\begin{proof}
Consider $\widehat{\phi}\colon \mathcal G^\infty(V)\to\mathcal G^\infty(U)$. We have already mentioned that $C_\phi^{X^*}=(\widehat{\phi}\otimes I)^*$. Thus the result follows from Lemma \ref{lemma:comptensorop} and the theorems of Gantmacher and  Schauder.
\end{proof}

\subsection{The scalar case \texorpdfstring{$X=Y=\C$}{X=Y=C}}
Shapiro~\cite{shapiro} showed that $C_\phi^L : \mathcal{H}L_0(\disk)\rightarrow \mathcal HL_0(\disk)$ is compact if and only if $\norm{\phi}_\infty<1$ (see also \cite{BM05} for an extension to other subsets of $\mathbb C$).
Together with the previously cited~\cite[Proposition~3]{AGL}, it is  readily seen that this happens if and only if $C_\phi^\infty\colon \mathcal H^\infty(\disk)\to \mathcal H^\infty(\disk)$ is (weakly) compact.  
Further, this is equivalent to the fact that $C_\phi^\infty\colon \mathcal H^\infty(\disk)\to \mathcal H^\infty(\disk)$ fixes no copy of $\ell_\infty$
among several other related properties, see Figure~\ref{fig:Ideals}. 
This has been proved by Bonet, Doma\'nski and Lindstr\"om~\cite{BDL} but it follows in this setting also from a general result of Bourgain~\cite[Theorem~1]{Bourgain} which claims the following: \emph{Let $Y$ be any Banach space and let $T\colon \mathcal H^\infty(\disk)\to Y$ be non-weakly-compact. Then $T$ fixes a copy of $\ell_\infty$}. 
Since the spaces $\HL_0(\disk)$ and $\mathcal H^\infty(\disk)$ are (isometrically) isomorphic the same is true for operators $T\colon\HL_0(\disk)\to Y$.

We will strengthen Shapiro's result showing that if $C_\phi^L$ fixes no copy of $\ell_\infty$ then $\norm{\phi}_\infty<1$. To this end, we need some auxiliary results. The first one is a classical theorem of Denjoy and Wolff which can be found in \cite{CowenPommerenke}. 

For a holomorphic map $\phi\colon \disk\rightarrow \disk$, we say that $z\in \overline{\disk}$ is a \emph{fixed point} if $\displaystyle\lim_{\xi \to1^{-}}\phi(\xi z)=z$. Clearly, if $z\in \disk$ this is equivalent to saying that $\phi(z)=z$, but when $z\in\torus$ this is subtly different from being a fixed point in the usual sense, since we cannot guarantee that $\phi$ has a continuous extension to the whole $\overline{\disk}$. 

Furthermore, if $z\in \overline{\disk}$ is a fixed point of $\phi$, then the limit  $\displaystyle\lim_{\xi\to 1^-} \phi'(\xi z)$ 
exists as a consequence of the Julia-Wolff-Carathéodory theorem (see \cite{CowenPommerenke}).

\begin{theorem}[Denjoy-Wolff]\label{thm:denjoy-wolff}
    If $\phi\colon \disk \rightarrow \disk$ is holomorphic and it is not the identity, then $\phi$ has a unique fixed point  $a\in \overline{\disk}$ such that $\displaystyle\lim_{\xi \to 1^-} |\phi'(\xi a)|\leq1$.
\end{theorem}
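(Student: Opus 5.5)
The statement is the classical Denjoy--Wolff theorem, cited from \cite{CowenPommerenke}, so I would only sketch the standard argument, treating the boundary angular-derivative facts (Julia--Wolff--Carath\'eodory) as given. The split is by whether $\phi$ has a fixed point inside $\disk$ or not.

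\emph{Case 1: $\phi$ has a fixed point $a\in\disk$.} Conjugate by the disc automorphism $\varphi_a(z)=\frac{a-z}{1-\overline a z}$ and set $\psi=\varphi_a\circ\phi\circ\varphi_a$. Then $\psi(0)=0$ and $\psi\neq\mathrm{id}$. The Schwarz lemma gives $|\psi'(0)|=|\phi'(a)|\le 1$. If $\phi$ were an elliptic automorphism, then $|\phi'(a)|=1$. Otherwise $|\psi(z)|<|z|$ for $z\neq 0$, so $\psi$ has no other fixed point in $\disk$.

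For uniqueness in $\overline{\disk}$, I still must exclude a boundary fixed point $b\in\torus$ with $\lim_{\xi\to1^-}|\phi'(\xi b)|\le 1$. Julia's lemma at $b$, applied with angular derivative $\beta\le 1$, says that $\phi$ maps each horodisc at $b$ into itself. The Schwarz lemma at $a$ says that $\phi$ maps each pseudo-hyperbolic disc about $a$ into itself. Since $\phi$ is not the identity and not an automorphism, these two invariances are incompatible. The cleanest way to see this is to note that the interior fixed point $a$ would then have to lie on the boundary of every horodisc at $b$, which is impossible.

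\emph{Case 2: $\phi$ has no fixed point in $\disk$.} Consider $\phi_r=r\phi$ for $r<1$. By Rouch\'e's theorem (equivalently, the Brouwer fixed point theorem on $r'\overline{\disk}$), $\phi_r$ has a unique fixed point $a_r\in\disk$. Take $r_n\to 1$ with $a_{r_n}\to a$. The limit $a$ must lie in $\torus$, since otherwise continuity would make $a$ a fixed point of $\phi$ in $\disk$.

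Next apply the Schwarz--Pick inequality to $\phi_r$ at $a_r$. For every $z\in\disk$,
\[
\frac{|1-\overline{a_r}\phi_r(z)|^2}{1-|\phi_r(z)|^2}\le \frac{|1-\overline{a_r}z|^2}{1-|z|^2}.
\]
Letting $r\to1$ along the subsequence gives the invariance of horodiscs at $a$. Julia's lemma then yields that $\phi$ has angular limit $a$ at $a$ with angular derivative $\beta\in(0,1]$. In particular $\lim_{\xi\to1^-}\phi(\xi a)=a$ and $\lim_{\xi\to 1^-}|\phi'(\xi a)|=\beta\le 1$.

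Uniqueness is the step I expect to be the main obstacle. Suppose there were two boundary points $a\neq b$, both with angular derivative at most $1$. Then $\phi$ would map horodiscs at $a$ into themselves and horodiscs at $b$ into themselves. Choose radii so that a horodisc at $a$ and a horodisc at $b$ are tangent at a single interior point; that point is forced to be a fixed point of $\phi$ in $\disk$, contradicting the case hypothesis. The genuinely delicate part is deriving these boundary facts, namely the existence of the radial limits and Julia--Wolff--Carath\'eodory itself. For those I would simply invoke \cite{CowenPommerenke} rather than reprove them.
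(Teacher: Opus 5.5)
The paper gives no proof of this statement; it is quoted from \cite{CowenPommerenke}. Your outline is the classical argument: Wolff's lemma via the maps $r\phi$, then Julia--Carath\'eodory. Case~2 works, including the uniqueness argument with two tangent horodiscs.

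\textbf{The gap (Case~1).} The step that fails as written is where you rule out a boundary fixed point $b\in\torus$ with $\lim_{\xi\to1^-}|\phi'(\xi b)|\le1$. Nothing forces $a$ to lie on the boundary of every horodisc at $b$. It lies on exactly one horocycle at $b$, the one with parameter $k_a=|b-a|^2/(1-|a|^2)$. Knowing only that closed horodiscs at $b$ are mapped into themselves, $\phi(a)=a$ sitting on that horocycle is not yet a contradiction; one more ingredient is needed. If you meant that the iterates $\phi^{(n)}(z)$ converge to $a$ while staying in a fixed closed horodisc, that idea works for non-automorphisms, but you would have to state and prove the convergence. Also, your Case~1 explicitly contains elliptic automorphisms, whereas this step assumes $\phi$ is not an automorphism.

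\textbf{Two quick repairs.}
\begin{enumerate}
\item[(i)] Put $z=a$ in Julia's inequality $\frac{|b-\phi(z)|^2}{1-|\phi(z)|^2}\le\beta\,\frac{|b-z|^2}{1-|z|^2}$. This gives $\beta\ge1$, hence $\beta=1$ with equality at $a$. Since $\operatorname{Re}\frac{b+w}{b-w}=\frac{1-|w|^2}{|b-w|^2}$, the function $u=\operatorname{Re}\bigl(\frac{b+\phi(z)}{b-\phi(z)}-\frac{b+z}{b-z}\bigr)$ is harmonic, nonnegative, and vanishes at $a$. By the minimum principle $u\equiv0$, so $\frac{b+\phi(z)}{b-\phi(z)}=\frac{b+z}{b-z}+ic$. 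Then $\phi(a)=a$ gives $c=0$, i.e. $\phi=\mathrm{id}$.
\item[(ii)] Alternatively, reuse your tangency trick. Take a closed horodisc $\overline H$ at $b$ with parameter $k<k_a$, and let $p$ be a point of $\overline H\cap\disk$ at minimal pseudo-hyperbolic distance $r>0$ from $a$. The closed pseudo-hyperbolic disc of radius $r$ about $a$ and $\overline H\cap\disk$ are both $\phi$-invariant and meet only at $p$. So $\phi(p)=p\neq a$, and a self-map of $\disk$ with two interior fixed points is the identity.
\end{enumerate}
Either repair uses only $\phi\neq\mathrm{id}$, so elliptic automorphisms are covered too.

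\textbf{A minor point in Case~2.} Passing from horodisc invariance to an angular derivative at most $1$ is Julia--Carath\'eodory, not Julia's lemma. Its hypothesis $\liminf_{z\to a}\frac{1-|\phi(z)|}{1-|z|}\le1$ comes from the radial bound $|a-\phi(\xi a)|\le\frac{2(1-\xi)}{1+\xi}$. You get this bound by putting $z=\xi a$ into your limiting Schwarz--Pick inequality.
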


If, in addition, we take $\phi \in \mathcal HL_0(\disk)$, then $\phi$ admits a Lipschitz extension to $\overline{\disk}$, and moreover $0$ is a fixed point of $\phi$. Consequently, we obtain the following corollary.

\begin{corollary}\label{cor:denjoy-wolff-modificado}
    Let $\phi\in \mathcal HL_0(\disk)$ such that $\phi(\disk)\subseteq\disk$ and $\phi$ is not a rotation. If there is some $a\in \C$ with $|\phi(a)|=1=|a|$, then $\displaystyle\lim_{\xi \to 1^{-}}|\phi'(\xi a)|>1.$  
\end{corollary}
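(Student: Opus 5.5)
The plan is to argue by contradiction from Theorem~\ref{thm:denjoy-wolff}. Since $\phi\in\mathcal HL_0(\disk)$, it extends to a Lipschitz map on $\overline{\disk}$, and the hypothesis $|\phi(a)|=1=|a|$ refers to this extension. First I reduce to the case $\phi(a)=a$. Put $\lambda=\overline{\phi(a)}\,a\in\torus$ and $\psi=\lambda\phi$. Then $\psi\in\mathcal HL_0(\disk)$, $\psi(\disk)\subseteq\disk$, and $\psi(a)=a$. Also $\psi$ is not the identity, since otherwise $\phi=\overline\lambda\, z$ would be a rotation. Because $|\psi'|=|\phi'|$, the conclusion for $\psi$ gives the conclusion for $\phi$, so it suffices to treat $\psi$.

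Next I identify the fixed points of $\psi$ in $\overline{\disk}$. By continuity of the extension, $\lim_{\xi\to1^-}\psi(\xi a)=\psi(a)=a$, so $a$ is a fixed point in the boundary sense used above. The origin is also a fixed point, since $\psi(0)=0$. As $a\neq 0$, these are two distinct fixed points. The limit $\lim_{\xi\to1^-}\psi'(\xi a)$ exists by the Julia--Wolff--Carath\'eodory theorem quoted before Theorem~\ref{thm:denjoy-wolff}.

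The key step is to show that the Denjoy--Wolff point of $\psi$ is $0$, not $a$. By Schwarz's lemma $|\psi'(0)|\le 1$, and equality would force $\psi$ to be a rotation. Hence $|\psi'(0)|<1$, and the interior fixed point $0$ satisfies the condition in Theorem~\ref{thm:denjoy-wolff}. That theorem says the fixed point $w$ with $\lim_{\xi\to1^-}|\psi'(\xi w)|\le1$ is unique. Since $0$ qualifies, the point $a\ne0$ cannot, which gives $\lim_{\xi\to1^-}|\psi'(\xi a)|>1$.

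The main obstacle is whether the uniqueness in Theorem~\ref{thm:denjoy-wolff} is really strong enough to exclude a second fixed point with derivative limit at most $1$, here including the boundary point $a$. The statement as quoted asserts exactly this, so I would use it directly. As a safeguard, I would also note the standard Julia-lemma fact that if $\psi$ has an interior fixed point and is not an automorphism, then every boundary fixed point has angular derivative $>1$. This follows from Julia's lemma applied at $a$ together with the Schwarz--Pick inequality at $0$.
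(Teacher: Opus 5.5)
Your proposal is correct and follows the paper's proof: rotate $\phi$ by a unimodular constant so that $a$ becomes a boundary fixed point of a non-identity self-map that fixes $0$ with $|\psi'(0)|\le 1$. Then the uniqueness in Theorem~\ref{thm:denjoy-wolff} rules out $\lim_{\xi\to1^-}|\psi'(\xi a)|\le 1$; the strict inequality $|\psi'(0)|<1$ and the Julia-lemma safeguard are not needed for this.
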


\begin{proof}
    Assume there exists $a\in \torus$ such that $\phi(a)\in \torus$ and take $\theta\in \R$ satisfying $e^{i\theta}\phi(a)=a$. Consider the function $\varphi \colon \disk \rightarrow \disk$ given by $\varphi(z)=e^{i\theta}\phi(z)$ for all $z\in \disk$. Clearly, $\varphi\in \mathcal HL_0(\disk)$ and $\varphi(a)=a$. Moreover $\varphi\neq I$ since $\phi$ is not a rotation, $\varphi(0)=0$, and $|\varphi'(0)|=\displaystyle\lim_{z\to0} \frac{|\varphi(z)|}{|z|}\leq 1$ due to the Schwarz lemma. Hence, Theorem~\ref{thm:denjoy-wolff} implies that $\displaystyle\lim_{\xi\to 1^{-}}|\varphi'(\xi a)|>1$. The result follows since $|\varphi'(z)|= |\phi'(z)|$ for all $z\in \disk$.
\end{proof}

We thus obtain the following characterization of composition operators on $\mathcal HL_0(\mathbb D)$. Observe that the proof below does not rely on the deep theorem of  Bourgain mentioned above.

\begin{theorem}\label{thm:compactscalarcase}
    Let $\phi \in \mathcal{H}L_0(\disk)$ such that $\phi(\disk)\subseteq \disk$.
    Then, the following assertions are equivalent. 
    \begin{enumerate}
    \item $C_\phi^L \colon \mathcal HL_0(\disk)\rightarrow \mathcal HL_0(\disk)$ is compact.
    \item $C_\phi^L \colon \mathcal HL_0(\disk)\rightarrow \mathcal HL_0(\disk)$ does not fix any copy of $\ell_\infty$.
    \item $\norm{\phi}_\infty<1$.    
    \item $C_\phi^\infty \colon \mathcal H^\infty(\disk)\rightarrow \mathcal H^\infty(\disk)$ is compact.
\end{enumerate}
\end{theorem}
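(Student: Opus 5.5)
The equivalence of (3) and (4) is \cite[Proposition~3]{AGL}: in dimension one, $\phi(\disk)$ is automatically relatively compact, so $C_\phi^\infty$ is compact if and only if $\norm{\phi}_\infty<1$. The implication (1)$\Rightarrow$(2) is immediate from Figure~\ref{fig:Ideals}. It therefore suffices to prove (3)$\Rightarrow$(1) and (2)$\Rightarrow$(3).

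\emph{(3)$\Rightarrow$(1).} I would apply Proposition~\ref{prop:compRelComp} with $U=V=\disk$. The set $V$ is convex, and $\overline{\phi(\disk)}\subseteq \norm{\phi}_\infty\overline{\disk}\subseteq\disk$. Moreover, $\phi(\disk)$ and $d\phi(\disk)(S_{\C})=\{\phi'(z)w: z\in\disk, |w|=1\}$ are bounded subsets of $\C$, because $|\phi'|\le L(\phi)$. Bounded subsets of $\C$ are relatively compact, so $C_\phi^L$ is compact. Alternatively, one can use Lemma~\ref{lemma:factor} together with Corollary~\ref{cor:ideals} and Lemma~\ref{lemma:CphiLvsInfty}(a), since $X=\C$ is finite dimensional.

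\emph{(2)$\Rightarrow$(3), the main step.} I would argue by contraposition: assume $\norm{\phi}_\infty=1$ and construct a copy of $\ell_\infty$ fixed by $C_\phi^L$. Since $\phi$ extends Lipschitz to $\overline{\disk}$ with $\overline{\phi(\disk)}$ compact, there is $a\in\torus$ with $|\phi(a)|=1$.

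If $\phi$ is a rotation, then $C_\phi^L$ is an onto isometry and clearly fixes $\ell_\infty$, since $\HL_0(\disk)\cong\mathcal H^\infty(\disk)$ contains $\ell_\infty$.

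Otherwise, Corollary~\ref{cor:denjoy-wolff-modificado} gives $\lim_{\xi\to1^-}|\phi'(\xi a)|=c>1$. In particular, $\phi'$ stays bounded away from zero along the radius toward $a$, and $\phi(\xi a)\to\phi(a)=:b\in\torus$.

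The next step uses the isometry $D\colon f\mapsto f'$ from $\HL_0(\disk)$ onto $\mathcal H^\infty(\disk)$ in dimension one. Under this isometry, $C_\phi^L$ corresponds to the weighted composition operator $g\mapsto \phi'\cdot(g\circ\phi)$ on $\mathcal H^\infty(\disk)$. I would take points $z_n=\xi_n a$ with $\xi_n\to1$ chosen so that $w_n=\phi(z_n)$ is an interpolating (Carleson) sequence tending to $b$. This can be arranged by choosing $\xi_n$ so that $1-|w_n|$ decreases geometrically; such radial-type sequences are interpolating. By Carleson's interpolation theorem (in the P.~Beurling linear form), there exist $g_n\in\mathcal H^\infty(\disk)$ with $\sup_z\sum_n|g_n(z)|\le M$ and $g_n(w_k)=\delta_{nk}$. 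Define $S\colon\ell_\infty\to\mathcal H^\infty(\disk)$ by $S(\alpha)=\sum_n\alpha_n g_n$; then $\norm{S\alpha}\le M\norm{\alpha}_\infty$. For the lower bound,
\[
\norm{\phi'\cdot (S\alpha)\circ\phi}_\infty\ \ge\ \sup_n |\phi'(z_n)|\,|\alpha_n|\ \ge\ \tfrac{c}{2}\norm{\alpha}_\infty
\]
for $n$ large. I would discard the finitely many small indices, or simply note that $|\phi'(z_n)|\ge 1$ eventually. Hence the weighted composition operator is an isomorphism on $S(\ell_\infty)$, which gives a copy of $\ell_\infty$ fixed by $C_\phi^L$, after conjugating back by $D$ and the antiderivative map.

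The expected obstacle is ensuring that $(\phi(z_n))_n$ is genuinely interpolating. This needs the curve $\xi\mapsto\phi(\xi a)$ to approach $b$ nontangentially, or at least separately enough. Julia--Wolff--Carath\'eodory gives exactly nontangential approach, since the angular derivative is finite and nonzero. Thinning out the sequence so that $1-|w_n|$ decreases geometrically then yields a uniformly separated, Carleson sequence.
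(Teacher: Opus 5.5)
Your proposal is correct and follows essentially the paper's argument for the key implication (2)$\Rightarrow$(3). It uses a radial sequence toward a point $a\in\torus$ with $|\phi(a)|=1$, the lower bound on $|\phi'|$ from Corollary~\ref{cor:denjoy-wolff-modificado}, an interpolating subsequence of images with P.~Beurling's functions, and conjugation by $d$ to the weighted composition operator $g\mapsto \phi'\cdot(g\circ\phi)$; the paper writes your norm estimate as a commutative diagram. For (3)$\Rightarrow$(1) you go through Proposition~\ref{prop:compRelComp} rather than the paper's (4)$\Rightarrow$(1) via Corollary~\ref{cor:ideals}, which is equally valid. Your nontangential-approach worry is unnecessary: geometric decay of $1-|w_n|$ alone already makes $(w_n)$ interpolating (Hayman--Newman), and this is the standard fact the paper invokes.
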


\begin{proof}
    First, observe the equivalence $(3)\Leftrightarrow (4)$ is well known (see, for instance, \cite[Exercise 2.6.10]{shapiro-libro}). 
    Also, the implication $(4)\Rightarrow(1)$
    is exactly the Corollary~\ref{cor:ideals} for $X=Y=\C$ and the ideal of compact operators; and the implication $(1)\Rightarrow (2)$ is obvious.

    So the main task is to prove         
    $(2) \Rightarrow (3)$. Suppose that $\norm{\phi}_\infty=1$. 
    If $\phi$  is a rotation, then $C_\phi^L$ is an onto isomorphism that clearly fixes every copy of $\ell_\infty$ contained in $\HL_0(\disk)$. That such a copy indeed exists is proved in \cite{ADGLM}.
    We proceed assuming that $\phi$ is not a rotation.
    By the compactness of the closed disk, there is some $y_0\in \torus$ with $|\phi(y_0)|=1$. We pick a sequence $(\xi_n)_n\subseteq (0,1)$ converging to $1$, and consider the sequence $(y_n)_n\subseteq \disk$ given by $y_n=\xi_n y_0$ for all $n\in\N$. 
    We know that $\lim_{n\to\infty} |\phi'(y_n)|> 1$ by Corollary \ref{cor:denjoy-wolff-modificado}, and this will lead to the desired conclusion.
    First, since $\lim_{n\to\infty} |\phi(y_n)|=1$, we may assume (see e.g. \cite[Chapter VII, Section 1]{Garnett}) that $(\phi(y_n))_n$ is an interpolating sequence for $\mathcal H^\infty(\disk)$ (up to taking a subsequence if necessary), that is, the operator $S_{(\phi(y_n))_n}\colon\mathcal H^\infty(\disk)\to \ell_\infty$ given by $f\mapsto (f(\phi(y_n))_n)$ is onto. 
    Thus, following temporarily the corresponding part of the proof of~\cite[Theorem 1]{BDL}, by the proof of ``b) implies c)'' of Theorem~III.E.4 in \cite{Wojtaszczyk}, there is a sequence $(h_k)_k\subseteq \mathcal H^\infty(\disk)$ such that $(S_{\phi(y_n)_n}h_k)_k$ is the unit vector basis in $\ell_\infty$ and there is a constant $M>0$ such that 
    \[
    \sum_{k=1}^\infty \abs{h_k(z)}\leq M, \quad \mbox{ for every }z\in \disk.
    \]
    We can therefore define a linear map $T:\ell_\infty \to \mathcal H^\infty(\disk)$ as
    $T\xi=\sum_{k=1}^\infty \xi_k h_k$.
    We have 
    \[
    \norm{T\xi}_\infty \leq M\norm{\xi}_\infty 
    \]
    and so $\norm{T}\leq M$.
    We clearly have that $S_{(\phi(y_n))_n}\circ T=Id_{\ell_\infty}$. 
    So $T$ is an isomorphic embedding.

    The upper part of the diagram below is obtained from Lemma~\ref{lemma:factor} applied to the case $X=Y=\C$.
    In the lower part $M_{\phi'}\colon \mathcal H^\infty(\mathbb D) \to \mathcal H^\infty(\mathbb D)$ denotes the multiplication operator defined by $f\mapsto f\cdot \phi'$, whereas $M_{\phi'(y_n)^{-1}}:\ell_\infty \to \ell_\infty$ is given by $(x_n)_n\mapsto (x_n\phi'(y_n)^{-1})_n$. This operator is well defined, because we may assume that $\phi'(y_n)\neq 0$, since $\lim_{n\to\infty} \abs{\phi'(y_n)}>1$.
    It is therefore straightforward to verify that the diagram commutes. 
\[\begin{tikzcd}
	{\mathcal HL_0(\mathbb D)} &&&& {\mathcal HL_0(\mathbb D)} \\
	{\mathcal H^\infty(\mathbb D)} && {\mathcal H^\infty(\mathbb D)} && {\mathcal H^\infty(\mathbb D)} \\
	\\
	{\ell_\infty} && {\ell_\infty} && {\ell_\infty}
	\arrow["{C_\phi^L}", from=1-1, to=1-5]
	\arrow["d"', from=1-1, to=2-1]
	\arrow["d", from=1-5, to=2-5]
	\arrow["{C_\phi^\infty}"', from=2-1, to=2-3]
	\arrow["{S_{(\phi(y_n))_n}}"', from=2-1, to=4-3]
	\arrow["{M_{\phi'}}"', from=2-3, to=2-5]
	\arrow["{S_{(y_n)_n}}", from=2-3, to=4-3]
	\arrow["{S_{(y_n)_n}}", from=2-5, to=4-5]
	\arrow["T", from=4-1, to=2-1]
	\arrow["{Id_{\ell_\infty}}", from=4-1, to=4-3]
	\arrow["{M_{\phi'(y_n)^{-1}}}", from=4-5, to=4-3]
\end{tikzcd}\]
In particular we see that $C_\phi^L$ fixes the copy $d^{-1}\circ T(\ell_\infty)$ of $\ell_\infty$ which we wanted to prove.
\end{proof}

\subsection{The vector case} 
We now arrive at the main result of this section, which establishes a series of implications concerning the compactness of composition operators. In the finite-dimensional setting, all these conditions are equivalent. 

\begin{theorem}\label{theo:compactnesscomp} 
Let $X, Y$ be complex Banach spaces and let $\phi\in \mathcal HL_0(B_Y, X)$ with $\phi(B_Y)\subseteq B_X$. Consider the following statements: 
\begin{enumerate}
\item[(1)] $C_\phi^L\colon \mathcal HL_0(B_X)\to \mathcal HL_0(B_Y)$ is compact.
\item[(2)] $C_\phi^L\colon \HL_0(B_X) \to \HL_0(B_Y)$ does not fix any copy of $\ell_\infty$ and $\phi(B_Y)$ is relatively compact.
\item[(3)] $\norm{\phi(y)}<\norm{y}$ for every $y\in \overline{B}_Y\setminus\{0\}$ and $\phi(B_Y)$ is relatively compact.
\item[(4)] $C_\phi^\infty\colon \mathcal H^\infty(B_X)\to \mathcal H^\infty(B_Y)$ is compact.
\end{enumerate}
Then $(1)\Rightarrow (2)\Rightarrow (3)$. If $\dim(Y)<\infty$, then $(3)\Rightarrow (4)$. If $\dim(X)<\infty$, 
then $(4)\Rightarrow (1)$. 
Thus, for $X$ and $Y$ finite-dimensional, all the statements are equivalent.
\end{theorem}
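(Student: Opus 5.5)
The plan is to prove the chain $(1)\Rightarrow(2)\Rightarrow(3)$ in general, then close the cycle in finite dimensions with $(3)\Rightarrow(4)$ and $(4)\Rightarrow(1)$. The two closing implications are essentially assembled from earlier results. The main obstacle is $(2)\Rightarrow(3)$, which we reduce to the scalar case, Theorem~\ref{thm:compactscalarcase}.

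For $(1)\Rightarrow(2)$, a compact operator cannot fix a copy of $\ell_\infty$ (see Figure~\ref{fig:Ideals}). For relative compactness of $\phi(B_Y)$ we avoid Proposition~\ref{prop:compRelComp}, since it needs $\overline{\phi(B_Y)}\subseteq B_X$. Instead we use the factorization $\phi=\beta_X\circ\widehat{\phi}\circ\delta_Y$ from its proof. Here $\delta_Y(B_Y)$ is bounded, $\widehat\phi$ is compact by Proposition~\ref{prop:compunitball}, and $\beta_X$ is bounded.

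For $(2)\Rightarrow(3)$, suppose $\norm{\phi(y_0)}=\norm{y_0}$ for some $0\ne y_0\in\overline B_Y$. Put $u=y_0/\norm{y_0}$ and choose $x^*\in S_{X^*}$ norming $\phi(y_0)$. Define $h(\lambda)=x^*(\phi(\lambda u))$ on $\disk$, extended to $\overline{\disk}$ by the Lipschitz extension. Then $h\in\mathcal HL_0(\disk)$, $h(\disk)\subseteq\disk$, and $|h(\lambda_0)|=|\lambda_0|$ at $\lambda_0=\norm{y_0}\neq 0$.
\begin{itemize}
\item If $|\lambda_0|<1$, the Schwarz lemma forces $h$ to be a rotation, so $\norm{h}_\infty=1$.
\item If $|\lambda_0|=1$, then $\norm{h}_\infty=1$ directly by continuity.
\end{itemize}
In either case Theorem~\ref{thm:compactscalarcase} says that $C_h^L$ fixes a copy of $\ell_\infty$. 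Now factor
\[ C_h^L = R\circ C_\phi^L\circ E, \qquad E(f)=f\circ x^*,\quad R(g)(\lambda)=g(\lambda u). \]
Here $E\colon\mathcal HL_0(\disk)\to\mathcal HL_0(B_X)$ is well defined because $x^*(B_X)\subseteq\disk$, and $R\colon \mathcal HL_0(B_Y)\to\mathcal HL_0(\disk)$. Both are bounded linear maps of norm at most $1$. The class of operators not fixing $\ell_\infty$ has the ideal property, so $C_h^L$ would not fix $\ell_\infty$, a contradiction. Relative compactness simply carries over from (2).

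For $(3)\Rightarrow(4)$ with $\dim Y<\infty$, the closed ball $\overline B_Y$ is compact. The function $y\mapsto\norm{\phi(y)}$ (extended continuously) is strictly less than $1$ on $\overline B_Y$: it is $0$ at $y=0$, and elsewhere $\norm{\phi(y)}<\norm{y}\le 1$. Hence its maximum is some $r<1$, giving $\norm{\phi}_\infty<1$. Together with relative compactness of $\phi(B_Y)$, \cite[Proposition~3]{AGL} yields compactness of $C_\phi^\infty$.

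For $(4)\Rightarrow(1)$ with $\dim X<\infty$, Lemma~\ref{lemma:CphiLvsInfty}(a) gives that $C_\phi^{X^*}$ is compact. Corollary~\ref{cor:ideals}, applied with $V=B_Y$ convex and $\mathcal I$ the compact operators, then gives that $C_\phi^L$ is compact.
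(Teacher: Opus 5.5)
Your proposal is correct and follows the paper's proof step by step. The four implications use the same tools as the paper:
\begin{itemize}
\item the factorization $\phi=\beta_X\circ\widehat\phi\circ\delta_Y$ for (1)$\Rightarrow$(2);
\item the reduction $C_h^L=C_i^L\circ C_\phi^L\circ C_{x^*}^L$ to Theorem~\ref{thm:compactscalarcase} for (2)$\Rightarrow$(3);
\item \cite[Proposition~3]{AGL} for (3)$\Rightarrow$(4);
\item Lemma~\ref{lemma:CphiLvsInfty} together with Corollary~\ref{cor:ideals} for (4)$\Rightarrow$(1).
\end{itemize}
Your only addition is the explicit compactness argument giving $\norm{\phi}_\infty<1$ when $\dim Y<\infty$, which the paper simply asserts.
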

\begin{proof}

$(1) \Rightarrow$ (2)  It is clear that compactness implies $\ell_\infty$-singularity.
Moreover,  $\phi(B_Y)=\beta_X\circ \widehat{\phi}\circ \delta_Y(B_Y)$ where $\beta_X:=T_{Id_{B_X}}$ denotes the canonical quotient onto $X$ satisfying $\beta_X\circ \delta_X=Id_{B_X}$.
By Lemma~\ref{lemma:compositionoperatorproperties} and (1), the operator $\widehat{\phi}$ is compact.
Since $\delta_Y(B_Y)$ is bounded, it follows that $\phi(B_Y)$ is relatively compact.

   $(2) \Rightarrow (3)$ Suppose that there exists $y_0\in \overline{B}_Y\setminus\{0\}$ with $\|\phi(y_0)\|=\norm{y_0}$. Pick $x^*\in S_{X^*}$ such that $x^*(\phi(y_0))=\norm{y_0}$ and consider  the linear map $i \colon \disk \rightarrow B_Y$ defined by $i(z)=z\frac{y_0}{\norm{y_0}}$ for all $z\in \disk$. 
   Then, $x^*\circ \phi \circ i\in \mathcal{H}L_0(\disk)$ and $(x^*\circ \phi \circ i)(\disk)\subseteq \disk$. Furthermore, $(x^*\circ \phi \circ i)(\norm{y_0})=\norm{y_0}$ which implies $\|x^*\circ \phi \circ i\|_\infty =1$ (this is obvious if $\|y_0\|=1$ and if $y_0\in B_Y\setminus\{0\}$ we appeal to  Schwarz's lemma to derive that $x^*\circ \phi \circ i$  is a rotation). 
   Then $C_{x^*\circ \phi \circ i}^L$ fixes a copy of $\ell_\infty$ by Theorem~\ref{thm:compactscalarcase}. Since we have $C_{x^*\circ \phi \circ i}^L = C_i^L\circ C_\phi^L \circ C_{x^*}^L$ we see that $C_\phi^L$ fixes a copy of $\ell_\infty$ too.

If $\dim(Y)<\infty$, the condition $\norm{\phi(y)}<\norm{y}$ for every $y\in \overline{B}_Y\setminus\{0\}$ is equivalent to $\|\phi\|_\infty<1$, and hence \cite[Proposition 3]{AGL} gives us the implication $(3)\Rightarrow (4)$. 

Finally, assume that  $\dim(X)<\infty$ and $C_\phi^\infty$ is compact. Then $C_\phi^{X^*}$ is compact by Lemma \ref{lemma:CphiLvsInfty}. Now Corollary~\ref{cor:ideals} for the ideal of compact operators yields the compactness of $C_\phi^L$. Thus, (4)$\Rightarrow$(1), finishing the  proof. 
\end{proof}

Concerning the final part of the previous proof, note that $C_\phi^{X^*}$ can be compact only if $X$ is finite-dimensional, by Lemma \ref{lemma:comptensorop}. 

We  now present examples showing that some of the implications in Theorem \ref{theo:compactnesscomp} fail for general Banach  spaces $X, Y$. The first one shows that $(3)\not\Rightarrow (4)$. 

\begin{example} 
Let $\phi\colon B_{\ell_1}\to \mathbb C$ be the linear map given by $\phi(y)=\sum_{n=1}^\infty(1-1/n)y_n$. Then $\phi\in \mathcal HL_0(B_{\ell_1})$ and $\phi(B_{\ell_1})\subseteq \mathbb D$. Indeed, it is clear that $|\phi(y)|<\norm{y}$ for each $y\in\overline{B}_{\ell_1}\setminus\{0\}$. Also, $\phi(B_{\ell_1})$ is relatively compact as it is contained in $\mathbb D$.  However, $\norm{\phi}_\infty=1$, so $C_\phi^\infty$ is not compact \cite{AGL}. In fact, since $\phi$ is linear, we will get from Lemma~\ref{lemma:Tfixeslinfty} that $C_\phi^L$ fixes a copy of $\ell_\infty$.
\end{example}

Now we provide an example of a composition operator such that $C_\phi^L$ is weakly compact and $\phi(B_Y)$ is relatively compact, but $C_\phi^L$ is not compact. This shows that the equivalence between (1) and (2) in \cite[Proposition 3]{AGL} for $\mathcal H^\infty$ spaces does not hold in the case of $\mathcal HL_0$. Moreover, in this example  $C_\phi^\infty$ is  compact, so this exhibits that  (4) $\not\Rightarrow$ (1) in Theorem \ref{theo:compactnesscomp}.
We have been informed by Mathis Lemay and Colin Petitjean  that they have independently obtained a similar example.

\begin{example}\label{ex:ell2} Consider $\phi\colon B_{\ell_2}\to \ell_2$ given by $\phi(y)=(\frac{c}{n} y_n^n)_{n=1}^\infty$, where $0<c<1$ is such that $c\sum_{n=1}^\infty \frac{1}{n^2}<1$. Then $\phi$ is holomorphic since it is weakly holomorphic, and  
\begin{align*}
\norm{\phi(x)-\phi(y)}_2^2 =\sum_{n=1}^\infty \frac{c^2}{n^2}|x_n^n-y_n^n|^2 \leq \sum_{n=1}^\infty c^2 |x_n-y_n|^2 = c^2 \norm{x-y}_2^2, \qquad \forall x, y\in B_{\ell_2},
\end{align*}
where we have used that the function $z\mapsto z^n$ is $n$-Lipschitz on $\overline{\mathbb D}$. Hence, $L(\phi)\leq c$, and consequently $\phi\in \mathcal HL_0(B_{\ell_2}, \ell_2)$. 

Also, $\phi(B_{\ell_2})$ is relatively compact, since 
\[ \sum_{n=N}^\infty \left|\frac{c}{n}y_n^n\right|^2 \leq c^2\sum_{n=N}^\infty \frac{1}{n^2}\underset{N\to\infty}{\longrightarrow} 0\]
uniformly for $y\in B_{\ell_2}$. In addition, $\norm{\phi}_\infty \leq c<1$ and therefore $C_\phi^\infty$ is compact by \cite{AGL}. 

Next, observe that $\norm{d\phi}_\infty =L(\phi)=c<1$, so $d\phi(B_{\ell_2})(S_{\ell_2})$ is bounded in $\ell_2$ and hence relatively weakly compact.
Thus, Proposition \ref{prop:compRelWComp} (b) yields that $C_\phi^L$ is weakly compact.

Finally, we show that $C_\phi^L$ is not compact. To this end, take $f_n=e_n^*\in \mathcal HL_0(B_{\ell_2})$. Note that $\norm{f_n}=1$ but if $n\neq m$ then  
\begin{align*}    
L(C_\phi(f_n)-C_\phi(f_m)) & \geq \norm{d((f_n-f_m)\circ \phi)(e_n)}= \norm{(f_n-f_m)\circ d\phi(e_n)}\\
&=\sup_{u\in B_{\ell_2}} |(f_n-f_m)(d\phi(e_n)(u))| \\
&= \sup_{u\in B_{\ell_2}} |(f_n-f_m)(c u_n e_n)|
= c
\end{align*}
Therefore, $(C_\phi(f_n))_n$ does not have any convergent subsequence.
\end{example}

Note that if $Y$ is finite-dimensional, then $\phi(B_Y)$ is relatively compact, since $\phi$ is a Lipschitz function. Additionally, if $X$ is reflexive, we can give the following characterization.

\begin{proposition}
\label{prop:comp and weakly comp} Let $X, Y$ be complex Banach spaces, where $X$ is reflexive and $\dim(Y)<\infty$. Let $\phi\in \mathcal HL_0(B_Y, X)$ such that $\phi(B_Y)\subseteq B_X$.  
Then, the following assertions are equivalent.
\begin{enumerate}
    \item $C_\phi^L\colon \mathcal HL_0(B_X)\to\mathcal HL_0(B_Y)$ is weakly compact.
    \item $C_\phi^L\colon \HL_0(B_X) \to \HL_0(B_Y)$ does not fix any copy of $\ell_\infty$.
    \item $C_\phi^\infty\colon \mathcal H^\infty(B_X)\to\mathcal H^\infty(B_Y)$ is weakly compact.
    \item $C_\phi^\infty\colon \mathcal H^\infty(B_X)\to\mathcal H^\infty(B_Y)$ is compact.
    \item $\norm{\phi}_\infty<1$.
\end{enumerate}
\end{proposition}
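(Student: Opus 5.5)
We prove the cycle $(1)\Rightarrow(2)\Rightarrow(5)\Rightarrow(4)\Rightarrow(3)\Rightarrow(5)$ and add $(5)\Rightarrow(1)$ separately.

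$(1)\Rightarrow(2)$ holds because weakly compact operators are $\ell_\infty$-strictly singular (Figure~\ref{fig:Ideals}). For $(2)\Rightarrow(5)$ we follow the proof of $(2)\Rightarrow(3)$ in Theorem~\ref{theo:compactnesscomp}. That argument gives $\norm{\phi(y)}<\norm{y}$ for every $y\in\overline{B}_Y\setminus\{0\}$; it uses only the factorization $C^L_{x^*\circ\phi\circ i}=C_i^L\circ C_\phi^L\circ C_{x^*}^L$ and Theorem~\ref{thm:compactscalarcase}, and no compactness of $\phi(B_Y)$. Since $\dim(Y)<\infty$, the set $\overline{B}_Y$ is compact. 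The Lipschitz extension of $\phi$ to $\overline{B}_Y$ is continuous, so $\norm{\phi}_\infty=\max_{y\in S_Y}\norm{\phi(y)}<1$.

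$(5)\Rightarrow(4)$ follows from \cite[Proposition~3]{AGL}: $\phi(B_Y)$ is relatively compact because $\phi$ is Lipschitz and $B_Y$ is totally bounded, and $\norm{\phi}_\infty<1$. $(4)\Rightarrow(3)$ is trivial. For $(3)\Rightarrow(5)$ we again use \cite[Proposition~3]{AGL}. Its second part says that if $\phi(B_Y)$ is relatively compact (true here) and $C_\phi^\infty$ is weakly compact, then $C_\phi^\infty$ is compact. The first part then gives $\norm{\phi}_\infty<1$.

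The remaining step is $(5)\Rightarrow(1)$. Put $U=B_X$ and $V=B_Y$. Then $V$ is convex and $\overline{\phi(V)}\subseteq \norm{\phi}_\infty\overline{B}_X\subseteq B_X$, and $\phi(V)$ is relatively compact. Moreover $d\phi(V)(S_Y)$ is bounded by $L(\phi)$, hence relatively weakly compact because $X$ is reflexive. Proposition~\ref{prop:compRelWComp}(b) then shows that $C_\phi^L$ is weakly compact. Alternatively, one could use Lemma~\ref{lemma:CphiLvsInfty}(c) together with Corollary~\ref{cor:ideals}, since $C_\phi^\infty$ is compact and $X$ is reflexive.

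The main obstacle is making sure that $(2)\Rightarrow(5)$ really does not need the relative compactness hypothesis present in Theorem~\ref{theo:compactnesscomp}(2). Inspection shows that only the scalar argument is used there, so the step goes through. Passing from the pointwise strict inequality $\norm{\phi(y)}<\norm{y}$ to the uniform bound $\norm{\phi}_\infty<1$ needs finite dimensionality of $Y$, which we have.
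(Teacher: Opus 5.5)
Your proof is correct, and its skeleton matches the paper's. $(1)\Rightarrow(2)$ is trivial, and $(3)\Leftrightarrow(4)\Leftrightarrow(5)$ is \cite[Proposition~3]{AGL}. Your $(2)\Rightarrow(5)$ is the scalar argument of Theorem~\ref{theo:compactnesscomp} plus compactness of $\overline{B}_Y$. The paper phrases this step as $(2)\Rightarrow(4)$ via that theorem, with relative compactness of $\phi(B_Y)$ automatic since $\dim Y<\infty$.

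The genuine difference is how weak compactness of $C_\phi^L$ is obtained. The route you list as an alternative is exactly the paper's. It proves $(4)\Rightarrow(1)$ by noting that compactness of $C_\phi^\infty$ and reflexivity of $X$ make $C_\phi^{X^*}=(\widehat\phi\otimes I)^*$ weakly compact (Lemma~\ref{lemma:CphiLvsInfty}(c)). That lemma rests on Racher's theorem for tensor products of a compact and a weakly compact operator, together with Schauder and Gantmacher. The paper then transfers weak compactness through the factorization of Corollary~\ref{cor:ideals}.

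Your primary route, Proposition~\ref{prop:compRelWComp}(b), works directly on the predual $\mathcal G_0$. The vectors $\widehat\phi(d\delta(y_1)(y_2))=d\delta(\phi(y_1))(d\phi(y_1)(y_2))$ lie in the image of $\overline{\phi(B_Y)}\times\overline{d\phi(B_Y)(S_Y)}^{w}$. This product is compact for the norm topology on the first factor and the weak topology on the second. The map $d\delta$ is norm$\times$weak-to-weak continuous on bounded sets, and Proposition~\ref{prop:compunitball}(4) concludes.

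Your route is more elementary: it needs no Mujica predual, no tensor products and no passage through $C_\phi^\infty$. It also shows exactly where each hypothesis enters in this step:
\begin{itemize}
\item $\norm{\phi}_\infty<1$ only guarantees $\overline{\phi(B_Y)}\subseteq B_X$, where $d\delta$ is defined;
\item $\dim Y<\infty$ only gives relative compactness of $\phi(B_Y)$;
\item reflexivity only makes the bounded set $d\phi(B_Y)(S_Y)$ relatively weakly compact.
\end{itemize}
The paper's route, in exchange, fits the uniform scheme of Section~\ref{Section:Compactness}. There the same factorization through $C_\phi^{X^*}$ handles compactness and weak compactness alike.
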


\begin{proof}
Since $\phi(B_Y)$ is relatively compact, the equivalence between $(3)$, $(4)$ and $(5)$ is proved in \cite[Proposition 3]{AGL}. Also, $(1)\Rightarrow (2)$ is trivial  and (2)$\Rightarrow$(4) follows from Theorem~\ref{theo:compactnesscomp} (note that, since $Y$ is finite-dimensional, $\norm{\phi}_\infty<1$ whenever $\norm{\phi(y)}<1$ for all $y\in S_Y$). 
Finally, let us show (4)$\Rightarrow$(1). If $C_\phi^\infty$ is compact, then  Lemma \ref{lemma:CphiLvsInfty} and the reflexivity of $X$ yield that $C_\phi^{X^*}$ is weakly compact. Now, $C_\phi^L$ is weakly compact by  Corollary \ref{cor:ideals}.
\end{proof}

When both $X$ and $Y$ are finite-dimensional spaces, Theorem \ref{theo:compactnesscomp} and Proposition~\ref{prop:comp and weakly comp} yield the following.

\begin{corollary}\label{cor:compactfindimcase}
    Let $X,Y$ be complex finite-dimensional Banach spaces. Let $\phi \in \mathcal{H}L_0(B_Y,X)$ such that $\phi(B_Y)\subseteq B_X$ and consider $C_\phi$ the associated composition operator. Then, the following assertions are equivalent. 
    \begin{enumerate}
        \item $C_\phi^L \colon \mathcal HL_0(B_X)\rightarrow \mathcal HL_0(B_Y)$ is compact. 
         \item $C_\phi^L \colon \mathcal HL_0(B_X)\rightarrow \mathcal HL_0(B_Y)$ does not fix any copy of $\ell_\infty$. 
        \item $C_\phi^\infty \colon \mathcal H^\infty(B_X)\rightarrow \mathcal H^\infty(B_Y)$ is  weakly compact.
        \item $C_\phi^\infty \colon \mathcal H^\infty(B_X)\rightarrow \mathcal H^\infty(B_Y)$ is compact.
        \item $\norm{\phi}_\infty<1$.
    \end{enumerate}
\end{corollary}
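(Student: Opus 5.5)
The plan is to assemble Corollary~\ref{cor:compactfindimcase} from Theorem~\ref{theo:compactnesscomp} and Proposition~\ref{prop:comp and weakly comp}. Finite-dimensional spaces are reflexive, so both results apply when $X$ and $Y$ are finite-dimensional. I will use two routine facts throughout:
\begin{itemize}
\item Since $\dim Y<\infty$, the closed ball $\overline{B}_Y$ is compact.
\item Since $\phi$ is Lipschitz, it extends to $\overline{B}_Y$, so $\phi(B_Y)$ is relatively compact.
\end{itemize}

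First I apply Theorem~\ref{theo:compactnesscomp}. With both dimensions finite, its statements (1)--(4) are all equivalent. Its (1) and (4) are exactly items (1) and (4) of the corollary. Its (2) reads ``$C_\phi^L$ fixes no copy of $\ell_\infty$ and $\phi(B_Y)$ is relatively compact''. The second clause holds automatically, so this is just item (2) of the corollary. Hence items (1), (2) and (4) are equivalent.

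Next I apply Proposition~\ref{prop:comp and weakly comp}, whose hypotheses ($X$ reflexive, $\dim Y<\infty$) are satisfied. It states that ``$C_\phi^\infty$ weakly compact'', ``$C_\phi^\infty$ compact'' and ``$\norm{\phi}_\infty<1$'' are equivalent. These are items (3), (4) and (5) of the corollary. It also brings in item (2) as an equivalent condition, which gives a second route to the same link.

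Chaining the two blocks through the common item (4) yields the equivalence of all five assertions. No step is really an obstacle, because everything is already contained in the cited results. The only point needing a moment's care is the compactness argument identifying condition (3) of the theorem with $\norm{\phi}_\infty<1$. That identification is already handled inside the proofs I am citing: the continuous extension of $\phi$ to the compact set $\overline{B}_Y$ attains its norm.
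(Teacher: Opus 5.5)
Your proposal is correct and is exactly the paper's argument: the paper obtains the corollary by combining Theorem~\ref{theo:compactnesscomp} (all four conditions equivalent when $\dim X,\dim Y<\infty$, with relative compactness of $\phi(B_Y)$ automatic) and Proposition~\ref{prop:comp and weakly comp} (finite-dimensional $X$ being reflexive), linked through the common condition that $C_\phi^\infty$ is compact. Your closing remark about identifying condition (3) of the theorem with $\norm{\phi}_\infty<1$ is harmless but unnecessary, since your chain never uses that condition.
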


During the preparation of this paper we  learned that Fred\'eric Bayart  independently proved the equivalence $(1)\Leftrightarrow(5)$  above.

Observe that the equivalent conditions of the previous corollary do not guarantee  $C_\phi$ to be compact (or weakly compact) as an operator from $\Lip_0(B_X)$ to $\Lip_0(B_Y)$, since this would require $\phi$ to be uniformly locally flat, and hence $\phi\equiv 0$ \cite{JVVV, ACPcomp}.

\subsection{The linear case.}

When the symbol $\phi$ of the composition operator $C_\phi^L$ is linear, we can improve some of the previous results with simpler proofs. 

\begin{lemma}\label{lemma:Tisometry} Let $X,Y$ be complex Banach spaces,  and let $T\in \mathcal L(Y,X)$ satisfy $T(B_Y)=B_X$. Then $C_T^L\colon \mathcal HL_0(B_X)\to\mathcal HL_0(B_Y)$ is an isometry. In particular, $C_T^L$ fixes a copy of $\ell_\infty$.  
\end{lemma}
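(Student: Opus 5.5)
The plan is to use the formula $L(g)=\sup_{y\in B_Y}\norm{dg(y)}$ from \eqref{eq:norm Lipschitz-diferential} together with the chain rule. For $f\in\mathcal HL_0(B_X)$ and $y\in B_Y$ we have $d(f\circ T)(y)=df(Ty)\circ T$. First I bound
\[\norm{d(f\circ T)(y)}=\sup_{v\in B_Y}|df(Ty)(Tv)|\le \sup_{u\in B_X}|df(Ty)(u)|=\norm{df(Ty)},\]
using $T(B_Y)\subseteq B_X$. Since also $Ty\in B_X$, this gives $L(f\circ T)\le L(f)$; this is also just $\norm{C_T^L}=L(T)=\norm{T}\le 1$ from Lemma~\ref{lemma:compositionoperatorproperties}.

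For the reverse inequality I use surjectivity. Given $x\in B_X$, pick $y\in B_Y$ with $Ty=x$. The inequality above becomes an equality, because $\{Tv: v\in B_Y\}=B_X$:
\[\norm{d(f\circ T)(y)}=\sup_{u\in B_X}|df(x)(u)|=\norm{df(x)}.\]
Taking the supremum over $x\in B_X$ yields $L(f\circ T)\ge L(f)$. Hence $C_T^L$ is an isometry. An equivalent route is through elementary molecules: for $x_1\neq x_2$ in $B_X$, choose preimages $y_i$, so that $|f(x_1)-f(x_2)|=|f\circ T(y_1)-f\circ T(y_2)|\le L(f\circ T)\norm{y_1-y_2}$. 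This only controls the quotient by $\norm{y_1-y_2}$, which could exceed $\norm{x_1-x_2}$, so the differential version is the one to use, and the key point is that $B_X$ is exactly the image of $B_Y$ in every direction.

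For the last assertion, \cite{ADGLM} shows that $\mathcal HL_0(B_X)$ contains a copy $E$ of $\ell_\infty$; this is the fact already invoked in the proof of Theorem~\ref{thm:compactscalarcase}. An isometric embedding restricted to $E$ is an isomorphism onto its image, so $C_T^L$ fixes a copy of $\ell_\infty$.

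The only subtle point is the reverse inequality. Attention is needed on two counts: the supremum in $\norm{df(x)}$ runs over the open ball, and preimages must be taken inside the open ball $B_Y$. Both are guaranteed exactly by the hypothesis $T(B_Y)=B_X$, so no closure argument is needed.
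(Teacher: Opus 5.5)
Your proof is correct and is essentially the paper's argument: the chain rule $d(f\circ T)(y)=df(Ty)\circ T$ together with $T(B_Y)=B_X$ and \eqref{eq:norm Lipschitz-diferential} gives $L(f\circ T)=\sup_{y,v\in B_Y}|df(Ty)(Tv)|=\sup_{x,u\in B_X}|df(x)(u)|=L(f)$. For the final claim, note that the copy of $\ell_\infty$ cited from \cite{ADGLM} in the proof of Theorem~\ref{thm:compactscalarcase} is a copy inside $\mathcal HL_0(\disk)$. You can move it into $\mathcal HL_0(B_X)$ via $g\mapsto g\circ x^*$ for any $x^*\in S_{X^*}$, since $x^*(B_X)=\disk$ and so this map is an isometry by the very statement you just proved.
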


\begin{proof}
For $f\in \mathcal HL_0(B_X)$ we have
\begin{align*}
    L\left(C_T^L(f)\right)&= \norm{d(f\circ T)}_\infty = \sup_{y\in B_Y} \norm{df(Ty)\circ T}\\
    &= \sup_{y,v\in B_Y} \norm{df(Ty)(Tv)} = \sup_{x,u\in B_X} \norm{df(u)(v)} = \norm{df}_\infty = L(f).
\end{align*}
\end{proof}

\begin{lemma}\label{lemma:Tfixeslinfty} Let $X, Y$ be complex Banach spaces and $T\in \mathcal L(Y,X)$ with $\norm{T}\leq 1$. If  $C_T^L$ does not fix a copy of $\ell_\infty$, then $\overline{T(B_Y)}\subseteq B_X$.
\end{lemma}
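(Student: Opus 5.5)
We argue by contraposition: assuming $\overline{T(B_Y)}\not\subseteq B_X$, we show that $C_T^L$ fixes a copy of $\ell_\infty$. Since $\norm{T}\leq 1$, we have $\overline{T(B_Y)}\subseteq \overline{B}_X$, so the assumption yields a sequence $(y_n)_n\subseteq B_Y$ with $\norm{Ty_n}\to 1$. We pick $x_n^*\in S_{X^*}$ with $x_n^*(Ty_n)=\norm{Ty_n}$. The plan is to mimic the proof of Theorem~\ref{thm:compactscalarcase}, replacing the disk by suitable one-dimensional slices and using linearity of $T$ to keep control of the differential.

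\textbf{Step 1: pass to the scalar functional $x^*\circ T$.} Since $T$ is linear, $d(f\circ T)(y)=df(Ty)\circ T$. For $f=g\circ x^*$ with $g\in \mathcal HL_0(\disk)$ and a fixed $x^*\in S_{X^*}$, we get $C_T^L(g\circ x^*)=g\circ (x^*\circ T)$. The natural first attempt is to work with a single $x^*$ such that $\norm{x^*\circ T}=1$, namely $\psi:=x^*\circ T\in S_{Y^*}$ with $\psi(B_Y)=\disk$. Then $C_\psi^L\colon \mathcal HL_0(\disk)\to\mathcal HL_0(B_Y)$ satisfies $L(g\circ\psi)=\sup_{y}|g'(\psi(y))|\,\norm{\psi}=\norm{g'}_\infty=L(g)$, exactly as in Lemma~\ref{lemma:Tisometry}. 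Since $\mathcal HL_0(\disk)$ contains $\ell_\infty$ (\cite{ADGLM}), composing with the isomorphic embedding $g\mapsto g\circ x^*$ of $\mathcal HL_0(\disk)$ into $\mathcal HL_0(B_X)$ (an isometry, by Lemma~\ref{lemma:Tisometry} applied to $x^*$) shows that $C_T^L$ restricted to this copy is isometric. This settles the case in which a norming $x^*$ exists.

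\textbf{Step 2: the general case, with no single norming functional.} Here we would build an interpolating-type construction directly. The points $z_n:=x_n^*(Ty_n)\to 1$ vary together with the functionals, so we instead work in $\mathcal H^\infty(B_X)$ via the diagram of Lemma~\ref{lemma:factor}. We look for a sequence $(h_k)_k\subseteq \mathcal H^\infty(B_X)$ with $h_k(Ty_n)=\delta_{kn}$ (after passing to a subsequence) and $\sum_k|h_k|\leq M$ on $B_X$; such sequences exist whenever $\norm{Ty_n}\to 1$, by Galindo--Lindstr\"om type interpolation in $\mathcal H^\infty(B_X)$, or by a direct construction from peak-type functions $\big(\frac{1+x_n^*(x)}{2}\big)^{m_n}$ with rapidly increasing exponents. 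We then take $F_k\in \mathcal HL_0(B_X)$ with $dF_k(x)=h_k(x)x_{k}^*$-type primitives. Concretely, we set $F_k(x)=\int_0^1 h_k(tx)\,x_k^*(x)\,dt$ adjusted, or more simply $F_k(x)=x_k^*(x)\,p_k(x)$ with $p_k$ peaking at $Ty_k$. The goal is to estimate $\norm{d(\sum\xi_kF_k\circ T)(y_n)}\gtrsim|\xi_n|$, which comes from testing on the direction $y_n$ and using $x_n^*(Ty_n)\approx 1$. The upper bound $\norm{\sum \xi_kF_k}_L\leq C\norm{\xi}_\infty$ follows from the summability $\sum_k|h_k|\leq M$, together with the fact that derivatives of the peak factors are controlled.

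\textbf{Main obstacle.} The hard part is Step 2: producing an $\ell_\infty$-sequence in $\mathcal HL_0(B_X)$ that is uniformly Lipschitz (not merely uniformly bounded), and whose image under $C_T^L$ stays uniformly separated. We would first try reducing to Step 1 by compactness. If $X$ is such that the $x_n^*$ admit a $w^*$-cluster point norming $T$, we are done. Otherwise we would fall back on the peak-function construction with a careful choice of exponents $m_n$ making the supports essentially disjoint.
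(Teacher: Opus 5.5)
Your Step~1 is correct, and it is in substance the paper's proof. The paper writes $C^L_{x^*\circ T}=C^L_T\circ C^L_{x^*}$, uses Lemma~\ref{lemma:Tisometry} to see that $C^L_{x^*\circ T}$ is an isometry (so it fixes $\ell_\infty$), and concludes by the ideal property. You instead show directly that $C^L_T$ is isometric on the copy $C^L_{x^*}(\mathcal HL_0(\disk))$; this is the same argument.

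As submitted, however, the proof is incomplete. You present Step~1 as settling only the case ``a norming $x^*$ exists'' and leave the remaining case to Step~2, which is not a proof. The functions $F_k$ are never actually defined. The radial primitive $\int_0^1 h_k(tx)\,x_k^*(x)\,dt$ does not have differential $h_k x_k^*$, because the form $x\mapsto h_k(x)x_k^*$ is not closed in general. The alternative $F_k=x_k^*\,p_k$ with $p_k(x)=\bigl(\frac{1+x_k^*(x)}{2}\bigr)^{m_k}$ has $\norm{dF_k(x)}$ of order $m_k$ near the peak. So the claimed bound $L\bigl(\sum_k\xi_kF_k\bigr)\leq C\norm{\xi}_\infty$ already fails for $\xi=e_k$ once $m_k\to\infty$.

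The missing idea is that Step~2 is never needed. In your first sentence you replace the hypothesis by ``$\norm{Ty_n}\to 1$'', which only says $\norm{T}=1$. But $\overline{T(B_Y)}\subseteq\overline{B}_X$ together with $\overline{T(B_Y)}\not\subseteq B_X$ gives an actual point $x\in\overline{T(B_Y)}$ with $\norm{x}=1$. Choose $x^*\in S_{X^*}$ with $x^*(x)=1$ and $y_n\in B_Y$ with $Ty_n\to x$. Then $\norm{x^*\circ T}\geq\lim_n\abs{x^*(Ty_n)}=1$, so $\psi=x^*\circ T$ has norm one and $\psi(B_Y)=\disk$. Under the contrapositive hypothesis a norming functional therefore always exists, Step~1 finishes the proof, and Step~2 should be deleted.

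Your weakened hypothesis really is weaker than what the lemma assumes. For the diagonal operator of Example~\ref{e:OrbitsLeaveSphere} on $\ell_2$ one has $\norm{T}=1$ but $\overline{T(B_{\ell_2})}\subseteq B_{\ell_2}$. So Step~2 was attempting a stronger statement than the lemma requires.
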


\begin{proof}
    Assume that $C_T^L$ does not fix a copy of $\ell_\infty$ and that there exists $x\in S_X\cap \overline{T(B_Y)}$. Then we may take $x^*\in S_{X^*}$ with $x^*(x)=1$ and it is clear that $\norm{x^*\circ T}=1$, so $x^*\circ T(B_Y)=\mathbb D$. But $C_{x^*\circ T}^L=C_T^L\circ C_{x^*}^L$ does not fix a copy of $\ell_\infty$, which contradicts Lemma \ref{lemma:Tisometry}.  
\end{proof}

\begin{proposition}\label{prop:compLineal}
    Let $X, Y$ be complex Banach spaces and $T\in \mathcal L(Y,X)$ with $\norm{T}\leq 1$. Then the following assertions are equivalent:
    \begin{itemize}
\item[(1)] $C_T^L\colon \mathcal HL_0(B_X)\to \mathcal HL_0(B_Y)$ is compact. 
\item[(2)] $C_T^\infty\colon \mathcal H^\infty(B_X)\to \mathcal H^\infty(B_Y)$ is compact. 
\item[(3)] $T$ is compact and $\norm{T}<1$.
    \end{itemize}
If moreover $X$ has the Schur property, they are also equivalent to:
\begin{itemize}
\item[(4)] $C_T^L$ is weakly compact.
\end{itemize} 
\end{proposition}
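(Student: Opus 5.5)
We will prove $(3)\Rightarrow(1)\Rightarrow(3)$ and $(2)\Leftrightarrow(3)$, and then treat $(4)$ under the Schur property.

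For $(3)\Rightarrow(1)$ we use Proposition~\ref{prop:compRelComp} with $U=B_X$, $V=B_Y$ and $\phi=T$. Since $\norm{T}<1$, we have $\overline{T(B_Y)}\subseteq \norm{T}\overline{B}_X\subseteq B_X$. Since $T$ is compact, $T(B_Y)$ is relatively compact. Moreover $dT(y)=T$ for every $y$, so $dT(B_Y)(S_Y)=T(S_Y)$, which is also relatively compact. Hence $C_T^L$ is compact.

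For $(1)\Rightarrow(3)$, compactness of $C_T^L$ implies in particular that $C_T^L$ does not fix a copy of $\ell_\infty$. Lemma~\ref{lemma:Tfixeslinfty} then gives $\overline{T(B_Y)}\subseteq B_X$. From $(1)\Rightarrow(2)$ of Theorem~\ref{theo:compactnesscomp}, $T(B_Y)$ is relatively compact, so $T$ is a compact operator. The key remaining point is $\norm{T}<1$. The set $\overline{T(B_Y)}$ is compact and contained in the open ball $B_X$, so the continuous function $\norm{\cdot}$ attains its maximum on it. That maximum is strictly less than $1$, and it equals $\norm{T}$.

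For $(2)\Leftrightarrow(3)$ we use \cite[Proposition 3]{AGL}: $C_T^\infty$ is compact if and only if $\norm{T}_\infty<1$ and $T(B_Y)$ is relatively compact. Here $\norm{T}_\infty=\norm{T}$, and relative compactness of $T(B_Y)$ is exactly compactness of $T$.

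For $(4)$, the implication $(1)\Rightarrow(4)$ is trivial. Assume $X$ has the Schur property and that $C_T^L$ is weakly compact. A weakly compact operator does not fix a copy of $\ell_\infty$ (Figure~\ref{fig:Ideals}), so Lemma~\ref{lemma:Tfixeslinfty} again gives $\overline{T(B_Y)}\subseteq B_X$. This is the hypothesis needed for Proposition~\ref{prop:compRelWComp}(a), which then yields that $T(B_Y)$ is relatively weakly compact. By Schur's property together with Eberlein–Šmulian, relatively weakly compact sets in $X$ are relatively norm compact, so $T$ is compact. The same maximum argument as above gives $\norm{T}<1$, hence $(3)$ holds.

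The main subtlety is the $\norm{T}<1$ step: it needs both the inclusion $\overline{T(B_Y)}\subseteq B_X$ from Lemma~\ref{lemma:Tfixeslinfty} and the compactness of $\overline{T(B_Y)}$, and the two together force $\norm{T}<1$.
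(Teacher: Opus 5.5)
Your proposal is correct and follows essentially the same route as the paper: Lemma~\ref{lemma:Tfixeslinfty} gives $\overline{T(B_Y)}\subseteq B_X$, then (relative) compactness of $T(B_Y)$ combined with a maximum argument gives $\norm{T}<1$; \cite[Proposition 3]{AGL} gives (2)$\Leftrightarrow$(3), Proposition~\ref{prop:compRelComp} gives (3)$\Rightarrow$(1), and Proposition~\ref{prop:compRelWComp}(a) with the Schur property handles (4). The only cosmetic difference is that, for (1)$\Rightarrow$(3), you get compactness of $T$ from the implication (1)$\Rightarrow$(2) of Theorem~\ref{theo:compactnesscomp} rather than from Proposition~\ref{prop:compRelComp}; both rest on the same factorization $T=\beta_X\circ\widehat{T}\circ\delta_Y$.
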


\begin{proof} 
The equivalence (2)$\Leftrightarrow$(3) follows from \cite{AGL}, while the implication  (3)$\Rightarrow$(1) is a consequence of Proposition \ref{prop:compRelComp}. Assume now that $C_T^L$ is compact. By Lemma \ref{lemma:Tfixeslinfty}, we have $\overline{T(B_Y)}\subseteq B_X$. We can apply Proposition \ref{prop:compRelComp} to get that $T$ is compact. Then $\overline{T(B_Y)}$ is a compact subset of $B_X$, so $\norm{T}<1$.

Finally, suppose that $C_T^L$ is weakly compact. Lemma~\ref{lemma:Tfixeslinfty} again yields $\overline{T(B_Y)}\subseteq B_X$. Then Proposition \ref{prop:compRelWComp} shows that $T(B_Y)$ is relatively weakly compact. If $X$ has the Schur property, then $T(B_Y)$ is indeed norm-compact proving that (3) holds. 
\end{proof}

The following example presents an operator $T\in \mathcal L(Y,X)$ which is weakly compact and satisfies $\norm{T}<1$, but for which $C_T^L$ fixes a copy of $\ell_\infty$. In particular, $C_T^L$ is not weakly compact. Observe that this also shows that the converse of Proposition~\ref{prop:compRelWComp} (b) fails.

\begin{example}\label{ex:ell2x/2} Let $T\colon \ell_2\to\ell_2$ be given by $T(x)=x/2$. 
Clearly, $T$ is weakly compact. 
For each $a=(a_n)_n\in \ell_\infty$, consider the function $f_a(x)= \sum_{n=1}^\infty a_n x^2_n$, $x\in B_{\ell_2}$. Note that for $x,u\in B_{\ell_2}$ we have $df_a(x)(u)=\sum_{n=1}^\infty 2a_n x_n u_n$. Thus, 
\[L(f_a)=\norm{df_a}_\infty =\sup_{x,u\in B_{\ell_2}} |df_a(x)(u)|  \leq 2\norm{a}_\infty\]
and $L(f_a)\geq \sup_n |df_a(e_n)(e_n)|= 2\sup_n|a_n|=2\norm{a}_\infty$. 

This shows that the subspace $Z=\{f_a: a\in \ell_\infty\}\subseteq \mathcal HL_0(B_{\ell_2})$ is isomorphic to $\ell_\infty$. Finally, it is clear that
\[ L\left(C_T^L(f_a)\right)= L(f_{a/4})= \frac{1}{2}\norm{a}_\infty =\frac{1}{4} L(f_a)\]  
so $C_T^L|_Z$ is an isomorphism onto its image. 
\end{example}

We exhibited in Example \ref{ex:ell2} a composition operator $C_\phi^L$ which is weakly compact but not compact. Now, we provide another example with linear symbol. This is an adaptation of  \cite[Example 3]{AGL}, where the same behavior is shown in the $\mathcal H^\infty$ case. 
Recall that a function $f\colon B_X\to Y$ is said to be \textit{weakly uniformly continuous} on $B_X$ if for every $\varepsilon>0$ there are $\delta>0$ and $x_1^*,\ldots, x_n^*\in X^*$ such that $\norm{f(x)-f(y)}<\varepsilon$ whenever $|x_i^*(x-y)|<\delta$ for all $i=1,\ldots,n$.  

\begin{example}
    
    Let $X=T^*$ be the original Tsirelson space. Recall that $X$ is reflexive and that every continuous polynomial $P\colon X\to X^*$ is weakly uniformly continuous on $B_X$ 
    (this follows from \cite[Theorem 2.5]{GonzaloJaramillo} combined with \cite[Proposition 2.12]{AHV}; see also \cite[Corollary 3.64]{HajekJohanis}). 
    Let $\phi\colon X\to X$ be given by $\phi(x)=x/2$. It is clear that $\phi\in\HL_0(B_X, X)$ and $\phi(B_X)\subseteq B_X$.
    Since $\phi(B_X)$ is not relatively compact,
     by Proposition~\ref{prop:compLineal} we have that $C_\phi^L\colon \mathcal HL_0(B_X)\to\mathcal HL_0(B_X)$ is not compact. 
    By Corollary \ref{cor:ideals}, to prove that $C_\phi^L$ is weakly compact it suffices to show that this is the case for $C_\phi^{X^*}$. 

    To this end, note first that for $f\in \mathcal H^\infty(B_X, X^*)$ and $x\in B_X$ the Taylor series expansion of $f$ yields
    \[ C_\phi^{X^*}(f)(x)= f(x/2) =\sum_{m=0}^\infty P_m(f)(0)(x/2) = \sum_{m=0}^\infty \frac{P_m(f)(0)(x)}{2^m}\]
    and this series converges uniformly in $B_X$ since $\sup_{x\in B_X}\norm{P_m(f)(0)(x)}\leq \norm{f}_\infty$ (see e.g. \cite[Corollary 7.4]{Mujica}). In consequence, $C_\phi^{X^*}(f)$ belongs to $\mathcal A_{wu}(B_X, X^*)$ (the subspace of weakly uniformly continuous elements of $ \mathcal H^\infty(B_X, X^*)$). 

    Any function in $\mathcal A_{wu}(B_X, X^*)$ has a continuous extension to $\overline B_X$ endowed with the weak topology, and recall that this is a compact set due to the
   reflexivity of $X$. Hence, $\mathcal A_{wu}(B_X, X^*)$ can be seen as a closed subspace of $C((\overline B_X, w), X^*)$ (continuous functions from $(\overline B_X, w)$ to $X^*$).

   Let $(f_n)\subseteq \overline B_{\mathcal H^\infty(B_X, X^*)}$. Our aim is to select a weakly convergent subsequence of $(C_\phi^{X^*}(f_n))_n$.
   Let $(x_m)_m$ be a dense set in $B_X$ (recall that $X$ is separable). Appealing  to the reflexivity of $X^*$ and applying a typical diagonal procedure we can obtain a subsequence $(f_{n_k})_k$ such that $(f_{n_k}(x_m))_k$ is $w$-convergent in $X^*$ for each~$m$. 
   Also, since $\mathcal H^\infty(B_X, X^*)=(\mathcal G^\infty(B_X)\pten X)^*$ is a dual space, the sequence $(f_{n_k})_k$ should have a subnet which is $w^*$-convergent to a function $g\in \overline B_{\mathcal H^\infty(B_X, X^*)}$. Clearly, this implies $f_{n_k}(x_m)\overset{w}{\to} g(x_m)$ for each $m$. Now, the sequence $(f_{n_k})_k$ is uniformly bounded, then it is equicontinuous \cite[Proposition 9.15]{Mujica}. Therefore, $f_{n_k}(x)\overset{w}{\to} g(x)$ for every $x\in B_X$.

   To finish the proof we show that $(C_\phi^{X^*}(f_{n_k}))_k$ converges weakly to $C_\phi^{X^*}(g)$ in $\mathcal A_{wu}(B_X, X^*)$. Since the weak topology of $\mathcal A_{wu}(B_X, X^*)$ is induced by the one of $C((\overline B_X, w), X^*)$, we equivalently have to prove the weak convergence in this latter space. 
   By the compactness of $(\overline B_X, w)$ and the reflexivity of $X$ we have that the set $\{\delta(x)\otimes y:\, x,y\in \overline B_X\}$ is a James boundary for $C((\overline B_X, w), X^*)$. Hence, by Rainwater-Simons theorem \cite[Theorem 3.134]{FHHMZ} it is enough to see that $C_\phi^{X^*}(f_{n_k})(x)(y)\overset{w}{\to} C_\phi^{X^*}(g)(x)(y)$, for all  $x,y\in \overline B_X$.
   This is clear from the previous arguments since $C_\phi^{X^*}(f_{n_k})(x)(y)=f_{n_k}(x/2)(y)$ and $C_\phi^{X^*}(g)(x)(y)= g(x/2)(y)$.

\end{example}   

\begin{remark}
    Recall that, by Lemma~\ref{lemma:compositionoperatorproperties} we have $C_\phi^L={\widehat{\phi}}^*$. 
    Hence, by the Schauder theorem, the Gantmacher theorem and Lemma~\ref{l:BessagaPelczynski}, the results in this section concerning compactness, weak compactness, and $\ell_\infty$-strict singularity of $C_\phi^L$ can equivalently be formulated in terms of compactness,  weak-compactness, and the property of fixing no complemented copies of $\ell_1$ for $\widehat{\phi}$, respectively.

   We do not know, however, the answer to the following question: if $\widehat{\phi}$ fixes a copy of~$\ell_1$, must it fix a complemented copy of $\ell_1$?
    Note that this implication  holds for linearizations of Lipschitz maps, see~\cite{Lemay}.
\end{remark}

\section{Iterates of composition operators on \texorpdfstring{$\mathcal HL_0$}{HL0}} \label{sect:iteration}

In this section, we assume that $X=Y$ is any complex Banach space. Given a composition operator $C_\phi : \mathcal HL_0(B_X)\rightarrow \mathcal HL_0(B_X)$ and  $n\in\N$, we denote by $C_\phi^{(n)}$ the $n$-th iterate of $C_\phi$, that is,
\begin{equation*}
    C_{\phi}^{(n)}= C_\phi \circ \cdots \circ C_\phi.
\end{equation*}
Observe that $C_\phi^{(n)}$ is again an operator on $\mathcal HL_0(B_X)$ and that, clearly, $C_\phi^{(n)}= C_{\phi^{(n)}}$, where $\phi^{(n)}=\phi\circ \cdots \circ \phi$. We are interested in determining whether $\|C_{\phi}^{(n)}\|= L(\phi^{(n)})$  converges to $0$, or equivalently, whether $C_\phi^{(n)}$ converges (in norm) to $C_0=0$. Note that this phenomenon has no analogue for composition operators on $\mathcal H^\infty(B_X)$, since in that setting all iterates of $C_\phi$ have norm one.

To this end, we start with a result that may be of independent interest.

Let $\HL_{00}(B_X,Y)$ be the subspace of those $f\in \HL_0(B_X,Y)$ such that $df(0)=0$, which we call  \emph{functions with no linear component}. Note that $\mathcal L(X,Y)$ is canonically seen (through restriction to $B_X$) as a subspace of $\HL_0(B_X,Y)$, because for a linear mapping, its norm and its Lipschitz constant coincide.

\begin{lemma}\label{l:HL0directSum}
For all complex Banach spaces $X,Y$ we have 
\[\HL_0(B_X,Y)=\mathcal L(X,Y)\oplus \HL_{00}(B_X,Y).\]
Moreover, the projection $\Pi\colon \HL_0(B_X,Y) \to \mathcal L(X,Y)$ is given by $\Pi f=df(0)$ and satisfies $\norm{\Pi f}\leq \norm{f}_\infty$.
Additionally, we have 
\begin{equation}\label{e:estimate_of_derivative}
\norm{df(x)}\leq \norm{f}_\infty+2L(f) \norm{x}    
\end{equation}
 for every $x\in B_X$ and $f\in \HL_0(B_X,Y)$. 
\end{lemma}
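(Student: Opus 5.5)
The plan is to prove the bound $\norm{df(0)}\leq \norm{f}_\infty$ first; the direct sum decomposition then follows formally. Fix $f\in \HL_0(B_X,Y)$, $u\in S_X$ and $y^*\in S_{Y^*}$, and consider the scalar function $g(z)=y^*(f(zu))$ on $\disk$. It is holomorphic, satisfies $g(0)=0$, and is bounded by $\norm{f}_\infty$. By the Cauchy estimate (equivalently, the Schwarz lemma applied to $g/\norm{f}_\infty$) we get $|g'(0)|\leq \norm{f}_\infty$. Since $g'(0)=y^*(df(0)(u))$, taking suprema over $y^*$ and $u$ yields $\norm{df(0)}\leq\norm{f}_\infty$.

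Define $\Pi f=df(0)$. This map is linear. Because $df(0)$ is a bounded linear map, it lies in $\mathcal L(X,Y)\subseteq \HL_0(B_X,Y)$, and $\norm{\Pi f}=L(df(0)|_{B_X})\leq \norm{f}_\infty\leq L(f)$, using $\norm{f}_\infty\leq L(f)$ because $f(0)=0$. So $\Pi$ is a bounded operator. For $T\in\mathcal L(X,Y)$ we have $dT(0)=T$, hence $\Pi^2=\Pi$ and the range of $\Pi$ is $\mathcal L(X,Y)$. Its kernel is $\HL_{00}(B_X,Y)$ by definition. A bounded projection gives a topological direct sum, and $f=\Pi f+(f-\Pi f)$ exhibits the decomposition.

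For \eqref{e:estimate_of_derivative}, write $\norm{df(x)}\leq \norm{df(0)}+\norm{df(x)-df(0)}$. The first term is at most $\norm{f}_\infty$. For the second term, $df\colon B_X\to\mathcal L(X,Y)$ is holomorphic and bounded by $L(f)$ by \eqref{eq:norm Lipschitz-diferential}. Apply the Schwarz lemma to $h=df-df(0)$, which vanishes at $0$ and satisfies $\norm{h}_\infty\leq 2L(f)$. The vector-valued Schwarz lemma comes from composing with a norm-one functional and restricting to the disc through $x/\norm{x}$: the function $z\mapsto \psi(h(zx/\norm{x}))$ is holomorphic on $\disk$, vanishes at $0$, and is bounded by $2L(f)$, so its value at $z=\norm{x}$ is at most $2L(f)\norm{x}$. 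This gives $\norm{df(x)-df(0)}\leq 2L(f)\norm{x}$, and the estimate follows.

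No step here is a real obstacle. The only point needing care is justifying the vector-valued Schwarz lemma for the $\mathcal L(X,Y)$-valued map $df$, which reduces to the scalar case via functionals as described.
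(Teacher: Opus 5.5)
Your proof is correct and follows the paper's argument: the Cauchy-estimate bound $\norm{df(0)}\leq\norm{f}_\infty$, the projection $\Pi f=df(0)$, and the Schwarz lemma applied to $df-df(0)=d\bigl((I-\Pi)f\bigr)$, whose sup norm is at most $2L(f)$. The differences are cosmetic. You bound $\norm{df-df(0)}_\infty$ directly by the triangle inequality rather than through $\norm{I-\Pi}\leq 2$. You also spell out the reduction of the vector-valued Schwarz lemma to the scalar case, which the paper leaves implicit.
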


\begin{proof}
    Let $f \in \HL_0(B_X,Y)$. 
    We define $\Pi f:=d f(0)$. 
    It is evident that $\norm{df(0)} \leq L(f)=\sup_{x\in B_X}\norm{df(x)}$.
    Moreover, using the power series expansion of $f$ at $0$, and observing that $f(0)=0$, we have that $f(x)-df(0)(x)=\sum_{m=2}^\infty P_m(x)$ for all $x\in B_X$, where $P_m\in \mathcal P(^m X,X)$ for all $m\geq 2$. Notice that the radius of convergence of the Taylor series of $f$ at $0$ is at least $1$, since $f$ is bounded on $B_X$ (see \cite[Theorem~7.13]{MujicaLibro}). 
    It is now clear that $f-\Pi f \in \HL_{00}(B_X,Y)$.

    The inequality $\norm{\Pi f}\leq \norm{f}_\infty$ follows from the Cauchy integral formulas (see \cite[Corollary~7.4]{MujicaLibro}).
    To prove \eqref{e:estimate_of_derivative}, note that  $\norm{I-\Pi}\leq 2$, then
    \[
    \begin{aligned}
    \norm{df(x)}&\leq \norm{d(\Pi f)(x)}+\norm{d(I-\Pi)f(x)}=\norm{\Pi f}+\norm{d(I-\Pi)f(x)}\\
    &\leq \norm{f}_\infty + \norm{d(I-\Pi)f}_\infty\norm{x}=\norm{f}_\infty +L((I-\Pi)f)\norm{x}\\
    &\leq \norm{f}_\infty +2L(f)\norm{x}.
    \end{aligned}  
    \]
    On the second line above we have used $\norm{\Pi f}\leq\norm{f}_\infty$ and the Schwarz lemma applied on $d(I-\Pi)f$. 
    The use of the Schwarz lemma is justified as $(I-\Pi)f \in \HL_{00}(B_X,Y)$.
\end{proof}

\begin{remark}
    The decomposition in Lemma~\ref{l:HL0directSum} has a pre-adjoint counterpart whenever $Y$ is a dual space.
    Let us denote $\beta_X:=T_{Id_{B_X}}$ the canonical quotient onto $X$ such that $\beta_X\circ \delta_X=Id_{B_X}$, also known as the \emph{barycenter map} in the Lipschitz-free community.
    One can easily check that $\beta_X^*$ is the canonical embedding of $X^*$ into $\HL_0(B_X)$. 
    It is also well known and easy to see that $\beta_X\circ d\delta_X(0)=Id_{B_X}$. 
    Thus $P=d\delta_X(0)\circ \beta_X$ is a projection (see Proposition 2.6 in \cite{ADGLM}). 
    Now when $Y=\C$ we have $P^*f=df(0)=\Pi f$ for $f\in \mathcal HL_0(B_X)$, as $\langle f, d\delta(0)h\rangle = df(0)h$. 
    
    Similarly, when $Y=Z^*$ for some Banach space $Z$, 
    we will have $\Pi_{Y}=(P_\C \pten Id_Z)^*$. 
    We leave the details to the interested reader.
\end{remark}

Now we are ready to give the first condition assuring that $C_\phi^{(n)}$ converges to 0.

\begin{theorem}\label{thm:ergodic-phi-contracting}
    Let $X$ be any complex Banach space,  let  $\phi \in \mathcal{H}L_0(B_X,X)$ with $\phi(B_X)\subseteq B_X$ and consider $C_\phi \colon  \mathcal HL_0(B_X)\rightarrow \mathcal HL_0(B_X)$ the associated composition operator. If $\norm{\phi}_\infty<1$, then $\lim_{n\to\infty}L\left(\phi^{(n)}\right)=0$. In other words, $\|C_{\phi}^{(n)}\|\to 0$.
\end{theorem}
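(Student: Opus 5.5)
Set $r:=\norm{\phi}_\infty<1$ and $g_n:=\phi^{(n)}$. The plan is to combine the chain rule with the derivative estimate \eqref{e:estimate_of_derivative} of Lemma~\ref{l:HL0directSum}, together with the fact that the iterates $g_n$ shrink uniformly to $0$ in sup norm.

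\emph{Step 1: sup norms.} Since $\phi(0)=0$ and $\phi(B_X)\subseteq rB_X$, the Schwarz lemma (applied to $x^*\circ\phi$, $x^*\in S_{X^*}$) gives $\norm{\phi(x)}\le r\norm{x}$ for all $x\in B_X$. Iterating, $\norm{g_n}_\infty\le r^n$, so the iterates converge to $0$ geometrically. Lipschitz constants need more than this, which is where derivatives enter.

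\emph{Step 2: derivative along orbits.} By the chain rule,
\[
dg_{n+k}(x)=dg_k(g_n(x))\circ dg_n(x), \qquad\text{so}\qquad L(g_{n+k})\le \sup_{\norm{u}\le r^n}\norm{dg_k(u)}\cdot L(g_n).
\]
Simply bounding the first factor by $L(g_k)$ only gives submultiplicativity, which is not enough. Instead I apply \eqref{e:estimate_of_derivative} to $f=\phi$ at points $u=g_j(x)$:
\[
\norm{d\phi(u)}\le \norm{\phi}_\infty+2L(\phi)\norm{u}\le r+2L(\phi)\, r^{j}.
\]

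\emph{Step 3: product estimate.} Writing $dg_n(x)=d\phi(g_{n-1}(x))\circ\cdots\circ d\phi(x)$ and using the bound from Step~2 for each factor, I get
\[
L(g_n)\le \prod_{j=0}^{n-1}\bigl(r+2L(\phi)r^{j}\bigr)=r^n\prod_{j=0}^{n-1}\Bigl(1+\tfrac{2L(\phi)}{r}r^{j}\Bigr).
\]
The product converges, since $\sum_j r^j<\infty$, so $L(g_n)\le C r^n\to 0$. If $r=0$ then $\phi\equiv 0$ and the claim is trivial, so we may assume $r>0$. Finally $\norm{C_\phi^{(n)}}=L(\phi^{(n)})$ by Lemma~\ref{lemma:compositionoperatorproperties}.

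The main point to verify is Step~1, namely the use of the Schwarz lemma for a vector-valued map with $\norm{\phi}_\infty=r$. For fixed $x\ne 0$, I would apply the classical Schwarz lemma to the scalar function $z\mapsto x^*(\phi(zx/\norm{x}))/r$ on $\disk$, which maps $\disk$ into $\overline{\disk}$ and vanishes at $0$. Everything else is routine.
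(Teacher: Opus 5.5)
Your proof is correct and follows the paper's argument essentially verbatim: the Schwarz lemma gives $\norm{\phi^{(k)}}_\infty\le r^k$, and then the chain rule together with \eqref{e:estimate_of_derivative}, applied along the orbit, gives $L(\phi^{(n)})\le\prod_{k=0}^{n-1}(r+2L(\phi)r^k)$. Your ending, which bounds the infinite product using $\sum_k r^k<\infty$, is slightly cleaner than the paper's choice of $1<s<1/r$ and yields the sharper rate $L(\phi^{(n)})\le Cr^n$.
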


\begin{proof}
We aim to estimate $L(\phi^{(n)})=\norm{d\phi^{(n)}}_\infty$.
By the chain rule,
\[
d\phi^{(n)}(x)=d\phi(\phi^{(n-1)}(x))\circ d\phi(\phi^{(n-2)}(x))\circ \ldots \circ d\phi(\phi(x))\circ d\phi(x).
\]
Set $r:=\norm{\phi}_\infty$.
By the Schwarz lemma, $\norm{\phi^{(k)}}_\infty \leq r^k$.
Hence, using \eqref{e:estimate_of_derivative}, we obtain
\[
\begin{aligned}
\sup_{x\in B_X}\norm{d\phi^{(n)}(x)}&\leq (r+2L(\phi)r^{n-1})\cdot\ldots\cdot(r+2L(\phi)r)\cdot (r+2L(\phi))\\
&=\prod_{k=0}^{n-1} (r+2L(\phi)r^k)=r^{n}\prod_{k=0}^{n-1} (1+2L(\phi)r^{k-1}).
\end{aligned}
\]
 Now choose $1<s<\frac{1}{r}$. 
   Since $r^k\to 0$, there exists $k_0\in\N$ such that $1+2L(\phi)r^k \leq s$ for every $k\geq k_0$.  Therefore, for every $n\geq k_0+1$, 
    \begin{align*}
        0\leq \norm{C_{\phi}^{(n)}}=L\left(\phi^{(n)}\right)\leq r^{n}\prod_{k=0}^{n-1}(1+2L(\phi)r^{k-1}) \leq  K(sr)^{n-1-k_0} \xrightarrow[n \to \infty]{} 0,
    \end{align*}
    because $0<sr<1$ and $K= r^{k_0+1}\prod_{k=0}^{k_0}(1+2L(\phi) r^{k-1})$ is a constant.  
\end{proof}

Let us show that if $\norm{\phi}_\infty=1$, the iterations of the composition operator may not tend to 0, for example when $\phi$ has a fixed point different from 0.

\begin{remark}
    Let $\phi\in \mathcal HL_0(B_X,X)$ with $\phi(B_X)\subseteq B_X$ such that there is some $x\in \overline{B}_X\setminus\{0\}$ satisfying $\phi(x)=x$. Then,
    \begin{align*}
        \norm{C_\phi^{(n)}}=L\left(\phi^{(n)}\right) \geq \frac{\norm{\phi^{(n)}(x)-\phi^{(n)}(0)}}{\norm{x-0}}= \frac{\norm{\phi^{(n)}(x)}}{\norm{x}}=1, \quad \forall n\in \N,
    \end{align*}
    since the Lipschitz constant does not change when we extend $\phi$ to the whole $\overline{B}_X$, thanks to Lemma \ref{lemma:extension-border-HL0}.
    Hence, $\|C_{\phi}^{(n)}\|$ does not converge to $0$. For example, given $x_0\in S_X$ and $x^*\in S_{X^*}$ with $x^*(x_0)=1$, consider $\phi\in\HL_0(B_X,X)$ given by  $\phi(x)=x^*(x)^mx_0$ (for any $m\in\N)$. Then, $\phi$ satisfies this condition because $\phi(x_0)=x_0$. 
\end{remark}

We now show that, although the condition $\|\phi\|_\infty<1$ is not necessary for the norm of the operator $C_{\phi^{(n)}}$ to converge to zero, it must hold for some iterate $\phi^{(n)}$.

\begin{theorem}\label{theo:equivalent-convergent-0}
   Let $X$ be any complex Banach space,  let  $\phi \in \mathcal{H}L_0(B_X,X)$ such that $\phi(B_X)\subseteq B_X$ and consider $C_\phi :  \mathcal HL_0(B_X)\rightarrow \mathcal HL_0(B_X)$ the associated composition operator.  Then the following assertions are equivalent.
    \begin{enumerate}
        \item $\lim_{n\to\infty} \norm{C_\phi^{(n)}}=0$.
        \item $\lim_{n\to\infty} L\left(\phi^{(n)}\right)=0$.
        \item $\lim_{n\to\infty} \norm{\phi^{(n)}}_\infty =0$.
        \item There is some $n\in \N$ such that $\norm{\phi^{(n)}}_\infty<1$.
    \end{enumerate}
\end{theorem}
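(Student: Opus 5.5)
The plan is to close the cycle $(1)\Leftrightarrow(2)\Rightarrow(3)\Rightarrow(4)\Rightarrow(2)$.

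\emph{Step 1: $(1)\Leftrightarrow(2)$.} This is immediate from Lemma~\ref{lemma:compositionoperatorproperties}. Since $C_\phi^{(n)}=C_{\phi^{(n)}}$, we get $\norm{C_\phi^{(n)}}=L(\phi^{(n)})$.

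\emph{Step 2: $(2)\Rightarrow(3)$.} Because $\phi^{(n)}(0)=0$, every $x\in B_X$ satisfies
\[
\norm{\phi^{(n)}(x)}=\norm{\phi^{(n)}(x)-\phi^{(n)}(0)}\le L(\phi^{(n)})\norm{x}\le L(\phi^{(n)}).
\]
Hence $\norm{\phi^{(n)}}_\infty\le L(\phi^{(n)})\to 0$.

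\emph{Step 3: $(3)\Rightarrow(4)$.} This is trivial.

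\emph{Step 4: $(4)\Rightarrow(2)$.} This is the only step with content. Fix $N$ with $\norm{\phi^{(N)}}_\infty<1$ and set $\psi=\phi^{(N)}$. Then $\psi\in\mathcal HL_0(B_X,X)$, $\psi(B_X)\subseteq B_X$ and $\norm{\psi}_\infty<1$. Theorem~\ref{thm:ergodic-phi-contracting} applied to $\psi$ gives $L(\psi^{(k)})=L(\phi^{(kN)})\to0$ as $k\to\infty$.

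For a general index, write $n=kN+j$ with $0\le j<N$. Then $\phi^{(n)}=\phi^{(kN)}\circ\phi^{(j)}$, so
\[
L(\phi^{(n)})\le L(\phi^{(kN)})\,L(\phi^{(j)})\le L(\phi^{(kN)})\max_{0\le j<N}\max\{1,L(\phi)\}^{j}.
\]
Here $\phi^{(0)}=Id$ and $L(\phi^{(j)})\le L(\phi)^j$. The maximum is a constant independent of $n$, and $k\to\infty$ as $n\to\infty$, so $L(\phi^{(n)})\to0$.

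An alternative is to factor as $\phi^{(j)}\circ\phi^{(kN)}$, but the order does not matter for the submultiplicativity of Lipschitz constants.

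\emph{Main obstacle.} The only point to check is that Theorem~\ref{thm:ergodic-phi-contracting} really applies to the iterate $\psi$. This holds because its hypotheses are exactly $\psi\in\mathcal HL_0(B_X,X)$, $\psi(B_X)\subseteq B_X$ and $\norm{\psi}_\infty<1$. The rest of the proof is bookkeeping.
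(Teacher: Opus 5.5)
Your proof is correct and follows the paper's argument: $\norm{C_\phi^{(n)}}=L(\phi^{(n)})$ gives $(1)\Leftrightarrow(2)$, the bound $\norm{f}_\infty\le L(f)$ gives $(2)\Rightarrow(3)$, and for $(4)\Rightarrow(2)$ you apply Theorem~\ref{thm:ergodic-phi-contracting} to $\phi^{(N)}$, write $n=kN+j$, and use submultiplicativity of Lipschitz constants. Your bound $\max\{1,L(\phi)\}^{j}$ on the remainder factor is slightly tidier than the paper's constant $K=\max_{1\le k<n_0}L(\phi^{(k)})$, since it also covers $j=0$ and $N=1$ without treating $\phi=0$ separately.
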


\begin{proof}
    The equivalence between $(1)$ and $(2)$ is immediate. Moreover, $(2)\Rightarrow (3)$ follows from the fact that $\norm{f}_\infty\leq L(f)$ for all $f\in \mathcal{H}L_0(B_X,Y)$ where $Y$ is any complex Banach space, while $(3)\Rightarrow (4)$ is straightforward.

 It remains to prove $(4)\Rightarrow(2)$. Fix $n_0\in \N$ such that $\norm{\phi^{(n_0)}}_\infty<1$. Applying Theorem \ref{thm:ergodic-phi-contracting} to the map  $\phi^{(n_0)} : B_X\rightarrow B_X$, we obtain $$\lim_{n\to\infty} L\left(\left(\phi^{(n_0)}\right)^{(n)}\right)=0.$$ Now set $ K= \max_{1\leq k< n_0} L\left( \phi^{(k)} \right)$. Observe that $K>0$, since
    $L(\phi)\geq \norm{\phi}_\infty >0$; otherwise $\phi=0$, in which case the conclusion is trivial.
   Given $\varepsilon>0$, choose $n_1$ such that $L\left(\left(\phi^{(n_0)}\right)^{(n)}\right)<\frac{\varepsilon}{K}$ for all $n\geq n_1$. Set $n_2=n_1n_0\in \N$. For every $n\geq n_2$, there exist $m\in \N$ and $k\in \N_0$ such that $n=mn_0+k$ with $m\geq n_1$ and $k<n_0$. Therefore,
   \begin{align*}
       L\left(\phi^{(n)} \right)&= L\left( \phi^{(mn_0)}\circ \phi^{(k)}\right) \leq L\left( \phi^{(mn_0)}\right) L\left(\phi^{(k)}\right) \\&= L\left(\left(\phi^{(n_0)}\right)^{(m)}\right) L\left(\phi^{(k)}\right)< \frac{\varepsilon}{K}K=\varepsilon.
   \end{align*}
   Hence, $\lim_{n\to\infty}  L\left(\phi^{(n)}\right)=0$.
\end{proof}

Note that the equivalent assertions of the previous theorem do not imply $\norm{\phi}_\infty<1$. We first give a counterexample for $\dim(X)\geq 2$.

\begin{example}
    If $\dim(X)\geq 2$, pick $x^*\in S_{X^*}$ and $x_0\in S_X$ such that $x^*(x_0)=0$. Consider $\phi : B_X\rightarrow B_X$ given by $\phi(x)=x^*(x)x_0$. It is clear that $\phi$ is linear so $\phi\in \mathcal HL_0(B_X)$ and  $\norm{\phi}_\infty =\norm{x^*}\norm{x_0}=1 $. Furthermore, 
    \begin{align*}
        \phi^{(2)}(x)= \phi(x^*(x)x_0)= x^*(x)x^*(x_0)x_0=0, \quad \forall x\in B_X.
    \end{align*}
    Hence, $\phi^{(n)}=0$ for all $n\geq 2$, and so $\|C_{\phi}^{(n)}\|=0$, for all $n\geq 2$.   
\end{example}

We also present a counterexample for the 1-dimensional case.

\begin{example}
    Let $\phi\colon\disk\to \C$ be given by $\phi(z)=\frac{iz+z^2}{2}$. Then, $\phi(\disk)\subseteq\disk$ and $\norm{\phi}_\infty=1$. Moreover, since the only point $z\in\overline{\disk}$ at which $\phi$ attains its norm is $z=i$, which is not in the image of $\phi$, it follows that $\|\phi^{(2)}\|_\infty<1$.
\end{example}

In Theorem \ref{theo:equivalent-convergent-0} we have seen two different characterizations of $\lim_{n\to\infty} \|C_\phi^{(n)}\|=0$, for any complex Banach space $X$. Furthermore, in the finite-dimensional case,  we can add new equivalent conditions to the preceding ones. In order to simplify the proof of the next proposition, we include there the assertions (3) and (4) from Theorem \ref{theo:equivalent-convergent-0}. Note that one of the equivalent conditions in Proposition \ref{prop:equivalent-convergent-0-findim} only involves the map $\phi$ and not its iterations.

\begin{definition}
    Let $X$ be a complex Banach space, and let $\phi\in \mathcal HL_0(B_X,X)$ such that $\phi(B_X)\subseteq B_X$. Given $x\in \overline{B}_X$ we define the \emph{orbit} of $x$ under the map $\phi$ as the set
    \begin{equation*}
        \orb_\phi(x)\coloneqq \{x\}\cup\{ \phi^{(n)}(x) : n\in \N\} \subseteq \overline{B}_X.
    \end{equation*}
\end{definition}

\begin{proposition}\label{prop:equivalent-convergent-0-findim}
    Let $X$ be a finite-dimensional complex Banach space and let $\phi\in \mathcal HL_0(B_X,X)$ with $\phi(B_X)\subseteq B_X$. Then the following assertions are equivalent.
     \begin{enumerate}
        \item There exists $n\in \N$ such that $\norm{\phi^{(n)}}_\infty<1$.
        \item $\lim_{n\to\infty} \norm{\phi^{(n)}}_\infty =0$.
        \item $\lim_{n\to\infty} \norm{\phi^{(n)}(x)}=0$, for all $x\in \overline{B}_X$.
        \item $\phi(C)\not\subseteq C$, for all $ \emptyset\neq C\subseteq S_X$.
        \item  $\orb_\phi(x)\not\subseteq S_X$, for all $x\in S_X$.
    \end{enumerate}
\end{proposition}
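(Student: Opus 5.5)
The plan is to prove the cycle $(1)\Rightarrow(2)\Rightarrow(3)\Rightarrow(5)\Rightarrow(4)\Rightarrow(1)$. Throughout, $\phi$ and its iterates are understood as their Lipschitz extensions to $\overline{B}_X$ (Lemma~\ref{lemma:extension-border-HL0}). These extensions map $\overline{B}_X$ into $\overline{B}_X$, and by continuity they still satisfy the Schwarz-type bound $\norm{\phi^{(k)}(x)}\le\norm{x}$.

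The first three links are quick.
\begin{itemize}
\item $(1)\Rightarrow(2)$ is contained in Theorem~\ref{theo:equivalent-convergent-0}, via $(4)\Rightarrow(2)\Rightarrow(3)$ there.
\item $(2)\Rightarrow(3)$ is immediate, because $\norm{\phi^{(n)}(x)}\le \norm{\phi^{(n)}}_\infty$ for $x\in\overline{B}_X$.
\item $(3)\Rightarrow(5)$: if some $x\in S_X$ had $\orb_\phi(x)\subseteq S_X$, then $\norm{\phi^{(n)}(x)}=1$ for all $n$, contradicting $(3)$.
\end{itemize}

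For $(5)\Rightarrow(4)$, argue by contraposition. Suppose $\emptyset\ne C\subseteq S_X$ satisfies $\phi(C)\subseteq C$, and take any $x\in C$. Then $\phi^{(n)}(x)\in C\subseteq S_X$ for every $n$, so $\orb_\phi(x)\subseteq S_X$, which contradicts $(5)$.

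The remaining step, $(4)\Rightarrow(1)$, is where finite dimensionality is used, and it is the main obstacle. Suppose $\norm{\phi^{(n)}}_\infty=1$ for all $n$. Since $\overline{B}_X$ is compact and $\phi^{(n)}$ is continuous on it, each set
\[
K_n:=\{x\in S_X:\ \norm{\phi^{(n)}(x)}=1\}
\]
is nonempty and compact. Because $\norm{\phi(y)}\le\norm{y}$, having $\norm{\phi^{(n+1)}(x)}=1$ forces $\norm{\phi^{(k)}(x)}=1$ for every $k\le n+1$. Hence the sets $K_n$ decrease. By compactness, $C:=\bigcap_n K_n$ is nonempty.

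We then check that $\phi(C)\subseteq C$. For $x\in C$, the point $\phi(x)$ lies in $S_X$, and $\norm{\phi^{(n)}(\phi(x))}=\norm{\phi^{(n+1)}(x)}=1$ for every $n$. So $\phi(x)\in K_n$ for all $n$, that is, $\phi(x)\in C$. This contradicts $(4)$, completing the cycle.

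The delicate points are the validity of the inequality $\norm{\phi^{(k)}(x)}\le\norm{x}$ on the closed ball, which follows from the Schwarz lemma together with continuity, and the use of compactness to make the sets $K_n$ nonempty. In infinite dimensions $\norm{\phi^{(n)}}_\infty=1$ need not be attained, which is exactly why the statement requires $X$ to be finite-dimensional.
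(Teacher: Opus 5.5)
Your proof is correct and is essentially the paper's argument. The paper proves $(5)\Rightarrow(1)$ by covering $\overline{B}_X$ with the increasing open sets $\{x\in\overline{B}_X:\norm{\phi^{(n)}(x)}<1\}$ and extracting a finite subcover, whereas you pass to the complementary decreasing compact sets $K_n$ and use the finite intersection property to produce an invariant subset of $S_X$; this is the same compactness argument in contrapositive form, and reordering the trivial implications among $(3)$, $(4)$, $(5)$ changes nothing, since $(4)\Leftrightarrow(5)$ is immediate in both directions.
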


\begin{proof}
    The implication $(1)\Rightarrow(2)$ follows from Theorem \ref{theo:equivalent-convergent-0} and $(2)\Rightarrow(3)\Rightarrow(4)\Rightarrow(5)$ is straightforward, observing that $\phi(C)\subseteq C$ implies that $\phi^{(n)}(C)\subseteq C$ for all $n\in \N$, and $\phi(\orb_\phi(x))\subseteq \orb_\phi(x)$ for all $x\in \overline{B}_X$.

    It remains to show $(5)\Rightarrow (1)$. Assume that (5) holds. Then, for each $x\in \overline{B}_X$ there is  $n\in \N$ such that $\phi^{(n)}(x)\in B_X$ (notice that  $\orb_\phi(x)\subseteq B_X$ for  $x\in B_X$). Hence,
    \begin{equation*}
        \overline{B}_X\subseteq \bigcup_{n\in\N} \left\{ x\in \overline{B}_X : \norm{\phi^{(n)}(x)}<1\right\}.
    \end{equation*}
    Since $\overline{B}_X$ is compact and the sets $\left\{ x\in \overline{B}_X : \norm{\phi^{(n)}(x)}<1\right\}$ are open, there exist  $n_1,\ldots,n_k\in\N$ such that 
    \begin{equation*}
        \overline{B}_X\subseteq \bigcup_{i=1}^k \left\{ x\in \overline{B}_X : \norm{\phi^{(n_i)}(x)}<1\right\}= \left\{ x\in \overline{B}_X : \norm{\phi^{(n_0)}(x)}<1\right\},
    \end{equation*}
    where $n_0=\max\{n_1,\ldots, n_k\}$.  Thus, $\norm{\phi^{(n_0)}(x)}<1$ for all $x\in \overline{B}_X$ and so $\norm{\phi^{(n_0)}}_\infty<1$, due to the compactness of $\overline{B}_X$.
    
\end{proof}

Note that Proposition \ref{prop:equivalent-convergent-0-findim} does not hold in the infinite-dimensional case, as the following example shows.

\begin{example}\label{e:OrbitsLeaveSphere} Let $\phi\colon B_{c_0}\to B_{c_0}$ given by $\phi((x_n)_{n=1}^\infty)=(\frac{n}{n+1} x_n)_{n=1}^\infty$. Since $\phi$ is the restriction of a bounded linear operator it is clear that $\phi\in \mathcal HL_0(B_{c_0}, c_0)$. Moreover, $\phi(S_{c_0})\subseteq B_{c_0}$ and therefore $\phi(C)\not\subseteq C$ for all $\emptyset\neq C\subseteq S_{c_0}$. On the other hand,
\[L(\phi^{(n)})=\norm{\phi^{(n)}}_\infty \geq \sup_{k\in\mathbb N}\norm{\phi^{(n)}(e_k)}= \sup_{k\in \mathbb N}\left(\frac{k}{k+1}\right)^n =1.\]
Hence, $L(\phi^{(n)})=1$ for each $n\in \mathbb N$. Note that the same example can also be considered for $\phi\colon B_{\ell_2}\to B_{\ell_2}$. 
\end{example}

It will be interesting to find conditions ensuring that $C_\phi$ is \emph{power bounded}, that is, there is $C>0$ such that $L(\phi^{(n)})\leq C$ for each $n\in \mathbb N$. We already know that this is the case if $\norm{\phi^{(n)}}_\infty<1$ for certain $n$. However, if $\phi\in \mathcal HL_0(B_X,X)$ with  $\norm{\phi^{(n)}}_\infty =1$ for all $n$, we show in the following example that $C_\phi$ could be power bounded or not. 

\begin{example} A linear $\phi$ yields $C_\phi$ power bounded / A particular polynomial $\phi$ produces $C_\phi$ not power bounded.
    \begin{enumerate}
\item If $\phi\in\mathcal HL_0(B_X,X)$ is linear with $\norm{\phi}_\infty =1$, then $C_\phi$ is power bounded. Indeed, $L(\phi^{(n)})=\|\phi^{(n)}\|_\infty\le \|\phi\|_\infty^n = 1$.

\item Let $\phi=p|_\mathbb D$ where $p$ is a polynomial in $\mathbb C$ of degree at least 2 such that $p(0)=0$, $p(\mathbb D)\subseteq \mathbb D$, and there is $x_0\in \mathbb T$ with $p(x_0)=x_0$ (e.g. $p(z)=z^2$). Then $C_\phi$ is not power bounded. Indeed,  we know from Corollary \ref{cor:denjoy-wolff-modificado}  that $|p'(x_0)|>1$. Then, given $n\in \mathbb N$, 
\begin{align*}
    \left|(\phi^{(n)})'(x_0)\right| &= \prod_{j=0}^{n-1} \left|\phi'(\phi^{(j)}(x_0))\right| = \prod_{j=0}^{n-1} |\phi'(x_0)| =  |\phi'(x_0)|^n \underset{n\to\infty}{\longrightarrow}  +\infty
\end{align*}
meaning that $L(\phi^{(n)})\underset{n\to\infty}{\longrightarrow}   +\infty$. 
Clearly, an analogous statement holds if $z_0$ is a fixed point of $\phi^{(k)}$ for some $k\in \mathbb N$.
\end{enumerate}
\end{example}

\section{Composition operators in \texorpdfstring{$\mathcal HL(B_X)$}{HL(BX)}} \label{sect:HL}

In this section we deal with composition operators between spaces of holomorphic Lipschitz functions on $B_X$ (which do not necessarily fix the point 0). 

We denote by
\[
\mathcal HL(B_X,Y)  =\{f\in \Lip(B_X, Y): f \text{ is holomorphic on }   B_X\};
\] as usual we put $\mathcal HL(B_X) $ in the scalar-valued case $Y=\mathbb C$. This is a Banach space endowed with the norm
\[
\|f\|_L= L(f) + \|f(0)\|.
\]
Note that $\|f\|_\infty=\sup_{x\in B_X}\|f(x)\|\le \|f\|_L$. Indeed, if $x\in B_X$,
\[
\|f(x)\|\le \|f(x)-f(0)\|+ \|f(0)\|\le L(f) \|x\| +\|f(0)\| \le \|f\|_L.
\]

We denote by $\mathcal A_u(B_X,Y)$ the set of holomorphic  uniformly continuous mappings from $B_X$ to $Y$, which is a Banach space with the norm $\|\cdot\|_\infty$. Since Lipschitz mappings are uniformly continuous, the previous inequality shows that the canonical inclusion $\mathcal HL(B_X,Y)\hookrightarrow \mathcal A_u(B_X,Y)$ has norm~1.

The space  $\mathcal HL(B_X)$ is often called the \emph{analytic Lipschitz algebra}, denoted by $\Lip_A(B_X, 1)$ (see, e.g., \cite{BM05, BM25}), and is usually equipped with the norm $\norm{\cdot}_\infty+L(\cdot)$. Moreover, in \cite{ADGLM}, the space $\mathcal HL(B_X,Y)$ is considered with the norm $\max\{L(f), \|f(0)\|\}$. These three norms are equivalent. We prefer to work with  $\norm{\cdot}_L$ since, on the one hand, it behaves well with multiplicative operators, and, on the other hand, $\mathcal HL_0(B_X, Y)$ embeds isometrically into $(\mathcal HL(B_X,Y), \norm{\cdot}_L)$.

As commented in \cite{ADGLM}, the space $\mathcal HL(B_X)$ admits a predual, denoted by $\mathcal G(B_X)$, for which the usual linearization properties hold, just as in the case of $\mathcal HL_0(B_X)$ and $\mathcal G_0(B_X)$. Since the norm on $\mathcal HL(B_X)$ considered here differs from the one used in \cite{ADGLM}, we summarize the relevant facts in the following proposition.

First, for every $x\in B_X$, we define $\delta(x)\in \mathcal HL(B_X)
^*$ to be the evaluation functional at $x$.
Since $X^*$ embeds canonically into $\mathcal HL(B_X)$, we have $\norm{\delta(x)-\delta(y)}=\norm{x-y}$, and therefore the map $\delta\colon B_X \to \mathcal HL(B_X)^*$ is an isometry. However, in this case, $\norm{\delta(x)}=1$ for all $x\in B_X$, and so $\norm{\delta}_L=2$. 
As before, $\delta$ is holomorphic by \cite[Corollary~15.47]{DefantGarciaMaestreSevillaPeris}.
We define
\[
\mathcal G(B_X)=\overline{\lspan}\, \delta(B_X)\subseteq \mathcal HL(B_X)^* 
\] 
and observe that most of the properties of $\mathcal G_0(B_X)$ admit analogous counterparts for $\mathcal G(B_X)$.

\begin{proposition}\label{prop:Gdef} Let $X, Y$ be complex Banach spaces. 
\begin{itemize}
   \item[(a)] For every $f\in \mathcal HL(B_X,Y)$ there exists a unique $T_f \in \mathcal L(\mathcal G(B_X),Y)$ such that $f=T_f\circ \delta$, and $$\frac12 \|f\|_L\le \max\{L(f),\norm{f(0)}\}\le \|T_f\| \le  \|f\|_L.$$
    
    \item[(b)] The map $f\mapsto T_f$ is surjective and linear.  In particular, 
    \[\mathcal HL(B_X,Y)\cong\mathcal L(\mathcal G(B_X),Y) \text{ and } \mathcal HL(B_X)\cong\mathcal G(B_X)^*.\]

    \item[(c)] The topologies $w^*$, $\tau_0$ and $\tau_p$ coincide on $r\overline{B}_{\mathcal HL(B_X)}$, for any $r>0$.

    \item[(d)] For any $r>0$,
    \begin{align*}    
    \mathcal G(B_X)&=\{\varphi\in \mathcal HL(B_X)^*: \varphi|_{r\overline{B}_{\mathcal HL(B_X)}} \text{ is } \tau_0\text{-continuous}\}\\
    &=\{\varphi\in \mathcal HL(B_X)^*: \varphi|_{r\overline{B}_{\mathcal HL(B_X)}} \text{ is } \tau_p\text{-continuous}\}.\end{align*}
\end{itemize}    
\end{proposition}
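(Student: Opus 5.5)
The plan is to mirror the proof of Proposition~\ref{prop:G0def}, keeping track of the constants that change because $\norm{\delta(x)}=1$ rather than $\norm{x}$. First, the set $\{\delta(x):x\in B_X\}$ is linearly independent in $\mathcal G(B_X)$. To see this, I will use the same Lagrange-type interpolation as before: given distinct points $x_0,\dots,x_n\in B_X$, take $f(x)=\sum_j\overline{\lambda_j}\prod_{i\neq j}\frac{x_{ij}^*(x_i-x)}{\norm{x_i-x_j}}$. This is now allowed because $f(0)$ need not vanish. Given $f\in\mathcal HL(B_X,Y)$, I then define $T_f(\delta(x))=f(x)$ and extend it linearly to $\lspan\delta(B_X)$.

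The core is to show that $\norm{T_f}\le\norm{f}_L$ on this span. For $y^*\in B_{Y^*}$ the function $y^*\circ f$ lies in $\mathcal HL(B_X)$ with $\norm{y^*\circ f}_L\le\norm{f}_L$, so
\[\Big|y^*\Big(\sum\lambda_if(x_i)\Big)\Big|\le\norm{f}_L\Big\|\sum\lambda_i\delta(x_i)\Big\|.\]
This bound lets $T_f$ extend to all of $\mathcal G(B_X)$. For the lower bounds, $\norm{f(0)}=\norm{T_f(\delta(0))}\le\norm{T_f}$, since $\norm{\delta(0)}=1$. Also $\norm{f(x)-f(y)}\le\norm{T_f}\norm{\delta(x)-\delta(y)}=\norm{T_f}\norm{x-y}$, using that $\delta$ is an isometry (because $X^*\subseteq\mathcal HL(B_X)$ contributes $L$-norm only). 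Hence $\max\{L(f),\norm{f(0)}\}\le\norm{T_f}$, and trivially $\frac12\norm{f}_L\le\max\{L(f),\norm{f(0)}\}$. Uniqueness follows because $\delta(B_X)$ spans a dense subspace.

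For (b), linearity of $f\mapsto T_f$ is clear. Surjectivity comes from the fact that $T\circ\delta\in\mathcal HL(B_X,Y)$ for every $T\in\mathcal L(\mathcal G(B_X),Y)$, since $\delta$ is holomorphic and Lipschitz. The case $Y=\mathbb C$ then gives an isomorphism $\mathcal HL(B_X)\cong\mathcal G(B_X)^*$. One should check that this identification is the natural pairing $\langle f,\delta(x)\rangle=f(x)$; it is even isometric, because $\mathcal G(B_X)$ is normed from $\mathcal HL(B_X)^*$ and Goldstine/bipolar gives $\norm{f}=\sup$ over the ball of $\mathcal G(B_X)$. In any case the isomorphism suffices for what follows.

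For (c), take a bounded net $f_\gamma\to f$ in $w^*$. Pointwise convergence is immediate. For $\tau_0$, on a totally bounded $K$ the functions $f_\gamma-f$ are uniformly Lipschitz with constant $\le 2r$ (in the $L$ part), so pointwise convergence on a finite $\varepsilon$-net upgrades to uniform convergence on $K$. Then $(r\overline B_{\mathcal HL(B_X)},w^*)$ is compact; here I use the $w^*$-closedness of the ball, which holds because the norm is a dual norm by (b). Since $\tau_p$ is Hausdorff, the identity maps are homeomorphisms. For (d), the argument is Banach–Dieudonné exactly as in Proposition~\ref{prop:G0def}.

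The main obstacle I expect is the precise duality bookkeeping. One must verify that the ball of $\mathcal HL(B_X)$ is $w^*$-compact with respect to $\mathcal G(B_X)$, which requires the dual norm on $\mathcal G(B_X)^*$ to coincide with (or be equivalent to) $\norm{\cdot}_L$. Concretely, $\norm{f}_L=\sup\{|\langle f,\mu\rangle|:\mu\in B_{\mathcal G(B_X)}\}$, and this holds because $\mathcal G(B_X)$ is a norming subspace of $\mathcal HL(B_X)^*$ containing all $\delta(x)$.
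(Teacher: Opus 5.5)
Your proof follows the paper's route. You mirror Proposition~\ref{prop:G0def}, get $\norm{T_f}\le\norm{f}_L$ from $\norm{y^*\circ f}_L\le\norm{f}_L$, and get the lower bounds from $\norm{\delta(0)}=1$ and the isometry of $\delta$. Part (a), the isomorphism in (b), and the reduction of (d) to (c) via Banach--Dieudonn\'e are fine.

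The one step whose justification fails is the $w^*$-compactness of $r\overline{B}_{\mathcal HL(B_X)}$, which you need for Alaoglu in (c). You attribute it to $\norm{\cdot}_L$ being the dual norm ``by Goldstine/bipolar'', or to $\mathcal G(B_X)$ containing every $\delta(x)$. Neither argument gives this. Testing separately against $\delta(0)$ and the molecules $\frac{\delta(x)-\delta(y)}{\norm{x-y}}$ only yields $\norm{T_f}\ge\max\{L(f),|f(0)|\}$. Goldstine and the bipolar theorem say nothing about how the norm induced by $\mathcal G(B_X)$ compares with $\norm{\cdot}_L$. Your fallback ``or be equivalent to'' is not enough on its own either, because an equivalent norm on a dual space need not have a $w^*$-closed unit ball. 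Indeed, in (c) you invoke ``dual norm by (b)'', not the isomorphism.

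Either of two short arguments closes this.
\begin{itemize}
\item Test $f$ against $\mu=\lambda\frac{\delta(x)-\delta(y)}{\norm{x-y}}+\delta(0)$ with $\lambda\in\mathbb T$. This $\mu$ has norm at most $1$, because $\bigl|\lambda\frac{g(x)-g(y)}{\norm{x-y}}+g(0)\bigr|\le L(g)+|g(0)|$. Choosing $\lambda$ to align phases gives $\norm{T_f}=\norm{f}_L$ for scalar $f$; this is the device behind Proposition~\ref{prop:unitballconvHL}.
\item Alternatively, suppose $\norm{f_\gamma}_L\le r$ and $f_\gamma\to f$ pointwise. Then $\frac{|f(x)-f(y)|}{\norm{x-y}}+|f(0)|\le r$ for all $x\neq y$. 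So the $\norm{\cdot}_L$-ball is $w^*$-closed, and since it is bounded for the dual norm, it is $w^*$-compact.
\end{itemize}
The paper's proof does not address this point explicitly, so with either fix your argument is complete.
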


\begin{proof}
The proof follows the same steps as in Proposition \ref{prop:G0def}, except for the norm estimates, which we show here. First, note that the norm of $T_f$ is determined by its values on the dense subspace $\lspan\delta(B_X)$. 
Since $\norm{y^* \circ f}_L\leq \norm{y^*}\norm{f}_L$ for $y^* \in Y^*$, an argument analogous to the one used in the proof of Proposition~\ref{prop:G0def} shows that $\|T_f(\sum \lambda_i\delta(x_i))\|\le \|f\|_L \| \sum \lambda_i\delta(x_i)\|$. Thus, $\|T_f\|\le \|f\|_L$. On the other hand,  it is clear that 
 $\|f\|_L= L(f) + \|f(0)\| \le 2 \max\{L(f),\norm{f(0)}\}$. Also, $L(f)=L(T_f\circ \delta)\le \|T_f\| L(\delta)=\|T_f\|$ and $\|f(0)\|=\|T_f( \delta(0))\|\le \|T_f\| \|\delta(0)\|=\|T_f\|$.
\end{proof}

Recall, as we mentioned in Section \ref{sect:composition operators HL0}, that every holomorphic Lipschitz map $f\colon B_X\to Y$ extends uniquely to a map $\widetilde{f}\colon \overline{B}_X\to Y$ with the same Lipschitz constant.
Except in the following result, which is the $\mathcal HL$-version of Lemma \ref{lemma:extension-border-HL0}, we will identify $f$ and $\widetilde{f}$ and simply write $f$.

\begin{lemma}\label{lemma:extension-border-HL}
    Let $X,Y$ be complex Banach spaces. Given $f\in \mathcal HL(B_X,Y)$ we have:
    \begin{itemize}
        \item[(a)] There is a Lipschitz map $\widetilde{f}\colon \overline{B}_X\rightarrow Y$ such that $\widetilde{f}|_{B_X}=f$ and $L\big(\widetilde{f}\big)= L(f)$.
        \item[(b)] There is an isometry
        $\widetilde{\delta} \colon  \overline{B}_X\rightarrow \mathcal G(B_X)$ such that $\widetilde{\delta}|_{B_X}=\delta$ and $\|\widetilde{\delta}(x)\|=1$ for all $x\in \overline{B}_X$.
        \item[(c)] There is a unique linear operator $T_f\in \mathcal L(\mathcal G(B_X), Y)$ such that the following diagram commutes.
\begin{equation*}
\xymatrix{
 \overline{B}_X \ar[r]^{\widetilde{f}}  \ar[d]_{\widetilde{\delta}}    &  Y  \\
   \mathcal{G}(B_X)  \ar[ru]_{T_f}  &
}
\end{equation*}
    \item[(d)] If $f(B_X)\subseteq  B_Y$, there is a unique linear operator $\widehat{f}\coloneq T_{\delta_Y \circ f} \in \mathcal L(\mathcal G(B_X), \mathcal{G}(B_Y))$ such that the following diagram commutes
    \begin{equation*}
        \xymatrix{
 \overline{B}_X \ar[r]^{\widetilde{f}}  \ar[d]_{\widetilde{\delta}_X }    &  \overline{B}_Y \ar[d]^{\widetilde{\delta}_Y }  \\
   \mathcal{G}(B_X)  \ar[r]_{ \widehat{f}}   &  \mathcal{G}(B_Y)
}
    \end{equation*} 
    \end{itemize}
\end{lemma}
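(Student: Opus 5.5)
The plan is to mirror the proof of Lemma~\ref{lemma:extension-border-HL0}, replacing Proposition~\ref{prop:G0def} by Proposition~\ref{prop:Gdef} and checking the new norm normalisation for $\widetilde\delta$.

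For (a), take $\widetilde f$ to be the unique Lipschitz extension of $f$ to $\overline{B}_X$. Since $Y$ is complete and $f$ is uniformly continuous, we set $\widetilde f(x)=\lim_n f(x_n)$ for any sequence $(x_n)\subseteq B_X$ with $x_n\to x$. The Lipschitz constant does not increase, because each inequality $\norm{f(x_n)-f(y_n)}\le L(f)\norm{x_n-y_n}$ passes to the limit.

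For (b), apply (a) to $\delta\colon B_X\to\mathcal G(B_X)$. This map is an isometry and hence $1$-Lipschitz, and its values lie in the closed subspace $\mathcal G(B_X)$, so the extension also takes values there. The isometry property follows by passing to the limit in $\norm{\delta(x_n)-\delta(y_n)}=\norm{x_n-y_n}$. For the norm identity, we use that $\norm{\delta(x)}=1$ for every $x\in B_X$: the constant function $1\in\mathcal HL(B_X)$ has $\norm{1}_L=1$ and $\delta(x)(1)=1$, while $|f(x)|\le\norm{f}_\infty\le\norm{f}_L$. Since the norm is continuous, it follows that $\|\widetilde\delta(x)\|=\lim_n\norm{\delta(x_n)}=1$.

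For (c), take $T_f$ from Proposition~\ref{prop:Gdef}(a). For $x\in\overline B_X$ and $x_n\to x$ with $x_n\in B_X$, the continuity of $T_f$ gives
\[T_f(\widetilde\delta(x))=\lim_n T_f(\delta(x_n))=\lim_n f(x_n)=\widetilde f(x).\]
Uniqueness holds because $\operatorname{span}\delta(B_X)$ is dense in $\mathcal G(B_X)$.

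For (d), we first note that $\widetilde f(\overline B_X)\subseteq\overline{f(B_X)}\subseteq\overline B_Y$. The composition $\delta_Y\circ f$ is holomorphic, since $\delta_Y$ is holomorphic, and it is Lipschitz with constant $L(f)$. Hence it belongs to $\mathcal HL(B_X,\mathcal G(B_Y))$, and we set $\widehat f=T_{\delta_Y\circ f}$.

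It then remains to check that $\widetilde{\delta_Y\circ f}=\widetilde\delta_Y\circ\widetilde f$ on $\overline B_X$. Both maps are continuous and agree on the dense set $B_X$, so they coincide. Applying (c) then gives the commutative diagram, and uniqueness again follows from density of the span of $\delta_X(B_X)$.

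No step is a serious obstacle. The only point needing care is the normalisation $\|\widetilde\delta(x)\|=1$ in (b), which replaces $\norm{x}$ from the $\mathcal G_0$ case; it is handled by the computation $\norm{\delta(x)}=1$ together with continuity of the norm.
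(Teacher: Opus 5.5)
Your proposal is correct and takes essentially the same route as the paper: the paper gives no separate proof and presents this lemma as the $\mathcal HL$-version of Lemma~\ref{lemma:extension-border-HL0}, i.e.\ the same argument with Proposition~\ref{prop:Gdef} in place of Proposition~\ref{prop:G0def}. Your check that $\norm{\delta(x)}=1$, using the constant function $1$ and $\norm{f}_\infty\le\norm{f}_L$, is exactly the normalisation the paper records just before the lemma.
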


Using the same arguments as in Proposition \ref{prop:unitballconv}, we refine the $\mathcal HL$ version of Proposition 2.6 of \cite{ADGLM}, referred in that article at the end of Section 3.

\begin{proposition}\label{prop:unitballconvHL} 
Let $X$ be a complex Banach space. Then
\begin{align*}
\overline{B}_{\mathcal G(B_X)}&=\overline{\Gamma}(\{d\delta(x_1)(x_2)+\delta(0): x_1\in B_X, x_2\in S_X\})\\
&=\overline{\Gamma}\left(\left\{\frac{\delta(x_1)-\delta(x_2)}{\norm{x_1-x_2}}+\delta(0) : x_1\not= x_2\in B_X\right\}\right)
\end{align*}
\end{proposition}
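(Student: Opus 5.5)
The plan is a Hahn--Banach separation argument. It rests on computing the support function of each of the two sets on $\mathcal HL(B_X)=\mathcal G(B_X)^*$, which we may use by Proposition~\ref{prop:Gdef}(b), and showing that each coincides with the dual norm $\norm{\cdot}_L$. Put
\[A=\{d\delta(x_1)(x_2)+\delta(0): x_1\in B_X, x_2\in S_X\}\]
and let $B$ be the set of shifted molecules. By the bipolar theorem, $\overline{\Gamma}(A)=\overline{B}_{\mathcal G(B_X)}$ exactly when $\sup_{a\in A}|\langle f,a\rangle|=\norm{f}_L$ for every $f\in\mathcal HL(B_X)$, where the norm on $\mathcal HL(B_X)$ is the one dual to $\mathcal G(B_X)$. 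The same criterion applies to $B$. One point needs care here: Proposition~\ref{prop:Gdef} only gives an isomorphism. I will check that the canonical pairing makes $\norm{\cdot}_L$ exactly the dual norm, so that $\norm{T_f}=\norm{f}_L$. If that identification turns out to be only isomorphic, the statement has to be read with the norm of $\mathcal G(B_X)$ as the predual norm of $(\mathcal HL(B_X),\norm{\cdot}_L)$. The support-function computation below establishes exactly that.

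First I show $A$ and $B$ lie in the unit ball. For $f$ with $\norm{f}_L\le 1$ we have $|\langle f, d\delta(x_1)(x_2)+\delta(0)\rangle|=|df(x_1)(x_2)+f(0)|\le \norm{df}_\infty+|f(0)|=L(f)+|f(0)|\le 1$. The second equality uses \eqref{eq:norm Lipschitz-diferential}, applied to $f-f(0)\in\mathcal HL_0(B_X)$. The molecules are handled the same way, giving $|(f(x_1)-f(x_2))/\norm{x_1-x_2}+f(0)|\le 1$.

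Next, the reverse inequality for the supremum. Fix $f$ and $\varepsilon>0$. Using \eqref{eq:norm Lipschitz-diferential}, pick $x_1\in B_X$ and $x_2\in S_X$ with $|df(x_1)(x_2)|>L(f)-\varepsilon$. Then rotate: replace $x_2$ by $\lambda x_2$ with $\lambda\in\torus$, so that $df(x_1)(\lambda x_2)=\lambda\, df(x_1)(x_2)$ has the same argument as $f(0)$. This is possible because $S_X$ is $\torus$-invariant and $df(x_1)$ is $\mathbb C$-linear. This alignment step is the key point and the only real obstacle, since the constant term $f(0)$ does not rotate with $x_2$. After rotation, $|df(x_1)(\lambda x_2)+f(0)|=|df(x_1)(x_2)|+|f(0)|>\norm{f}_L-\varepsilon$. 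So $\sup_{A}|\langle f,\cdot\rangle|=\norm{f}_L$, and separation yields the first equality.

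For the molecules, the argument of Proposition~\ref{prop:unitballconv} shows that each $d\delta(x_1)(x_2)$ is a norm limit of molecules $(\delta(x_1+tx_2)-\delta(x_1))/t$ as $t\to0^+$, and these have denominator $\norm{tx_2}=t$. Hence $A\subseteq\overline{B}$. Combined with $B\subseteq\overline{B}_{\mathcal G(B_X)}$, this gives $\overline{B}_{\mathcal G(B_X)}=\overline{\Gamma}(A)\subseteq\overline{\Gamma}(B)\subseteq\overline{B}_{\mathcal G(B_X)}$, which proves the second equality.
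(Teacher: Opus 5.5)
Your proposal is correct and is essentially the paper's argument: the paper likewise shows that $\{d\delta(x_1)(x_2)+\delta(0)\}$ is $1$-norming for $(\mathcal HL(B_X),\norm{\cdot}_L)$ via $\norm{f}_L=\sup\bigl(|df(x_1)(x_2)|+|f(0)|\bigr)=\sup|df(x_1)(x_2)+f(0)|$. The last equality there is exactly your rotation of $x_2$, which the paper leaves implicit; the second equality then follows from the inclusion of that set in the closure of the shifted molecules, as in Proposition~\ref{prop:unitballconv}. Your caveat about Proposition~\ref{prop:Gdef} being only isomorphic is harmless but unnecessary, since the norm of $\mathcal G(B_X)$ is by definition the one inherited from $\mathcal HL(B_X)^*$, so your polar computation directly gives $\overline{B}_{\mathcal G(B_X)}$ (and, as a by-product, shows that the scalar identification $\mathcal HL(B_X)=\mathcal G(B_X)^*$ is isometric).
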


\begin{proof} Note that the set  $\{d\delta(x_1)(x_2)+\delta(0): x_1\in B_X, x_2\in S_X\}$ is 1-norming for $\mathcal HL(B_X)$ since 
\begin{align*}
    \norm{f}_L&=\sup \{ |\langle f, d\delta(x_1)(x_2)\rangle| : x_1\in B_X,\ x_2\in S_X\} + |\langle f, \delta(0)\rangle|\\&=\sup \{ |\langle f, d\delta(x_1)(x_2)\rangle| + |\langle f, \delta(0)\rangle| : x_1\in B_X,\ x_2\in S_X\}\\&= \sup \{ |\langle f, d\delta(x_1)(x_2)+\delta(0)\rangle|  : x_1\in B_X,\ x_2\in S_X\},
\end{align*}
for all $f\in \mathcal HL(B_X)$. This yields the first equality. For the second one, observe that $$\{d\delta(x_1)(x_2)+\delta(0): x_1\in B_X, x_2\in S_X\}\subseteq \overline{\left\{\frac{\delta(x_1)-\delta(x_2)}{\norm{x_1-x_2}}+\delta(0) : x_1\not= x_2\in B_X\right\}},$$
by the same argument as in Proposition \ref{prop:unitballconv}.
\end{proof}

Next, we provide the definition of a composition operator on $\mathcal HL(B_X)$, which should be compared with Definition~\ref{def:compositionoperator}.

\begin{definition}
    Let $X,Y$ be complex Banach spaces. A mapping $A \colon \mathcal HL(B_X)\rightarrow \mathcal{H}L(B_Y)$ is called a \textit{composition operator} if there exists a map $\phi \in \mathcal HL(B_Y, X)$ satisfying $\phi(B_Y)\subseteq \overline B_X$ such that $A (f) = f\circ \phi$, for every $f\in \mathcal HL(B_X)$. As usual, we denote this operator by $C_\phi$.
\end{definition}

Note that, for $\mathcal HL$ spaces, we consider a more general notion of composition operator, since we allow the symbol $\phi$ to take values in the \textbf{closed} unit ball of $X$. Recall that, by the Maximum Modulus Theorem, if $\phi \in \mathcal HL(B_Y, X)$ with $\phi(B_Y)\subseteq \overline B_X$ then exactly one of the following alternatives holds: $\phi(B_Y)\subseteq  B_X$ or $\phi(B_Y)\subseteq S_X$. Observe that in the latter case the composition operator is still well defined. Indeed, given $f\in \mathcal HL(B_X)$, it is clear that $f\circ\phi$ is Lipschitz. Moreover, $f$ is uniformly continuous and therefore can be uniformly approximated (with respect to the supremum norm) by a sequence of polynomials $(P_n)$.  Consequently, $P_n\circ\phi \to f\circ\phi$  in the Banach space $\mathcal A_u(B_X,Y)$, and since each $P_n\circ\phi$ is holomorphic, it follows that $f\circ\phi$ is holomorphic as well.

\begin{lemma}\label{lemma:compositionoperatorpropertiesHL}
     Let $X,Y$ be complex Banach spaces and let $\phi \in \mathcal HL(B_Y, X)$ with $\phi(B_Y)\subseteq \overline B_X$. Then, the composition operator $C_\phi \in \mathcal L( \mathcal HL(B_X), \mathcal{H}L(B_Y))$ satisfies $C_\phi= \widehat{\phi}^*$. Furthermore,
     \begin{itemize}
         \item[(a)] If $\norm{\phi}_L\leq 1$, then $\norm{C_\phi}=1$. 
         \item[(b)] If $\norm{\phi}_L>1$, then $\max\{L(\phi),1\}\leq \norm{C_\phi}\leq \norm{\phi}_L$.
     \end{itemize} 
\end{lemma}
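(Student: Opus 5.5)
First I will show that $C_\phi=\widehat{\phi}^*$, arguing as in Lemma~\ref{lemma:compositionoperatorproperties}. One subtlety is that $\phi$ may take values in $S_X$. Hence $\widehat{\phi}$ is defined as the linearization $T_{\widetilde{\delta}_X\circ\phi}$ given by Proposition~\ref{prop:Gdef}(a), with $\widetilde{\delta}_X$ the extension from Lemma~\ref{lemma:extension-border-HL}(b). The map $\widetilde{\delta}_X\circ\phi\colon B_Y\to\mathcal G(B_X)$ is Lipschitz because $\widetilde\delta_X$ is an isometry. It is holomorphic because, for each $f\in\mathcal HL(B_X)=\mathcal G(B_X)^*$, the function $\langle f,\widetilde\delta_X(\phi(y))\rangle=\widetilde f(\phi(y))$ is holomorphic by the polynomial approximation argument just before the lemma; Dunford's theorem then applies, the weak$^*$-holomorphy being tested against the norming dual $\mathcal G(B_X)^*$. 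Then for $f\in\mathcal HL(B_X)$ and $y\in B_Y$ we have $\langle\widehat\phi^*f,\delta_Y(y)\rangle=\langle f,\widetilde\delta_X(\phi(y))\rangle=f(\phi(y))$. This gives $C_\phi=\widehat\phi^*$, and in particular $C_\phi$ is bounded.

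For the norm estimates I will compute directly with $\norm{\cdot}_L$ rather than through $\widehat\phi$, since the linearization is only isomorphic. The upper bound comes from
\[
\norm{f\circ\phi}_L=L(f\circ\phi)+|f(\phi(0))|\le L(f)L(\phi)+|f(0)|+L(f)\norm{\phi(0)}=L(f)\norm{\phi}_L+|f(0)|,
\]
which is at most $\max\{\norm{\phi}_L,1\}\,\norm{f}_L$. This yields $\norm{C_\phi}\le 1$ in case (a) and $\norm{C_\phi}\le\norm{\phi}_L$ in case (b). For the lower bound $\norm{C_\phi}\ge 1$, I will test on the constant function $f\equiv1$, for which $\norm{f}_L=1$ and $C_\phi f=1$.

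For the lower bound $\norm{C_\phi}\ge L(\phi)$, I will test on functionals $x^*\in S_{X^*}$ shifted by a constant. Given $y_1\ne y_2$, choose $x^*\in S_{X^*}$ with $x^*(\phi(y_1)-\phi(y_2))=\norm{\phi(y_1)-\phi(y_2)}$. The function $f=x^*$ satisfies $\norm{f}_L=1$, while $C_\phi f=x^*\circ\phi$ has $\norm{x^*\circ\phi}_L\ge L(x^*\circ\phi)\ge \norm{\phi(y_1)-\phi(y_2)}/\norm{y_1-y_2}$. Taking the supremum over $y_1\ne y_2$ gives $\norm{C_\phi}\ge L(\phi)$. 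This settles (b); together with the constant test it also gives $\norm{C_\phi}=1$ in (a).

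The only delicate point I anticipate is the well-definedness and holomorphy of $\widetilde\delta_X\circ\phi$ when $\phi(B_Y)\subseteq S_X$, which is needed to identify $C_\phi$ with an adjoint. I handle it with the polynomial approximation remark combined with weak$^*$-holomorphy. Everything else is elementary.
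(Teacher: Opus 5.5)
Your proof is correct. It follows the paper's skeleton: identify $C_\phi=\widehat\phi^*$, then use the identical estimate $\norm{f\circ\phi}_L\le L(f)\norm{\phi}_L+|f(0)|$ for the upper bound.

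The difference is in the lower bound $\max\{L(\phi),1\}$. You test $C_\phi$ directly on $f\equiv 1$ and on norming functionals $x^*\in S_{X^*}$. The paper instead reads the bound off the linearization, $\norm{C_\phi}=\norm{\widehat\phi}\ge\max\{L(\delta_X\circ\phi),\norm{\delta_X(\phi(0))}\}$, using the estimate of Proposition~\ref{prop:Gdef}(a). That route is a one-liner once the machinery is in place. However, it implicitly uses that $\norm{\widehat\phi^*}$ equals the operator norm of $C_\phi$ for $\norm{\cdot}_L$, i.e. that the scalar duality $\mathcal HL(B_X)=\mathcal G(B_X)^*$ is isometric. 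This is true, by the $1$-norming set in Proposition~\ref{prop:unitballconvHL}. So your ``only isomorphic'' caveat applies to the vector-valued linearization $\widehat\phi$, not to this identification. Your direct tests are elementary and avoid the issue altogether.

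You also check that $\widetilde\delta_X\circ\phi$ is Lipschitz and holomorphic when $\phi(B_Y)$ may lie in $S_X$. This supplies a detail the paper skips when it cites Lemma~\ref{lemma:extension-border-HL}, whose part (d) is stated only for maps into the open ball.
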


\begin{proof}
    Observe that $C_\phi=(\widehat{\phi})^*=(T_{\delta_X\circ \phi})^*$ by Lemma \ref{lemma:extension-border-HL}. Hence,  $$\norm{C_\phi}\geq \max\{L(\delta_X\circ \phi),\norm{\delta_X(\phi(0))}\}=\max\{ L(\phi),1\},$$ since $\delta_X$ is an isometry and $\norm{\delta_X(x)}=1$ for all $x\in \overline{B}_X$. Now, for all $f\in \mathcal HL(B_X)$,  \begin{align*}
        \norm{f\circ \phi}_L &= L(f\circ \phi)+\norm{f(\phi(0))}\leq L(f)L(\phi)+\norm{f(\phi(0))-f(0)}+\norm{f(0)}\\&\leq L(f)L(\phi)+L(f)\norm{\phi(0)}+\norm{f(0)}= L(f)\norm{\phi}_L+\norm{f(0)}.
    \end{align*}
    Therefore, if $\norm{\phi}_L\leq 1$, then $\norm{f\circ \phi}\leq \norm{f}_L$, for all $f\in \mathcal HL(B_X)$ and consequently $\norm{C_\phi}=1$. Otherwise, $\norm{f\circ \phi}\leq \norm{\phi}_L\norm{f}_L$, for all $f\in \mathcal HL(B_X)$ and thus $\max\{L(\phi),1\}\leq \norm{C_\phi}\leq \norm{\phi}_L$.
\end{proof}

\subsection{Multiplicative and composition operators}
As in Section \ref{sect:composition operators HL0}, the space $\mathcal HL(B_X)$ is also a Gelfand algebra: if $f,g \in \mathcal HL(B_X)$, then $f \cdot g \in \mathcal HL(B_X)$ and $\|f \cdot g\|_L \le 2 \|f\|_L \|g\|_L .$ In addition, $\mathcal HL(B_X)$ is unital, since it contains the constant function $1$, unlike $\mathcal{H}L_0(B_X)$.
As before, composition operators preserve products: $
C_\phi(f \cdot g) = C_\phi(f) \cdot C_\phi(g).$
Consequently,  it is natural to ask under which conditions a continuous linear multiplicative mapping from $\mathcal HL(B_X)$ into $\mathcal HL(B_Y)$ must be a composition operator. 

The following lemma will allow us to transfer to $\mathcal HL(B_X)$ some results previously established for $\mathcal HL_0(B_X)$.

\begin{lemma}\label{lemma: A tilde}
    Let $X,Y$ be complex Banach spaces. Let $A:\mathcal HL_0(B_X)\to \mathcal HL_0(B_Y)$ be a continuous linear mapping and define $\widetilde{A}: \mathcal HL(B_X)\to \mathcal HL(B_Y)$ by
    \[
    \widetilde{A}(f)=A(f-f(0)) + f(0).
    \] Then,
    \begin{itemize}
        \item[(a)] $\widetilde{A}$ is a continuous linear mapping satisfying $\|\widetilde{A}\|=1$ when $\|A\|\le 1$ and $\|\widetilde{A}\|= \|A\|$ when $\|A\|> 1$.
        \item[(b)] If $A$ is multiplicative, then so is $\widetilde{A}$.
    \end{itemize}
\end{lemma}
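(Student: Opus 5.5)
Write $c=f(0)$ and $g=f-c\in\mathcal HL_0(B_X)$, so that every $f\in\mathcal HL(B_X)$ decomposes uniquely as $f=g+c$. Then $\widetilde A(f)=A(g)+c$. Linearity of $\widetilde A$ is immediate from the linearity of $A$ and of $f\mapsto f(0)$. Since $A(g)(0)=0$, we have $\widetilde A(f)(0)=c$ and $L(\widetilde A f)=L(Ag)$, while $L(f)=L(g)$. Hence
\[
\|\widetilde A f\|_L = L(Ag)+|c|\le \|A\|L(g)+|c|.
\]

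For part (a), the upper bounds are now direct. If $\|A\|\le 1$, the displayed inequality gives $\|\widetilde A f\|_L\le L(g)+|c|=\|f\|_L$. If $\|A\|>1$, it gives $\|\widetilde A f\|_L\le \|A\|(L(g)+|c|)=\|A\|\|f\|_L$.

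For the lower bounds, test $\widetilde A$ on the constant function $1$: we get $\widetilde A(1)=1$ and $\|1\|_L=1$, so $\|\widetilde A\|\ge 1$. Next, restrict to $g\in\mathcal HL_0(B_X)$, where $\widetilde A(g)=A(g)$ and $\|g\|_L=L(g)$. Since $\mathcal HL_0$ sits isometrically inside $\mathcal HL$, this yields $\|\widetilde A\|\ge\|A\|$. Combining the two cases gives $\|\widetilde A\|=\max\{1,\|A\|\}$, which is exactly the statement of (a).

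For part (b), the only step needing care is to expand both sides. Take $f=g+c$ and $h=k+d$ with $g,k\in\mathcal HL_0(B_X)$. Then $fh=gk+(dg+ck)+cd$, and $gk+dg+ck\in\mathcal HL_0(B_X)$ with $(fh)(0)=cd$. By linearity and multiplicativity of $A$,
\[
\widetilde A(fh)=A(g)A(k)+dA(g)+cA(k)+cd=(A(g)+c)(A(k)+d)=\widetilde A(f)\widetilde A(h).
\]
None of these steps poses a real obstacle; the one point to check is that the cross terms $dg+ck$ lie in $\mathcal HL_0(B_X)$, which holds because $g(0)=k(0)=0$.
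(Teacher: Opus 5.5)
Your proposal is correct and follows the paper's proof essentially step for step. It uses the same estimate $\|\widetilde A f\|_L\le \|A\|L(f-f(0))+|f(0)|$, the same lower bounds via $\widetilde A(1)=1$ and the isometric inclusion of $\mathcal HL_0(B_X)$, and the same expansion of the product using multiplicativity of $A$ on $(f-f(0))(h-h(0))$; writing the result as $\|\widetilde A\|=\max\{1,\|A\|\}$ is just a compact restatement of (a).
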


\begin{proof}
 (a) It is clear that $\widetilde{A}$ is well defined and linear. Note that, for each $f\in\mathcal HL(B_X)$ we have
 \[
 \|\widetilde{A}f\|_L = L(A(f-f(0))) + |f(0)|\le \norm{A} L(f-f(0)) + |f(0)|\le \norm{A} L(f) + |f(0)|.
 \]
 Then, if $\norm{A}\le 1$ we derive $\|\widetilde{A}f\|_L\le  \norm{f}_L$. This, along with the fact that $\widetilde A(1)=1$ yields $\|\widetilde{A}\|=1$. Instead, if  $\norm{A}> 1$ we obtain  $\|\widetilde{A}f\|_L\le \norm{A} \norm{f}_L$ and so $\|\widetilde{A}\|\le \|A\|$. But $\widetilde{A}(f)=A(f)$, for all $f\in \mathcal HL_0(B_X)$ and $\|f\|_L=L(f)$ for these functions. This implies that $\|A\|\le \|\widetilde{A}\|$ and hence, $\|\widetilde{A}\|= \|A\|$.

 (b) Suppose that $A$ is multiplicative. Then, for all $f,g\in\mathcal HL(B_X)$,
 \begin{align*}
     \widetilde{A}(f) \widetilde{A}(g) &= A(f-f(0)) A(g-g(0))+A(f-f(0))g(0)+A(g-g(0)) f(0) + f(0) g(0)\\
     &= A\Big((f-f(0)) \cdot (g-g(0))\Big)+A(f-f(0))g(0)+A(g-g(0)) f(0) + f(0) g(0)\\
     &= A\Big(fg-f(0) g(0)\Big) + f(0) g(0)= \widetilde{A}(fg).
 \end{align*}

\end{proof}

Note that since $\mathcal HL(B_X)$ has a unit (the function $f\equiv 1$), a multiplicative element $\psi\in\mathcal HL(B_X)^*$ different from $\psi\equiv 0$, should satisfy $\psi(1)=1$. Taking this into account, with analogous arguments as in the previous lemma we get: 
 
\begin{lemma}\label{lemma: phi tilde} 
     Let $X$ be a complex Banach space. For $\varphi\in \mathcal HL_0(B_X)^*$ we define $\widetilde\varphi\in \mathcal HL(B_X)^*$ by
    \[
    \widetilde{\varphi}(f)=\varphi(f-f(0)) + f(0).
    \]
    If $\varphi$ is multiplicative,  then so is $\widetilde\varphi$. Moreover, if $\psi\in \mathcal HL(B_X)^*$ is multiplicative and $\psi\not\equiv 0$, then $\psi=\widetilde\varphi$ for $\varphi=\psi|_{\mathcal HL_0(B_X)}$.
    \end{lemma}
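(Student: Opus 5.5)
The plan has two parts: first show that $\widetilde\varphi$ is a well-defined continuous linear functional that is multiplicative whenever $\varphi$ is, and then handle the converse by restriction. For the first part I will mimic the proof of Lemma~\ref{lemma: A tilde}. For $f\in\mathcal HL(B_X)$ the function $f-f(0)$ lies in $\mathcal HL_0(B_X)$ and satisfies $L(f-f(0))=L(f)$. Hence $\widetilde\varphi$ is well defined and linear, and
\[
|\widetilde\varphi(f)|\le \norm{\varphi}L(f)+|f(0)|\le\max\{\norm{\varphi},1\}\norm{f}_L ,
\]
so $\widetilde\varphi\in\mathcal HL(B_X)^*$.

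Multiplicativity then follows from the same expansion used in Lemma~\ref{lemma: A tilde}(b). Writing $f_0=f-f(0)$ and $g_0=g-g(0)$, we have $fg-f(0)g(0)=f_0g_0+g(0)f_0+f(0)g_0$, with every term in $\mathcal HL_0(B_X)$. Applying $\varphi$ and using $\varphi(f_0g_0)=\varphi(f_0)\varphi(g_0)$ gives
\[
\widetilde\varphi(fg)=\varphi(f_0)\varphi(g_0)+g(0)\varphi(f_0)+f(0)\varphi(g_0)+f(0)g(0)=\widetilde\varphi(f)\widetilde\varphi(g).
\]
This also covers $\varphi=0$, in which case $\widetilde\varphi=\delta(0)$.

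For the converse, let $\psi\in\mathcal HL(B_X)^*$ be multiplicative with $\psi\not\equiv 0$. I first show $\psi(1)=1$. Pick $f$ with $\psi(f)\neq0$; then $\psi(f)=\psi(f\cdot 1)=\psi(f)\psi(1)$, so $\psi(1)=1$. Next set $\varphi=\psi|_{\mathcal HL_0(B_X)}$. This restriction is continuous, since $\mathcal HL_0(B_X)$ embeds isometrically into $(\mathcal HL(B_X),\norm{\cdot}_L)$, and it is linear and multiplicative. For any $f$, linearity gives
\[
\psi(f)=\psi(f-f(0))+f(0)\psi(1)=\varphi(f-f(0))+f(0)=\widetilde\varphi(f).
\]
No step is a genuine obstacle; the only point needing care is deducing $\psi(1)=1$ from $\psi\not\equiv0$ before substituting.
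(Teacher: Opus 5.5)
Your proof is correct and follows essentially the same route as the paper. The paper gets multiplicativity from the expansion in Lemma~\ref{lemma: A tilde}(b) and the converse from the fact that a nonzero multiplicative $\psi$ satisfies $\psi(1)=1$; your continuity bound and your check that the restriction lies in $\mathcal HL_0(B_X)^*$ fill in details the paper leaves implicit.
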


As we see in Corollary \ref{cor:multiplicativebounded}, multiplicative forms in $\mathcal HL_0(B_X)$ have norm smaller than or equal to 1. Instead, in $\mathcal HL(B_X)$, they have unit norm (except for the null functional).

\begin{lemma}\label{lemma:multiplicativeboundedHL}
  Let $X$ be a complex Banach space and let $\psi\in \mathcal{H}L(B_X)^*$. If $\psi$ is multiplicative and $\psi\not\equiv 0$, then $\norm{\psi}= 1$.  
\end{lemma}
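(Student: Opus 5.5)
Since $\mathcal HL(B_X)$ is unital, the lower bound is immediate: a nonzero multiplicative $\psi$ satisfies $\psi(1)=\psi(1)^2$ and $\psi(1)\neq 0$, so $\psi(1)=1$. As $\norm{1}_L=L(1)+|1|=1$, this gives $\norm{\psi}\geq 1$.

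For the upper bound I will decompose through $\mathcal HL_0(B_X)$ using Lemma~\ref{lemma: phi tilde}. Let $\varphi=\psi|_{\mathcal HL_0(B_X)}$. It is a multiplicative linear form on $\mathcal HL_0(B_X)$ (possibly zero), and $\psi=\widetilde\varphi$. By Lemma~\ref{l:MultiplicativeFormsOnGelfandAlgebras}, $\varphi$ is continuous, and by Corollary~\ref{cor:multiplicativebounded}, $\norm{\varphi}\leq 1$ with respect to the norm $L(\cdot)$ on $\mathcal HL_0(B_X)$; if $\varphi=0$ this holds trivially.

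Now take $f\in\mathcal HL(B_X)$. Then $f-f(0)\in \mathcal HL_0(B_X)$ and $L(f-f(0))=L(f)$, so
\[
|\psi(f)|=|\varphi(f-f(0))+f(0)|\leq \norm{\varphi}L(f)+|f(0)|\leq L(f)+|f(0)|=\norm{f}_L .
\]
Hence $\norm{\psi}\leq 1$, and combined with the lower bound we get $\norm{\psi}=1$.

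There is no real obstacle. The only points needing care are to use the paper's norm $\norm{\cdot}_L=L(\cdot)+|f(0)|$, which is exactly what makes the final estimate come out to $\norm{f}_L$, and to note that Corollary~\ref{cor:multiplicativebounded} applies to $\varphi$ because its continuity is guaranteed by Lemma~\ref{l:MultiplicativeFormsOnGelfandAlgebras} (or Proposition~\ref{prop:multiplicativebounded}(b)). The same estimate is essentially the argument of Lemma~\ref{lemma: A tilde}(a) with $\mathcal HL_0(B_Y)$ replaced by $\mathbb C$.
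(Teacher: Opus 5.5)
Your proof is correct and follows essentially the paper's route. The paper likewise restricts $\psi$ to $\mathcal HL_0(B_X)$, obtains $\norm{\varphi}\le 1$ from Corollary~\ref{cor:multiplicativebounded}, and concludes via Lemma~\ref{lemma: A tilde}; you simply write out that lemma's estimate and the lower bound $\psi(1)=1$ explicitly for the scalar-valued case.
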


\begin{proof}
    By Lemma~\ref{lemma: phi tilde}, if $\psi\in \mathcal{H}L(B_X)^*$ is multiplicative, then $\varphi=\psi|_{\mathcal HL_0(B_X)}$ belongs to $\mathcal HL_0(B_X)^*$ and it is also multiplicative. Since $\|\varphi\|\le 1$, due to Corollary~\ref{cor:multiplicativebounded}, and $\psi=\widetilde{\varphi}$ an appeal to Lemma \ref{lemma: A tilde}  yields  $\norm{\psi}= 1$.  
\end{proof}

 Now we are ready to give versions of Proposition~\ref{prop:multiplicativeisdelta} and Theorem~\ref{thm:compositionoperatorequivalence} for $\mathcal HL(B_X)$.

 \begin{proposition}\label{prop:multiplicativeisdeltaHL}
     Let $X$ be a complex Banach space with the BAP. Let $0\not=\psi\in \mathcal G(B_X)$ be multiplicative. Then, there exists $x_0\in \overline{B}_X$ such that $\psi=\delta(x_0)$.
 \end{proposition}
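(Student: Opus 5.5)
The plan is to reduce to Proposition~\ref{prop:multiplicativeisdelta} through the restriction $\varphi:=\psi|_{\mathcal HL_0(B_X)}$. By Lemma~\ref{lemma: phi tilde}, $\varphi$ is multiplicative and $\psi=\widetilde\varphi$, that is,
\[ \psi(f)=\varphi(f-f(0))+f(0) \qquad \text{for all } f\in \mathcal HL(B_X). \]
The first step is to show that $\varphi\in\mathcal G_0(B_X)$. By Proposition~\ref{prop:Gdef}(d), $\psi$ is $\tau_p$-continuous on bounded subsets of $\mathcal HL(B_X)$. Since $\mathcal HL_0(B_X)$ sits isometrically inside $\mathcal HL(B_X)$, the ball $\overline B_{\mathcal HL_0(B_X)}$ is contained in $\overline B_{\mathcal HL(B_X)}$. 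On this set pointwise convergence is the same in both spaces, so $\varphi$ is $\tau_p$-continuous on $\overline B_{\mathcal HL_0(B_X)}$. Proposition~\ref{prop:G0def}(d) then gives $\varphi\in\mathcal G_0(B_X)$. This step is mostly bookkeeping but is the one that must be checked most carefully: we have to confirm that the restriction of a $\mathcal G(B_X)$ element, viewed as a functional on $\mathcal HL_0(B_X)$, really lies in the HL-free space.

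If $\varphi=0$, then $\psi(f)=f(0)$, so $\psi=\delta(0)$ and we take $x_0=0$. Otherwise, Proposition~\ref{prop:multiplicativeisdelta} (using that $X$ has the BAP) gives $x_0\in\overline B_X$ with $\varphi=\delta_{\mathcal G_0}(x_0)$. This case also covers $\varphi=0$ with $x_0=0$.

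It remains to identify $\psi$ with the evaluation $\widetilde\delta(x_0)\in\mathcal G(B_X)$ from Lemma~\ref{lemma:extension-border-HL}(b). For any $f\in\mathcal HL(B_X)$ we compute
\[ \psi(f)=\varphi(f-f(0))+f(0)=(f-f(0))(x_0)+f(0)=f(x_0), \]
where $f$ is extended to $\overline B_X$ via Lemma~\ref{lemma:extension-border-HL}(a). To justify the middle equality, note that $\delta(x_0)$ acts on $\mathcal HL_0$ by evaluating the Lipschitz extension; this follows from Lemma~\ref{lemma:extension-border-HL0}(c) applied to $f-f(0)$. Likewise, $\widetilde\delta(x_0)(f)=\widetilde f(x_0)$ in $\mathcal G(B_X)$, by taking limits $\delta(x_n)\to\widetilde\delta(x_0)$ along a sequence $x_n\to x_0$ in $B_X$. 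Hence $\psi=\delta(x_0)$.
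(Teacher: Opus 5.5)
Your proof is correct and follows essentially the same route as the paper: restrict $\psi$ to $\mathcal HL_0(B_X)$, apply Proposition~\ref{prop:multiplicativeisdelta} to obtain $\varphi=\delta(x_0)$, and recover $\psi=\widetilde{\varphi}=\delta(x_0)$ via Lemma~\ref{lemma: phi tilde}. You also fill in two points the paper leaves implicit: that $\varphi\in\mathcal G_0(B_X)$ (via $\tau_p$-continuity on balls), and that $\widetilde{\delta(x_0)}$ acts on $\mathcal HL(B_X)$ as evaluation at $x_0$.
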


 \begin{proof}
     Let $\varphi=\psi|_{\mathcal HL_0(B_X)}$. Then,  $\varphi\in\mathcal G_0(B_X)$ and $\varphi$ is multiplicative. By Proposition~\ref{prop:multiplicativeisdelta}, there exists $x_0\in\overline{B}_X$ such that $\varphi=\delta(x_0)$. It then follows from Lemma~\ref{lemma: phi tilde}  that $\psi=\widetilde{\varphi}= \widetilde{\delta(x_0)}=\delta(x_0)$.
 \end{proof}

The proof of the next theorem follows the same steps as the proof of Theorem \ref{thm:compositionoperatorequivalence}, using Proposition \ref{prop:Gdef} instead of Proposition \ref{prop:G0def}, Lemma~\ref{lemma:compositionoperatorpropertiesHL} instead of Lemma~\ref{lemma:compositionoperatorproperties}, Proposition \ref{prop:automaticcontinuity} instead of Proposition~\ref{prop:multiplicativebounded},  and Proposition~\ref{prop:multiplicativeisdeltaHL} instead of Proposition~\ref{prop:multiplicativeisdelta}.  Note also that the condition $A\not= 0$ means $A(1)=1$, which implies $\delta_Y(y)\circ A\not= 0$, for each $y\in B_Y$ (this condition is needed in order to apply Proposition~\ref{prop:multiplicativeisdeltaHL}).

\begin{theorem}\label{thm:compositionoperatorequivalenceHL}
    Let $X, Y$ be complex Banach spaces.
    Let $A \colon \mathcal HL(B_X)\rightarrow \mathcal HL(B_Y)$ be a multiplicative linear mapping, $A\not= 0$, and consider the following statements.
    \begin{enumerate}
        \item $A$ is a composition operator.
        \item There is a linear continuous map $T\colon \mathcal G(B_Y)\rightarrow \mathcal G(B_X)$ such that $A=T^*$.
        \item $A$ is $\tau_0$-$\tau_0$-continuous. 
        \item $A$ is $\tau_p$-$\tau_p$-continuous.
    \end{enumerate}
    Then 
    \begin{itemize}
        \item $(1)$ implies $(2)$, $(3)$ and $(4)$,
        \item $((3)$ or $(4))$ implies $(2)$.
    \end{itemize}
    When $X$ has the BAP, the statements $(1)$-$(4)$ are equivalent.
\end{theorem}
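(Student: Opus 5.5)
We follow the scheme of the proof of Theorem~\ref{thm:compositionoperatorequivalence}, with the substitutions indicated before the statement. The first step is automatic continuity: since $\delta(B_Y)$ consists of characters of $\mathcal HL(B_Y)$, the Gelfand transform of $\mathcal HL(B_Y)$ is injective. Proposition~\ref{prop:automaticcontinuity} then shows that $A$ is bounded.

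For $(1)\Rightarrow(2)$ we use Lemma~\ref{lemma:compositionoperatorpropertiesHL}, which gives $A=C_\phi=\widehat\phi^*$. For $(1)\Rightarrow(3)$ and $(1)\Rightarrow(4)$ we repeat verbatim the net arguments from the $\mathcal HL_0$ case. If $f_\alpha\to f$ pointwise (resp.\ uniformly on compacta of $B_X$), then $f_\alpha\circ\phi\to f\circ\phi$ in the same sense. When $\phi(B_Y)$ meets $S_X$, we understand the $f_\alpha$ as their Lipschitz extensions to $\overline B_X$. For compact $K\subseteq B_Y$ the set $\phi(K)$ is compact in $\overline B_X$, and on bounded sets $\tau_0$-convergence already gives uniform convergence there, because $L(f_\alpha-f)$ is bounded and the convergence extends to the closure. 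For $(3)$ or $(4)\Rightarrow(2)$ we set $T=A^*|_{\mathcal G(B_Y)}$. Using Proposition~\ref{prop:Gdef}(c),(d), we check that $T\varphi$ restricted to $\overline B_{\mathcal HL(B_X)}$ is $\tau$-continuous for every $\varphi\in\mathcal G(B_Y)$. This uses that $\varphi$ is $\tau$-continuous on $\norm{A}\overline B_{\mathcal HL(B_Y)}$, and then $T^*=A$.

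The substantive implication is $(2)\Rightarrow(1)$ under BAP. Fix $y\in B_Y$. The functional $T(\delta_Y(y))=\delta_Y(y)\circ A\in\mathcal G(B_X)$ is multiplicative by the same computation as before. It is nonzero, since $A\neq 0$ and $A$ multiplicative force $A(1)$ to be a nonzero idempotent in $\mathcal HL(B_Y)$. As $B_Y$ is connected and $A(1)$ is continuous, $A(1)=1$, and so $T(\delta_Y(y))(1)=1$. Proposition~\ref{prop:multiplicativeisdeltaHL} then yields $x_y\in\overline B_X$ with $T(\delta_Y(y))=\delta_X(x_y)$. We define $\phi(y)=x_y$, so that $A(f)=f\circ\phi$.

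Testing against $x^*\in X^*\subseteq\mathcal HL(B_X)$ shows that $x^*\circ\phi=A(x^*)$ is holomorphic, so $\phi$ is holomorphic. It is also Lipschitz, with
\[\norm{\phi(y_1)-\phi(y_2)}\le \sup_{x^*\in S_{X^*}}L(A(x^*))\le\norm{A}\norm{y_1-y_2},\]
using $\norm{x^*}_L=1$. Hence $\phi\in\mathcal HL(B_Y,X)$ with $\phi(B_Y)\subseteq\overline B_X$, which is exactly what the definition of composition operator on $\mathcal HL$ requires. Unlike the $\mathcal HL_0$ case, no maximum modulus step is needed here, since symbols landing in $S_X$ are allowed.

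The main obstacle is ensuring that $\delta_Y(y)\circ A\neq0$, so that Proposition~\ref{prop:multiplicativeisdeltaHL} applies. This is handled by the idempotent argument above, which shows $A(1)=1$. A secondary subtlety is the $\tau_0$-continuity of $C_\phi$ when $\phi(B_Y)$ touches the sphere. This is resolved by the equi-Lipschitz extension to $\overline B_X$ on bounded sets, which suffices because Proposition~\ref{prop:Gdef}(d) only requires $\tau$-continuity on balls.
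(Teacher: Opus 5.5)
Your proposal follows the paper's route. The paper's proof is the proof of Theorem~\ref{thm:compositionoperatorequivalence} with Proposition~\ref{prop:Gdef}, Lemma~\ref{lemma:compositionoperatorpropertiesHL}, Proposition~\ref{prop:automaticcontinuity} and Proposition~\ref{prop:multiplicativeisdeltaHL} substituted, plus the remark that $A\neq 0$ forces $A(1)=1$. Your idempotent-plus-connectedness argument justifies that remark, which the paper only asserts. Your $(2)\Rightarrow(1)$ is correct, including the point that no maximum modulus step is needed. (In your Lipschitz display the middle term is missing the factor $\norm{y_1-y_2}$.)

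The step that does not go through is $(1)\Rightarrow(3),(4)$ when $\phi(B_Y)\subseteq S_X$. There, ``$f_\alpha\to f$ pointwise (resp.\ in $\tau_0$) implies $f_\alpha\circ\phi\to f\circ\phi$'' fails for unbounded nets. Your patch only gives $\tau$-$\tau$ continuity of $C_\phi$ on bounded subsets of $\mathcal HL(B_X)$. Your remark that this suffices because Proposition~\ref{prop:Gdef}(d) only needs continuity on balls concerns $(3)/(4)\Rightarrow(2)$, not the implication you are proving.

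This cannot be repaired, because with $\tau_0,\tau_p$ taken on $B_X$ (as the paper defines them) the implication is false. Take $X=Y=\mathbb C$ and $\phi\equiv 1$. Then $A=C_\phi\colon f\mapsto \widetilde f(1)$ is a nonzero multiplicative composition operator. The functions $f_n(z)=z^n$ tend to $0$ uniformly on compact subsets of $\mathbb D$, hence pointwise, but $A(f_n)\equiv 1$. Since $\norm{f_n}_L=n$, your equi-Lipschitz argument does not reach them. So $A$ satisfies $(1)$ and $(2)$ but neither $(3)$ nor $(4)$. Since $\mathbb C$ has the BAP, the claimed equivalence under the BAP fails as well.

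The paper's ``same steps'' proof has exactly this defect: its step $\lim_\alpha f_\alpha(\phi(y))=f(\phi(y))$ is unjustified at points $\phi(y)\in S_X$. What your argument does prove is a corrected theorem in which $(3)$ and $(4)$ mean $\tau$-$\tau$ continuity on bounded sets. An alternative correction is to define $\tau_0,\tau_p$ through the extensions to $\overline B_X$. Bounded-set continuity is all that the proof of $(3)/(4)\Rightarrow(2)$ uses. With this reading, your proof of every implication, and of the equivalence under the BAP, is complete.
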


\subsection{Spectrum of \texorpdfstring{$\mathcal HL(B_X)$}{HL(BX)}}\label{subsect:spectrum}
For a commutative unital Banach algebra  $\mathcal A$, a classical object of study is its \emph{spectrum} (or \emph{maximal ideal space}) which is defined as the set
\[
\mathcal M(\mathcal A)=\{\varphi\in\mathcal A^*:\, \varphi \text{ is multiplicative}\}\setminus\{0\}.
\]
We extend this notion to a unital commutative Gelfand algebra  as $\mathcal HL(B_X)$. Accordingly, the spectrum of $\mathcal HL(B_X)$ is the set $\mathcal M(\mathcal HL(B_X))$ consisting of the nonzero multiplicative elements of $\mathcal HL(B_X)^*$. We show that there is a close relationship between $\mathcal M(\mathcal HL(B_X))$ and the spectrum of the Banach algebra $\mathcal A_u(B_X)$. Given that $\mathcal M(\mathcal A_u(B_X))$ has been extensively studied (see, e.g. \cite{ACG, ACLM, AFGM, DLM}), this connection allows us to describe $\mathcal M(\mathcal HL(B_X))$ as well as the multiplicative elements of $\mathcal HL_0(B_X)^*$.
\begin{remark} \label{rmk:psi-norma-infinito}
    From Lemma \ref{lemma: phi tilde} we know that each $\psi\in \mathcal M(\mathcal HL(B_X))$ is $\psi=\widetilde{\varphi}$, where $ \varphi=\psi|_{\mathcal HL_0(B_X)}$ is multiplicative. Now, Remark \ref{rmk:acotada-norma-infinito} tells us that $|\varphi(f)|\le \|f\|_\infty$, for all $f\in \mathcal HL_0(B_X)$. This easily implies that $|\psi(f)|\le 3\|f\|_\infty$, for all $f\in \mathcal HL(B_X)$.
\end{remark}
\begin{proposition}\label{prop:relacion-espectros}
    Let $X$ be a complex Banach space. Then, the function
   \begin{align*}
	\Theta\colon \mathcal M(\mathcal A_u(B_X)) & \to \mathcal M(\mathcal HL(B_X))\\
	\xi\quad  & \mapsto \ \xi|_{\mathcal HL(B_X)}.
	\end{align*}
    is bijective.
\end{proposition}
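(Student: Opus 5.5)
The plan is to check three things in turn: $\Theta$ is well defined, it is injective, and it is surjective. Injectivity will come from density of $\mathcal HL(B_X)$ in $\mathcal A_u(B_X)$. Surjectivity will come from the estimate $|\psi(f)|\le 3\norm{f}_\infty$ of Remark~\ref{rmk:psi-norma-infinito}, which lets each character of $\mathcal HL(B_X)$ extend by continuity.

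\emph{Well defined.} Take $\xi\in\mathcal M(\mathcal A_u(B_X))$. The canonical inclusion $\mathcal HL(B_X)\hookrightarrow \mathcal A_u(B_X)$ has norm $1$ and respects products, so $\xi|_{\mathcal HL(B_X)}$ is linear, multiplicative and bounded, with
\[|\xi(f)|\le \norm{f}_\infty\le\norm{f}_L.\]
It is nonzero because $\xi(1)=1$ and the constant $1$ lies in $\mathcal HL(B_X)$. Hence $\Theta(\xi)\in\mathcal M(\mathcal HL(B_X))$.

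\emph{Key density fact.} The paper already observes, when discussing symbols with values in $\overline B_X$, that every $f\in\mathcal A_u(B_X)$ is a uniform limit on $B_X$ of polynomials $P_n\in\mathcal P(X)$. Each continuous polynomial is bounded on bounded sets and has bounded differential on $B_X$, so it is Lipschitz on $B_X$; thus $\mathcal P(X)\subseteq \mathcal HL(B_X)$. Consequently $\mathcal HL(B_X)$ is $\norm{\cdot}_\infty$-dense in $\mathcal A_u(B_X)$.

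\emph{Injectivity.} Characters of the Banach algebra $\mathcal A_u(B_X)$ are continuous for $\norm{\cdot}_\infty$. So if $\xi_1$ and $\xi_2$ agree on the dense subspace $\mathcal HL(B_X)$, then $\xi_1=\xi_2$.

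\emph{Surjectivity.} This is the main step. Let $\psi\in\mathcal M(\mathcal HL(B_X))$. By Remark~\ref{rmk:psi-norma-infinito}, $|\psi(f)|\le 3\norm{f}_\infty$ for every $f\in\mathcal HL(B_X)$. Hence $\psi$ is uniformly continuous for the sup norm on the dense subspace and extends uniquely to a bounded linear functional $\xi$ on $\mathcal A_u(B_X)$.

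Multiplicativity passes to the limit. Given $f,g\in\mathcal A_u(B_X)$, choose $f_n,g_n\in\mathcal HL(B_X)$ with $f_n\to f$ and $g_n\to g$ uniformly. Then $f_ng_n\to fg$ uniformly, and $f_ng_n\in\mathcal HL(B_X)$. Therefore
\[\xi(fg)=\lim_n\psi(f_ng_n)=\lim_n\psi(f_n)\psi(g_n)=\xi(f)\xi(g).\]
Since $\xi(1)=\psi(1)=1$, we get $\xi\in\mathcal M(\mathcal A_u(B_X))$ and $\Theta(\xi)=\psi$.

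The real content sits in the sup-norm bound of Remark~\ref{rmk:psi-norma-infinito}. That bound comes from the spectral radius estimate $r(f)\le\norm{f}_\infty$ in Proposition~\ref{prop:multiplicativebounded} together with Lemma~\ref{lemma: phi tilde}. If I spell it out, for $f\in\mathcal HL(B_X)$ I would write
\[|\psi(f)|\le|\varphi(f-f(0))|+|f(0)|\le\norm{f-f(0)}_\infty+|f(0)|\le 3\norm{f}_\infty.\]
The remaining point to verify is the density of polynomials in $\mathcal A_u(B_X)$. I would justify it as in the paper: approximate $f$ uniformly by the dilations $f_r(x)=f(rx)$ with $r\to1$, which works by uniform continuity of $f$. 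Each $f_r$ is holomorphic on a larger ball, so its Taylor series converges uniformly on $B_X$, and truncations of that series give the approximating polynomials.
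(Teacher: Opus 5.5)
Your proof is correct and follows the paper's argument essentially step for step. Density of polynomials, hence of $\mathcal HL(B_X)$, in $\mathcal A_u(B_X)$ gives that $\Theta$ is well defined and injective. The bound $|\psi(f)|\le 3\norm{f}_\infty$ from Remark~\ref{rmk:psi-norma-infinito} lets each character extend by continuity, and multiplicativity then passes to uniform limits.

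One small precision in your density sketch. In infinite dimensions, holomorphy of $f_r$ on a larger ball does not by itself force its Taylor series to converge uniformly on $B_X$. What makes it work is that $f$, being uniformly continuous on $B_X$, is bounded. The Cauchy estimates then give $\sup_{B_X}|r^mP_m|\le r^m\norm{f}_\infty$ for the homogeneous Taylor components $P_m$ of $f$ at $0$.
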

\begin{proof}
    The norm one inclusion $\mathcal HL(B_X)\hookrightarrow\mathcal A_u(B_X)$ has dense image (because polynomials are dense in $\mathcal A_u(B_X)$). Thus, $\Theta$ is well defined and injective. To see that it is surjective, let $\psi\in  \mathcal M(\mathcal HL(B_X))$. By Remark \ref{rmk:psi-norma-infinito}, $\psi$ is continuous in $\mathcal HL(B_X)$ with respect to the $\|\cdot\|_\infty$-norm. We can extend $\psi$, by density, to $\overline{\psi}\in \mathcal A_u(B_X)^*$. Using that each $f\in\mathcal A_u(B_X)$ can be approximated (in the $\|\cdot\|_\infty$-norm) by a bounded net of polynomials, it is easy to prove that $\overline{\psi}$ is multiplicative. In consequence, $\psi=\Theta(\overline\psi)$, where $\overline{\psi}\in \mathcal M(\mathcal A_u(B_X))$, showing that $\Theta$ is surjective.
\end{proof}
 As a direct consequence of Proposition \ref{prop:relacion-espectros} and Lemma \ref{lemma: phi tilde} we have
\begin{corollary} \label{cor:biyeccion-espectro}
     Let $X$ be a complex Banach space. Then, the function
   \begin{align*}
	\Lambda\colon \mathcal M(\mathcal A_u(B_X)) & \to \{\varphi\in\mathcal HL_0(B_X)^* \text{ multiplicative}\}\\
	\xi\quad  & \mapsto \ \xi|_{\mathcal HL_0(B_X)}.
	\end{align*}
    is bijective.
\end{corollary}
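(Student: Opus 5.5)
The plan is to compose $\Theta$ from Proposition~\ref{prop:relacion-espectros} with the bijection between $\mathcal M(\mathcal HL(B_X))$ and the set of multiplicative elements of $\mathcal HL_0(B_X)^*$ coming from Lemma~\ref{lemma: phi tilde}. Concretely, define
\[
R\colon \mathcal M(\mathcal HL(B_X))\to \{\varphi\in\mathcal HL_0(B_X)^* \text{ multiplicative}\},\qquad R(\psi)=\psi|_{\mathcal HL_0(B_X)}.
\]
Since $\mathcal HL_0(B_X)\subseteq \mathcal HL(B_X)$, we have $\Lambda(\xi)=\xi|_{\mathcal HL_0(B_X)}=R(\Theta(\xi))$, so $\Lambda=R\circ\Theta$. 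Because $\Theta$ is a bijection, it remains only to show that $R$ is a bijection.

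\emph{Well-definedness.} For $\psi\in\mathcal M(\mathcal HL(B_X))$ the restriction $R(\psi)$ is linear, multiplicative and continuous, since $L(f)=\norm{f}_L$ on $\mathcal HL_0(B_X)$. Note that $R(\psi)$ may be the zero functional; for instance $\psi=\delta(0)$ gives $R(\psi)=0$. This is why the codomain must be taken to be all multiplicative elements, including $0$, rather than only characters.

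\emph{Injectivity.} By Lemma~\ref{lemma: phi tilde}, every $\psi\in\mathcal M(\mathcal HL(B_X))$ satisfies $\psi=\widetilde{R(\psi)}$. Hence $\psi$ is determined by $R(\psi)$.

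\emph{Surjectivity.} Let $\varphi\in\mathcal HL_0(B_X)^*$ be multiplicative, possibly $\varphi=0$. By Lemma~\ref{lemma: phi tilde}, $\widetilde\varphi$ is multiplicative, and it is nonzero since $\widetilde\varphi(1)=1$. It is also bounded, by the estimate in Lemma~\ref{lemma: A tilde}(a) applied to functionals: $|\widetilde\varphi(f)|\le\norm{\varphi}L(f)+|f(0)|$. So $\widetilde\varphi\in\mathcal M(\mathcal HL(B_X))$. Finally, for $f\in\mathcal HL_0(B_X)$ we have $f(0)=0$, hence $\widetilde\varphi(f)=\varphi(f)$, i.e.\ $R(\widetilde\varphi)=\varphi$.

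There is no real obstacle here. The only delicate point is to make sure the zero functional is handled consistently: it is a multiplicative element of $\mathcal HL_0(B_X)^*$, and it corresponds to $\delta(0)$, equivalently to evaluation at $0$ in $\mathcal M(\mathcal A_u(B_X))$.
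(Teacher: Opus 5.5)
Your proposal is correct and follows the paper's route. The paper obtains the corollary directly from Proposition~\ref{prop:relacion-espectros} and Lemma~\ref{lemma: phi tilde}: it factors $\Lambda$ as the restriction to $\mathcal HL_0(B_X)$ composed with $\Theta$, and uses the correspondence $\psi\leftrightarrow\widetilde\varphi$ to see that this restriction is bijective; your explicit handling of the zero functional (which corresponds to $\delta(0)$) makes precise a point the paper leaves implicit.
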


In summary, through restriction we can identify the three sets:
\[
\{\varphi\in\mathcal HL_0(B_X)^* \text{ multiplicative}\} \simeq \mathcal M(\mathcal HL(B_X))\simeq \mathcal M(\mathcal A_u(B_X)).
\]

\begin{example}
    Based on on the known characterization of $\mathcal M(\mathcal A_u(B_X))$ we can describe the multiplicative elements of $\mathcal HL_0(B_X)^*$. By means of the canonical extension of each $f\in \mathcal A_u(B_X)$ to $\overline B_{X^{**}}$ (see \cite{AB,ACG,Aron-survey} for an explanation), for every $z\in \overline B_{X^{**}}$ we have the evaluation functional $\delta(z)$ which belongs to $\mathcal M(\mathcal A_u(B_X))$. In particular,
    \begin{enumerate}
        \item $\mathcal M(\mathcal A_u(\disk))=\{\delta(z): z\in\overline\disk\}$. Hence, every multiplicative functional in $\mathcal HL_0(\disk)^*$ actually belongs to $\mathcal G_0(\disk)$.
        \item $\mathcal M(\mathcal A_u(B_{c_0}))=\{\delta(z): z\in\overline B_{\ell_\infty}\}$.
        \item For any $1<p<\infty$, $\mathcal M(\mathcal A_u(B_{\ell_p}))\supsetneq\{\delta(z): z\in\overline B_{\ell_p}\}$. Since $\mathcal M(\mathcal A_u(B_{\ell_p}))$ is $w^*$-compact, there exist many $w^*$-limits of nets of evaluations that are not themselves evaluations.
        For instance, if $(e_n)$ denotes the unit vector basis of $\ell_p$, then any $w^*$-accumulation point of $(\delta(e_n))$ is a multiplicative functional that is not an evaluation at any $x\in \overline B_{\ell_p}$ (for details, see e.g. \cite{ACG, Aron-survey,CGMS}).
    \end{enumerate}
\end{example}

\subsection{Onto and into composition operators}

Using the same arguments as in Proposition \ref{p:CompositionOnto} and invoking once again Lemma \ref{lemma:biLipschitz-biHolomorphic}, we obtain the following parallel result.

\begin{proposition}\label{p:CompositionOntoHL}
    Let $X,Y$ be complex Banach spaces and let $\phi\in \mathcal HL(B_Y,X)$ with $\phi(B_Y)\subseteq \overline{B}_X$. 
    \begin{itemize}
     \item[(a)] If  $C_\phi$ is onto (equiv. $\widehat{\phi}$ is an isomorphism onto its image),  then $\phi$ is bi-Lipschitz onto its image and $\norm{d\phi(y_1)(y_2)}\geq L(\phi^{-1})^{-1} $ for every $y_1\in B_Y$ and $y_2\in S_Y$. 
     \item[(b)] If $\phi(B_Y)$ is dense in $B_X$, or $\phi(B_Y)$ has non-empty interior, then $C_\phi$ is injective.
     \item[(c)] If $C_\phi$ is injective, then $\phi(B_Y)$ has dense span.
     \item[(d)] If $\phi$ is a bi-Lipschitz bijection between $B_Y$ and $B_X$, then $\phi^{-1}$ is holomorphic and $C_\phi$ is an onto isomorphism.
     \item[(e)] If  $\phi(B_Y)=B_X$ and $C_\phi$ is an onto isometry, then $\phi$ is the restriction of an onto linear isometry $T\colon Y\to X$.
     \end{itemize}
\end{proposition}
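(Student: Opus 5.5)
We will run through the proof of Proposition~\ref{p:CompositionOnto} item by item, replacing $\mathcal G_0$, $\mathcal HL_0$ and $\delta$ by $\mathcal G$, $\mathcal HL$ and the $\mathcal HL$-version of $\delta$. The facts we rely on are that $C_\phi=\widehat{\phi}^*$ (Lemma~\ref{lemma:compositionoperatorpropertiesHL}), that $\delta$ is still an isometry into $\mathcal G(B_X)$, and that $X^*\subseteq \mathcal HL(B_X)$.

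\emph{Item (a).} If $C_\phi$ is onto, the Open Mapping Theorem gives $C>0$ such that every $y^*\in S_{Y^*}$ has a preimage $f\in\mathcal HL(B_X)$ with $\norm{f}_L\le C$, hence $L(f)\le C$. Fix $y_1,y_2\in B_Y$ and choose $y^*\in S_{Y^*}$ with $y^*(y_1-y_2)=\norm{y_1-y_2}$. Then
\[\norm{y_1-y_2}=f(\phi(y_1))-f(\phi(y_2))\le C\norm{\phi(y_1)-\phi(y_2)},\]
so $\phi$ is bi-Lipschitz onto its image. The lower bound on $\norm{d\phi(y_1)(y_2)}$ follows from the same difference-quotient limit as before. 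The equivalence in parentheses is again the closed-range duality for $\widehat\phi$.

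\emph{Item (b).} If $\phi(B_Y)$ is dense in $B_X$ and $f\circ\phi=0$, then $f=0$ by continuity. If $\phi(B_Y)$ contains a nonempty open set $U'$, then $f|_{U'}=0$, and the identity principle on the connected set $B_X$ (\cite[Proposition~5.7]{MujicaLibro}) gives $f=0$. The identity principle needs $U'\subseteq B_X$. When $\phi(B_Y)\subseteq S_X$, the image has empty interior, so only the case $\phi(B_Y)\subseteq B_X$ arises and this is fine.

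\emph{Item (c).} If $\overline{\lspan}\,\phi(B_Y)\ne X$, take $0\ne x^*\in X^*$ vanishing on it. Then $x^*\in\mathcal HL(B_X)$ and $C_\phi x^*=0$, so $C_\phi$ is not injective.

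\emph{Item (d).} Lemma~\ref{lemma:biLipschitz-biHolomorphic} shows that $\phi^{-1}$ is holomorphic, so $C_{\phi^{-1}}$ is a bounded composition operator on the $\mathcal HL$ spaces. Since $C_{\phi^{-1}}\circ C_\phi$ and $C_\phi\circ C_{\phi^{-1}}$ are the identities, $C_\phi$ is an onto isomorphism. This can be verified directly on functions, or through the preadjoints as in the earlier proof.

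\emph{Item (e).} This is the step we expect to need the most care. If $C_\phi$ is an onto isometry, then so is $\widehat\phi$, by duality. Therefore
\[\norm{\phi(x)-\phi(y)}=\norm{\delta(\phi(x))-\delta(\phi(y))}=\norm{\delta(x)-\delta(y)}=\norm{x-y},\]
using that $\delta$ is an isometry in the $\mathcal HL$ setting too. So $\phi\colon B_Y\to B_X$ is a surjective isometry. The difference from the $\mathcal HL_0$ case is that $\phi(0)$ need not be $0$, so we apply Mankiewicz's theorem in its general form: a surjective isometry between open connected sets of normed spaces extends to an affine isometry. This gives $\phi=T+\phi(0)$ with $T$ real-linear. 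An affine map sending $B_Y$ onto $B_X$ maps centre to centre, because the balls are symmetric about their centres and an affine bijection preserves central symmetry. Hence $\phi(0)=0$ and $\phi=T|_{B_Y}$. Finally, $T=d\phi(0)$ is complex-linear, and it is an onto isometry $Y\to X$ because $T(B_Y)=B_X$.
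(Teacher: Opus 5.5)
Your proposal is correct. For (a)--(d) it is the paper's argument: the paper simply says the proofs of Proposition~\ref{p:CompositionOnto} carry over. In (b) you also note that the case $\phi(B_Y)\subseteq S_X$ has empty interior, so the identity principle is applied inside $B_X$; the paper leaves this detail implicit.

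The genuine difference is in (e), where you reach $\phi(0)=0$ by a different route.

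\emph{The paper's route.} It shows $\phi(0)=0$ by testing the isometry on the functions $c+x^*$. From
\[
|c|+\norm{x^*}=\norm{C_\phi(c+x^*)}_L=|c+x^*(\phi(0))|+L(x^*\circ\phi)\qquad (c\in\mathbb C),
\]
it gets that $|c+x^*(\phi(0))|-|c|$ is constant in $c$, hence $x^*(\phi(0))=0$. Then $C_\phi$ restricts to an onto isometry between the $\mathcal HL_0$ spaces, and Proposition~\ref{p:CompositionOnto}(e) applies. This uses the constants in the algebra and the additive form of $\norm{\cdot}_L$. It also yields more than needed: $\phi(0)=0$ for every isometric $C_\phi$, without surjectivity of $C_\phi$ or $\phi(B_Y)=B_X$.

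\emph{Your route.} You first show that $\phi$ is a surjective isometry of $B_Y$ onto $B_X$. From then on the argument is purely metric-geometric and avoids the detour through the $\mathcal HL_0$ spaces. The price is Mankiewicz's theorem in its affine form, for open connected sets. You also need the small fact hidden in your ``centre to centre'' step: a bounded set has at most one centre of symmetry. Concretely, write the extension as $T+b$ with $T$ real-linear and $b=\phi(0)$. Then $T(B_Y)=B_X-b$ is symmetric about $0$, so $B_X=B_X+2b$, and boundedness forces $b=0$.
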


\begin{proof}
(e) It suffices to show that $\phi(0)=0$, and the conclusion will follow from Proposition \ref{p:CompositionOnto} (e). Take $x^*\in X^*$. For  $c\in\mathbb C$, we have
\begin{align*}
|c|+\norm{x^*}=\norm{c+x^*}_L=\norm{C_\phi(c+x^*)}_L=|c+x^*(\phi(0))|+L(x^*\circ \phi)
\end{align*}
Thus $|c+x^*(\phi(0))|-|c|$ is constant for every $c\in \mathbb C$, so it follows that $x^*(\phi(0))=0$. Since $x^*$ is arbitrary, we get $\phi(0)=0$. 
\end{proof}
We are also able to prove an analogue of Corollary \ref{c:BAP-characterizationOfCompositionIsomorphism}. 
\begin{corollary}\label{c:BAP-characterizationOfCompositionIsomorphismHL}
    Let $X,Y$ be complex Banach spaces such that $Y$ has the BAP. Let $\phi\in \mathcal HL(B_Y,X)$ with $\phi(B_Y)\subseteq \overline{B}_X$. Then, the following assertions are equivalent.
    \begin{enumerate}
        \item $C_\phi \colon \mathcal HL(B_X)\rightarrow \mathcal HL(B_Y)$ is an onto isomorphism.
        \item $\phi \colon \overline{B}_Y \rightarrow \overline{B}_X$ is a bijection and $\phi^{-1}\in \mathcal HL(B_X,Y)$.
        \item $\phi : B_Y \rightarrow B_X$ is a bijection and $\phi^{-1}\in \mathcal HL(B_X,Y)$.
        \item There is a linear onto isometry $T\colon Y\to X$ and a biholomorphic bi-Lipschitz function  $\varphi\colon B_X\to B_X$  such that $\phi=\varphi\circ T$. 
        \end{enumerate}
\end{corollary}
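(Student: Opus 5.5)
The plan is to prove $(1)\Rightarrow(3)\Rightarrow(2)\Rightarrow(3)\Rightarrow(4)\Rightarrow(1)$. The first implication reuses the argument of Corollary~\ref{c:BAP-characterizationOfCompositionIsomorphism}, now through Theorem~\ref{thm:compositionoperatorequivalenceHL}. The step I expect to be the main obstacle is $(3)\Rightarrow(4)$, because $\phi(0)$ need not be $0$.

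For $(1)\Rightarrow(3)$, let $A$ be the inverse of $C_\phi$.

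1. Exactly as in Corollary~\ref{c:BAP-characterizationOfCompositionIsomorphism}, $A$ is multiplicative. It is nonzero because $A(1)=1$. Since $\widehat\phi$ is an onto isomorphism, $A=(\widehat{\phi}^{-1})^*$.
2. Since $A\colon\mathcal HL(B_Y)\to\mathcal HL(B_X)$ and $Y$ has the BAP, Theorem~\ref{thm:compositionoperatorequivalenceHL} gives $A=C_\psi$ with $\psi\in\mathcal HL(B_X,Y)$ and $\psi(B_X)\subseteq\overline B_Y$.
3. From $C_\psi C_\phi=Id$ and $C_\phi C_\psi=Id$, applied to functionals $x^*\in X^*$ and $y^*\in Y^*$, I get $\phi\circ\psi=Id_{B_X}$ and $\psi\circ\phi=Id_{B_Y}$. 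Here the Lipschitz extensions to the closed balls are used.
4. The maximum modulus dichotomy recalled before Lemma~\ref{lemma:compositionoperatorpropertiesHL} gives $\phi(B_Y)\subseteq B_X$ or $\phi(B_Y)\subseteq S_X$. Since $\phi(\psi(x))=x\in B_X$ for $x\in B_X$, the first alternative holds. Symmetrically $\psi(B_X)\subseteq B_Y$.
5. Hence $\phi\colon B_Y\to B_X$ is a bijection with inverse $\psi\in\mathcal HL(B_X,Y)$, which is $(3)$.

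For $(3)\Rightarrow(2)$, extend $\phi$ and $\phi^{-1}$ to the closed balls by Lipschitz extension. By continuity the extensions are mutually inverse, so they are bijections between $\overline B_Y$ and $\overline B_X$.

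For $(2)\Rightarrow(3)$, I again use the dichotomy.

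1. If $\phi(B_Y)\subseteq S_X$, then by continuity $\phi(\overline B_Y)\subseteq S_X$. This contradicts surjectivity onto $\overline B_X$, so $\phi(B_Y)\subseteq B_X$.
2. Apply the same reasoning to $\psi:=\phi^{-1}|_{B_X}$, which lies in $\mathcal HL(B_X,Y)$ with values in $\overline B_Y$. This gives $\psi(B_X)\subseteq B_Y$, that is, $B_X\subseteq\phi(B_Y)$.
3. Together these show that $\phi$ maps $B_Y$ bijectively onto $B_X$, which is $(3)$.

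For $(4)\Rightarrow(1)$, write $\phi=\varphi\circ T$. Then $C_\phi=C_T\circ C_\varphi$. Both factors are onto isomorphisms by Proposition~\ref{p:CompositionOntoHL}(d), since $T|_{B_Y}$ and $\varphi$ are bi-Lipschitz bijections of balls.

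For $(3)\Rightarrow(4)$, Lemma~\ref{lemma:biLipschitz-biHolomorphic} shows that $\phi$ is a biholomorphism of $B_Y$ onto $B_X$. If $\phi(0)=0$ this is Cartan's theorem, as in Corollary~\ref{c:BAP-characterizationOfCompositionIsomorphism}. In general there may be no automorphism of $B_X$ moving $\phi(0)$ to $0$, so I would first invoke the Kaup--Upmeier theorem: biholomorphically equivalent unit balls of complex Banach spaces are linearly isometric. This gives an onto linear isometry $T\colon Y\to X$. I then set $\varphi:=\phi\circ T^{-1}|_{B_X}$. It is a biholomorphic bijection of $B_X$ onto itself, and it is bi-Lipschitz because $T$ is an isometry and $\phi,\phi^{-1}$ are Lipschitz. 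Then $\phi=\varphi\circ T$ as required. If one wants to avoid the external theorem, the first thing I would try is to linearize at $0$ using $d\phi(0)$ together with the Carathéodory--Kobayashi distance. I expect that route to be harder, so I would rely on the citation.
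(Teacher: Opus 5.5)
Your proposal is correct and uses the same ingredients as the paper. The inverse of $C_\phi$ is multiplicative, so Theorem~\ref{thm:compositionoperatorequivalenceHL} (with the BAP of $Y$) makes it a composition operator $C_\psi$; the maximum modulus dichotomy excludes symbols mapping into the sphere; and the Kaup--Upmeier theorem gives $(3)\Rightarrow(4)$. The only difference is organizational: you go $(1)\Rightarrow(3)$ directly and add an easy continuity argument for $(3)\Rightarrow(2)$, whereas the paper goes $(1)\Rightarrow(2)\Rightarrow(3)$. One small point in step 4 of $(1)\Rightarrow(3)$: $\psi(x)$ may a priori lie on $S_Y$, so you need that $\phi(B_Y)\subseteq S_X$ forces $\phi(\overline B_Y)\subseteq S_X$ by continuity, as you do in $(2)\Rightarrow(3)$.
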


\begin{proof}
    $(1)\Rightarrow (2)$ It is the same proof as in Corollary \ref{c:BAP-characterizationOfCompositionIsomorphism}, observing that we have to consider the closed unit balls and apply Theorem \ref{thm:compositionoperatorequivalenceHL}.

    $(2)\Rightarrow(3)$ If $\phi \colon \overline{B}_Y \rightarrow \overline{B}_X$ is a bijection and $\phi^{-1}\in \mathcal HL(B_X,Y)$, in particular, $\phi(B_Y)\subseteq B_X$ or $\phi(B_Y)\subseteq S_X$, due to the Maximum Modulus Theorem. However, if $\phi(B_Y)\subseteq S_X$, using the continuity of $\phi$ we have $\phi\left(\overline{B}_Y\right)\subseteq S_X$, which contradicts the bijectivity of $\phi$. It is clear that the same happens to $\phi^{-1}$.  Hence, the only possibility is that $\phi(B_Y)\subseteq B_X$ and $\phi^{-1}(B_X)\subseteq B_Y$, which proves (3).

    $(3)\Rightarrow(4)$ By the theorem of Kaup and Upmeier (see the paragraph after Theorem~2.3 in \cite{Arazy}), there is an onto linear isometry $T\colon Y\to X$ and an automorphism $\varphi$ of $B_X$ such that $\phi=\varphi\circ T$. From the properties of $T$, it follows that $L(\varphi)= L(\phi)$ and $L(\varphi^{-1})= L(\phi^{-1})$. 

    $(4)\Rightarrow (1)$ is clear. 

\end{proof}

\subsection{Compactness of composition operators on \texorpdfstring{$\mathcal HL(B_X)$}{HL(BX)}}

First, we present a general characterization of the compactness and weak compactness of composition operators, which should be compared with Proposition \ref{prop:compunitball}, together with its consequences corresponding to Propositions \ref{prop:compRelComp} and \ref{prop:compRelWComp}.

\begin{proposition}\label{prop:compunitballHL}
 Let $X, Y$ be  complex Banach spaces, and let $\phi\in \mathcal HL(B_Y, X)$ with $\phi(B_Y)\subseteq \overline{B}_X$.  The following statements are equivalent:
\begin{itemize}
\item[(1)] $C_\phi\colon \mathcal HL(B_X)\to \mathcal HL(B_Y)$ is a compact (resp. weakly compact) operator.
\item[(2)] $\widehat{\phi}\colon \mathcal G(B_Y)\to \mathcal G(B_X)$ is a compact (resp. weakly compact) operator. 
\item[(3)] The set 
\[\left\{\frac{\delta(\phi(y_1))-\delta(\phi(y_2))}{\norm{y_1-y_2}}: y_1, y_2\in B_Y, y_1\neq y_2\right\}\]
is relatively compact (resp. relatively weakly compact) in $\mathcal G(B_X)$. 
\item[(4)] The set
\[\left\{\widehat\phi(d\delta(y_1)(y_2)): y_1\in B_Y, y_2\in S_Y\right\}\]
is relatively compact (resp. relatively weakly compact) in $\mathcal G(B_X)$.
\end{itemize}
\end{proposition}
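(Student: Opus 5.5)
We follow the scheme of Proposition~\ref{prop:compunitball}, with two extra ingredients: the constant direction $\delta(0)$, which lives only in $\mathcal G(B_X)$, and the fact that the symbol may map into $S_X$.

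\textbf{Step (1)$\Leftrightarrow$(2).} By Lemma~\ref{lemma:compositionoperatorpropertiesHL} we have $C_\phi=\widehat{\phi}^*$. Schauder's theorem (for compactness) and Gantmacher's theorem (for weak compactness) then give the equivalence at once.

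\textbf{Step (2)$\Leftrightarrow$(3)$\Leftrightarrow$(4).} We use the description of $\overline{B}_{\mathcal G(B_Y)}$ from Proposition~\ref{prop:unitballconvHL}: it is the closed absolutely convex hull of the sets
\[
D_1=\left\{\frac{\delta(y_1)-\delta(y_2)}{\norm{y_1-y_2}}+\delta(0)\right\}
\quad\text{and}\quad
D_2=\{d\delta(y_1)(y_2)+\delta(0)\}.
\]
Since $\widehat{\phi}$ is linear and continuous,
\[
\widehat\phi(\overline{B}_{\mathcal G(B_Y)})\subseteq \overline{\Gamma}\bigl(\widehat\phi(D_i)\bigr).
\]
Moreover $\widehat{\phi}(\delta(0))=\delta(\phi(0))$ is a single vector, so $\widehat\phi(D_i)$ equals the corresponding set in (3) or (4) translated by $\delta(\phi(0))$.

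Translation by a fixed vector preserves relative (weak) compactness. The closed absolutely convex hull of a relatively compact set is compact, by Mazur. The closed absolutely convex hull of a relatively weakly compact set is weakly compact, by Krein--Šmulian. These facts give (3)$\Rightarrow$(2) and (4)$\Rightarrow$(2).

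Conversely, assume (2). The sets $D_1$ and $D_2$ lie in $2\overline{B}_{\mathcal G(B_Y)}$, because $\norm{\delta(0)}=1$ and the molecules have norm at most $1$. Hence $\widehat\phi(D_i)$ is relatively (weakly) compact. Subtracting the fixed vector $\delta(\phi(0))$ then yields (3) and (4).

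\textbf{Main obstacle.} The delicate point is that $\widehat\phi$ must be well defined and must satisfy
\[
\widehat{\phi}\Bigl(\frac{\delta(y_1)-\delta(y_2)}{\norm{y_1-y_2}}\Bigr)=\frac{\delta(\phi(y_1))-\delta(\phi(y_2))}{\norm{y_1-y_2}},
\qquad
\widehat{\phi}(d\delta(y_1)(y_2))=d(\delta\circ\phi)(y_1)(y_2),
\]
even when $\phi(B_Y)\subseteq S_X$. In that case we would construct $\widehat\phi=T_{\widetilde\delta_X\circ\phi}$ directly. For this we check that $\widetilde\delta_X\circ\phi\colon B_Y\to\mathcal G(B_X)$ is holomorphic and Lipschitz:
\begin{itemize}
\item It is Lipschitz because $\widetilde\delta_X$ is an isometry.
\item It is weak$^*$-holomorphic, since for each $f$ the function $f\circ\phi$ is holomorphic, as argued after the definition of composition operators on $\mathcal HL$. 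Dunford's theorem then gives holomorphy.
\end{itemize}
The first identity follows from linearity and the diagram of Lemma~\ref{lemma:extension-border-HL}. The second follows by differentiating $\widehat\phi\circ\delta_Y=\widetilde\delta_X\circ\phi$, since $\widehat\phi$ is continuous and linear.
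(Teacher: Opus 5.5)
Your proof is correct and follows the paper's route: $C_\phi=\widehat\phi^*$ combined with Schauder's and Gantmacher's theorems, the closed absolutely convex hull description of $\overline{B}_{\mathcal G(B_Y)}$ from Proposition~\ref{prop:unitballconvHL}, and the fact that translating a set by $\delta(\phi(0))$ does not affect relative (weak) compactness. Your extra check that $\widehat\phi=T_{\widetilde\delta_X\circ\phi}$ is well defined when $\phi(B_Y)\subseteq S_X$ fills in a detail the paper leaves implicit.
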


\begin{proof}
    Just follow the proof of Proposition 2.1 in \cite{ACPcomp}, using the expressions for $\overline{B}_{\mathcal G(B_Y)}$ obtained in Proposition \ref{prop:unitballconvHL}. Finally, note that $A\subseteq \mathcal G(B_X)$ is relatively (weakly) compact if and only if so is $A+\delta(\phi(0))$.
\end{proof}

\begin{proposition}\label{prop:compRelCompHL}
     Let $X, Y$ be complex Banach spaces and $\phi\in \mathcal HL(B_Y,X)$ such that $\overline{\phi(B_Y)}\subseteq B_X$. Then the following assertions are equivalent:
    \begin{itemize}
\item[(1)] $C_\phi\colon \mathcal HL(B_X)\to \mathcal HL(B_Y)$ is compact. 
\item[(2)] $\phi(B_Y)$ is relatively compact and $d\phi(B_Y)(S_Y)$ is relatively compact.
    \end{itemize}
\end{proposition}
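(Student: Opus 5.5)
We follow the proof of Proposition~\ref{prop:compRelComp}, replacing Proposition~\ref{prop:compunitball} by Proposition~\ref{prop:compunitballHL} and $\mathcal G_0$ by $\mathcal G$. Here $U=B_X$ and $V=B_Y$ is convex, and condition (4) of Proposition~\ref{prop:compunitballHL} is available without further assumptions. The hypothesis $\overline{\phi(B_Y)}\subseteq B_X$ guarantees that $d\delta_X$ can be evaluated at every point of the relevant closure.

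For (2)$\Rightarrow$(1), fix $y_1\in B_Y$ and $y_2\in S_Y$. Since $\widehat{\phi}\circ\delta_Y=\delta_X\circ\phi$ on $B_Y$ and $\widehat\phi$ is linear and bounded, the chain rule gives
\[
\widehat{\phi}(d\delta_Y(y_1)(y_2))=d(\delta_X\circ\phi)(y_1)(y_2)=d\delta_X(\phi(y_1))\bigl(d\phi(y_1)(y_2)\bigr).
\]
Here $\delta_X\colon B_X\to\mathcal G(B_X)$ is holomorphic, by the same Dunford argument as in Section~\ref{sect:preliminares}. The map $(x_1,x_2)\mapsto d\delta_X(x_1)(x_2)$ is continuous on $B_X\times X$, because a holomorphic map has a continuous differential. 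The set above is therefore contained in the image of the compact set $\overline{\phi(B_Y)}\times\overline{d\phi(B_Y)(S_Y)}\subseteq B_X\times X$ under this continuous map, so it is relatively compact. Proposition~\ref{prop:compunitballHL}(4) then gives the compactness of $C_\phi$.

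For (1)$\Rightarrow$(2), we need a barycenter map on $\mathcal G(B_X)$. Let $\beta_X:=T_{Id_{B_X}}\in\mathcal L(\mathcal G(B_X),X)$, obtained from Proposition~\ref{prop:Gdef}(a) since $Id_{B_X}\in\mathcal HL(B_X,X)$; it satisfies $\beta_X\circ\delta_X=Id_{B_X}$. Then $\phi=\beta_X\circ\widehat\phi\circ\delta_Y$ on $B_Y$. Compactness of $C_\phi$ gives compactness of $\widehat\phi$ by Proposition~\ref{prop:compunitballHL}, and $\delta_Y(B_Y)$ is bounded in $\mathcal G(B_Y)$ since $\norm{\delta(y)}=1$. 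Hence $\phi(B_Y)$ is relatively compact.

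Differentiating $\beta_X\circ\delta_X=Id_{B_X}$ yields $\beta_X(d\delta_X(x_1)(x_2))=x_2$. Combining this with the displayed identity gives
\[
d\phi(B_Y)(S_Y)=\beta_X\bigl(\{\widehat\phi(d\delta_Y(y_1)(y_2)):y_1\in B_Y,\ y_2\in S_Y\}\bigr).
\]
By Proposition~\ref{prop:compunitballHL}(4) the inner set is relatively compact, so its image under the bounded operator $\beta_X$ is relatively compact as well.

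The argument is routine once Proposition~\ref{prop:compunitballHL} is in hand. The only points needing care are two. First, $\beta_X$ must be a bounded linear map on $\mathcal G(B_X)$; Proposition~\ref{prop:Gdef} gives this, even though the correspondence there is only an isomorphism and not an isometry. Second, the chain rule identity requires $\phi(B_Y)\subseteq B_X$, which follows from $\overline{\phi(B_Y)}\subseteq B_X$.
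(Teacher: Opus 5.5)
Your proof is correct and is the argument the paper intends. The paper gives no separate proof here; it presents this result as the $\mathcal HL$ analogue of Proposition~\ref{prop:compRelComp}. You transfer that proof by using Proposition~\ref{prop:compunitballHL}(4) in place of Proposition~\ref{prop:compunitball}(4), together with the barycenter $\beta_X=T_{Id_{B_X}}$ on $\mathcal G(B_X)$ and the chain-rule identity $\widehat\phi(d\delta_Y(y_1)(y_2))=d\delta_X(\phi(y_1))(d\phi(y_1)(y_2))$, using the hypothesis $\overline{\phi(B_Y)}\subseteq B_X$ exactly where the compactness argument needs it.
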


\begin{proposition}\label{prop:compRelWCompHL}
     Let $X, Y$ be complex Banach spaces and $\phi\in \mathcal HL(B_Y,X)$ such that $\overline{\phi(B_Y)}\subseteq B_X$. 
    \begin{itemize}
\item[(a)] If $C_\phi\colon \mathcal HL(B_X)\to \mathcal HL(B_Y)$ is weakly compact, then $\phi(B_Y)$ and $d\phi(B_Y)(S_Y)$ are relatively weakly compact sets. 
\item[(b)] If $\phi(B_Y)$ is relatively compact and $d\phi(B_Y)(S_Y)$ is relatively weakly compact, then $C_\phi\colon \mathcal HL(B_X)\to \mathcal HL(B_Y)$ is weakly compact. 
    \end{itemize}   
\end{proposition}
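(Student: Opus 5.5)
We follow the proof of Proposition~\ref{prop:compRelWComp} almost verbatim. The hypothesis $\overline{\phi(B_Y)}\subseteq B_X$ keeps the symbol strictly inside $B_X$, so the maps $\delta_X$ and $d\delta_X$ only need to be evaluated at points of $B_X$. The two ingredients we replace are the description of $\overline{B}_{\mathcal G(B_Y)}$, now given by Proposition~\ref{prop:unitballconvHL}, and the compactness criterion, now Proposition~\ref{prop:compunitballHL}.

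For (a), assume $C_\phi$ is weakly compact. By Proposition~\ref{prop:compunitballHL}, $\widehat{\phi}\colon\mathcal G(B_Y)\to\mathcal G(B_X)$ is weakly compact. Let $\beta_X:=T_{Id_{B_X}}\in\mathcal L(\mathcal G(B_X),X)$ be the linearization of the identity given by Proposition~\ref{prop:Gdef}, so that $\beta_X\circ\delta_X=Id_{B_X}$. Then $\phi=\beta_X\circ\widehat{\phi}\circ\delta_Y$ on $B_Y$. Since $\delta_Y(B_Y)$ is bounded (each $\delta_Y(y)$ has norm $1$), $\phi(B_Y)$ is contained in the image of a bounded set under a weakly compact operator, hence is relatively weakly compact.

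For the derivatives, differentiate $\beta_X\circ\delta_X=Id_{B_X}$ to get $\beta_X(d\delta_X(x_1)(x_2))=x_2$ for $x_1\in B_X$. Differentiate $\widehat{\phi}\circ\delta_Y=\delta_X\circ\phi$ to get $\widehat{\phi}(d\delta_Y(y_1)(y_2))=d\delta_X(\phi(y_1))(d\phi(y_1)(y_2))$. Together these give
\[
d\phi(B_Y)(S_Y)=\beta_X\bigl(\{\widehat{\phi}(d\delta_Y(y_1)(y_2)):y_1\in B_Y,\ y_2\in S_Y\}\bigr).
\]
The set inside is relatively weakly compact by item (4) of Proposition~\ref{prop:compunitballHL}, so its image under the linear map $\beta_X$ is relatively weakly compact.

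For (b), the key step is that
\[
(x_1,x_2)\mapsto d\delta_X(x_1)(x_2),\qquad (B_X,\norm{\cdot})\times(X,w)\to(\mathcal G(B_X),w),
\]
is continuous. The argument is the same as in Proposition~\ref{prop:compRelWComp}(b). Pairing with $f\in\mathcal HL(B_X)=\mathcal G(B_X)^*$ gives $\langle f,d\delta(x_1)(x_2)\rangle=df(x_1)(x_2)$. This is jointly continuous in the required sense because $df\colon B_X\to X^*$ is norm continuous and weak nets in $X$ are bounded. One point to check is that pairing with elements of $\mathcal HL(B_X)$ really does determine the weak topology of $\mathcal G(B_X)$; this holds because Proposition~\ref{prop:Gdef}(b) identifies $\mathcal HL(B_X)$ with $\mathcal G(B_X)^*$ up to an isomorphism, and an isomorphic dual induces the same weak topology.

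Next, by the chain rule identity above,
\[
\{\widehat{\phi}(d\delta_Y(y_1)(y_2))\}\subseteq d\delta_X\bigl(\overline{\phi(B_Y)}\bigr)\bigl(\overline{d\phi(B_Y)(S_Y)}^w\bigr).
\]
The product $\overline{\phi(B_Y)}\times\overline{d\phi(B_Y)(S_Y)}^w$ is compact in $(B_X,\norm{\cdot})\times(X,w)$. Here the hypothesis $\overline{\phi(B_Y)}\subseteq B_X$ is exactly what guarantees that the first factor lies inside $B_X$, where $d\delta_X$ is defined. Its image under the continuous map is weakly compact, so the set on the left is relatively weakly compact. Item (4) of Proposition~\ref{prop:compunitballHL} then gives that $C_\phi$ is weakly compact.

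The only delicate point is the weak-topology identification just mentioned. Everything else is a direct transcription of the $\mathcal HL_0$ proof.
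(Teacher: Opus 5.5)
This is the paper's route: the $\mathcal HL$ statement is obtained by transcribing the proof of Proposition~\ref{prop:compRelWComp} with Propositions~\ref{prop:unitballconvHL} and~\ref{prop:compunitballHL} in place of their $\mathcal HL_0$ counterparts, which is exactly what you do. Your check that pairing with $\mathcal HL(B_X)$ generates the weak topology of $\mathcal G(B_X)$ is also correct.

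One claim needs correcting. ``Weak nets in $X$ are bounded'' is false: a weakly null net can have norms tending to infinity. In fact $(x_1,x_2)\mapsto d\delta(x_1)(x_2)$ need not be continuous on all of $(B_X,\norm{\cdot})\times(X,w)$. For instance, in $\ell_2$ with $f(x)=\frac12\sum_n x_n^2$, take a norm-null net $u_\alpha$ and a weakly null net $x_\alpha$ with $df(u_\alpha)(x_\alpha)=1$ for all $\alpha$.

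The fix is immediate. You only need continuity on $\overline{\phi(B_Y)}\times\overline{d\phi(B_Y)(S_Y)}^w$. Its second factor is weakly compact, hence norm bounded (indeed by $L(\phi)$). On that set your estimate $\norm{df(u_\alpha)-df(u)}\norm{x_\alpha}\to 0$ is valid, and the paper's own estimate tacitly relies on the same restriction.
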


When the symbol $\phi$ belongs to $\mathcal HL_0(B_Y,X)$ the (weak) compactness of the composition operator between $\mathcal HL$ spaces is equivalent to the same property for the mapping between $\mathcal HL_0$ spaces.

\begin{lemma}\label{lemma:compHLHL0}
    Let $X,Y$ be complex Banach spaces, and let $\phi\in\mathcal HL_0(B_Y,X)$ satisfy $\phi(B_Y)\subseteq B_X$. Then, for the composition operator $C_\phi : \mathcal HL_0(B_X)\rightarrow \mathcal HL_0(B_Y)$, the induced mapping $\widetilde{C}_\phi : \mathcal HL(B_X)\rightarrow \mathcal HL(B_Y)$ is the composition operator with symbol $\phi$. Moreover, $C_\phi$ is (weakly) compact if and only if $\widetilde{C}_\phi$ is (weakly) compact; and $\widetilde{C}_\phi$ fixes no copy of $\ell_\infty$ if and only if $C_\phi$ fixes no copy of $\ell_\infty$. 
\end{lemma}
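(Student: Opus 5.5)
The plan is to exploit the direct sum decomposition $\mathcal HL(B_X)=\mathcal HL_0(B_X)\oplus\mathbb C$ and to check that $\widetilde C_\phi$ is block diagonal with respect to it.

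First I identify $\widetilde C_\phi$. For $f\in\mathcal HL(B_X)$, Lemma~\ref{lemma: A tilde} gives $\widetilde C_\phi(f)=(f-f(0))\circ\phi+f(0)$. This equals $f\circ\phi$, because constants pass through composition. Since $\phi\in\mathcal HL_0(B_Y,X)$ has $\phi(B_Y)\subseteq B_X\subseteq\overline B_X$, the operator $\widetilde C_\phi$ is the composition operator with symbol $\phi$ on $\mathcal HL$.

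Next I set up the decomposition. Let $P_X\colon\mathcal HL(B_X)\to\mathcal HL_0(B_X)$ be $P_Xf=f-f(0)$, and let $Q_X f=f(0)$ be the complementary rank-one projection. Both are bounded, since $L(f-f(0))=L(f)\le\|f\|_L$. The embedding $J_X\colon\mathcal HL_0(B_X)\hookrightarrow\mathcal HL(B_X)$ is isometric. The key identities are
\[
\widetilde C_\phi=J_Y C_\phi P_X+Q_X',\qquad C_\phi=P_Y\widetilde C_\phi J_X,
\]
where $Q_X'f=f(0)\cdot 1$ is a rank-one operator. The first follows from the formula for $\widetilde C_\phi$; the second holds because $\widetilde C_\phi$ maps $\mathcal HL_0(B_X)$ into $\mathcal HL_0(B_Y)$, using $\phi(0)=0$.

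Compactness and weak compactness then transfer directly. Both classes have the ideal property and are closed under adding finite-rank operators, so each identity gives one implication.

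For $\ell_\infty$-strict singularity, the second identity and the ideal property show that if $\widetilde C_\phi$ fixes no copy of $\ell_\infty$, then neither does $C_\phi$. The converse is the main obstacle, because the class is not obviously stable under finite-rank perturbations. I would use that $\ell_\infty$-strictly singular operators form a closed operator ideal (Rosenthal, as recalled in Section~\ref{sect:preliminares}). This ideal is in particular closed under addition and contains finite-rank operators, so $J_YC_\phi P_X+Q_X'$ fixes no copy of $\ell_\infty$.

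A direct argument would also work: if $E\cong\ell_\infty$ were fixed by $\widetilde C_\phi$, then $E\cap\ker Q_X$ is a finite-codimensional subspace of $E$, hence still contains a copy of $\ell_\infty$, and on $\ker Q_X=\mathcal HL_0(B_X)$ the operator $\widetilde C_\phi$ coincides with $C_\phi$. I would present this direct version to avoid depending on the ideal result.
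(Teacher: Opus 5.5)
Your proposal is correct and follows essentially the paper's route: the paper also identifies $\widetilde C_\phi f=f\circ\phi$, then uses the onto isometry $Tf=(f-f(0),f(0))$ from $\mathcal HL(B_X)$ onto $\mathcal HL_0(B_X)\oplus_1\mathbb C$ to write $\widetilde C_\phi=T^{-1}\circ(C_\phi\oplus I)\circ T$, which is exactly your block-diagonal decomposition. Your explicit treatment of the rank-one block in the $\ell_\infty$ case, via Rosenthal's closed ideal or the finite-codimension argument, fills in a step the paper leaves implicit with ``This shows''.
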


\begin{proof}
    For the first part, observe that $$\widetilde{C}_\phi(f)=C_\phi(f-f(0))+f(0)=f\circ \phi -f(0)\circ \phi+f(0)=f\circ \phi, \quad \forall f\in \mathcal HL(B_X),$$
    where we considered $f(0)$ as a constant mapping, which is an element of $\mathcal HL(B_X)$.

  Now, let $T\colon \mathcal HL(B_X)\to \mathcal HL_0(B_X)\oplus_1 \mathbb C$ be given by $Tf=(f-f(0), f(0))$. Clearly $T$ is an onto isometry and $\widetilde{C}_\phi= T^{-1}\circ (C_\phi\oplus I)\circ T$. This shows that  $C_\phi$ satisfies any of the properties in the statement if and only if so does  $\widetilde{C}_\phi$. 
    
\end{proof}

It is evident that a composition operator with constant symbol is compact:

\begin{lemma}
    Let $X,Y$ be complex Banach spaces and let $C_\phi: \mathcal HL(B_X)\rightarrow \mathcal HL(B_Y)$ be a composition operator. If there is some $x_0\in \overline{B}_X$ such that  $\phi\equiv x_0$, then $C_\phi$ is compact. 
\end{lemma}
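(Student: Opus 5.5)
The plan is to show that $C_\phi$ has rank one, and hence is compact. For every $f\in\mathcal HL(B_X)$ and every $y\in B_Y$ we have $C_\phi(f)(y)=f(\phi(y))=f(x_0)$, where $f(x_0)$ denotes the value at $x_0$ of the unique Lipschitz extension of $f$ to $\overline{B}_X$ (Lemma~\ref{lemma:extension-border-HL}(a)). Note that if $x_0\in S_X$, this is precisely the case $\phi(B_Y)\subseteq S_X$ allowed in the definition of composition operators on $\mathcal HL$. In either case $C_\phi(f)$ is the constant function $f(x_0)\cdot \mathbf 1$, so
\[
C_\phi(f)=\langle \widetilde\delta(x_0), f\rangle\, \mathbf 1 ,
\]
where $\widetilde\delta(x_0)\in\mathcal G(B_X)\subseteq \mathcal HL(B_X)^*$ is the extended evaluation from Lemma~\ref{lemma:extension-border-HL}(b).

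Two routine points remain. First, $f\mapsto f(x_0)$ is a bounded functional: $|f(x_0)|\le |f(0)|+L(f)\norm{x_0}\le\norm{f}_L$, which is consistent with $\|\widetilde\delta(x_0)\|=1$. Second, the constant function $\mathbf 1$ lies in $\mathcal HL(B_Y)$ with $\norm{\mathbf 1}_L=1$. It follows that $C_\phi=\widetilde\delta(x_0)\otimes\mathbf 1$ is a bounded operator of rank at most one. Its image of the unit ball is contained in the bounded subset $\{\lambda\mathbf 1:|\lambda|\le 1\}$ of a one-dimensional space, hence is relatively compact.

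Alternatively, one could invoke Proposition~\ref{prop:compunitballHL}(3): the molecules $\frac{\delta(\phi(y_1))-\delta(\phi(y_2))}{\norm{y_1-y_2}}$ are all zero, so the set is trivially compact. The direct rank-one argument is simpler, though.

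There is no real obstacle here. The only point needing care is the boundary case $x_0\in S_X$, where one must use the extension $\widetilde f$ to make sense of $f(x_0)$. This is exactly what Lemma~\ref{lemma:extension-border-HL} provides.
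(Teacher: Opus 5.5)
Your proof is correct and follows the paper's argument: both note that $C_\phi(f)$ is the constant function $f(x_0)$ with $|f(x_0)|\le\norm{f}_L$. Hence $C_\phi$ maps the unit ball into the compact set $\{\lambda\mathbf 1:|\lambda|\le 1\}\cong\overline{\disk}$. Your remark that the boundary case $x_0\in S_X$ uses the Lipschitz extension from Lemma~\ref{lemma:extension-border-HL} spells out a detail the paper leaves implicit.
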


\begin{proof}
    Observe that $C_\phi(f)=f(x_0)\in \overline{\disk}\subseteq \mathcal HL(B_Y)$, for all $f\in B_{\mathcal HL(B_X)}$, and $\overline{\disk}$ is a compact set.
\end{proof}

Now we characterize (weak) compactness of composition operators in the scalar case $(X=Y=\C)$, as we did in Theorem \ref{thm:compactscalarcase}.  Recall that if $\phi \in \mathcal{H}L(\disk)$ satisfies $\phi(\disk)\subseteq \overline\disk$ then $\phi(\disk)\subseteq \disk$ or $\phi$ is constant. For constant symbols, we are done by the previous lemma, so we restrict to the case where $\phi(\disk)\subseteq \disk$.

Given $a\in \mathbb D$, we denote $\varphi_a\colon \mathbb D\to \mathbb D$ the automorphism of $\mathbb D$ given by $\varphi_a(z)=\frac{z-a}{1-\overline{a}z}$. Note that
\[ \varphi_a'(z)= \frac{1-\overline{a}z+\overline{a}(z-a)}{(1-\overline{a}z)^2}= \frac{1-|a|^2}{(1-\overline{a}z)^2} \quad \forall z\in\mathbb D\]
and thus 
\[L(\varphi_a)= \sup_{z\in \mathbb D} \frac{1-|a|^2}{|1-\overline{a}z|^2}= \frac{1-|a|^2}{(1-|a|)^2}=\frac{1+|a|}{1-|a|},\]
so $\varphi_a\in \mathcal HL(\mathbb D)$ with $\|\varphi_a\|_L=\frac{1+|a|}{1-|a|} + |a|$. Recall also that $(\varphi_a)^{-1}= \varphi_{-a}$.

\begin{theorem}\label{thm:compactscalarcaseHL} 
    Let $\phi \in \mathcal{H}L(\disk)$ such that $\phi(\disk)\subseteq \disk$ and consider $C_\phi$ the associated composition operator. Then, the following assertions are equivalent. 
    \begin{enumerate}
    \item $C_\phi \colon \mathcal HL(\disk)\rightarrow \mathcal HL(\disk)$ is compact.
        \item $C_\phi \colon \mathcal HL(\disk)\rightarrow \mathcal HL(\disk)$  does not fix any copy of $\ell_\infty$. 
        \item $C_\phi \colon \mathcal H^\infty(\disk)\rightarrow \mathcal H^\infty(\disk)$ is compact.
        \item $\norm{\phi}_\infty<1$.
    \end{enumerate}
\end{theorem}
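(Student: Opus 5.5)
The plan is to reduce to Theorem~\ref{thm:compactscalarcase} via Möbius conjugation. Put $a=\phi(0)\in\disk$ and $\psi=\varphi_a\circ\phi$. Then $\psi\in\mathcal HL_0(\disk)$, $\psi(\disk)\subseteq\disk$, and $L(\psi)\le L(\varphi_a)L(\phi)$. Since $\varphi_{-a}\circ\psi=\phi$, we get
\[
C_\phi=C_\psi\circ C_{\varphi_{-a}}\quad\text{on }\mathcal HL(\disk),
\]
where $C_{\varphi_{-a}}\colon\mathcal HL(\disk)\to\mathcal HL(\disk)$ is an onto isomorphism whose inverse is $C_{\varphi_a}$. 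This follows from Proposition~\ref{p:CompositionOntoHL}(d), or directly from Lemma~\ref{lemma:compositionoperatorpropertiesHL}, because both Möbius maps are bi-Lipschitz on $\disk$. Consequently $C_\phi$ is compact, resp.\ fixes no copy of $\ell_\infty$, if and only if $C_\psi$ has the same property. Both properties are invariant under composition with onto isomorphisms: each class has the ideal property, and we compose with an isomorphism and then with its inverse. Moreover $\|\phi\|_\infty<1$ if and only if $\|\psi\|_\infty<1$, since $\varphi_{\pm a}$ maps compact subsets of $\disk$ into compact subsets of $\disk$ and extends to a homeomorphism of $\overline\disk$ preserving $\torus$.

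Next, $\psi(0)=0$, so Lemma~\ref{lemma:compHLHL0} applies. The operator $C_\psi$ on $\mathcal HL(\disk)$ is the induced map $\widetilde{C}_\psi$ of the operator $C_\psi^L$ on $\mathcal HL_0(\disk)$, and it is compact, resp.\ fixes no copy of $\ell_\infty$, exactly when $C_\psi^L$ is. Theorem~\ref{thm:compactscalarcase} then shows that (1) and (2) for $C_\psi^L$ are each equivalent to $\|\psi\|_\infty<1$. By the previous paragraph, (1) $\Leftrightarrow$ (2) $\Leftrightarrow$ (4) for $\phi$.

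Finally, (3) $\Leftrightarrow$ (4) is the classical fact \cite[Exercise 2.6.10]{shapiro-libro}, which holds for arbitrary self-maps of the disc (alternatively \cite[Proposition 3]{AGL}).

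The main point needing care is the transfer through $C_{\varphi_{-a}}$. One must check that it is a genuine onto isomorphism of $\mathcal HL(\disk)$, which is simple: $C_{\varphi_a}$ and $C_{\varphi_{-a}}$ are bounded by Lemma~\ref{lemma:compositionoperatorpropertiesHL} and are mutually inverse. One must also check that the ideal-type properties pass through it; fixing a copy of $\ell_\infty$ is preserved under composition with isomorphisms on either side. The remaining equivalences are routine bookkeeping.
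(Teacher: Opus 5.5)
Your proposal is correct and follows essentially the paper's own argument: set $\psi=\varphi_a\circ\phi$ with $a=\phi(0)$, use that $C_{\varphi_a}$ is an onto isomorphism of $\mathcal HL(\disk)$ with inverse $C_{\varphi_{-a}}$, pass to $\mathcal HL_0(\disk)$ via Lemma~\ref{lemma:compHLHL0}, and conclude with Theorem~\ref{thm:compactscalarcase}. The only difference is cosmetic: for (3)$\Leftrightarrow$(4) you cite the classical $\mathcal H^\infty$ result directly for $\phi$, whereas the paper also routes this equivalence through $\psi$; both are valid.
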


\begin{proof}
     Denote $a=\phi(0)$ and let $\psi = \varphi_{a}\circ\phi$. Observe that $\psi\in \mathcal HL_0(\mathbb D)$, $\psi(\mathbb D)\subseteq \mathbb D$ and $\phi=\varphi_{-a}\circ \psi$. We will use this factorization to show that each of the conditions in the statement is equivalent to the corresponding one in Theorem \ref{thm:compactscalarcase}.
   
   First, note that the composition operator $\widetilde{C}_\psi\colon \mathcal HL(\mathbb D)\to \mathcal HL(\mathbb D)$ satisfies $\widetilde{C}_\psi=C_\phi\circ C_{\varphi_a}$, where $C_{\varphi_a}$ is an isomorphism from $\mathcal HL(\mathbb D)$ to $\mathcal HL(\mathbb D)$ with inverse $C_{\varphi_{-a}}$. Hence, $C_\phi \colon \mathcal HL(\disk)\rightarrow \mathcal HL(\disk)$ is  compact (resp. $\ell_\infty$-strictly singular) if and only if  $\widetilde{C}_\psi\colon \mathcal HL(\mathbb D)\to \mathcal HL(\mathbb D)$ is compact (resp. $\ell_\infty$-strictly singular). By Lemma \ref{lemma:compHLHL0}, this happens precisely if and only if the composition operator $C_{\psi}\colon \mathcal HL_0(\mathbb D)\to \mathcal HL_0(\mathbb D)$ is compact (resp. $\ell_\infty$-singular). 
   Moreover, since $\psi=\varphi_{a}\circ\phi$, the  compactness of $C_\phi \colon \mathcal H^\infty(\disk)\rightarrow \mathcal H^\infty(\disk)$ is equivalent to the compactness of $C_\psi\colon \mathcal H^\infty(\mathbb D)\to \mathcal H^\infty(\mathbb D)$. 
   Finally, it is clear that $\norm{\psi}_\infty<1$ if and only if $\norm{\phi}_\infty<1$.
   The conclusion now follows from Theorem \ref{thm:compactscalarcase}.
\end{proof}

In order to study the (weak) compactness of composition operators whose symbols satisfy  $\phi(B_Y)\subseteq B_X$, we need  a counterpart of Lemma \ref{lemma:factor}, together with an additional statement concerning the differential mapping.  Notice that, unlike the situation for $\mathcal HL_0$ spaces, the map $d$ is not an isometry  in the $\mathcal HL$ setting (indeed, it is not even injective). Nevertheless,  it still behaves well with respect to compactness, as we show below. This allows the following lemma to play the role of  Corollary \ref{cor:ideals} in the subsequent proofs.

\begin{lemma}\label{lemma:factorHL} Let $X, Y$ be complex Banach spaces and $\phi\in \mathcal HL(B_Y,X)$ with $\phi(B_Y)\subseteq B_X$. Then the following diagram commutes, 

\[\begin{tikzcd}
	{\mathcal HL(B_X)} &&&& {\mathcal HL(B_Y)} \\
	{\mathcal H^\infty(B_X,X^*)} && {\mathcal H^\infty(B_Y,X^*)} && {\mathcal H^\infty(B_Y, Y^*)}
	\arrow["{C_\phi}", from=1-1, to=1-5]
	\arrow["d"', from=1-1, to=2-1]
	\arrow["d", from=1-5, to=2-5]
	\arrow["{C_\phi^{X^*}}"', from=2-1, to=2-3]
	\arrow["M"', from=2-3, to=2-5]
\end{tikzcd}\]
where $d$ maps each function to its differential, and $M\colon \mathcal H^\infty(B_Y, X^*)\to \mathcal H^\infty(B_Y, Y^*)$ is given by $M(g)(y)= g(y)\circ d\phi(y)$. Moreover, if $d\circ C_\phi$ is (weakly) compact, then $C_\phi$ is (weakly) compact.
\end{lemma}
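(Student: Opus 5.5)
The plan has two parts: first check that the diagram commutes, then transfer (weak) compactness from $d\circ C_\phi$ to $C_\phi$.

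\emph{Commutativity.} This is the same chain rule computation as in Lemma~\ref{lemma:factor}. For $f\in\mathcal HL(B_X)$ and $y\in B_Y$,
\[
d(f\circ\phi)(y)=df(\phi(y))\circ d\phi(y)=M(C_\phi^{X^*}(df))(y).
\]
We also check that every map is well defined and bounded. First, $\norm{df}_\infty\le L(f)\le\norm{f}_L$, so $d$ is bounded from $\mathcal HL(B_X)$ into $\mathcal H^\infty(B_X,X^*)$. Second, since $\phi(B_Y)\subseteq B_X$, the map $C_\phi^{X^*}$ is defined and has norm $1$. Third, $\norm{M}\le\norm{d\phi}_\infty=L(\phi)$.

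\emph{Transfer of compactness.} We use the decomposition $\mathcal HL(B_Y)=\mathcal HL_0(B_Y)\oplus\mathbb C$ through the isometry $T g=(g-g(0),g(0))$ from the proof of Lemma~\ref{lemma:compHLHL0}. We write
\[
C_\phi f=\bigl(C_\phi f-(C_\phi f)(0)\bigr)+f(\phi(0)),
\]
so $C_\phi=J\circ D+E$, where $D f=f\circ\phi-f(\phi(0))\in\mathcal HL_0(B_Y)$, $J$ is the isometric inclusion of $\mathcal HL_0(B_Y)$ into $\mathcal HL(B_Y)$, and $E f=f(\phi(0))\cdot 1$. The operator $E$ is bounded of rank one, hence compact: indeed $|f(\phi(0))|\le\norm{f}_\infty\le\norm{f}_L$.

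Next, $d(Df)=d(f\circ\phi)=(d\circ C_\phi)(f)$. By \eqref{eq:norm Lipschitz-diferential}, $d$ restricted to $\mathcal HL_0(B_Y)$ is an isometry onto its range $d(\mathcal HL_0(B_Y))$, which is therefore closed. Let $d_0^{-1}$ be its inverse on that range, so that $D=d_0^{-1}\circ(d\circ C_\phi)$.

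If $d\circ C_\phi$ is compact, its range over the unit ball is relatively compact inside the closed subspace $d(\mathcal HL_0(B_Y))$. Composing with the bounded operator $d_0^{-1}$ preserves this, so $D$ is compact. In the weakly compact case, the weak closure of a bounded subset of a closed subspace stays in that subspace (Mazur), and $d_0^{-1}$ is weak-to-weak continuous; hence $D$ is weakly compact. Therefore $C_\phi=JD+E$ is (weakly) compact, since (weakly) compact operators form a linear space.

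The only delicate point is that $d$ is not injective on $\mathcal HL(B_X)$. This is handled by subtracting the constant $f(\phi(0))$, and the subtraction changes $C_\phi$ only by the compact rank-one operator $E$.
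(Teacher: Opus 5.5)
Your proof is correct and uses essentially the same mechanism as the paper. The paper embeds $\mathcal HL(B_Y)$ isometrically into $\mathcal H^\infty(B_Y,Y^*)\oplus_1\mathbb C$ via $\bar d(f)=(df,f(0))$ and handles the scalar coordinate $f(\phi(0))$ by passing to a further subnet, whereas you split off that same coordinate as the rank-one operator $E$ and invert $d$ on $\mathcal HL_0(B_Y)$; your version also spells out the closed-range/Mazur point that the paper leaves implicit when it asserts that the isometry $\bar d$ reflects weak compactness.
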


\begin{proof} The proof of the commutative diagram follows the same arguments as in Lemma \ref{lemma:factor}. For the behavior of $d$, consider the map $\bar{d}: \mathcal HL(B_Y)\rightarrow \mathcal H^\infty(B_Y,Y^*)\oplus_1\C$ given by $$\bar{d}(f)=(df,f(0)), \quad \forall f\in\mathcal HL(B_Y). $$ It is easy to see that $\bar{d}$ is linear and that $\norm{f}_L=\|\bar{d}(f)\|$ for every $f\in\mathcal HL(B_Y)$. Hence,  $\bar{d}$ is a linear isometry. Therefore, the (weak) compactness of $C_\phi$ is implied by the (weak) compactness of $\bar{d}\circ C_\phi$. Thus, it remains to show that if $d\circ C_\phi$ is (weakly) compact, then so is $\bar{d}\circ C_\phi$. 

    Pick a net $(f_\alpha)_\alpha\subseteq B_{\mathcal HL(B_X)}$. Since $d\circ C_\phi$ is (weakly) compact, there exists a subnet $(f_{\alpha_\beta})_\beta$ such that $(d(C_\phi(f_{\alpha_\beta})))_\beta$ is (weakly) convergent. Moreover, by the compactness of $\overline{\disk}$ and after passing to a further subnet if necessary, we may assume that $f_{\alpha_\beta}(\phi(0))$ converges in $\overline{\disk}$. Consequently,  $(\bar{d}(C_\phi(f_{\alpha_\beta})))_\beta = (d(C_\phi(f_{\alpha_\beta})),f_{\alpha_\beta}(\phi(0)))_\beta$ is (weakly) convergent, proving that $\bar{d}\circ C_\phi$ is (weakly) compact.
\end{proof}

Next, we establish the $\mathcal HL$ counterparts of Theorem \ref{theo:compactnesscomp}, Proposition \ref{prop:comp and weakly comp} and Corollary \ref{cor:compactfindimcase}.

\begin{theorem}\label{theo:compactnesscompHL} Let $X, Y$ be complex Banach spaces and let $\phi\in \mathcal HL(B_Y, X)$ with $\phi(B_Y)\subseteq B_X$. Consider the following statements: 
\begin{enumerate}
\item[(1)] $C_\phi\colon \mathcal HL(B_X)\to \mathcal HL(B_Y)$ is compact.
\item[(2)] $C_\phi\colon \mathcal HL(B_X)\to \mathcal HL(B_Y)$ does not fix any copy of $\ell_\infty$ and $\phi(B_Y)$ is relatively compact.  
\item[(3)] $\norm{\phi(y)}<1$ for every $y\in \overline{B}_Y$ and $\phi(B_Y)$ is relatively compact.
\item[(4)] $C_\phi\colon \mathcal H^\infty(B_X)\to \mathcal H^\infty(B_Y)$ is compact.
\end{enumerate}
Then $(1)\Rightarrow (2)\Rightarrow (3)$. If $\dim(Y)<\infty$, then $(3)\Rightarrow (4)$. If $\dim(X)<\infty$, then $(4)\Rightarrow (1)$. Thus, for $X$ and $Y$ finite-dimensional, all the statements are equivalent.

\end{theorem}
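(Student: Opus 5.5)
The plan is to reduce to the $\mathcal HL_0$ case (Theorem~\ref{theo:compactnesscomp}) wherever possible, and to handle the two implications involving $\phi(0)\neq 0$ directly.

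\emph{$(1)\Rightarrow(2)$.} Compactness implies $\ell_\infty$-strict singularity. For relative compactness of $\phi(B_Y)$, write $\phi=\beta_X\circ \widehat{\phi}\circ\delta_Y$, where $\beta_X=T_{Id_{B_X}}$ is now taken on $\mathcal G(B_X)$ (Proposition~\ref{prop:Gdef}). By Proposition~\ref{prop:compunitballHL}, $\widehat\phi$ is compact, and $\delta_Y(B_Y)$ is bounded in $\mathcal G(B_Y)$ (norm one). Hence $\phi(B_Y)$ is relatively compact.

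\emph{$(2)\Rightarrow(3)$.} Argue by contradiction. Suppose $y_0\in\overline B_Y$ with $\norm{\phi(y_0)}=1$; since $\phi(B_Y)\subseteq B_X$, necessarily $y_0\in S_Y$. Pick $x^*\in S_{X^*}$ with $x^*(\phi(y_0))=1$, and let $i(z)=zy_0$ for $z\in\disk$. Then $g=x^*\circ\phi\circ i\in\mathcal HL(\disk)$, $g(\disk)\subseteq\disk$, and $\norm{g}_\infty=1$ (using the Lipschitz extension to $\overline\disk$). By Theorem~\ref{thm:compactscalarcaseHL}, $C_g$ fixes a copy of $\ell_\infty$. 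Moreover $C_g=C_i\circ C_\phi\circ C_{x^*}$, where $C_{x^*}\colon\mathcal HL(\disk)\to\mathcal HL(B_X)$ and $C_i$ are bounded (Lemma~\ref{lemma:compositionoperatorpropertiesHL}; note $x^*(B_X)\subseteq\overline\disk$ and $i(\disk)\subseteq B_Y$). By the ideal property, $C_\phi$ fixes a copy of $\ell_\infty$, a contradiction.

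\emph{$(3)\Rightarrow(4)$ when $\dim Y<\infty$.} Compactness of $\overline B_Y$ and continuity of the extension give $\norm{\phi}_\infty<1$. Together with relative compactness, \cite[Proposition 3]{AGL} yields compactness of $C_\phi^\infty$; that result does not need $\phi(0)=0$.

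\emph{$(4)\Rightarrow(1)$ when $\dim X<\infty$.} By Lemma~\ref{lemma:CphiLvsInfty}(a), $C_\phi^{X^*}$ is compact. The diagram of Lemma~\ref{lemma:factorHL} gives $d\circ C_\phi=M\circ C_\phi^{X^*}\circ d$, which is therefore compact, and the last part of Lemma~\ref{lemma:factorHL} then gives compactness of $C_\phi$.

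The finite-dimensional equivalence follows by chaining these implications. The main point needing care is $(2)\Rightarrow(3)$: we must make sure that Theorem~\ref{thm:compactscalarcaseHL} applies to the restricted symbol $g$ (it requires $g(\disk)\subseteq\disk$, which holds because $\phi(B_Y)\subseteq B_X$), and that the outer composition operators are bounded even though $x^*$ maps $B_X$ only into $\overline\disk$.
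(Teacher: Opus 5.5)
Your proof is correct and takes the same route as the paper. The paper simply transfers the proof of Theorem~\ref{theo:compactnesscomp} to the $\mathcal HL$ setting, using the factorization $\phi=\beta_X\circ\widehat\phi\circ\delta_Y$, the scalar Theorem~\ref{thm:compactscalarcaseHL} together with the ideal property, \cite[Proposition 3]{AGL}, and Lemmas~\ref{lemma:CphiLvsInfty} and~\ref{lemma:factorHL}. Your version differs only cosmetically, for instance in deducing compactness of $\widehat\phi$ from Proposition~\ref{prop:compunitballHL} rather than from Schauder's theorem via $C_\phi=\widehat\phi^*$.
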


\begin{proof}
    Just follow the arguments in the proof of Theorem \ref{theo:compactnesscomp} replacing the referred results from Sections \ref{sect:composition operators HL0} and \ref{Section:Compactness} by the analogous statements on this section. Note that the condition $\norm{\phi(y)}<\norm{y}$ appeared in Theorem \ref{theo:compactnesscomp} (3) has to be  translated to $\norm{\phi(y)}<1$ (if $\phi(0)=0$ they are equivalent). 
\end{proof}

The proofs of the next two results follow by copying, mutatis mutandis,  their relatives in $\mathcal HL_0$. 

\begin{proposition}\label{prop:comp and weakly compHL} Let $X, Y$ be complex Banach spaces, where $X$ is reflexive and $\dim(Y)<\infty$. Let $\phi\in \mathcal HL(B_Y, X)$ such that $\phi(B_Y)\subseteq B_X$ and consider $C_\phi$ the associated composition operator. Then, the following assertions are equivalent.
\begin{enumerate}
    \item $C_\phi\colon \mathcal HL(B_X)\to\mathcal HL(B_Y)$ is weakly compact. 
    \item $C_\phi\colon \mathcal HL(B_X)\to\mathcal HL(B_Y)$ does not fix any copy of $\ell_\infty$.
    \item $C_\phi\colon \mathcal H^\infty(B_X)\to\mathcal H^\infty(B_Y)$ is weakly compact.
    \item $C_\phi\colon \mathcal H^\infty(B_X)\to\mathcal H^\infty(B_Y)$ is compact.
    \item $\norm{\phi}_\infty<1$.
\end{enumerate}

\end{proposition}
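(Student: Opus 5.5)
The plan is to copy the proof of Proposition~\ref{prop:comp and weakly comp}, replacing each ingredient by its $\mathcal HL$ counterpart from this section. Since $\dim(Y)<\infty$ and $\phi$ is Lipschitz on $B_Y$, the set $\phi(B_Y)$ is bounded in a finite-dimensional domain image. More precisely, $\phi(B_Y)$ is the Lipschitz image of a totally bounded set, hence relatively compact in $X$. Therefore \cite[Proposition 3]{AGL} gives the equivalences $(3)\Leftrightarrow(4)\Leftrightarrow(5)$ directly, with no $\phi(0)=0$ assumption needed. The implication $(1)\Rightarrow(2)$ is trivial, because weakly compact operators do not fix copies of $\ell_\infty$ (Figure~\ref{fig:Ideals}).

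For $(2)\Rightarrow(5)$ I would invoke Theorem~\ref{theo:compactnesscompHL}, implication $(2)\Rightarrow(3)$. Relative compactness of $\phi(B_Y)$ holds automatically, so it gives $\norm{\phi(y)}<1$ for every $y\in\overline B_Y$. Since $\overline B_Y$ is compact and the Lipschitz extension of $\phi$ is continuous, this yields $\norm{\phi}_\infty<1$.

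The substantive step is $(4)\Rightarrow(1)$. If $C_\phi^\infty$ is compact, then Lemma~\ref{lemma:CphiLvsInfty}(c), which is stated for $\phi\in\mathcal HL(V,X)$ and so needs no base point, together with the reflexivity of $X$ shows that $C_\phi^{X^*}\colon\mathcal H^\infty(B_X,X^*)\to\mathcal H^\infty(B_Y,X^*)$ is weakly compact. (If $\phi$ is constant, $C_\phi$ is compact by the lemma on constant symbols, so I may assume $\phi\neq0$.)

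Lemma~\ref{lemma:factorHL} then gives $d\circ C_\phi = M\circ C_\phi^{X^*}\circ d$. Weakly compact operators have the ideal property, so $d\circ C_\phi$ is weakly compact. The last assertion of Lemma~\ref{lemma:factorHL} then upgrades this to weak compactness of $C_\phi$.

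The main obstacle is this last passage. In the $\mathcal HL_0$ setting it is immediate, because there $d$ is an isometry. Here $d$ is not injective, so I rely on the isometry $\bar d(f)=(df,f(0))$ into $\mathcal H^\infty(B_Y,Y^*)\oplus_1\C$. The scalar coordinate $f\mapsto f(\phi(0))$ has rank one, so it does not affect weak compactness. That reduction is exactly what Lemma~\ref{lemma:factorHL} already provides, so the argument closes.
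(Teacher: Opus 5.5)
Your proposal is correct and is essentially the paper's argument: the paper proves this by copying the proof of Proposition~\ref{prop:comp and weakly comp} mutatis mutandis. That means \cite[Proposition 3]{AGL} for $(3)\Leftrightarrow(4)\Leftrightarrow(5)$, Theorem~\ref{theo:compactnesscompHL} for $(2)\Rightarrow(5)$, and Lemma~\ref{lemma:CphiLvsInfty}(c) combined with Lemma~\ref{lemma:factorHL} (in place of Corollary~\ref{cor:ideals}) for $(4)\Rightarrow(1)$, with the non-injectivity of $d$ handled exactly as you describe.
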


\begin{corollary}\label{cor:compactfindimcaseHL}
    Let $X,Y$ be complex finite-dimensional Banach spaces. Let $\phi \in \mathcal{H}L(B_Y,X)$ such that $\phi(B_Y)\subseteq B_X$ and consider $C_\phi$ the associated composition operator. Then, the following assertions are equivalent.
    \begin{enumerate}
        \item $C_\phi \colon \mathcal HL(B_X)\rightarrow \mathcal HL(B_Y)$ is compact. 
         \item $C_\phi \colon \mathcal HL(B_X)\rightarrow \mathcal HL(B_Y)$ does not fix any copy of $\ell_\infty$.
        \item $C_\phi \colon \mathcal H^\infty(B_X)\rightarrow \mathcal H^\infty(B_Y)$ is  weakly compact.
        \item $C_\phi \colon \mathcal H^\infty(B_X)\rightarrow \mathcal H^\infty(B_Y)$ is compact.
        \item $\norm{\phi}_\infty<1$.
    \end{enumerate}
\end{corollary}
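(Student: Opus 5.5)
The plan is to reduce Corollary~\ref{cor:compactfindimcaseHL} to its $\mathcal HL_0$ relatives: Theorem~\ref{theo:compactnesscompHL} gives part of it, and Proposition~\ref{prop:comp and weakly compHL} gives the rest. Since $X$ and $Y$ are finite-dimensional, $X$ is reflexive and $\dim(Y)<\infty$, so Proposition~\ref{prop:comp and weakly compHL} applies directly. It gives the equivalence of (2), (3), (4), (5), together with the statement that weak compactness of $C_\phi$ on $\mathcal HL$ is equivalent to these conditions.

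Next, Theorem~\ref{theo:compactnesscompHL} with both spaces finite-dimensional gives the equivalence of (1) with (4). This closes the cycle: $(1)\Leftrightarrow(4)\Leftrightarrow(5)\Leftrightarrow(3)\Leftrightarrow(2)$.

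If one prefers to verify the ingredients of Proposition~\ref{prop:comp and weakly compHL} instead of citing it, the main step is $(2)\Rightarrow(5)$. It goes through the chain $(2)\Rightarrow(3)$ of Theorem~\ref{theo:compactnesscompHL}. Note that $\phi(B_Y)$ is automatically relatively compact, since $\phi$ is Lipschitz on a bounded subset of a finite-dimensional space. The conclusion $\norm{\phi(y)}<1$ on the compact set $\overline B_Y$ then gives $\norm{\phi}_\infty<1$ by continuity of the Lipschitz extension. That chain in turn rests on the scalar Theorem~\ref{thm:compactscalarcaseHL} and the ideal property, after composing with a functional $x^*$ and the linear embedding $i$ of $\disk$. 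The direction $(5)\Rightarrow(1)$ uses \cite[Proposition 3]{AGL} to get compactness of $C_\phi^\infty$. Lemma~\ref{lemma:CphiLvsInfty}(a) then gives compactness of $C_\phi^{X^*}$, and the factorization in Lemma~\ref{lemma:factorHL} upgrades compactness of $d\circ C_\phi=M\circ C_\phi^{X^*}\circ d$ to compactness of $C_\phi$. The implications $(1)\Rightarrow$ weakly compact $\Rightarrow(2)$ and $(4)\Rightarrow(3)$ are trivial.

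The only delicate point I expect is the $(2)\Rightarrow(3)$ step when $\phi(0)\neq0$. The scalar function $x^*\circ\phi\circ i$ need not vanish at $0$, so one must use Theorem~\ref{thm:compactscalarcaseHL}, which handles this through Möbius maps, rather than Theorem~\ref{thm:compactscalarcase}. One also has to check that $C_i$ and $C_{x^*}$ are bounded operators between $\mathcal HL$ spaces, which holds by Lemma~\ref{lemma:compositionoperatorpropertiesHL}. Everything else is direct citation.
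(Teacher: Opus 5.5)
Your proposal is correct and follows the paper's route. The paper obtains this corollary by combining Theorem~\ref{theo:compactnesscompHL} (which gives $(1)\Leftrightarrow(4)$ for finite-dimensional $X,Y$) with Proposition~\ref{prop:comp and weakly compHL} (which gives $(2)$--$(5)$), just as Corollary~\ref{cor:compactfindimcase} follows from its $\mathcal HL_0$ counterparts. Your unpacking of the ingredients also matches the paper's ``mutatis mutandis'' arguments: Theorem~\ref{thm:compactscalarcaseHL} handles $\phi(0)\neq 0$, and Lemma~\ref{lemma:factorHL} replaces Corollary~\ref{cor:ideals}.
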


For a strictly convex Banach space $X$, the only symbols $\phi \in \mathcal{H}L(B_Y,X)$ satisfying $\phi(B_Y)\subseteq S_X$ are the constant functions. When $X$ fails to be strictly convex,  non-constant functions $\phi$  mapping the ball into a sphere may exist. The following example shows that, in this case, the associated composition operator may or may not be compact. 

\begin{example}
    Let $\phi\in\mathcal HL(\disk, \ell_\infty^2)$ such that $\phi(\disk)\subseteq S_{\ell_\infty^2}$. Then, $\phi(z)=(\xi(z), \lambda)$ or $\phi(z)=(\lambda, \xi(z))$ with $|\lambda|=1$ and $\xi\in\mathcal HL(\disk)$ satisfying $\xi(\disk)\subseteq\overline\disk$. Then, the following assertions are equivalent.
    \begin{enumerate}
        \item $C_\phi \colon \mathcal HL(B_{\ell_\infty^2})\rightarrow \mathcal HL(\disk)$ is compact. 
         \item $C_\xi \colon \mathcal HL(\disk)\rightarrow \mathcal HL(\disk)$  is compact.
          \item $\norm{\xi}_\infty<1$ or $\xi$ is constant.
    \end{enumerate} 
    We have already known that (2) and (3) are equivalent. For $(1)\Leftrightarrow (2)$ note that $\phi$ is equal to $\xi$ composed with an inclusion, and $\xi$ coincides with $\phi$ composed with a projection.  
\end{example}

\subsection{Iterates of composition operators}
We conclude by studying the behavior of the iterates of  composition operators, in the same spirit as in Section \ref{sect:iteration}. In the present setting, since the point $0$ is no longer fixed, it is more difficult to identify a natural candidate for the limit of the iterates of a composition operator. Nevertheless, we obtain positive results in certain particular cases. First, if the symbol $\phi$ satisfies $\phi(0)=0$, then the situation is very similar to that of $\mathcal{H}L_0$. As a consequence of the following proposition, we show that $C_{\phi}^{(n)}$ converges to $C_0$ as an operator on $\mathcal{H}L$ if and only if $C_{\phi}^{(n)}$ converges to $0$ as an operator on $\mathcal{H}L_0$. 

\begin{proposition}\label{prop:normHL-C0}
    Let $X,Y$ be complex Banach spaces and let $\phi\in \mathcal HL_0(B_Y,X)$ with $\phi(B_Y)\subseteq B_X$. Consider $C_\phi : \mathcal HL(B_X)\rightarrow \mathcal HL(B_Y)$ the associated composition operator. Then, \begin{equation*}
        \norm{C_\phi-C_0}_{\mathcal L(\mathcal HL(B_X),\mathcal HL(B_Y))}=\norm{C_\phi}_{\mathcal L(\mathcal HL_0(B_X),\mathcal HL_0(B_Y))}=L(\phi).
    \end{equation*}
\end{proposition}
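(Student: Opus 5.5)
The plan is to compute $(C_\phi-C_0)(f)$ explicitly and reduce everything to the $\mathcal HL_0$ setting, where Lemma~\ref{lemma:compositionoperatorproperties} already gives $\norm{C_\phi}_{\mathcal L(\mathcal HL_0(B_X),\mathcal HL_0(B_Y))}=L(\phi)$. So only the first equality needs proof. Here $C_0$ denotes the composition operator with the constant symbol $0$, so $C_0(f)=f(0)$, viewed as a constant function on $B_Y$. For $f\in\mathcal HL(B_X)$ we then have
\[
(C_\phi-C_0)(f)=f\circ\phi-f(0)=(f-f(0))\circ\phi ,
\]
and this function vanishes at $0$ because $\phi(0)=0$. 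Hence $\norm{(C_\phi-C_0)f}_L=L\big((f-f(0))\circ\phi\big)$.

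For the upper bound I would estimate
\[
L\big((f-f(0))\circ\phi\big)\le L(\phi)\,L(f-f(0))=L(\phi)L(f)\le L(\phi)\norm{f}_L .
\]
This gives $\norm{C_\phi-C_0}\le L(\phi)$.

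For the lower bound, I would restrict to $f\in\mathcal HL_0(B_X)$, which sits isometrically inside $\mathcal HL(B_X)$ since $\norm{f}_L=L(f)$ there. On such $f$ we have $(C_\phi-C_0)f=f\circ\phi=C_\phi^L f$. Therefore
\[
\norm{C_\phi-C_0}\ge \sup_{L(f)\le 1} L(f\circ\phi)=\norm{C_\phi^L}=L(\phi).
\]
If a direct argument is preferred, one can take $x^*\in S_{X^*}$ norming $\phi(y_1)-\phi(y_2)$ and get $L(x^*\circ\phi)\ge \norm{\phi(y_1)-\phi(y_2)}/\norm{y_1-y_2}$ for all $y_1\ne y_2$.

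No step is a real obstacle. The only point needing care is checking that $f\circ\phi-f(0)$ vanishes at the origin, so that its $\norm{\cdot}_L$-norm reduces to its Lipschitz constant; this is exactly where the hypothesis $\phi(0)=0$ is used.
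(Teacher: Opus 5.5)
Your proof is correct and follows essentially the same route as the paper. The upper bound comes from $f\circ\phi-f(0)$ vanishing at $0$, so that its $\norm{\cdot}_L$-norm is just $L(f\circ\phi-f(0))\le L(\phi)L(f)\le L(\phi)\norm{f}_L$; the lower bound comes from restricting the supremum to the isometric copy of $\mathcal HL_0(B_X)$ and invoking $\norm{C_\phi^L}=L(\phi)$ from Lemma~\ref{lemma:compositionoperatorproperties}.
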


\begin{proof}
    On the one hand,
    \begin{align*}
        \norm{C_\phi(f)-C_0(f)}_L&=L(f\circ \phi-f(0))+\norm{f(\phi(0))-f(0)}\\&=L(f\circ \phi-f(0))\leq L(f)L(\phi)\leq \norm{f}_LL(\phi), \quad \forall f\in \mathcal HL(B_X),
    \end{align*}
    so $\norm{C_\phi-C_0}\leq L(\phi)$. On the other hand, observe that 
    \begin{align*}
        \norm{C_\phi-C_0}&=\sup_{f\in \mathcal HL(B_X)}\frac{\norm{C_\phi(f)-C_0(f)}_L}{\norm{f}_L}\geq \sup_{f\in \mathcal HL_0(B_X)}\frac{\norm{C_\phi(f)-C_0(f)}_L}{\norm{f}_L}\\&=\sup_{f\in \mathcal HL_0(B_X)}\frac{L(C_\phi(f))}{L(f)}=\norm{C_\phi}_{\mathcal L(\mathcal HL_0(B_X),\mathcal HL_0(B_Y))}=L(\phi).
    \end{align*}
    Hence, $\norm{C_\phi-C_0}=L(\phi)$.
\end{proof}

\begin{remark}
    In particular, under the conditions of Proposition \ref{prop:normHL-C0}, if $X=Y$, we have that $C_\phi^{(n)}$ converges in $\mathcal L(\mathcal HL(B_X),\mathcal HL(B_X))$ to $C_0$ if and only if $C_\phi^{(n)}$ converges in $\mathcal L(\mathcal HL_0(B_X),\mathcal HL_0(B_X))$ to $0$, where the latter case was characterized in Theorem \ref{theo:equivalent-convergent-0} and Proposition \ref{prop:equivalent-convergent-0-findim}.
\end{remark}

Finally, when $\phi$ is contractive, the iterates of $C_\phi$ turn out to converge to the composition operator associated to the (unique) fixed point of $\phi$.

\begin{proposition}
    Let $X$ be a complex Banach space and let $\phi \in \mathcal HL(B_X,X)$ such that $\phi(B_X)\subseteq \overline{B}_X$. If $L(\phi)<1$, then  the associated composition operator $C_\phi : \mathcal HL(B_X)\rightarrow \mathcal HL(B_X)$ satisfies $$\lim_{n\to\infty}\norm{C_{\phi}^{(n)}-C_a}=0,$$
    where $a\in \overline{B}_X$ is the unique fixed point of $\phi$ in $\overline{B}_X$.
\end{proposition}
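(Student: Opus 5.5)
The proof has three steps. First extend $\phi$ to $\overline{B}_X$ with the same Lipschitz constant $L(\phi)<1$, as in Lemma~\ref{lemma:extension-border-HL}. Since $\phi(B_X)\subseteq\overline{B}_X$ and $\overline{B}_X$ is closed, we get $\phi(\overline{B}_X)\subseteq\overline{B}_X$. The set $\overline{B}_X$ is complete, so Banach's fixed point theorem gives a unique $a\in\overline{B}_X$ with $\phi(a)=a$. Moreover $\norm{\phi^{(n)}(x)-a}\le L(\phi)^n\norm{x-a}\le 2L(\phi)^n$ for all $x\in\overline{B}_X$. Note that $C_a$ is the composition operator with constant symbol $a$, i.e. $C_a(f)=f(a)$ viewed as a constant function. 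It is well defined even when $a\in S_X$, because $f$ extends to $\overline{B}_X$.

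The second step is to compute $C_\phi^{(n)}(f)-C_a(f)=f\circ\phi^{(n)}-f(a)$ for $f\in\mathcal HL(B_X)$ and estimate its norm $\norm{\cdot}_L$:
\[
\norm{f\circ\phi^{(n)}-f(a)}_L = L(f\circ\phi^{(n)}) + \abs{f(\phi^{(n)}(0))-f(a)}\le L(f)L(\phi)^n + L(f)\norm{\phi^{(n)}(0)-a}.
\]
Here we use that subtracting a constant does not change the Lipschitz constant, that $L(\phi^{(n)})\le L(\phi)^n$ by submultiplicativity, and that $f$ is Lipschitz on $\overline{B}_X$ with constant $L(f)$. With the bound from the first step this gives
\[
\norm{C_\phi^{(n)}(f)-C_a(f)}_L\le 3L(\phi)^n L(f)\le 3L(\phi)^n\norm{f}_L,
\]
so $\norm{C_\phi^{(n)}-C_a}\le 3L(\phi)^n\to0$.

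The one point needing care is that the iterates and $f(a)$ make sense when $a\in S_X$ or when $\phi$ maps into the sphere. By the Maximum Modulus dichotomy, if $\phi(B_X)\subseteq S_X$ then $\phi(B_X)$ never re-enters $B_X$. In that case each $\phi^{(n)}$ for $n\ge2$ is understood via the extension of $\phi$ to $\overline{B}_X$, and $C_\phi^{(n)}=C_{\phi^{(n)}}$ still holds because $f\circ\phi$ is again in $\mathcal HL(B_X)$ and its extension agrees with $\widetilde f\circ\widetilde\phi$ by continuity. Once these identifications are checked, the estimate is routine.
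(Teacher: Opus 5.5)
Your proposal is correct and follows essentially the same route as the paper. Both arguments extend $\phi$ to $\overline{B}_X$, apply Banach's fixed point theorem, and bound $\|f\circ\phi^{(n)}-f(a)\|_L\le L(f)L(\phi)^n+L(f)\|\phi^{(n)}(0)-a\|$. Your additions are an explicit geometric rate $3L(\phi)^n$ (the paper only uses $\phi^{(n)}(0)\to a$) and a check that the iterates are well defined through the extension when $\phi$ maps into $S_X$, which the paper leaves implicit.
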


\begin{proof}
    Consider the Lipschitz extension of $\phi$ from $\overline{B}_X$ to $\overline{B}_X$. By assumption, it is contractive, and therefore Banach's fixed-point theorem ensures the existence of a unique fixed point $a\in \overline{B}_X$. Moreover, $a=\lim_{n\to\infty} \phi^{(n)}(0)$.
    Hence, for each $n\in \N$, we have
    \begin{align*}
        \norm{C_{\phi}^{(n)}-C_a}&=\sup_{\norm{f}_L=1} \norm{f\circ \phi^{(n)}-f(a)}_L\\&= \sup_{\norm{f}_L=1} L(f\circ \phi^{(n)}-f(a))+\norm{f(\phi^{(n)}(0))-f(a)}\\&=\sup_{\norm{f}_L=1} L(f\circ \phi^{(n)})+\norm{f(\phi^{(n)}(0))-f(a)}\\&\leq \sup_{\norm{f}_L=1} L(f)L( \phi)^n+L(f)\norm{\phi^{(n)}(0)-a}\\
        &\le L( \phi)^n+\norm{\phi^{(n)}(0)-a}.
    \end{align*}
    Clearly, the right hand side tends to $0$ as $n$ tends to $\infty$, since $L(\phi)<1$.
\end{proof}

\medskip
\section*{Acknowledgements} 

We are grateful to Colin Petitjean for his useful comments. 

This work was initiated during the first-named author's visit to the Universidad de Zaragoza in September and October 2025. She would like to thank everyone, both within and outside the university, who helped make her visit such a delightful experience. This stay was partially supported by a grant from Programa Ibercaja-CAI. 

The research of Verónica Dimant was partially supported by
CONICET PIP 11220200101609CO and UdeSA-PAI 2025.

The research of Luis C. García-Lirola was supported by grants PID2021-122126NB-C31 and PID2022-137294NB-I00 funded by MCIN/AEI/\\ 
10.13039/501100011033 and by ``ERDF A way of making Europe''; and by grant E48-23R funded by Diputación General de Aragón (DGA), and by Fundaci\'on S\'eneca: ACyT Regi\'on de Murcia grant 21955/PI/22.

The research of Juan Guerrero-Viu was supported by FPU24/02284 predoctoral grant funded by MCIU; by grant PID2022-137294NB-I00 funded by MCIN/AEI/
10.13039/501100011033 and by ``ERDF A way of making Europe''; by grant E48-23R funded by Diputación General de Aragón (DGA).

The research of Antonín Procházka was supported  by the French ANR project no. ANR-24-CE40-0892-01.
The LmB receives support from the EIPHI Graduate School (contract ANR-17-EURE-0002).

\end{document}